%
%
\documentclass[11pt,a4paper,reqno]{amsart}

\allowdisplaybreaks


\newcommand{\zero}[1]{{#1}_{\scriptscriptstyle{(0)}}}
\newcommand{\one}[1]{{#1}_{\scriptscriptstyle{(1)}}}
\newcommand{\two}[1]{{#1}_{\scriptscriptstyle{(2)}}}

\newcommand{\three}[1]{{#1}_{\scriptscriptstyle{(3)}}}
\newcommand{\four}[1]{{#1}_{\scriptscriptstyle{(4)}}}

\newcommand{\mzero}[1]{{#1}_{\scriptscriptstyle{(0)}}}
\newcommand{\mone}[1]{{#1}_{\scriptscriptstyle{(-1)}}}
\newcommand{\mtwo}[1]{{#1}_{\scriptscriptstyle{(-2)}}}
\newcommand{\mthree}[1]{{#1}_{\scriptscriptstyle{(-3)}}}
\newcommand{\mfour}[1]{{#1}_{\scriptscriptstyle{(-4)}}}

\newcommand{\ot}{\otimes}
\newcommand{\bbC}{\mathbb{C}}
\newcommand{\bbN}{\mathbb{N}}
\newcommand{\id}{\mathrm{id} }
\newcommand{\del}[1]{{}^{#1}\delta}

\renewcommand{\cot}{\gamma}
\newcommand{\coin}[2]{\bar{\cot}\big({#1}\otimes{#2}\big)}
\newcommand{\co}[2]{\cot\big({#1}\ot{#2}\big)}
\newcommand{\cat}{\qMod{A}{B}{}{B}}
\newcommand{\tcat}{\qMod{A_\gamma}{B_\gamma}{}{B_\gamma}}
\newcommand{\hm}[3]{{}_{#1}\mathrm{Hom}(#2, #3)}
\newcommand{\delbar}[1]{\overline{\partial}_{#1}}
\newcommand{\ev}{\mathrm{ev}}
\newcommand{\coev}{\mathrm{coev}}
\newcommand{\ch}{\nabla_{\mathrm{Ch},\cE}}
\newcommand{\tch}{\nabla_{\mathrm{Ch}, \Gamma(\cE)}}

\newcommand{\dc}{(\Omega^\bullet, \wedge, d)}
\newcommand{\dctwisted}{(\Omega_\gamma^{\bullet}, \wedge_\gamma, d_\gamma)}
\newcommand{\complexdc}{(\Omega^{(\bullet, \bullet)}, \wedge, \partial, \delbar{})}
\newcommand{\complexdctwisted}{(\Omega_\gamma^{(\bullet, \bullet)}, \wedge_\gamma, \partial_\gamma, (\delbar{})_\gamma)}
\newcommand{\metric}{(g, ( ~, ~ ))}
\newcommand{\metrictwisted}{(g_\gamma,(~,~)_\gamma)}

\newcommand{\ol}[1]{\overline{#1}}
\newcommand{\Hol}{\mathrm{Hol}}


\newcommand{\Title}{The Levi-Civita connection and Chern connections for cocycle deformations of K\"{a}hler manifolds}%
\newcommand{\ShortTitle}{\Title}%

\newcommand{\AuthorOne}{Bappa Ghosh}%
\newcommand{\AuthorTwo}{Jyotishman Bhowmick}
\newcommand{\AuthorOneAddr}{%
Stat-Math Unit, Indian Statistical Institute, Kolkata, India 700108
}%
\newcommand{\AuthorTwoEmail}{jyotishmanb@gmail.com}
\newcommand{\AuthorOneEmail}{%
bappa0697@gmail.com
}%
\newcommand{\AuthorOneThanks}{%
B.G. is supported by Shyama Prasad Mukherjee fellowship from CSIR, India}%

\newcommand{\SubjectClassText}{46L87, 81R60, 81R50, 17B37, 16T05}
\newcommand{\Keywords}{quantum groups, noncommutative geometry,
complex geometry.
}


\newcommand{\pdfTitle}{PDF Title}
\newcommand{\pdfAuthor}{PDF Author(s)}
\newcommand{\pdfSubject}{PDF Subject}
\newcommand{\pdfKeywords}{\Keywords}
\newcommand{\pdfCreator}{Write a creator name (e.g. MikTeX)}
\newcommand{\pdfCreationDate}{\today}
\newcommand{\pdfColorLink}{true}
\newcommand{\pdfLinkColor}{blue}
\newcommand{\pdfUrlColor}{blue}
\newcommand{\pdfCiteColor}{blue}

\input{qmod}

\usepackage[top=0.9in, bottom=0.9in, left=0.95in, right=0.95in]{geometry}
\usepackage{amsmath, amssymb, amsthm} 
\usepackage[utf8]{inputenc}
\usepackage[T1]{fontenc}
\usepackage{mathtools}
\usepackage{mathrsfs} 
\usepackage{xfrac} 
\usepackage{array}
\usepackage{verbatim}
\usepackage{graphicx}
\usepackage{enumitem} 
\usepackage{hyperref}
\hypersetup{
	pdftitle={\pdfTitle},
	pdfauthor={\pdfAuthor},
	pdfsubject={\pdfSubject},
	pdfcreationdate={\pdfCreationDate},
	pdfcreator={\pdfCreator},
	pdfkeywords={\pdfKeywords},
	colorlinks=\pdfColorLink,
	linkcolor={\pdfLinkColor},
	urlcolor=\pdfUrlColor,
	citecolor=\pdfCiteColor,
	pdfpagemode=UseOutlines,
}
\usepackage{tikz-cd} 
\usepackage{lipsum}
\usetikzlibrary{matrix,arrows}
\usepackage[english]{babel}
\usepackage{lmodern}
\usepackage{xcolor}


\theoremstyle{plain}
\newtheorem{theorem}{Theorem}[section]
\newtheorem{prop}[theorem]{Proposition}
\newtheorem{lem}[theorem]{Lemma}
\newtheorem{cor}[theorem]{Corollary}


\theoremstyle{plain}
\newtheorem{definition}[theorem]{Definition}
\newtheorem{example}[theorem]{Example}

\theoremstyle{plain}
\newtheorem{remark}[theorem]{Remark}

\numberwithin{equation}{section}


\newtheoremstyle{ser}
{8pt}
{8pt}
{\it}
{}
{\sf}
{:}
{6mm}
{}

\theoremstyle{ser}

\newtheoremstyle{serr}
{8pt}
{8pt}
{\normalfont}
{}
{\sf}
{.}
{6mm}
{}

\theoremstyle{serr}

\theoremstyle{ser}

\theoremstyle{ser}






\definecolor{indraRed}{rgb}{0.593, 0.183, 0.183}
\definecolor{indraPink}{rgb}{0.858, 0.188, 0.478}
\definecolor{indraBlue}{rgb}{0, 0.199, 0.398}
\definecolor{madridBlue}{rgb}{0.199, 0.199, 0.695}
\definecolor{metropolisThemeColor}{rgb}{0.105, 0.214, 0.234}
\definecolor{metropolisBarColor}{rgb}{0.984, 0.0.515, 0.015}
\definecolor{UBCblue}{rgb}{0.04706, 0.13725, 0.26667} 
\definecolor{UBCgrey}{rgb}{0.3686, 0.5255, 0.6235} 

\makeatletter
\def\mathcolor#1#{\@mathcolor{#1}}
\def\@mathcolor#1#2#3{%
	\protect\leavevmode
	\begingroup
	\color#1{#2}#3%
	\endgroup
}
\makeatother



\newcommand{\ra}{\rightarrow} 





\newcommand{\Hom}{{\rm{Hom}}}


\newcommand{\R}{\mathbb{R}}













     \newcommand{\cC}{\mathcal C}		
     \newcommand{\cD}{\mathcal D}		
     \newcommand{\cE}{\mathcal E}		
     \newcommand{\cF}{\mathcal F}		
     		
     		\newcommand{\kH}{\mathscr{H}}

\newcommand{\fN}{\mathfrak N}

\newcommand{\fS}{\mathfrak S}


\newcommand{\bN}{{\mathbb N}}

%
%

\begin{document}

\title[\ShortTitle]{\MakeUppercase\Title}

\author{\AuthorTwo}
\author{\AuthorOne} 
{\thanks{\AuthorOneThanks}}
\address{\AuthorOneAddr}
\email{\AuthorTwoEmail}
\address{\AuthorOneAddr}
\email{\AuthorOneEmail}

\subjclass{\SubjectClassText}
\keywords{\Keywords}


\begin{abstract}
	We consider unitary cocycle deformations of covariant $\ast$-differential calculi. We prove that complex structures, holomorphic bimodules and Chern connections on the deformed calculus are twists of their untwisted counterparts. Moreover,  for cocycle deformations of a class of classical K\"ahler manifolds, the Levi-Civita connection on the space of one-forms of the deformed calculus is shown to be a direct sum of the Chern connections on the twisted holomorphic and the anti-holomorphic bimodules. Our class of examples also includes cocycle deformations of the Heckenberger-Kolb calculi.    
\end{abstract}
\maketitle%
\thispagestyle{empty}%
\section{Introduction}

The last decade saw a number of approaches to study connections and curvatures in noncommutative geometry. One of the central themes have been the question of existence and uniqueness of Levi-Civita connections for different classes of examples of noncommutative manifolds. Classically, the existence of the Levi-Civita connection is derived by a Koszul type formula and a similar approach was taken in  \cite{KoszulJDG}. For examples coming from quantum groups and their homogeneous spaces, we refer to  \cite{aschieriweber}, \cite{atiyahseq} and references therein. In the setup of spectral triples (\cite{connes}) with different definitions of metric-compatibility and torsion,   the authors of \cite{meslandrennie1} proved the existence and uniqueness of Levi-Civita connections for toric deformations of classical manifolds. They also proved the existence of Levi-Civita connections for a spectral triple on the Podle\'{s} sphere in \cite{meslandrennie3}. In a very recent work, a  comparison between the frameworks of \cite{article7} for tame differential calculi, the derivation based framework of \cite{arnwil}, \cite{arnwil2} and \cite{meslandrennie1} has been deduced in \cite{flamantMR}. A comparison between the approaches taken in \cite{BeggsMajid:Leabh} and \cite{meslandrennie1} is available in \cite{comparison}. In this article we will follow the setup of \cite{BeggsMajid:Leabh}.

On the other hand, the notion of noncommutative complex structure has been studied by a number of mathematicians (see \cite{PolSchwar2003}, \cite{KLvSPodles}, \cite{BS}, \cite{MMF2} and references therein). This led to the definition of K\"ahler structures in \cite{MMF3} and the subsequent identification of   K\"ahler structures on a family of quantum homogeneous spaces in \cite{MATASSA2019103477}.  In this article, we investigate the relationship between Levi-Civita connections and Chern connections for a class of noncommutative K\"ahler manifolds.

One of the fundamental results in K\"ahler geometry states that if $M$ is a K\"ahler manifold, then the Levi-Civita connection on the real tangent bundle on $M$ can be related to  the Chern connection on the holomorphic tangent bundle. On the complexified space of one-forms $\Omega^1= \Omega^{(1,0)}\oplus \Omega^{(0,1)}$
 the Levi-Civita connection $\nabla$ for a Riemannian metric can be expressed as the direct sum of Chern connections for the associated Hermitian metrics on $\Omega^{(1,0)}$ and $\Omega^{(0,1)}.$ The goal of this article is to establish a noncommutative analogue of this theorem for a class of noncommutative K\"ahler manifolds via deformation of the classical Levi-Civita connection.
 

Let us explain the framework of \cite{BeggsMajid:Leabh} in which we work. We consider a Hopf $\ast$-algebra $A$ and a $\ast$-algebra $B$ which is equipped with an $A$-comodule $\ast$-algebra coaction 
$$  \prescript{}{}{\delta}: B \rightarrow A \otimes B. $$
The noncommutative manifold structure for the triplet $(A, B, \prescript{}{}{\delta})$ is given by an $A$-covariant $\ast$-differential calculus $\dc$ (see Definition \ref{8thmarch256}) and the geometric input data is that of an $A$-covariant metric $\metric,$  the latter being  a choice of a self-dual structure on the space of one-forms $\Omega^1.$ Moreover, in order to make sense of metric-compatibility of a connection $\nabla$ on $\Omega^1,$ we work with bimodule connections, while the torsion of $\nabla$ is the map $T_\nabla: \wedge \circ \nabla - d: \Omega^1 \rightarrow \Omega^2.$  A bimodule connection $\nabla$ on $\Omega^1$ is called a Levi-Civita connection for $\metric$ if $\nabla$ is torsionless and compatible with $\metric$ (Definition \ref{15thdec242}).

An $A$-covariant complex structure on $\dc$ is an $\mathbb{N}_0 \times \mathbb{N}_0$-grading on $\dc$ by $A$-covariant $B$-bimodules $\Omega^{(p,q)}  $ satisfying $\Omega^k = \oplus_{p + q = k} \Omega^{(p,q)}  $ and some more compatibility conditions (see Definition \ref{18thfeb255}). Of particular interest are the bimodules $\Omega^{(1,0)}  $ and $\Omega^{(0,1)} $ which are holomorphic (Definition \ref{11thdec231}) if the complex structure is factorizable for the given and the opposite complex structure respectively. In this case, a choice of a real metric $\metric$  on $\Omega^1$ satisfying 
\begin{equation}\label{diamondnew}
		(\eta_1, \eta_2)=0\text{ for  all } \eta_1, \eta_2 \text{ in }\Omega^{(1,0)}\text{ or }\Omega^{(0,1)},
	\end{equation}
	yields an Hermitian metric (Definition \ref{4thdec231}) $\kH_g$ on $\Omega^1 = \Omega^{(1,0)} \oplus \Omega^{(0,1)} $ such that $\kH_g = \kH_1 \oplus \kH_2 $ for covariant Hermitian metrics $\kH_1$ and $\kH_2$ on $\Omega^{(1,0)}$ and $ \Omega^{(0,1)}$ respectively. Then by a result of Beggs-Majid (\cite{BeggsMajid:Leabh}),  we have unique Chern connections $\nabla_{\text{Ch}, \Omega^{(1,0)}}$ and $\nabla_{\text{Ch}, \Omega^{(0,1)}}$ for the pairs $(\Omega^{(1,0)}, \kH_1)$ and $(\Omega^{(0,1)}, \kH_2). $

Now, let us introduce cocycle deformations in the picture. If $\gamma$ is a unitary $2$-cocycle on the Hopf $\ast$-algebra $A,$ then we have a deformed Hopf $\ast$-algebra $A_\gamma$ and a deformed $A_\gamma$-comodule $\ast$-algebra $(B_\gamma, \prescript{B_\gamma}{}{\delta} ).$ In fact, there is a  well-known monoidal equivalence $ \Gamma: \cat \rightarrow \tcat $ of relative Hopf-modules (see \eqref{21stnov243}).  It follows that  the $A$-covariant $\ast$-differential calculus $\dc$ is deformed to an $A_\gamma$-covariant $\ast$-differential calculus $\dctwisted.$ If $\nabla_{\Omega^1}$ is the unique  $A$-covariant Levi-Civita connection for a metric $\metric$ on $\Omega^1,$ then it is already known (see Theorem \ref{thm:15thdec243}) that a $\gamma$-twist $\nabla_{\Omega^1_\gamma}$ is the unique Levi-Civita connection for the twisted metric $\metrictwisted$ on $\Omega^1_\gamma.$

The goal of this article is to answer the following question:

\noindent If an $A$-covariant $\ast$-differential calculus is equipped with a real metric $\metric$ and a factorizable complex structure so that  
$$ \nabla = \nabla_{\text{Ch}, \Omega^{(1, 0)}} \oplus \nabla_{\text{Ch}, \Omega^{(0, 1)}}$$
 with respect to the decomposition $\Omega^1 = \Omega^{(1,0)} \oplus  \Omega^{(0,1)}, $ then is the twisted Levi-Civita connection $\nabla_{\Omega^1_\gamma}$ related with the Chern connections on the twisted bimodules $\Omega^{(1,0)}_\gamma$ and $\Omega^{(0,1)}_\gamma.$

We give an affirmative answer to this question for real metrics satisfying \eqref{diamondnew} by showing that
\begin{equation} \label{11thsep251} 
\nabla_{\Omega^1_\gamma} = \nabla_{\text{Ch}, \Omega^{(1, 0)}_\gamma} \oplus \nabla_{\text{Ch}, \Omega^{(0, 1)}_\gamma}.
\end{equation}
However, in order to make sense of \eqref{11thsep251}, we need to prove a number of results. 

To begin with, we show that under cocycle deformations, a covariant Hermitian metric $\kH$ on an object $\cE$ in $\cat$ can be deformed to an $A_\gamma$-covariant Hermitian metric $\kH_\gamma$ on the  $B_\gamma$-bimodule  $\Gamma  ( \cE ) $ (see Section \ref{sec:cocyledef_Rel}).   With a bit more work, it follows that the cocycle deformation of the Hermitian metric $\kH_g$ on $\Omega^1$ is equal to $\kH_{g_{\gamma}},$ i.e, the Hermitian metric obtained from the twisted metric $\metrictwisted.$ Next, we turn our attention to prove that  covariant complex structures and holomorphic bimodules can be twisted in the presence of a unitary $2$-cocycle.  Moreover, if $\ch$ is the Chern connection for an Hermitian metric on a holomorphic bimodule $\cE,$  then it is proven that the Chern connection for the pair $(\Gamma(\cE), \kH_\gamma )$ is given by the cocycle deformation of $\ch.$ 

Summarizing, if $\dc$ has an $A$-covariant factorizable complex structure, then we get Chern connections $\nabla_{\text{Ch}, \Omega^{(1, 0)}_\gamma}$ and $ \nabla_{\text{Ch}, \Omega^{(0, 1)}_\gamma}$ for the pairs $(\Omega^{(1,0)}_\gamma, (\kH_1)_\gamma)$ and $(\Omega^{(0,1)}_\gamma, (\kH_2)_\gamma),$ where $\kH_1$ and $\kH_2$ are the Hermitian metrics mentioned above.  Finally,  we combine all the results obtained so far to prove \eqref{11thsep251}. This is done in  Theorem \ref{29thjan253}
which is the main result of the article. In Subsection \ref{4thmarch254}, we have collected many examples of cocycles coming from compact quantum group algebras. For more examples of cocycles, we refer to \cite[Chapter 3]{NeshveyevTuset} and references therein.

In \cite{LeviCivitaHK}, the existence and uniqueness of Levi-Civita connection on the Heckenberger-Kolb calculi on the quantized irreducible flag manifolds for every real covariant metric was proved. It was shown that the Levi-Civita connection is the sum of the Chern connections on the spaces of  holomorphic and anti-holomorphic one-forms.
The class of examples tackled in \cite{LeviCivitaHK} are a class of quantum homogeneous spaces and a key role in the proof of the main theorem in that article was played by Takeuchi's monoidal equivalence between certain categories of relative Hopf modules.
In this article,  we appeal to the well-known monoidal equivalence $ \Gamma: \cat \rightarrow \tcat $ of relative Hopf-modules  and careful applications of cocycle identities. 
We should mention that in the particular case of toric deformations (studied in \cite{meslandrennie1}), the deformed Hopf $\ast$-algebra is still the untwisted function algebra on $\mathbb{T}^n$ and the $\ast$-structure on the deformed algebra $B_\gamma$ coincides with the $\ast$-structure on $B$ (see Example \ref{8thmarch257}). This leads to a simplification of the proofs. But since we are dealing with arbitrary unitary $2$-cocycles, the proofs need much more care.  

Now, let us come to the range of applications. Our first class of examples are unitary cocycle deformations of a class of K\"ahler manifolds. Indeed, if we have an action of a smooth affine real algebraic group  $G$ on a smooth  affine real algebraic variety $X$ such that the set of real points $X(\mathbb{R})$ of $X$ has a covariant  K\"ahler structure in the sense of Definition \ref{def:kahler-manifold}, then it is well-known that the hypotheses of Theorem \ref{29thjan253} are satisfied. However, for the convenience of the reader, we have explained the classical situation clearly in Subsection \ref{11thsep252}.  Secondly, our result Theorem \ref{29thjan253} is applicable to cocycle deformations of genuine noncommutative manifolds too. Indeed we establish the existence and uniqueness of Levi-Civita connection on unitary cocycle deformations of the Heckenberger-Kolb calculi via deformation of the connection constructed in \cite{LeviCivitaHK}.


We have collected most of the necessary definitions, notations and relevant results in the first two sections of the article. In Section \ref{8thmarch251}, we recall the notions of differential calculi, metrics and connections, while Section \ref{31stjan251} deals with cocycle deformations of relative Hopf modules and differential calculi. All the results quoted in Section \ref{31stjan251} are well-known except the possible exception of the second assertion of Proposition \ref{prop:5thdec241}. Section \ref{8thmarch252} deals with the deformation of covariant Hermitian metrics on covariant bimodules. In Section \ref{8thmarch253}, we prove that complex structures, holomorphic bimodules and Chern connections can be deformed to their twisted counterparts. The main result of this article and two classes of examples are discussed in Section \ref{8thmarch254}.  

Our article ends with two appendices. Since our work makes heavy use of cocycle identities,  we have proved  these equations in the first subsection of Appendix \ref{8thmarch255} for the ease of readability.  Another technical tool that we have used is the language of bar categories. In Appendix \ref{B}, we have  studied  interactions of bar categories with the unitary $2$-cocycle deformations.

\vspace{6mm}

\paragraph{\textbf{Acknowledgments:}} We are grateful to Ulrich Kr\"ahmer for discussions related to Section \ref{8thmarch254}. It is our pleasure to thank Debashish Goswami for posing a question which eventually led to this article and for fruitful discussions.

\section{Preliminaries} \label{8thmarch251}


Throughout this article, unless otherwise mentioned, our ground field will be $\mathbb{C}$. All algebras are assumed to be unital. All unadorned tensor products are over $\mathbb{C}$.

Let $ (\mathcal{C}, \otimes_\mathcal{C}) $ be a monoidal category with unit object $1_{\mathcal{C}}$. An object $M$ in  ${\mathcal{C}}$ is said to have a~right dual if there exists an object $\prescript{\ast}{}{M}$ in ${\mathcal{C}}$ and  morphisms $\ev: M\otimes_\mathcal{C} \prescript{\ast}{}{M} \to 1_{\mathcal{C}}$, $\coev: 1_{\mathcal{C}}\to \prescript{*}{}{M}\otimes_\mathcal{C} M$ such that the equations
  \begin{align*}
      (\ev\otimes_\mathcal{C} \id_M) (\id_M\otimes_\mathcal{C}  \coev) =\id_M, \quad (\id_{\prescript{\ast}{}{M}}\otimes_\mathcal{C}\ev)(\coev\otimes_\mathcal{C} \id_{\prescript{\ast}{}{M}})  = \id_{\prescript{\ast}{}{M}}
  \end{align*}
are satisfied.


If $B$ be an~algebra,  then category of all $B$-bimodules will be denoted by $\qMod{}{B}{}{B}$.
For objects $M, N$  in the category $\qMod{}{B}{}{B},$ we define 
\begin{align*}
	\hm{B}{M}{N}:=  \{f: M \to N : f \text{ is  left  $B$-linear}\}.
\end{align*}

Let $A$ be a Hopf-algebra with coproduct $\Delta: A\to A\otimes A$, counit $\epsilon: A\to \mathbb{C}$ and antipode $S:A\to A$. The antipode is always assumed to be bijective and moreover, we will use Sweedler notation to write 
\[
\Delta (a) = a_{(1)} \otimes a_{(2)}.
\]

For a Hopf algebra $A$ and a left $A$-comodule $V$, the coaction on an element
$v\in V$ is written in Sweedler notation as 
$$\del{V}(v) = \mone{v}\ot\zero{v}.$$
An element $v \in V$ is called coinvariant if $\del{V}(v) = 1 \otimes v. $
We denote by $\qMod{A}{}{}{}$ the monoidal category of left $A$-comodules.

For a Hopf algebra $A$ and a left $A$-comodule algebra $B$, the symbol $\qMod{A}{B}{}{B}$ is reserved for the category of relative Hopf-modules. Thus an object $(\cE, \del{\cE})$ of $\cat$ is a $B$-bimodule and left $A$-comodule such that
$$\del{\cE}(aeb)= \del{B}(a)\del{\cE}(e) \del{B}(b)= \mone{a}\mone{e} \mone{b}\ot \zero{a}\zero{e}\zero{b} \quad\text{for all } e \in \cE, a, b\in B .$$
A morphism $f: \cE \to \cF $  of the category $\cat$ is a $B$-bilinear map which is also $A$-covariant, i.e., $\del{\cF} (f ( e )) = (\id\ot f )\del{\cE} ( e ) $.

If $\cE, \cF$ are two objects of the category $\cat,$ then  $\prescript{}{B}{\Hom(\cE, \cF)}$ is again a $B$-bimodule with structure maps defined as: 
\begin{align*}
	(b\cdot f)(e)=f(e\cdot b); \quad (f \cdot b)(e) = f(e)b,
\end{align*}
where $f \in \prescript{}{B}{\Hom(\cE,\cF)}, e\in \cE $ and $ b \in B$. If $\cE$ is finitely generated as a left $B$-module, then following \cite[Section 2]{Ulb90} or \cite[Proposition 2.11]{aschieriweber}, we have a left $A$-coaction $\del{\hm{B}{\cE}{\cF}}$ on $\hm{B}{\cE}{\cF}$. For $f \in \hm{B}{\cE}{\cF}$,  $\del{\hm{B}{\cE}{\cF}}(f):= \mone{f} \ot \zero{f}$, where for all $e \in \cE$, we have 
\begin{equation} \label{eq:27thnov241}
	\mone{f} \ot \zero{f}(e)= S(\mone{e}) \mone{[f(\zero{e})]} \ot \zero{[f(\zero{e})]}.
\end{equation}    

Then one has the following  result:
\begin{prop} \label{28thnov241jb} (\cite[Section 2]{Ulb90}, \cite[Proposition 2.11]{aschieriweber})
Let $B$ be a left $A$-comodule algebra. If $\cE$ and $\cF$ are objects in $\cat$ and $\cE$ is finitely generated as a left $B$-module, then $\hm{B}{\cE}{\cF}$ is an object of $\cat.$ Moreover, if $\cE$ is finitely generated and projective as a left $B$-module, then $\hm{B}{\cE}{B}$ is a right dual of $\cE$ in $\cat.$
\end{prop}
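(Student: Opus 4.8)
The statement has two parts: (1) for $\cE$ finitely generated as a left $B$-module, $\hm{B}{\cE}{\cF}$ is again a relative Hopf module; and (2) for $\cE$ finitely generated projective, $\hm{B}{\cE}{B}$ is a right dual of $\cE$ in $\cat$. I would treat these separately, leaning on the classical arguments of Ulbrich and of Aschieri--Weber, but being careful to check covariance of every structure map.

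For part (1), the plan is first to verify that the formula \eqref{eq:27thnov241} for $\del{\hm{B}{\cE}{\cF}}$ genuinely defines a left $A$-coaction. Since $\cE$ is finitely generated as a left $B$-module, pick generators $e_1, \dots, e_n$; then a left $B$-linear map $f$ is determined by the finite tuple $(f(e_1), \dots, f(e_n)) \in \cF^n$, so the putative coaction lands in $A \otimes \hm{B}{\cE}{\cF}$ rather than in some completion --- this finiteness is exactly where the hypothesis is used. One then checks the coassociativity and counitality axioms directly from \eqref{eq:27thnov241} using the comodule algebra axiom on $B$, the comodule axioms on $\cE$ and $\cF$, and the antipode/coproduct relations (in particular $\Delta \circ S = (S \otimes S) \circ \tau \circ \Delta$). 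Next I would check that with this coaction the $B$-bimodule structure maps $(b \cdot f)(e) = f(e \cdot b)$ and $(f \cdot b)(e) = f(e) b$ satisfy the relative Hopf module compatibility $\del{}(a f b) = \del{B}(a)\, \del{}(f)\, \del{B}(b)$; this is a Sweedler-notation computation that again invokes the antipode axiom to handle the left multiplication, since the $b$ sits \emph{inside} the argument there.

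For part (2), I would take $\ev: \cE \otimes_B \hm{B}{\cE}{B} \to B$ to be $\ev(e \otimes f) = f(e)$, and construct $\coev: B \to \hm{B}{\cE}{B} \otimes_B \cE$ using a dual basis: since $\cE$ is finitely generated projective as a left $B$-module, there are $e_i \in \cE$ and $f_i \in \hm{B}{\cE}{B}$ with $e = \sum_i e_i\, f_i(e)$ for all $e$, and one sets $\coev(1) = \sum_i f_i \otimes e_i$ (extended $B$-linearly, or rather $\coev(b) = \sum_i f_i \otimes e_i b$). The two triangle (zig-zag) identities are then the standard dual-basis manipulations. The genuinely new point beyond the classical case is that $\ev$ and $\coev$ must be morphisms in $\cat$, i.e. both $B$-bilinear \emph{and} $A$-covariant. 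Bilinearity of $\ev$ is immediate; $A$-covariance of $\ev$ follows from \eqref{eq:27thnov241} together with the antipode axiom $\mone{e}S(\mtwo{e})\cdots$ — unwinding it, $(\id \otimes \ev)\del{}(e \otimes f) = S(\mone{e})\mtwo{e}\, \mone{[f(\zero{e})]} \otimes \zero{[f(\zero{e})]}$, wait—more precisely one uses $\del{\cE \otimes_B \hm{B}{\cE}{B}}$, applies the formulas, and the factor $\mone{e} S(\mtwo{e}) = \epsilon(\mone{e})$ collapses. For $\coev$ one must check that $\sum_i f_i \otimes e_i$ is coinvariant; this is not automatic for an arbitrary dual basis, but it holds because $\coev$ is \emph{defined} as the mate of the identity under the adjunction, hence intrinsic, and intrinsic maps between covariant objects are covariant — alternatively one checks directly that $\del{}(\sum_i f_i \otimes e_i) = 1 \otimes \sum_i f_i \otimes e_i$ by expanding using \eqref{eq:27thnov241} and the dual-basis identity.

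The main obstacle I anticipate is the bookkeeping in the covariance check for $\coev$: unlike $\ev$, which has an explicit formula, $\coev$ depends on a choice of dual basis that need not itself be covariant, so one has to argue either via naturality/uniqueness of duals (a right dual is unique up to isomorphism, and one already knows $\hm{B}{\cE}{B}$ carries \emph{some} covariant structure making $\ev$ a morphism, so the induced $\coev$ is forced to be a morphism) or via a direct but slightly delicate Sweedler computation showing $\sum_i \mone{f_i} \mone{e_i}' \otimes \cdots$ telescopes to $1 \otimes \sum_i f_i \otimes e_i$. I would present the naturality argument as the clean route and relegate the explicit verification to a remark, citing \cite[Section 2]{Ulb90} and \cite[Proposition 2.11]{aschieriweber} for the parts that are identical to the known non-$\ast$ setting.
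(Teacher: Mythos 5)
The paper itself does not prove this proposition; it is imported from Ulbrich and Aschieri--Weber with only a citation, so there is no internal proof to match against. Your outline reproduces the standard argument of those sources and is essentially sound: finite generation is indeed what places the candidate coaction \eqref{eq:27thnov241} inside $A\otimes\hm{B}{\cE}{\cF}$, though the justification should be phrased slightly more carefully than ``$f$ is determined by a finite tuple'' --- one first checks that $e\mapsto S(\mone{e})\mone{[f(\zero{e})]}\otimes\zero{[f(\zero{e})]}$ is left $B$-linear for the action of $B$ on the $\cF$-slot, then notes its image lies in $W\otimes\cF$ for a finite-dimensional subspace $W\subseteq A$ determined by the generators, and that the coefficient maps along a basis of $W$ inherit left $B$-linearity. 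The coassociativity, counitality, bimodule compatibility and the covariance of $\ev$ are routine antipode-axiom computations, as you say.

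The one step that needs repair is your preferred route to the covariance of $\coev$. ``Intrinsic maps between covariant objects are covariant'' is not a principle, and the uniqueness-of-duals argument does not apply as stated: a priori the zig-zag identities are only known in the underlying bimodule category, and you do not yet possess any right dual of $\cE$ in $\cat$ to compare with, so nothing formally forces colinearity of $\coev$. A clean rigorous substitute: the canonical map $\Theta:\hm{B}{\cE}{B}\otimes_B\cE\to\hm{B}{\cE}{\cE}$, $f\otimes e'\mapsto f(\cdot)e'$, is a morphism in $\cat$ (a direct check with \eqref{eq:27thnov241}) and is bijective because $\cE$ is finitely generated projective; it sends $\coev(1)=\sum_i f_i\otimes e_i$ to $\id_\cE$, which is coinvariant since $S(\mtwo{e})\mone{e}\otimes\zero{e}=\epsilon(\mone{e})1\otimes\zero{e}$, hence $\coev(1)$ is coinvariant and $\coev$ is a morphism. (Equivalently, your fallback direct computation works: covariance of $\ev$ together with the zig-zag gives $\sum_i \mone{f_i}\mone{e_i}\otimes\zero{f_i}(e)\zero{e_i}=1\otimes e$ for all $e\in\cE$, and injectivity of $\id_A\otimes\Theta$ finishes it.) Two small slips to fix when writing this up: for a left module the dual-basis identity reads $e=\sum_i f_i(e)e_i$, not $e=\sum_i e_i f_i(e)$, and it is the former that the first triangle identity requires; and the $B$-bilinearity of $\coev$ rests on the centrality $\sum_i b\cdot f_i\otimes e_i=\sum_i f_i\otimes e_i\cdot b$, which is again most easily seen through the isomorphism $\Theta$.
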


Let us also recall the definition of a Hopf $*$-algebra. A Hopf algebra $A$ is called a Hopf $\ast$-algebra if $A$ is a $\ast$-algebra and  $\Delta$ is a $\ast$-homomorphism. Then it follows that the antipode $S$ is invertible and satisfies the following identity (see \cite[Subsection 1.2.7]{KSLeabh}):  	
\begin{equation}\label{19thsept20245}
	S(S(a^*)^*)=a 
\end{equation}
for all $a \in A$.

\subsection{Bar Categories} \label{2ndfeb261}

The theory of bar categories was developed in \cite{BarCategory}. In this subsection, we collect the relevant notations and definitions which have been used throughout the article. For a monoidal category $ (\mathcal{C}, \otimes_\mathcal{C})$ and objects $X, Y $ of $ \mathcal{C}, $ we denote by flip the functor from  $\mathcal{C}\times \mathcal{C} $ to $\mathcal{C}  \times \mathcal{C}$ which sends the pair $(X,Y)$ to $ (Y,X)$. We also denote by $1_{\mathcal{C}}$ the unit object. As usual, we will suppress the notations for the left unit, right unit as well as the associator of $\mathcal{C}$.

\begin{definition}(\cite[Definition 2.1]{BarCategory}) \label{15thjuly241}
	A bar category is a monoidal category $(\mathcal{C},\otimes_\mathcal{C},1_{\mathcal{C}})$ together with  a functor $\mathrm{bar}: \mathcal{C}\to \mathcal{C}$ (written as $X\mapsto \overline{X}$),
	a natural equivalence $\mathrm{bb}: \id_ {\mathcal{C}} \to \mathrm{bar}\circ \mathrm{bar}  $ between the identity and the  $\mathrm{bar}\circ \mathrm{bar}$ functors on $\mathcal{C}$,
	an invertible morphism $ \star:1_{\mathcal{C}} \to \overline{1_{\mathcal{C}}}$ and
	a natural equivalence $\Upsilon$  between $\mathrm{bar}\circ \otimes_\mathcal{C}$ and $\otimes_\mathcal{C} \circ (\mathrm{bar}\times \mathrm{bar})\circ \mathrm{flip}$ from $\mathcal{C}\times \mathcal{C}$ to $\mathcal{C}$	
	such that the following compositions of morphisms are both equal to $1_{\overline{X}}:$
	$$ \overline{X} \xrightarrow{\cong} \overline{X \otimes_\mathcal{C} 1_{\mathcal{C}}} \xrightarrow{\Upsilon_{X, 1_{\mathcal{C}}}} \overline{ 1_{\mathcal{C}}}\otimes_\mathcal{C}  \overline{X} \xrightarrow{\star^{-1} \otimes_\mathcal{C} \id}  1_{\mathcal{C}} \otimes_\mathcal{C} \overline{X} \xrightarrow{\cong} \overline{X}, $$
	$$ \overline{X} \xrightarrow{\cong} \overline{ 1_{\mathcal{C}} \otimes_\mathcal{C} X} \xrightarrow{\Upsilon_{ 1_{\mathcal{C}},X}}  \overline{X} \otimes_\mathcal{C} \overline{1_{\mathcal{C}}}\xrightarrow{ \id \otimes_\mathcal{C}  \star^{-1}}  \overline{X} \otimes_\mathcal{C}  1_{\mathcal{C}} \xrightarrow{\cong} \overline{X},$$ 
	and moreover, the following equations hold:
	$$ (\Upsilon_{Y,Z} \otimes_\mathcal{C} \id) \Upsilon_{X, Y \otimes_\mathcal{C} Z} = (\id \otimes_\mathcal{C} \Upsilon_{X, Y}) \Upsilon_{X \otimes_\mathcal{C} Y, Z}, ~ \overline{\star} \star = \mathrm{bb}_{1_{\mathcal{C}}}: 1_{\mathcal{C}} \to \overline{\overline{1_{\mathcal{C}}}}, ~ \overline{\mathrm{bb}_X} =\mathrm{bb}_{\overline{X}}: \overline{X} \to \overline{\overline{\overline{X}}}$$
	for all objects $X, Y, Z$ in $\mathcal{C}$.
	
	An object $X$ in a bar category is  called a star object if there is a morphism $\star_X: X\to \overline{X}$ such that $\overline{\star_X}\circ \star_X = \mathrm{bb}_X $.
\end{definition}


If $M$ is a vector space, then the symbol $\overline{M}$ will stand for the vector space defined as 
$$ \overline{M}:= \{ \overline{m}: m \in M \}.$$ 
Moreover, if ${M}$ is $B$-bimodule, then $\overline{M}$ is equipped with the following $B$-bimodule structure:
\begin{align}\label{28thnov231}
	b\cdot \ol{m}= \ol{m\cdot b^{\ast}};\quad \ol{m}\cdot b= \ol{b^{\ast}\cdot m} \quad \text{for all $b\in B$, $m\in M,$}
\end{align} and 
if $(M, \prescript{M}{}{\delta})$ is a~left $A$-comodule, then $\overline{M}$ has a~left $A$-comodule structure defined by
\begin{align}\label{25thnov232}
	\prescript{\ol{M}}{}{\delta}(\ol{m})= m_{(-1)}^{\ast}\otimes \ol{m_{(0)}},
\end{align}
where we have used Sweedler's notation $\prescript{M}{}{\delta}(m)= m_{(-1)}\otimes m_{(0)}$.

\begin{example} (\cite[Section 2.8]{BeggsMajid:Leabh}) \label{5thdec241jb}
	If A is a Hopf $\ast$-algebra and $B$ a left $A$-comodule $\ast$-algebra, then the category $\qMod{A}{B}{}{B}$ of relative Hopf modules is a bar category.
	Indeed, if $M$ is an object of $\qMod{A}{B}{}{B},$ then  \eqref{28thnov231} and \eqref{25thnov232} make   $\overline{M}$  an object of $\qMod{A}{B}{}{B}.$  
	Now we define   $\mathrm{bar} (M) := \overline{M}$ and 
	for $f \in \Hom(M,N)$, we define 
	$ \ol{f}\in \Hom(\ol{M}, \ol{N})$ by $\ol{f}(\ol{x})=\ol{f(x)}$.
	
	Moreover, the natural equivalence $\mathrm{bb}$ is given by
	$ \mathrm{bb}_M ( m ) = \overline{\overline{m}} $ for all $m \in M.$
	Finally, for objects $M, N$ in $\qMod{A}{B}{}{B},$
	$$ \Upsilon_{M, N} (  \overline{m \otimes_B n}  ) = \overline{n} \otimes_B \overline{m}.  $$

	Let us note that if $(\Omega^{\bullet},\wedge, d)$ is an $A$-covariant $\ast$-differential calculus on $B$, then the map
	\begin{equation} \label{8thjuly241}
		\star_{\Omega^1}: \Omega^1 \rightarrow \overline{\Omega^1}, ~ \star_{\Omega^1} (\omega) := \overline{\omega^*} 
	\end{equation}
	makes  $\Omega^1 $  a star object of $\qMod{A}{B}{}{B}$.
	
\end{example}

\subsection{Differential calculi and metrics}

\begin{definition} \label{Covariant Calculi}
Let $(B, \prescript{B}{}{\delta})$ be a left $A$-comodule algebra for a Hopf-algebra $A$. An $A$-covariant differential calculus over $B$ is a differential graded algebra $( \bigoplus_{k\ge 0} \Omega^k(B),  \wedge, d)$ such that $\Omega^0(B) = B,$  $ \Omega^\bullet (B):= \bigoplus_{k\ge 0} \Omega^k(B) $ is generated as an algebra by $B$ and $dB$ and moreover,  the coaction $ \prescript{B}{}{\delta}$ extends to a (necessarily unique) comodule algebra map $\prescript{\Omega^\bullet}{}{\delta}: \Omega^\bullet (B) \rightarrow A \otimes \Omega^\bullet (B) $ such that   $\Omega^k ( B ) $ is an object of $\qMod{A}{B}{}{B}$ for each $k \geq 0$ and the map $d$ is $A$-covariant. 
\end{definition}

If $A$ is taken to be the trivial one dimensional Hopf-algebra, then we recover the definition of a differential calculus. We note that if $(\Omega^{\bullet} (B), \wedge, d)$ is any left $A$-covariant calculus, then clearly, the canonical projection map $\Omega^\bullet (B) \rightarrow \Omega^k (B) $ as well as $\wedge: \Omega^k (B) \otimes_B \Omega^l (B) \rightarrow \Omega^{k + l} (B) $ are left $A$-covariant.

Let $B$ be a $\ast$-algebra and  $A$  a Hopf $\ast$-algebra such that  $(B, \prescript{B}{}{\delta})$ is an $A$-comodule algebra. If $~^{B}\delta: B \to A \otimes B$ is a $*$-algebra homomorphism, then the pair  $(B, \prescript{B}{}{\delta})$ is called an  $A$-comodule $\ast$-algebra.

In this case,  one is led to consider the following subclass of differential calculi.

\begin{definition} \label{8thmarch256}
If $A$ is a Hopf $\ast$-algebra and $B$ is a comodule $\ast$-algebra, then an $A$-covariant differential calculus $(\Omega^{\bullet}(B), \wedge, d) $ on   $B$ is called an $A$-covariant $\ast$-differential calculus if there exists a conjugate linear involution $\ast: \Omega^{\bullet}(B) \to \Omega^{\bullet}(B) $ which extends the map $\ast: B \rightarrow B $ such that 
$$ \ast(\Omega^k(B))\subseteq \Omega^k(B), ~ (d\omega)^{\ast} =d(\omega^{\ast}), ~ (\omega \wedge \nu)^{\ast} = (-1)^{kl} \nu^{\ast} \wedge \omega^{\ast} $$
for  all $ \omega \in \Omega^{k}(B), \nu \in \Omega^l (B)$ and moreover, if $\Omega^\bullet (B)$ is a comodule $\ast$-algebra.
\end{definition}

From now on, while referring to a differential calculus on an algebra $B$, we will often use the notations $\Omega^k$ and $\Omega^{\bullet}$ to denote $\Omega^k(B)$ and $\Omega^{\bullet}(B)$ respectively.

\begin{definition} (\cite[Definition 1.15]{BeggsMajid:Leabh}) \label{4thmay242}
	A metric on a differential calculus $(\Omega^\bullet, \wedge, d)$ on an algebra $B$ is a pair $(g, (~ , ~))$ where $g $ is an element of $\Omega^1 \otimes_B \Omega^1$ and $(~ , ~): \Omega^1 \otimes_B \Omega^1 \rightarrow B$ is a $B$-bilinear map such that the following conditions hold:
	\begin{equation*} 
     ((\omega, ~) \otimes_B \id) g = \omega = (\id \otimes_B (~ , \omega)) g.
     \end{equation*}
	If the differential calculus is $A$-covariant, then we will say that the metric is covariant if  $(~ , ~)$ and the map 
	\begin{equation} \label{23rdmay241}
		\coev_g: B \rightarrow \Omega^1 \otimes_B \Omega^1 ~  \text{defined by} ~ \coev_g (b) = b g 
	\end{equation}
	are $A$-covariant.
\end{definition}

If $ (g, (~ , ~)) $ is a metric on $\Omega^1(B)$, then  by \cite[Lemma 1.16]{BeggsMajid:Leabh}, $g$ is central, i.e, $b g = g b$ for all $b \in B$. This implies that the map $\coev_g$ is actually $B$-bilinear. If $ (g, (~ , ~)) $ is a covariant metric, then the covariance of the map $\coev_g$ 
implies that
\begin{equation} \label{18thfeb252}
{}^{\Omega^1 \otimes_B \Omega^1} \delta ( g ) = 1 \otimes g.
\end{equation}

The following well-known characterization of metrics will be useful for us.

\begin{remark} \cite[Page 311]{BeggsMajid:Leabh} \label{24thjuly241}
	If $ (g, (~ , ~)) $ is a covariant metric on $\Omega^1$ and $\coev_g$ is the map defined in \eqref{23rdmay241}, then $ (\Omega^1, (~ , ~), \coev_g) $ is a right dual of $\Omega^1$ in the category $\qMod{A}{B}{}{B}$. 
	
	Conversely, if a triplet $ (\Omega^1, \ev, \coev) $ is a right dual of $\Omega^1$ in $\qMod{A}{B}{}{B}$, then the pair $ (\coev (1), \ev) $  is a covariant metric on $\Omega^1$.

    In particular, if $\dc$ is a differential calculus such that $\Omega^1$ admits a metric $ \metric, $ then $\Omega^1$ is finitely generated and projective as a left $B$-module (see \cite{etingof2015tensor}, for example). 
\end{remark}

If $\dc$ is a $*$-differential calculus, then we can make sense of the following definition:
\begin{definition} (\cite[Chapter 8]{BeggsMajid:Leabh} )  \label{23rddec24jb2}
Suppose that  $\metric$  is a metric  on a $\ast$-differential calculus $\dc$ with $g = \sum_i \omega_i \otimes_B \eta_i. $
 Then $\metric$    is said to be a real metric if 
 $$g= \sum_i \eta^*_i \otimes_B \omega^*_i.  $$
\end{definition}

\subsection{Connections} \label{4thaugust241}

If  $(\Omega^{\bullet}, \wedge, d) $ is a differential calculus over an algebra $B$, then a left connection on a $B$-bimodule $\mathcal{E}$ is a $\mathbb{C}$-linear map
$\nabla: \mathcal{E} \rightarrow \Omega^1 \otimes_B \mathcal{E}$ such that
$$\nabla (b e) = b \nabla (e)  + db \otimes_B e $$
for all $e \in \mathcal{E}$ and for all $b \in B$.
	A left connection $\nabla$ on an object $\cE$ of $\cat$ is said to be covariant if $\nabla$ is left $A$-covariant i.e., 
    $${}^{\Omega^1 \otimes_B\cE}\delta\circ \nabla= (\id \otimes_B \nabla) \circ {}^\cE\delta. $$

A left connection $\nabla$ on a $B$-bimodule $\mathcal{E}$ is called a left $\sigma$-bimodule connection if there exists a $B$-bimodule map 
$\sigma: \mathcal{E} \otimes_B \Omega^1 \rightarrow \Omega^1 \otimes_B \mathcal{E}$
such that
\begin{equation} \label{19thoct237}
	\nabla (e b) = \nabla (e) b + \sigma (e \otimes_B db)
\end{equation}
for all $e \in \mathcal{E}$ and for all $b \in B$.

If \eqref{19thoct237} is satisfied, then we will sometimes say that $ (\nabla, \sigma) $ is a left bimodule connection. It is well-known that if $ ( \nabla, \sigma_1 ) $ and $( \nabla, \sigma_2 )$ are bimodule connections on $\cE,$ then $\sigma_1 = \sigma_2.$

Suppose that $(\nabla_{\mathcal{E}}, \sigma_{\mathcal{E}}) $ is a bimodule connection and $ \nabla_ {\mathcal{F}} $ is a connection on $B$-bimodules $\mathcal{E}$ and $\mathcal{F}$ respectively.  Then by \cite[Theorem 3.78]{BeggsMajid:Leabh},  we have a  connection $ \nabla_{\mathcal{E} \otimes_B \mathcal{F}} $ on $\mathcal{E} \otimes_B \mathcal{F} $ defined as
\begin{equation*}
	\nabla_{\mathcal{E} \otimes_B \mathcal{F}}:= \nabla_{\mathcal{E}} \otimes_B \id + (\sigma_{\mathcal{E}} \otimes_B \id) (\id \otimes_B \nabla_{\mathcal{F}}).     
\end{equation*}

Now we introduce the definition of the Levi-Civita connection for a metric. 

\begin{definition} (\cite[Definition 8.3]{BeggsMajid:Leabh}) \label{15thdec242}
	Suppose that $B$ is an algebra and $\dc$ is a differential calculus on $B$. Let $\metric$ be a metric on the space of one-forms $\Omega^1(B)$. A left bimodule connection $(\nabla, \sigma) $ is said to be a Levi-Civita connection for the metric $\metric$ if the following two conditions are satisfied:
    \begin{enumerate}
    \item $\nabla$ is torsionless, i.e, the map
    $\wedge \circ \nabla - d: \Omega^1 \rightarrow \Omega^2 $ is the zero map, 
    
    \item $\nabla$ is compatible with $\metric,$ i.e, $\nabla_{\Omega^1 \otimes_B \Omega^1} g = 0.$
    \end{enumerate}
	\end{definition}

Thus, whenever we say that $\nabla$ is a Levi-Civita connection for a metric $\metric,$ it will be assumed that $\nabla$ is already a   bimodule connection. We should also mention that for connections on the space of one-forms on a spectral triple (\cite{connes}), a spectral definition of torsion has been studied, for which we refer to \cite{dabrowskisitarzadv}.

In the classical geometry, one of the fundamental theorems for Riemannian geometry is  the uniqueness and existence of the Levi-Civita connection (see \cite[Theorem 5.4]{johnmlee} but establishing an analogous result for noncommutative manifolds remains a significant challenge.

For examples of differential calculi and metrics which admit Levi-Civita connection in the sense of Definition \ref{15thdec242}, we refer to the monograph \cite{BeggsMajid:Leabh} and references therein. More recently, Matassa proved the existence uniqueness theorem for an analogue of the Fubini-Study metric on quantum projective spaces (see \cite{matassalevicivita}) and this was extended to the case of covariant real metrics on all quantized irreducible flag manifolds in \cite{LeviCivitaHK}. 

\section{Cocycle deformation of relative Hopf modules and differential calculi} \label{31stjan251}

Let $(A, \Delta, S, \epsilon)$ be a Hopf algebra. We recall that $A\ot A$ is canonically
a coalgebra with coproduct $\Delta_{A\ot A}(h\ot k)=\one{h}\ot
\one{k}\ot \two{h}\ot \two {k}$ and counit $\epsilon_{A\otimes
	A}(h\otimes k)=\epsilon(h)\epsilon(k)$ for all $ h,k\in A$. If $\phi, \psi$ are linear maps from $A\ot A$ to $\bbC$, the convolution $\phi * \psi$ is defined as 
$$(\phi * \psi) (a\ot b )= \phi(\one{a}\ot \one{b}) \psi(\two{a}\ot \two{b}) $$
A linear functional $\phi$ on $A\ot A$ is said to be convolution invertible if there exists a linear functional $\psi$ on $A\ot A$ such that $\phi * \psi = \epsilon_{A\ot A}= \psi * \phi.$

\begin{definition} \label{18thjan252}
	A  convolution invertible map $\gamma:A \otimes A \ra \mathbb{C}$ is called a \textbf{$2$-cocycle} if	
	\begin{equation}
		\label{25thaug24} \gamma({{g}_{(1)}}\otimes {\one{h}}) \co{\two{g}\two{h}}{k} =  \co{\one{h}}{\one{k}} \co{g}{\two{h}\two{k}},   
	\end{equation}
	for all $g,h, k \in A $ and  it is said to be  unital if 
	$\gamma({h\otimes1})= \epsilon(h) = \gamma({1\otimes h})$ for all  $ h\in A $.
\end{definition}

\begin{remark}
	All cocycles in this article will be assumed to be unital unless otherwise mentioned.
\end{remark}

Throughout this article, $\overline{\gamma}$ will denote the convolution inverse of a $2$-cocycle $\gamma.$ 
We  recall the following result from the literature (for example see \cite[Lemma 2.16]{TwistPAschieriMain}).
\begin{lem}\label{lem:formula}
	Let   $\cot:A \otimes A \to \bbC$ be a  $\bbC$-linear map 
	with convolution  inverse $\bar{\cot}$. Let $g, h, k \in A.$ Then the
	following statements are equivalent:
	\begin{enumerate}
		\item
		$\cot$ is a cocycle, i.e, it satisfies \eqref{25thaug24},
		\item\label{ii}
		$\coin{\one{g}\one{h}}{k} \coin{\two{g}}{\two{h}}=  \coin{g}{\one{h}\one{k}} \coin{\two{h}}{\two{k}}\,,\;$ 
		\item\label{iii}
		$
		\co{\one{g}\one{h}}{\one{k}} \coin{\two{g}}{\two{h}\two{k}} =
		\coin{g}{\one{h}} \co{\two{h}}{k}\,,\;$ 
		\item\label{iv}
		$
		\co{\one{g}}{\one{h}\one{k}} \coin{\two{g}\two{h}}{\two{k}}=  \co{g}{\two{h}} \coin{\one{h}}{k}\,.$
	\end{enumerate}
\end{lem}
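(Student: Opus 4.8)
The plan is to prove the cyclic chain of implications $(1)\Rightarrow(\ref{ii})\Rightarrow(\ref{iii})\Rightarrow(\ref{iv})\Rightarrow(1)$, or alternatively to observe that each equivalence is obtained from \eqref{25thaug24} by convolving with $\bar\gamma$ on suitable tensor legs and then contracting using the counit axiom $\gamma*\bar\gamma=\epsilon_{A\ot A}=\bar\gamma*\gamma$. Concretely, I would start from the cocycle identity \eqref{25thaug24}, namely
\[
\co{\one{g}}{\one{h}}\,\co{\two{g}\,\two{h}}{k}=\co{\one{h}}{\one{k}}\,\co{g}{\two{h}\,\two{k}},
\]
and convolve both sides appropriately with $\bar\gamma$. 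For instance, to get $(\ref{iii})$ I would apply $\bar\gamma$ paired against $\one{g}\,\one{h}\ot \one{k}$ on the left-hand side and use coassociativity together with $\bar\gamma(\one{x}\ot\one{y})\gamma(\two{x}\ot\two{y})=\epsilon(x)\epsilon(y)$ to collapse one factor; the right-hand side then simplifies by the same counit identity. The other implications follow by the analogous manipulation, rebracketing the coproducts by coassociativity and cancelling a $\gamma$ against a $\bar\gamma$ each time.

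In more detail: for $(1)\Rightarrow(\ref{iii})$ I would multiply \eqref{25thaug24} (with $g,h,k$ replaced by appropriate Sweedler components so that the coproduct legs line up) by $\coin{\one{g}\,\one{h}}{\one{k}}$ on the leftmost slots and sum; on the left side the pair $\coin{\one{g}\,\one{h}}{\one{k}}\co{\two{g}\,\two{h}}{\two{k}}$ telescopes to $\epsilon(gh)\epsilon(k)$-type terms after rebracketing, leaving $\co{g}{h}$... — rather, the cleaner bookkeeping is to pair $\bar\gamma$ against the $\two{g}\,\two{h}\ot k$-slot of the \emph{right} side and the $\one{h}\ot\one{k}$-slot, tracking indices carefully. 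Since the four statements are permutations of the same data under the coalgebra antipode-free manipulations, each implication is a one- or two-line Sweedler computation. I would present one implication in full and remark that the remaining three are entirely analogous, or simply cite \cite[Lemma 2.16]{TwistPAschieriMain} as the excerpt already does and include the computations in an appendix for completeness, in the spirit of the paper's first appendix on cocycle identities.

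The main obstacle is purely notational rather than conceptual: keeping the Sweedler indices consistent through repeated applications of coassociativity, since a single element acquires three or four coproduct legs and one must be scrupulous about which leg is paired with $\gamma$, which with $\bar\gamma$, and which survives. There is no genuine difficulty — no use of the antipode, the $\ast$-structure, or unitality of $\gamma$ is needed here (unitality is only invoked elsewhere) — so the proof is a routine but error-prone diagram chase. I would therefore organize it as: (i) fix notation and recall $\gamma*\bar\gamma=\epsilon_{A\ot A}=\bar\gamma*\gamma$; (ii) prove $(1)\Leftrightarrow(\ref{ii})$ by convolving \eqref{25thaug24} with $\bar\gamma$ on the first two tensor legs and its convolution inverse back; (iii) prove $(1)\Leftrightarrow(\ref{iii})$ and $(1)\Leftrightarrow(\ref{iv})$ by the symmetric maneuvers on the other legs; (iv) conclude.
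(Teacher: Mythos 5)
Your approach is correct, and it is essentially what the paper does: the paper offers no proof of this lemma at all, simply quoting it from \cite[Lemma 2.16]{TwistPAschieriMain}, and the standard proof there is exactly the convolution manipulation you describe. One way to remove the ``error-prone bookkeeping'' you worry about is to package the four expressions as elements of the convolution algebra $(A\otimes A\otimes A)^{*}$, namely $\gamma_{12}=\gamma\otimes\epsilon$, $\gamma_{23}=\epsilon\otimes\gamma$, $\gamma_{12,3}=\gamma\circ(m\otimes\id)$, $\gamma_{1,23}=\gamma\circ(\id\otimes m)$ (each convolution invertible, with inverse given by replacing $\gamma$ by $\bar\gamma$, since $m\otimes\id$ and $\id\otimes m$ are coalgebra maps): then (1) reads $\gamma_{12}\ast\gamma_{12,3}=\gamma_{23}\ast\gamma_{1,23}$, (2) is obtained by taking convolution inverses of both sides, and (3), (4) by multiplying (1) on suitable sides by $\bar\gamma_{12}$, $\bar\gamma_{1,23}$, $\bar\gamma_{23}$, $\bar\gamma_{12,3}$, so no Sweedler indices need to be tracked at all.
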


The following proposition recalls the deformation of Hopf algebras under a $2$-cocycle. 

\begin{prop} (\cite[Subsection 2.3]{SMFounds}) {\label{prop/defn:twisted-Hopf-algebra}}
	Suppose that $(A, \cdot, \Delta, \epsilon, S)$ is a Hopf  algebra and $\gamma$  a  $2$-cocycle, then we can define a Hopf algebra $(A_\gamma,\cdot_\gamma, \Delta, \epsilon, S_\gamma)$  such that 
	\begin{enumerate}
		\item The identity map defines a coalgebra isomorphism between  $(A_\gamma, \epsilon) $ and  $(A,\epsilon).$ 
		\item      
		$
		h\cdot_\gamma k:= \co{\one{h}}{\one{k}} \two{h}\two{k} \coin{\three{h}}{\three{k}},
		$
		\item  $S_{\gamma}(h):= U(\one{h})S(\two{h})\bar{U}(\three{h})$, where $U, \overline{U}: A \rightarrow \mathbb{C} $ are defined as  $U(k):= \co{\one{k}}{S(\two{k})},$ $ \bar{U}(k)= \coin{S(\one{k})}{\two{k}},$
        \end{enumerate}
        for all $h,k\in A.$
\end{prop}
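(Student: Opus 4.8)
The plan is to check directly that the quintuple $(A_\gamma,\cdot_\gamma,\Delta,\epsilon,S_\gamma)$ satisfies the Hopf algebra axioms, organized in three stages: first that $(A_\gamma,\cdot_\gamma,1)$ is an associative unital algebra, then that $(A_\gamma,\cdot_\gamma,1,\Delta,\epsilon)$ is a bialgebra, and finally that $S_\gamma$ is an antipode for it. Note that assertion (1) requires no work once this is done: since $\Delta$ and $\epsilon$ are literally the same linear maps as for $A$, coassociativity and the counit axioms for $(A_\gamma,\Delta,\epsilon)$ hold by hypothesis on $A$, so the identity is a coalgebra isomorphism $(A_\gamma,\epsilon)\cong(A,\epsilon)$ by construction. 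All the content therefore lies in the compatibility of the twisted product $\cdot_\gamma$ with the unchanged coalgebra, and in producing $S_\gamma$.

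For the first stage, unitality is quick: from $\gamma(1\otimes h)=\epsilon(h)=\gamma(h\otimes 1)$ one first deduces that $\overline{\gamma}$ is also unital, by applying $\gamma*\overline{\gamma}=\epsilon_{A\otimes A}$ to $1\otimes h$ and to $h\otimes 1$; then $1\cdot_\gamma h=\gamma(1\otimes\one{h})\,\two{h}\,\overline{\gamma}(1\otimes\three{h})=\epsilon(\one{h})\,\two{h}\,\epsilon(\three{h})=h$, and symmetrically $h\cdot_\gamma 1=h$. Associativity is the main computational step. Expanding $(h\cdot_\gamma k)\cdot_\gamma l$ and $h\cdot_\gamma(k\cdot_\gamma l)$ and using coassociativity of $\Delta$ together with the fact that $\Delta$ is an algebra map for the \emph{old} product, both sides become sums in which the leftover $\gamma$- and $\overline{\gamma}$-factors appear evaluated on tensor legs, some of which are products $\one{g}\one{h}$; the cocycle identity \eqref{25thaug24}, used repeatedly in its equivalent forms (ii)--(iv) of Lemma \ref{lem:formula}, is exactly the tool that lets one slide such a factor across a product and convert one bracketing into the other. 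I expect this to be the longest and most bookkeeping-heavy part of the proof.

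For the second stage, the key observation is that multiplicativity of $\epsilon$ and of $\Delta$ for $\cdot_\gamma$ uses only the convolution-inverse relations, not the full cocycle identity. Indeed $\epsilon(h\cdot_\gamma k)=\gamma(\one{h}\otimes\one{k})\,\epsilon(\two{h})\,\epsilon(\two{k})\,\overline{\gamma}(\three{h}\otimes\three{k})=(\gamma*\overline{\gamma})(h\otimes k)=\epsilon(h)\epsilon(k)$, and a parallel computation, in which an adjacent pair collapses via $\overline{\gamma}*\gamma=\epsilon_{A\otimes A}$, gives $\Delta(h\cdot_\gamma k)=\Delta(h)\cdot_\gamma\Delta(k)$ with the right-hand side computed in the tensor product algebra $(A_\gamma,\cdot_\gamma)\otimes(A_\gamma,\cdot_\gamma)$. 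Together with the first stage and the unchanged coalgebra axioms this makes $A_\gamma$ a bialgebra.

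For the third stage, I would first show that the functionals $U,\overline{U}:A\to\mathbb{C}$ defined by $U(k)=\gamma(\one{k}\otimes S(\two{k}))$ and $\overline{U}(k)=\overline{\gamma}(S(\one{k})\otimes\two{k})$ are mutually convolution-inverse in $\mathrm{Hom}(A,\mathbb{C})$, which is a short computation with the cocycle identity and the antipode axioms of $S$. Then I would verify the two antipode identities $\one{h}\cdot_\gamma S_\gamma(\two{h})=\epsilon(h)1=S_\gamma(\one{h})\cdot_\gamma\two{h}$ by substituting $S_\gamma(h)=U(\one{h})\,S(\two{h})\,\overline{U}(\three{h})$, expanding $\cdot_\gamma$, and repeatedly applying the cocycle identities to reduce the accumulated $\gamma$-, $\overline{\gamma}$-, $U$- and $\overline{U}$-factors to counits; the antipode property and the invertibility of $S$ enter here. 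This verification is, after associativity, the most delicate point, precisely because the $U$ and $\overline{U}$ factors must be introduced at the right moments and the cocycle identity must be used in several of its guises. Once it is done, $S_\gamma$ is automatically an anti-homomorphism of algebras and of coalgebras, and $(A_\gamma,\cdot_\gamma,\Delta,\epsilon,S_\gamma)$ is a Hopf algebra, establishing (2) and (3) and — as already observed — (1).
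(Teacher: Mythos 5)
The paper does not actually prove this proposition: it is recalled from \cite{SMFounds} (Subsection 2.3), so there is no in-paper argument to compare against, and your proposal amounts to the standard direct verification given in that source. The route you describe is correct, and the steps you do write out are sound: unitality of $\bar\gamma$ and of $\cdot_\gamma$, $\epsilon(h\cdot_\gamma k)=\epsilon(h)\epsilon(k)$ and $\Delta(h\cdot_\gamma k)=\Delta(h)\cdot_\gamma\Delta(k)$ needing only $\gamma\ast\bar\gamma=\bar\gamma\ast\gamma=\epsilon_{A\otimes A}$, associativity and the antipode identities needing the cocycle condition \eqref{25thaug24} in its equivalent forms from Lemma \ref{lem:formula}, and $U,\bar U$ being mutually convolution-inverse (precisely what the paper later records in Remark \ref{25thnov241}). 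The only caveat is that the two genuinely laborious computations (associativity of $\cdot_\gamma$ and $\one{h}\cdot_\gamma S_\gamma(\two{h})=\epsilon(h)1=S_\gamma(\one{h})\cdot_\gamma\two{h}$) are left as sketches rather than carried out, and the invertibility of $S$ you invoke there is not actually needed; these are omissions of detail, not of ideas.
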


The Hopf algebra $A_\gamma$ is called the $2$-cocycle twist of the Hopf algebra $A$.

\subsection{Cocycle deformation of relative Hopf modules} \label{sec:cocyledef_Rel}
If $\gamma$ is  a $2$-cocycle on a Hopf algebra $A,$ then the cocycle deformation of an object $(M, \del{M})$  of $\cat$  is the pair  $( \Gamma ( M ), \del{M_\gamma})$ where $\Gamma ( M ) = M $ as a vector space equipped with $B_\gamma$-bimodule structures given by
\begin{align*}
	b\cdot_\cot m = \co{\mone{b}}{\mone{m}}\zero{b} \cdot \zero{m},	\quad m\cdot_\cot b =\co{\mone{m}}{\mone{b}} \zero{m}\cdot \zero{b} 
\end{align*}
 and $\del{M_\gamma}= \del{M}$. Then $\tcat$ is a monoidal category with the monoidal structure to be denoted by $\ot_{B_\gamma}$.

In this case, it is well-known that the canonical functor 
\begin{equation} \label{21stnov243}
	\Gamma: \cat \to \tcat
\end{equation}
 defines a monoidal equivalence between the categories $(\cat, \ot_B)$ and $(\tcat, \ot_{B_\gamma}).$ The associated natural isomorphism  $\varphi$ between the functors $\ot_{B_\gamma} \circ(\Gamma\times \Gamma)$ and $\Gamma \circ \ot_B$  is given by
\begin{eqnarray}\label{nt}
	\varphi_{V,W}: \Gamma(V) \ot_{B_\gamma} \Gamma(W) &\longrightarrow&  \Gamma(V \ot_B W)  ~,
	\\
	v \ot_{B_\gamma} w &\longmapsto & \co{\mone{v}}{\mone{w}}  \zero{v} \ot_B \zero{w}  ~, \nonumber
\end{eqnarray}
for  objects $V,W$ in $\cat$. The inverse  $\varphi_{V,W}^{-1}$ is given by
\begin{equation} \label{eq:29thnov243}
	\varphi_{V,W}^{-1} (v \ot_B w)= \coin{\mone{v}}{\mone{w}} \zero{v} \ot_{B_\gamma} \zero{w}. 
\end{equation}

Let us also note that since $\varphi$ is a natural isomorphism, we have the equality
\begin{equation}\label{15thdec244}
	\left(\Gamma(f) \ot_{B_\cot} \Gamma(g)\right) \circ \varphi_{V_1,W_1}^{-1}= \varphi^{-1}_{V_2, W_2} \circ \Gamma(f \ot_B g)
\end{equation}
for morphisms $f \in \hm{}{V_1}{V_2},  g\in \hm{}{W_1}{W_2} $ in the category $\cat$.

If $B$ is a left $A$-comodule algebra,  then we will denote the $A_\gamma$-comodule $\Gamma ( B ) $ by the symbol $B_\gamma.$  Then from \cite[Subsection 2.3]{SMFounds}, we know that  $B_\gamma$ is an algebra via the formula 
$$ a\cdot_\cot b = \co{\mone{a}}{\mone{b}} \zero{a}\zero{b}, $$
 making $(B_\gamma, \del{B_\gamma})$  a left $A_\gamma$-comodule algebra.

Let us note that if we take $B = \mathbb{C} $ in \eqref{21stnov243}, we have a monoidal equivalence
\begin{equation} \label{16thNov241}
	\Gamma: \qMod{A}{}{}{} \to \qMod{A_\gamma}{}{}{}.
	\end{equation}

Let us also recall the well-known definition of cocycle deformation of morphisms in the category $\cat.$ If $\cE_1, \cE_2, \cF_1, \cF_2$ are objects of the category $\cat$ and $T: \cE_1 \ot_{B} \cE_2 \to \cF_1\ot_{B} \cF_2$ and $S: \cE_1 \ot_{B}\cE_2 \to B $ are morphisms, then we define 
$$ T_\gamma: \Gamma(\cE_1)\ot_{B_\cot}\Gamma(\cE_2) \to \Gamma(\cF_1)\ot_{B_\cot}\Gamma(\cF_2)  \text{ and } S_\gamma: \Gamma(\cE_1)\ot_{B_\cot}\Gamma(\cE_2) \to B_\gamma $$
as
\begin{equation} \label{1stdec241jb}
T_\gamma:= \varphi_{\cF_1, \cF_2}^{-1}\circ \Gamma(T) \circ \varphi_{\cE_1, \cE_2} \text{ and } S_\gamma := \Gamma(S) \circ \varphi_{\cE_1, \cE_2}.
\end{equation}
It follows that $T_\gamma$ and $S_\gamma$ are morphisms of $\tcat$.

\subsection{Cocycle deformation of Hopf  \texorpdfstring{$*$}{}-algebras}

For the purpose of dealing with $*$-differential calculi, we will need a compatibility between the $*$-structure and the cocycle. Beggs and Majid considered reality conditions 
$ \overline{\gamma ( a \otimes b )} = \gamma ( S^2 ( b )^\ast \otimes S^2 ( a )^\ast  ) $ (\cite[Subsection 2.3]{SMFounds}) as well as $ \overline{\gamma ( a \otimes b )} = \gamma ( a^\ast \otimes b^\ast ) $ (\cite[equation (8) ]{beggsmajidtwisting}). However, we will need a unitarity condition on the cocycle (instead of a reality condition) which has been studied in the operator algebra literature and in fact coincides with the definition of dual unitary $2$-cocycles on compact quantum groups (see Remark \ref{18thdec24jb1}).

\begin{definition} \label{18thjan253} (\cite[Definition 4.4]{SadeDeformSpecTrip})
	A $2$-cocycle $\gamma$ on a Hopf $*$-algebra is said to be unitary if 
	\begin{equation} 
		\label{19thSept20241} \overline{\co{a}{b}}= \coin{S(a)^*}{S(b)^*}.
	\end{equation}
\end{definition}

\noindent In this case we also have 
\begin{equation} \label{19thSept20242}
\overline{\coin{a}{b}}= \co{S(a)^*}{S(b)^*}
\end{equation}
 due to the identity $S(S(a^*)^*)=a$ (see \eqref{19thsept20245}).

Then we have the following result in which $U$ and $\overline{U}$ are as in Proposition \ref{prop/defn:twisted-Hopf-algebra}:

\begin{prop} (\cite[Subsection 4.2]{SadeDeformSpecTrip}) \label{10thmarch251}
If $(A, \cdot, \Delta, \epsilon, S, *)$ is a Hopf $*$-algebra and $\gamma$  a unitary $2$-cocycle on $A,$ consider the maps $V, \bar{V}: A \rightarrow \mathbb{C}$ as
        \begin{equation*}  \label{19thsept20244}
V(k):= U(S^{-1}(k))= \co{S^{-1}(\two{k})}{\one{k}}, ~ \bar{V}(k):= \bar{U}(S^{-1}(k))= \coin{\two{k}}{S^{-1}(\one{k})}.
        \end{equation*}
Then $\ast_\gamma: A \rightarrow A$ defined as 
		 $$h^{*_{\gamma}}:= \bar{V}({\one{h}^*}){\two{h}^*}V({\three{h}^*})$$
         is an antilinear involution on $A$ and moreover $(A_\gamma,\cdot_\gamma, \Delta, \epsilon,  S_\gamma, *_\gamma )$ is a Hopf $\ast$-algebra.

 If $B$ is a left $A$-comodule $\ast$-algebra, then the algebra $B_\gamma$ is equipped with a $*$-algebra structure given by
\begin{equation} \label{4thfeb251}
 b^{*_\cot} = \bar{V}(\mone{b}^*) ~ \zero{b}^*
\end{equation}
making $(B_\gamma, \del{B_\gamma})$  a left $A_\gamma$-comodule $*$-algebra.
\end{prop}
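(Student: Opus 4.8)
The plan is to verify each assertion by direct computation with cocycle identities, treating the three claims—that $*_\gamma$ is antilinear and involutive, that $(A_\gamma, \cdot_\gamma, \Delta, \epsilon, S_\gamma, *_\gamma)$ is a Hopf $*$-algebra, and that \eqref{4thfeb251} makes $B_\gamma$ an $A_\gamma$-comodule $*$-algebra—in turn. First I would record the basic properties of the auxiliary functionals: $U, \overline U$ are convolution inverse to each other in the appropriate sense (this is standard from Proposition \ref{prop/defn:twisted-Hopf-algebra} and is exactly the kind of identity collected in Appendix \ref{8thmarch255}), and then deduce the analogous relations for $V, \overline V$ by applying $S^{-1}$ and using that $\gamma$ is a cocycle iff $\overline\gamma$ satisfies the equivalent identities in Lemma \ref{lem:formula}. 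The unitarity hypothesis \eqref{19thSept20241}, together with its consequence \eqref{19thSept20242}, is what lets one pass between $\gamma$ and $\overline\gamma$ while simultaneously conjugating and applying $S$; I would single out at the start the two identities $\overline{V(k)} = \overline V(S(k)^*)$-type relations obtained by combining \eqref{19thSept20241}--\eqref{19thSept20242} with the definitions of $V,\overline V$, since these are the workhorses.

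For antilinearity of $*_\gamma$ there is nothing to check beyond the antilinearity of $h \mapsto h^*$ on $A$ and of complex conjugation on the scalars $\overline V(\one h^*), V(\three h^*)$. For the involutive property $h^{*_\gamma *_\gamma} = h$, I would expand $h^{*_\gamma}$, apply $*_\gamma$ again, use coassociativity to organize the resulting five-fold coproduct, and then collapse the product of four $V$/$\overline V$ factors using the convolution-inverse relations for $V,\overline V$ together with \eqref{19thsept20245}; the target is to reduce everything to $\epsilon$'s contracted against $S(S(a^*)^*) = a$. For the Hopf $*$-algebra axioms, the coproduct $\Delta$ is unchanged and $\epsilon$ is unchanged, so I only need: (i) $*_\gamma$ is an antihomomorphism for $\cdot_\gamma$, i.e. $(h \cdot_\gamma k)^{*_\gamma} = k^{*_\gamma} \cdot_\gamma h^{*_\gamma}$; (ii) $\Delta$ is a $*$-homomorphism, i.e. $\Delta(h^{*_\gamma}) = \one h^{*_\gamma} \otimes \two h^{*_\gamma}$, which is immediate since $\Delta$ is coalgebra-preserving and $*_\gamma$ is built coalgebra-functorially from $*$ and scalar functionals; and (iii) the antipode identity relating $S_\gamma$ and $*_\gamma$, which follows once (i) and (ii) hold together with $S$ being a $*$-algebra/coalgebra map for the old structure. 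Claim (i) is the computational heart: expand both sides into products in $A$ using the formula for $\cdot_\gamma$ from Proposition \ref{prop/defn:twisted-Hopf-algebra}, push the $*$ through (flipping orders and introducing $\overline{\gamma}$ via \eqref{19thSept20242}), and then match the two sides using the cocycle identity \eqref{25thaug24} for $\gamma$ and its equivalents from Lemma \ref{lem:formula} to absorb the stray $U,\overline U, V, \overline V$ factors.

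Finally, for the comodule $*$-algebra statement, I would check three things about $b \mapsto b^{*_\gamma} = \overline V(\mone b^*)\,\zero b^*$: it is an antilinear involution on $B_\gamma$ (same collapse as for $A_\gamma$, now using $A$-coassociativity of ${}^B\delta$ and the involutivity already established for scalars); it reverses the deformed product, $(a \cdot_\gamma b)^{*_\gamma} = b^{*_\gamma} \cdot_\gamma a^{*_\gamma}$, which uses the formula $a \cdot_\gamma b = \co{\mone a}{\mone b}\zero a\zero b$, the fact that ${}^B\delta$ is a $*$-homomorphism on $B$, and the cocycle identities to handle the $\gamma$ and $V$ factors; and that ${}^{B_\gamma}\delta = {}^B\delta$ intertwines $*_\gamma$ on $B_\gamma$ with $*_\gamma$ on $A_\gamma$, i.e. ${}^{B_\gamma}\delta(b^{*_\gamma}) = (\mone b)^{*_\gamma} \otimes (\zero b)^{*_\gamma}$, which again is a short manipulation of the defining formulas. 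The main obstacle I anticipate is the antimultiplicativity check in claim (i) (and its $B$-analogue): there one has a genuinely long string of coproducts and four scalar functionals to rearrange, and the bookkeeping of which cocycle identity from Lemma \ref{lem:formula} to apply—and at which position in the Sweedler expansion—is delicate. I would structure that computation by first conjugating to move everything into $\overline\gamma$-land via \eqref{19thSept20242}, then invoking the stated results of \cite{SadeDeformSpecTrip} (or the cocycle-identity lemmas of Appendix \ref{8thmarch255}) to finish, rather than reproving those identities from scratch.
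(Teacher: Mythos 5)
Your plan is sound, but note that the paper does not prove this proposition at all: it is quoted verbatim from \cite[Subsection 4.2]{SadeDeformSpecTrip}, the only added remark being that the reference works with right comodule $\ast$-algebras while the paper uses left comodules. So your proposal is not ``the same route as the paper'' --- it is a reconstruction of the cited proof --- but it is the correct reconstruction, and the identities you single out are exactly the ones this paper assembles for its own later computations of the same flavour: unitarity \eqref{19thSept20241}--\eqref{19thSept20242}, the equivalent cocycle identities of Lemma \ref{lem:formula}, the convolution-inverse facts of Remark \ref{25thnov241}, Lemma \ref{lem:22ndnov241}, and the key exchange identity \eqref{19thsept20243} together with its corollary \eqref{3rdfeb253}. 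Indeed, the antimultiplicativity step you call the computational heart is carried out, essentially verbatim for the comodule-algebra case, in the paper's proof of Proposition \ref{prop:5thdec241} (the string $(\omega\wedge_\gamma\eta)^{*_\gamma}=(-1)^{kl}\eta^{*_\gamma}\wedge_\gamma\omega^{*_\gamma}$ is proved precisely by passing to $\bar\gamma$ via \eqref{19thSept20241} and then applying \eqref{19thsept20243}), and the involutivity collapse you describe is the same two-line argument the paper uses inside Proposition \ref{26thfeb251} via Remark \ref{25thnov241} and Lemma \ref{lem:22ndnov241}.

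Two small points of hygiene rather than gaps. First, the claim that $\Delta(h^{*_\gamma})=\one{h}^{*_\gamma}\otimes\two{h}^{*_\gamma}$ is ``immediate'' still needs one insertion of the convolution identity $V\ast\bar V=\epsilon\circ\ast$-type relation in the middle Sweedler leg: expanding, $\Delta(h^{*_\gamma})=\bar V(\one{h}^*)\,\two{h}^*\otimes\three{h}^*\,V(\four{h}^*)$, while the right-hand side carries an extra factor $V(\three{h}^*)\bar V(\four{h}^*)$, so the equality is exactly the statement that $\bar V$ is the convolution inverse of $V$ evaluated on $h^*$ --- a one-line check, but it should be said. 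Second, under the paper's Definition of a Hopf $\ast$-algebra (a $\ast$-algebra with $\Delta$ a $\ast$-homomorphism) there is no separate antipode--$\ast$ axiom to verify, so your step (iii) can be dropped; the relation \eqref{19thsept20245} for $S_\gamma$ and $*_\gamma$ is then automatic. With these understood, your outline would carry through and is consistent with how the cited source, and this paper in its analogous computations, actually argue.
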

 We should mention  that \cite[Definition 4.7]{SadeDeformSpecTrip} is for right comodule $\ast$-algebras while we are working with left comodule structures. The following observation was made in \cite[Subsection 4.2]{SadeDeformSpecTrip}:

\begin{remark} \label{25thnov241}
	If $\gamma$ is any cocycle, then the maps $\bar{U}$ and $\bar{V}$ are (respectively) the convolution inverses of $U$ and $V.$ This can be checked by using the equations  \eqref{iii} and \eqref{iv} of Lemma \ref{lem:formula}. 
\end{remark}

\begin{remark} \label{9thsep251}
If $A$ is a cocommutative Hopf $\ast$-algebra, then it is well-known that $A$ and $A_\gamma$ are isomorphic Hopf $\ast$-algebras.   Indeed, since $h_{(1)} \otimes h_{(2)} = h_{(2)} \otimes h_{(1)}, $ the formula for the deformed product in Proposition \ref{prop/defn:twisted-Hopf-algebra} gives 
\begin{align*}
 h\cdot_\gamma k &= \co{\one{h}}{\one{k}} \two{h}\two{k} \coin{\three{h}}{\three{k}} = \one{h} \one{k} \co{\two{h}}{\two{k}} \coin{\three{h}}{\three{k}}\\
 &= \one{h} \one{k} (\gamma \ast \overline{\gamma}) (\two{h} \otimes \two{k}) = \one{h} \one{k} \epsilon (\two{h}) \epsilon (\two{k}) = hk.
 \end{align*}

Similarly, 
$$ h^{\ast_\gamma} = \overline{V} ( h^\ast_{(1)} ) h^*_{(2)}  V ( h^\ast_{(3)} ) = h^*_{(1)} \overline{V} ( h^\ast_{(2)} ) V ( h^\ast_{(3)} )  =  h^\ast  $$
since $\overline{V}$ is the convolution inverse of $V.$ 
\end{remark}

If $B$ is a left $A$-comodule $\ast$-algebra and $\gamma$ a unitary $2$-cocycle, then Theorem \ref{theorem:21stnov242} proves that the monoidal equivalence $\Gamma$ of  \eqref{21stnov243} is actually a bar functor. We refer to Definition \ref{3rdmarch251} for the definition.  

We record the following observation which will be used in the sequel:
\begin{lem} \label{lem:22ndnov241}
	Suppose that $A$ is a Hopf $*$-algebra and $\gamma$ is a unitary  $2$-cocycle then $\overline{\overline{V}(h^*)}=V(h)$.
\end{lem}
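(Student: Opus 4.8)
The plan is to unwind both sides using the explicit formulas recorded in Proposition~\ref{10thmarch251}, namely $V(k)=\co{S^{-1}(\two{k})}{\one{k}}$ and $\bar{V}(k)=\coin{\two{k}}{S^{-1}(\one{k})}$, and then to apply the unitarity identity \eqref{19thSept20242} together with the antipode identity \eqref{19thsept20245}. No analytic input is needed; the whole argument is an exercise in tracking Sweedler legs through $\ast$, $S$ and $S^{-1}$.

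First I would use that $\Delta$ is a $\ast$-homomorphism, so that $\Delta(h^{*})=\one{h}^{*}\ot\two{h}^{*}$, and hence
\[
\bar{V}(h^{*})=\coin{\two{h}^{*}}{S^{-1}(\one{h}^{*})}.
\]
Conjugating and invoking \eqref{19thSept20242} with $a=\two{h}^{*}$ and $b=S^{-1}(\one{h}^{*})$ gives
\[
\overline{\bar{V}(h^{*})}=\co{S(\two{h}^{*})^{*}}{S(S^{-1}(\one{h}^{*}))^{*}}.
\]
The second leg collapses immediately, since $S(S^{-1}(\one{h}^{*}))^{*}=(\one{h}^{*})^{*}=\one{h}$. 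For the first leg, \eqref{19thsept20245} reads $S(S(a^{*})^{*})=a$, i.e. $S(a^{*})^{*}=S^{-1}(a)$; taking $a=\two{h}$ yields $S(\two{h}^{*})^{*}=S^{-1}(\two{h})$. Therefore $\overline{\bar{V}(h^{*})}=\co{S^{-1}(\two{h})}{\one{h}}$, which is exactly the formula for $V(h)$ from Proposition~\ref{10thmarch251}.

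The computation is essentially bookkeeping, so I do not expect a genuine obstacle; the only step requiring a little care is making sure the Sweedler indices are in the right order, which comes down to the fact that $\Delta$ intertwines the $\ast$-structure and that $S^{-1}$ is an anti-coalgebra map (the latter being implicitly used already in the derivation of the closed forms for $V$ and $\bar V$ in Proposition~\ref{10thmarch251}).
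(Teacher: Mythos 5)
Your proposal is correct and follows essentially the same route as the paper: expand $\bar V(h^\ast)$ by its defining formula, conjugate using \eqref{19thSept20242}, and simplify the two legs with $S(a^\ast)^\ast=S^{-1}(a)$ from \eqref{19thsept20245} to recover $V(h)$. The paper's proof is exactly this computation, only written more tersely.
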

\begin{proof}
	We apply \eqref{19thsept20245} and \eqref{19thSept20242} to compute
    
	$
		\overline{\overline{V}(h^*)}= \overline{\coin{\two{h}^*}{S^{-1}(\one{h}^*)}}= \co{S(\two{h}^*)^*}{(\one{h}^*)^*}= \co{S^{-1}(\two{h})}{\one{h}}= V(h).
	$
\end{proof}

\begin{lem} \label{lem:28thnov242}
	If $A$ is a Hopf $*$-algebra and $\gamma$ is a two cocycle on $A$ with convolution inverse $\bar{\gamma},$ then the following equation holds:
	\begin{equation} \label{19thsept20243}
		\bar{V}(\one{k}^*)\bar{V}(\one{h}^*) \co{\two{k}^*}{\two{h}^*} =\coin{S(\one{h})^*}{S(\one{k})^*}~ \bar{V}(\two{k}^*\two{h}^*). 
    \end{equation}
\end{lem}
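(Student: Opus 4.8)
The identity \eqref{19thsept20243} is an instance of a cocycle identity rewritten in terms of the maps $\bar V$ and the $\ast$-structure. The natural strategy is to first recognize the left-hand side as a convolution expression: since $\bar V$ is the convolution inverse of $V$ (Remark \ref{25thnov241}), and $V(k)=\co{S^{-1}(\two{k})}{\one{k}}$ by Proposition \ref{10thmarch251}, the product $\bar{V}(\one{k}^*)\bar{V}(\one{h}^*) \co{\two{k}^*}{\two{h}^*}$ should be reorganized so that the cocycle term $\co{\,\cdot\,}{\,\cdot\,}$ can be combined with one of the $\bar V$'s via a cocycle identity. Concretely, I would first substitute the formula for $\bar V$ in terms of $\bar\gamma$, namely $\bar V(k)=\coin{\two{k}}{S^{-1}(\one{k})}$, and then apply the $\ast$-related identities \eqref{19thSept20241}--\eqref{19thSept20242} to convert $\bar\gamma$-values of starred arguments into $\gamma$-values (or vice versa), so that everything lives on one "side" of the cocycle.

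The key step is then to apply one of the equivalent cocycle conditions from Lemma \ref{lem:formula} — most likely \eqref{iii} or \eqref{iv}, since these mix $\gamma$ and $\bar\gamma$ and are exactly the shape needed to collapse a product of a $\bar\gamma$-term and a $\gamma$-term into a single $\bar\gamma$-term (as appears on the right-hand side via $\bar V(\two{k}^*\two{h}^*)$, which after unfolding $\bar V$ is a single $\bar\gamma$ evaluated on a product). In more detail: after unfolding both sides in terms of $\bar\gamma$ and using the $\ast$-compatibility to align arguments, the claimed equation should reduce, upon relabelling Sweedler indices and using $\Delta$ being a $\ast$-homomorphism together with the anti-multiplicativity of $S$, to precisely the statement of Lemma \ref{lem:formula}\eqref{iii} (or \eqref{iv}) applied to appropriate elements built from $h^*, k^*$. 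The term $\coin{S(\one{h})^*}{S(\one{k})^*}$ on the right-hand side strongly suggests that one applies \eqref{19thSept20241} to rewrite it as $\overline{\co{S(\one{h})}{S(\one{k})}}$, or dually uses \eqref{19thSept20242} in the form $\coin{S(a)^*}{S(b)^*}=\overline{\co{a^*}{b^*}}$, thereby turning the right-hand side into a conjugate of a $\gamma$-expression; comparing with the complex conjugate of the left-hand side (also rewritten via \eqref{19thSept20241}) may be the cleanest route, reducing \eqref{19thsept20243} to a pure cocycle identity among $\gamma$-values with no $\ast$ present, which is then Lemma \ref{lem:formula}.

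The main obstacle I anticipate is \emph{bookkeeping of the Sweedler indices and the antipode}: the two sides have different numbers of coproduct legs after unfolding ($\bar V$ itself uses two legs, so $\bar V(\one{k}^*)$ contributes coproduct legs of $k$ beyond the one displayed), and one must carefully track which copy of $\one{k}$, $\two{k}$ etc. ends up where, using coassociativity to re-split, and the identities $S(xy)=S(y)S(x)$, $(xy)^*=y^*x^*$, $\Delta(x^*)=\one{x}^*\otimes\two{x}^*$, and \eqref{19thsept20245}. It is easy to produce an expression that looks almost right but is off by a mismatched antipode or a transposed pair of arguments. I would therefore organize the computation by fixing at the outset the fully-unfolded form of both sides as $\bar\gamma$-expressions (or $\overline{\gamma\text{-expressions}}$), then identify the precise specialization of Lemma \ref{lem:formula} that matches, rather than manipulating blindly. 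A secondary minor point is making sure the unitality of $\gamma$ is invoked where a leg gets hit by $\epsilon$, but this is routine.
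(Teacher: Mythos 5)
There is a genuine gap, and it concerns the hypotheses. Your route leans essentially on the unitarity relations \eqref{19thSept20241}--\eqref{19thSept20242} (and, if you conjugate $\bar V$-terms, on Lemma \ref{lem:22ndnov241}, which also needs unitarity), but the lemma is stated for an \emph{arbitrary} $2$-cocycle on a Hopf $\ast$-algebra: no unitarity is assumed, and none is needed. The paper's proof uses only the definition $\bar V(k)=\coin{\two{k}}{S^{-1}(\one{k})}$, the Hopf-$\ast$ identity $S^{-1}(a^*)=S(a)^*$ (a consequence of \eqref{19thsept20245}, independent of $\gamma$), the equivalent cocycle conditions \eqref{ii} and \eqref{iv} of Lemma \ref{lem:formula}, unitality of $\gamma$, and antimultiplicativity of $S^{-1}$: one unfolds $\bar V(\one{h}^*)=\coin{\two{h}^*}{S(\one{h})^*}$, merges it with $\co{\two{k}^*}{\two{h}^*}$ via \eqref{iv} so that unitality collapses one $\gamma$-factor, then unfolds $\bar V(\one{k}^*)$, inserts $\three{h}^*S(\two{h})^*=\epsilon(\two{h})1$, applies \eqref{ii}, and finally rewrites $S(\two{h})^*S(\two{k})^*=S^{-1}(\two{k}^*\two{h}^*)$ to recognize $\bar V(\two{k}^*\two{h}^*)$. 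A proof that routes through unitarity therefore establishes a strictly weaker statement than the one claimed (even if, in the paper's applications, the cocycle always happens to be unitary).

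Moreover, even granting unitarity, your proposed shortcut does not close the argument. Conjugating both sides (using $\overline{\bar V(x^*)}=V(x)$, \eqref{19thSept20241} and \eqref{19thSept20242}) turns \eqref{19thsept20243} into the star-free identity $V(\one{k})\,V(\one{h})\,\coin{S^{-1}(\two{k})}{S^{-1}(\two{h})}=\co{\one{h}}{\one{k}}\,V(\two{h}\two{k})$, and this is \emph{not} literally an instance of any of the four conditions in Lemma \ref{lem:formula}: the functional $V$ itself is $\gamma$ evaluated on an antipode-twisted pair of legs, so you still face exactly the same multi-step computation (unfold $V$, apply two of the equivalent cocycle identities, invoke unitality, track the antipodes). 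In other words, the $\ast$-conversion only relocates the work; the Sweedler/antipode derivation you defer as ``bookkeeping'' is the entire content of the lemma and is left undone in your plan. The cleaner fix is to drop the conjugation and the unitarity identities altogether and carry out the direct computation sketched above.
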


\begin{cor}
If $\gamma$ is a  $2$-cocycle on $A,$ then for all $h, k \in A,$ the following equation holds:
\begin{equation} \label{3rdfeb253}
\co{S(\one{h})^*}{S(\one{k})^*}
 \bar{V} ( k^\ast_{(2)} ) \bar{V} ( h^\ast_{(2)} ) = \bar{V} ( k^\ast_{(1)} h^\ast_{(1)} ) 
 \coin{ k^\ast_{(2)}}{h^\ast_{(2)}}
\end{equation}
\end{cor}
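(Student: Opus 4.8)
The plan is to pass to the convolution algebra $\cC := \Hom(A \otimes A, \mathbb{C})$, where $A \otimes A$ carries its standard tensor product coalgebra structure, convolution $\ast$ is associative with unit $\epsilon_{A \otimes A} = \epsilon \otimes \epsilon$, and $\gamma \ast \bar\gamma = \bar\gamma \ast \gamma = \epsilon_{A \otimes A}$ by definition of $\bar\gamma$. After the substitution $a = k^\ast$, $b = h^\ast$, both Lemma~\ref{lem:28thnov242} and the asserted identity \eqref{3rdfeb253} become equalities between explicit elements of $\cC$, and I would obtain the latter from the former by multiplying on the left and on the right by suitable convolution-invertible functionals.

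Concretely, introduce the elements of $\cC$
\[ p(a \otimes b) := \bar V(a)\,\bar V(b), \quad s(a \otimes b) := \bar V(ab), \quad r(a \otimes b) := \coin{S^{-1}(b)}{S^{-1}(a)}, \quad r'(a \otimes b) := \co{S^{-1}(b)}{S^{-1}(a)}. \]
Using that $\Delta$ is a $\ast$-homomorphism, that $\ast : A \to A$ is bijective, and the identity $S^{-1}(x^\ast) = S(x)^\ast$ (a consequence of \eqref{19thsept20245}), which gives $S(\one{k})^\ast = S^{-1}(k^\ast_{(1)})$ and $S(\one{h})^\ast = S^{-1}(h^\ast_{(1)})$, one unwinds the Sweedler notation to see that Lemma~\ref{lem:28thnov242} is exactly the equation $p \ast \gamma = r \ast s$ in $\cC$, while \eqref{3rdfeb253} is exactly $r' \ast p = s \ast \bar\gamma$ in $\cC$. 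Since $\ast$ is a bijection, it is enough to establish these as identities of functionals on all of $A \otimes A$.

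Next I would check that $r'$ is the two-sided convolution inverse of $r$: since $S^{-1}$ reverses comultiplication, $\Delta(S^{-1}(z)) = S^{-1}(\two{z}) \otimes S^{-1}(\one{z})$, so a short computation gives $(r \ast r')(a \otimes b) = (\gamma \ast \bar\gamma)\big(S^{-1}(b) \otimes S^{-1}(a)\big) = \epsilon(a)\epsilon(b)$ and, symmetrically, $(r' \ast r)(a \otimes b) = (\bar\gamma \ast \gamma)\big(S^{-1}(b) \otimes S^{-1}(a)\big) = \epsilon(a)\epsilon(b)$; hence $r \ast r' = r' \ast r = \epsilon_{A \otimes A}$. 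Finally, multiplying $p \ast \gamma = r \ast s$ on the left by $r'$ and on the right by $\bar\gamma$, and invoking associativity of $\ast$ together with $r' \ast r = \epsilon_{A \otimes A}$ and $\gamma \ast \bar\gamma = \epsilon_{A \otimes A}$, yields $r' \ast p = s \ast \bar\gamma$, which is \eqref{3rdfeb253} upon evaluating at $a = k^\ast$, $b = h^\ast$. The only genuinely non-formal ingredient is the invertibility of $r$, settled above; the step I expect to demand the most care is the clerical one of matching the two sides of Lemma~\ref{lem:28thnov242} and of \eqref{3rdfeb253} with the abstract products $p \ast \gamma$, $r \ast s$, $r' \ast p$, $s \ast \bar\gamma$ — in particular tracking the flip in the arguments of $\gamma$, $\bar\gamma$ and the placement of $S^{-1}$.
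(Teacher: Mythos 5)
Your argument is correct and is essentially the paper's own proof in a cleaner packaging: the paper likewise deduces \eqref{3rdfeb253} from Lemma \ref{lem:28thnov242} by inserting $\gamma\ast\bar{\gamma}=\epsilon\otimes\epsilon$ on the right and cancelling $\gamma\big(S(h_{(1)})^*\otimes S(k_{(1)})^*\big)\,\bar{\gamma}\big(S(h_{(2)})^*\otimes S(k_{(2)})^*\big)=\epsilon(h)\epsilon(k)$, which is precisely your verification that $r'$ is the convolution inverse of $r$. Recasting the Sweedler computation as the identity $r'\ast p=s\ast\bar{\gamma}$ in the convolution algebra $\Hom(A\otimes A,\mathbb{C})$ changes the bookkeeping, not the argument, so no further comment is needed.
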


\subsection{Twisting Covariant Differential Calculi}

Following \cite{beggsmajidtwisting}, we recall the cocycle deformation of an $A$-covariant differential calculus $\dc$ over an algebra $B.$

In this case, $\Omega^k(B)$ is an object of $\cat$ and $d: \Omega^k(B) \to \Omega^{k+1}(B)$ is a morphism of $\qMod{A}{}{}{}$, and therefore, $\Gamma(\Omega^k(B))$ is an object of $\tcat$ and $d_\gamma:= \Gamma ( d ): \Gamma(\Omega^k(B)) \to \Gamma(\Omega^{k+1}(B))$ is an $A_\gamma$-covariant map. Let us define 
$$ \Omega^k(B_\gamma):= \Gamma(\Omega^k(B)), ~ \Omega^\bullet(B_\gamma):= \oplus_{k \geq 0}\Omega^k(B_\gamma).$$ 
Moreover, we define
$$  \wedge_\gamma: \Omega^k(B_\gamma)\ot_{B_\cot} \Omega^l(B_\gamma)\to \Omega^{k+l}(B_\gamma) $$
by \eqref{1stdec241jb}. It follows that for all $\omega \in \Omega^k(B_\gamma), \eta \in \Omega^l(B_\gamma),$
\begin{align}\label{29thnov24jb1}
	\omega\wedge_\cot\eta = \co{\mone{\omega}}{\mone{\eta}}\zero{\omega}\wedge \zero{\eta}.
\end{align}

From now on, we will use a shorthand notation to denote $\Omega^k ( B_\gamma) $ and $\Omega^\bullet ( B_\gamma) $ by the symbols $\Omega^k_\gamma $ and $\Omega^\bullet_\gamma $ respectively. 

Since $(\Omega^\bullet, \wedge)$ is a left $A$-comodule $*$-algebra, we have a left $A_\gamma$-comodule $*_\gamma$-algebra $(\Omega^\bullet_\gamma, \wedge_\gamma).$ Then we have the following result:

\begin{prop} (\cite[Section 5]{beggsmajidtwisting}) \label{prop:5thdec241}
	Let $A$ be a Hopf-algebra and $(\Omega^\bullet, \wedge, d)$  an $A$-covariant differential calculus on an $A$-comodule $B.$ If $\gamma$ is a $2$-cocycle on $A,$ then  $(\Omega^\bullet_\gamma, \wedge_\cot, d_\gamma)$ is an $A_\gamma$-covariant differential calculus on $B_\gamma$.
	
	Moreover, if $A$ is a Hopf $\ast$-algebra, $B$ an $A$-covariant $\ast$-algebra, $(\Omega^\bullet, \wedge, d)$ a $\ast$-calculus and $\gamma$ unitary, then
	$(\Omega^\bullet_\gamma, \wedge_\cot, d_\gamma)$ is an $A_\gamma$-covariant $*$-differential calculus.
\end{prop}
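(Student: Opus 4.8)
The plan is to verify the axioms of an $A_\gamma$-covariant differential calculus (Definition \ref{Covariant Calculi}) and then, in the $\ast$-case, the extra conditions of Definition \ref{8thmarch256}. First I would observe that the graded algebra structure has already been handed to us: $(\Omega^\bullet_\gamma, \wedge_\gamma)$ is an $A_\gamma$-comodule algebra (it is $\Gamma$ applied to the comodule algebra $(\Omega^\bullet, \wedge)$, and $\Gamma$ is a monoidal functor, so products go to products), and $d_\gamma = \Gamma(d)$ is by construction $A_\gamma$-covariant and $\bbC$-linear. The two genuine calculus axioms left to check are (i) $d_\gamma^2 = 0$ and the graded Leibniz rule $d_\gamma(\omega \wedge_\gamma \eta) = d_\gamma \omega \wedge_\gamma \eta + (-1)^{|\omega|}\omega \wedge_\gamma d_\gamma \eta$, and (ii) that $\Omega^\bullet_\gamma$ is generated as an algebra by $B_\gamma$ and $d_\gamma B_\gamma$. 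For (i): $d_\gamma^2 = \Gamma(d)\Gamma(d) = \Gamma(d^2) = 0$ since $\Gamma$ is a functor. For the Leibniz rule, I would write $\omega \wedge_\gamma \eta = \gamma(\mone{\omega}\otimes \mone{\eta})\,\zero{\omega}\wedge\zero{\eta}$ using \eqref{29thnov24jb1}, apply $d = d_\gamma$ (note $d$ and $d_\gamma$ agree as $\bbC$-linear maps on the underlying vector space, and $d$ is $A$-covariant so it commutes with taking Sweedler legs of the coaction), use the ordinary Leibniz rule for $d$, and then recognize the two resulting terms as $d_\gamma\omega \wedge_\gamma \eta$ and $(-1)^{|\omega|}\omega\wedge_\gamma d_\gamma\eta$ by reading \eqref{29thnov24jb1} backwards — here one uses that $d$ preserves the coaction so $\mone{d\omega}\otimes\zero{d\omega} = \mone{\omega}\otimes d\zero{\omega}$. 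This is a short computation with no cocycle identity needed. For (ii): since $\Omega^\bullet$ is generated by $B$ and $dB$, every element of $\Omega^k$ is a sum of products $b_0\,db_1\wedge\cdots\wedge db_k$; applying $\varphi$-conjugation and the fact that $\Gamma$ is a monoidal equivalence, the same element is a $\wedge_\gamma$-product of elements of $B_\gamma$ and $d_\gamma B_\gamma$ (possibly with cocycle scalars, which are harmless). More cleanly: the subalgebra of $(\Omega^\bullet_\gamma, \wedge_\gamma)$ generated by $B_\gamma$ and $d_\gamma B_\gamma$ is, under the monoidal equivalence $\Gamma^{-1}$, a subalgebra of $(\Omega^\bullet,\wedge)$ containing $B$ and $dB$, hence all of $\Omega^\bullet$.

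For the second assertion, assume $A$ is a Hopf $\ast$-algebra, $B$ an $A$-comodule $\ast$-algebra, $\dc$ a $\ast$-calculus, and $\gamma$ unitary. By Proposition \ref{10thmarch251}, $A_\gamma$ is a Hopf $\ast$-algebra and $B_\gamma$ is an $A_\gamma$-comodule $\ast$-algebra. I would define the candidate involution $\ast_\gamma$ on $\Omega^\bullet_\gamma$ by the same formula as on $B_\gamma$, namely $\omega^{\ast_\gamma} := \bar{V}(\mone{\omega}^\ast)\,\zero{\omega}^\ast$ for $\omega \in \Omega^k_\gamma$ (extending \eqref{4thfeb251}); this makes sense because $\Omega^k$ is a left $A$-comodule and carries the conjugate-linear involution $\ast$ of the untwisted calculus, and it clearly preserves degree, $\ast_\gamma(\Omega^k_\gamma)\subseteq\Omega^k_\gamma$. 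One then checks four things: (a) $\ast_\gamma$ is a conjugate-linear involution on $\Omega^\bullet_\gamma$ — this is essentially the computation already done in Proposition \ref{10thmarch251} for $B_\gamma$ (using Lemma \ref{lem:22ndnov241}, i.e. $\overline{\bar V(h^\ast)} = V(h)$, and that $\ast$ is an involution on $\Omega^\bullet$), and the argument is degree-blind; (b) $\ast_\gamma$ extends the $\ast_\gamma$ on $B_\gamma$ — immediate, same formula; (c) $(d_\gamma\omega)^{\ast_\gamma} = d_\gamma(\omega^{\ast_\gamma})$ — this uses $(d\omega)^\ast = d(\omega^\ast)$ and $A$-covariance of $d$ to move $d$ past the Sweedler leg and past $\bar V(\mone{\omega}^\ast)$; and (d) the graded anti-multiplicativity $(\omega\wedge_\gamma\nu)^{\ast_\gamma} = (-1)^{kl}\,\nu^{\ast_\gamma}\wedge_\gamma\omega^{\ast_\gamma}$ for $\omega\in\Omega^k_\gamma,\nu\in\Omega^l_\gamma$. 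Finally (e): $(\Omega^\bullet_\gamma,\wedge_\gamma)$ is a comodule $\ast_\gamma$-algebra, which follows once (a)–(d) are in place together with the comodule $\ast$-algebra property of $\Omega^\bullet$.

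The main obstacle is step (d), the graded $\ast$-anti-multiplicativity compatibility between $\ast_\gamma$, $\wedge_\gamma$, and the twist. Expanding both sides via \eqref{29thnov24jb1} and the definition of $\ast_\gamma$ produces an identity of the shape
\[
\bar{V}\!\left((\mone{\omega}\mone{\nu})^\ast\right)\,\overline{\co{\mtwo{\omega}}{\mtwo{\nu}}}\;(\zero{\omega}\wedge\zero{\nu})^\ast
\;=\;
(-1)^{kl}\,\co{\mone{\nu}^\ast\!{}_{\!\scriptscriptstyle(1)}}{\mone{\omega}^\ast\!{}_{\!\scriptscriptstyle(1)}}\,\bar{V}(\mone{\nu}^\ast\!{}_{\!\scriptscriptstyle(2)})\,\bar{V}(\mone{\omega}^\ast\!{}_{\!\scriptscriptstyle(2)})\;\zero{\nu}^\ast\wedge\zero{\omega}^\ast,
\]
after using $(\zero{\omega}\wedge\zero{\nu})^\ast = (-1)^{kl}\zero{\nu}^\ast\wedge\zero{\omega}^\ast$ in $\Omega^\bullet$. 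Cancelling the $(-1)^{kl}$ and the common factor $\zero{\nu}^\ast\wedge\zero{\omega}^\ast$ (relabelling Sweedler indices), what remains is precisely a functional identity in the scalars $\bar V$, $\gamma$, $\bar\gamma$ — and this is exactly (the transpose/relabelled form of) Corollary equation \eqref{3rdfeb253}, which is derived from Lemma \ref{lem:28thnov242} and hence from the unitary cocycle identities collected in the first appendix. So the strategy is: reduce (d) to a scalar identity, match it against \eqref{3rdfeb253} (using unitarity \eqref{19thSept20241}–\eqref{19thSept20242} to handle the complex conjugates on $\gamma$), and invoke that corollary. The remaining steps (a)–(c) and (e) are routine manipulations of the same flavour, each reducing to convolution-inverse bookkeeping for $U,\bar U, V,\bar V$ as in Remark \ref{25thnov241} and Lemma \ref{lem:22ndnov241}; I would relegate the detailed scalar computations to the appendix in the spirit of the paper's organization.
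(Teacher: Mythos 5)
Your proposal is correct and takes essentially the same route as the paper: the first assertion is simply quoted from Beggs--Majid (your direct check of the Leibniz rule, $d_\gamma^2=0$ and generation is fine but not required), and for the $\ast$-part the paper likewise defines $\ast_\gamma$ on $\Omega^\bullet_\gamma$ by \eqref{4thfeb251}, checks degree-preservation and $(d_\gamma\omega)^{\ast_\gamma}=d_\gamma(\omega^{\ast_\gamma})$ via covariance of $d$, and proves the graded anti-multiplicativity by exactly your reduction, using unitarity \eqref{19thSept20241} together with the scalar identity \eqref{19thsept20243} of Lemma \ref{lem:28thnov242}. The only slip is notational: in your displayed identity the Sweedler legs carrying $\gamma$ and $\bar{V}$ on the right-hand side are interchanged --- the correct expansion of $\nu^{\ast_\gamma}\wedge_\gamma\omega^{\ast_\gamma}$ puts $\bar{V}$ on the outer legs and $\gamma$ on the inner legs, so the identity needed is \eqref{19thsept20243} itself rather than its transpose \eqref{3rdfeb253} --- but since you invoke the right lemma this does not affect the viability of the argument.
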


\begin{proof}
  Since the first statement is proved in \cite{beggsmajidtwisting}, we are left to prove the second assertion for which   we assume that $ ( \Omega^\bullet, \wedge, d  ) $ is a $\ast$-differential calculus and $\gamma$ is a unitary $2$-cocycle.
  
  The containment $*_\gamma(\Omega_\gamma^k(B_\gamma)) \subseteq \Omega_\gamma^k( B_\cot)$ follows from \eqref{4thfeb251}.  Next, for $\omega \in \Omega_\gamma^k,$	we have 
	$$d_\gamma(\omega^{*_\gamma})= \bar{V}(\mone{\omega}^*)~d(\zero{\omega}^*)= \bar{V}(\mone{\omega}^*)~(d\zero{\omega})^*=(d_\gamma\omega)^{*_\gamma} $$ by using $A$-covariance of $d$.  Finally, if $ \omega \in \Omega^k_\gamma,~\eta \in \Omega_\gamma^l$, we get
	\begin{align*}
		(\omega\wedge_\gamma \eta)^{*_\gamma} & = \overline{\co{\mone{\omega}}{\mone{\eta}}}(\zero{\omega}\wedge \zero{\eta})^{*_\gamma}\\
		&= {\coin{S(\mtwo{\omega})^*}{S(\mtwo{\eta})^*}}\bar{V}(\mone{\eta}^* \mone{\omega}^*)(\zero{\omega}\wedge \zero{\eta})^{*} \text{ (by \eqref{19thSept20241})}\\
		&=\bar{V}(\mtwo{\omega}^*) \bar{V}(\mtwo{\eta}^*) \co{\mone{\eta}^*}{\mone{\omega}^*} (-1)^{kl} \zero{\eta}^*\wedge \zero{\omega}^*\quad\text{(by using \eqref{19thsept20243})}\\
		&= \bar{V}(\mone{\omega}^*) \bar{V}(\mone{\eta}^*)(-1)^{kl} \zero{\eta}^*\wedge_\gamma \zero{\omega}^*\\
		&= (-1)^{kl} {\eta^{*_\gamma}}\wedge_{\gamma} {\omega^{*_\gamma}}.
	\end{align*}
	Hence, $\dctwisted$ is a covariant $*_\gamma$-differential calculus.
\end{proof}

It is well-known that if $\gamma$ is a cocycle on $A,$ then its convolution inverse $\bar{\gamma}$ is a cocycle on $A_\gamma.$ Moreover, if $M$ is an object of $\cat,$ then the $\overline{\gamma}$-deformation of  $  \Gamma ( M ) $ is equal to  $   M. $ In fact, more is true:

\begin{remark} \label{27thdec241jb}
If $\dc$ is an $A$-covariant differential calculus on $B,$ then the $\bar{\gamma}$-deformation of $\dctwisted$ is $\dc.$
\end{remark}

We end this section by recalling the cocycle twisting of connections. If $\nabla_{\cE}$ is an $A$-covariant left connection on an object $\cE$ in $\cat$, then we define a $\bbC$-linear map $\nabla_{\Gamma(\cE)}: \Gamma(\cE) \to \Omega^1_\gamma\ot_{B_\cot} \Gamma(\cE)$ as 
\begin{equation}
	\label{eq:3rddec241} \nabla_{\Gamma(\cE)}:= \varphi_{  \Omega^1, \cE}^{-1} \circ \Gamma(\nabla_\cE),
\end{equation} 
where $\Gamma(\nabla_\cE)$ is  as in \eqref{16thNov241}. Then we have the following proposition:

\begin{prop} (\cite[Proposition 9.28]{BeggsMajid:Leabh}) \label{prop:29thnov242}
	Suppose that $(\Omega^\bullet, \wedge, d)$ is an $A$-covariant differential calculus on a left $A$-comodule algebra $B$ and $\nabla_\cE$ an $A$-covariant connection on an object $\cE$ in $\cat$.  Then $\nabla_{\Gamma(\cE)}$ is an $A_\gamma$-covariant  left connection on $\Gamma(\cE)$. 
    
    Furthermore, if $\sigma: \cE\ot_{B} \Omega^1 \to \Omega^1 \ot_{B} \cE$ is a morphism in $\cat$ such that  $(\nabla_\cE, \sigma)$ is a left $A$-covariant bimodule connection on $\cE$, then $(\nabla_{\Gamma(\cE)}, \sigma_\gamma)$ is a  left $A_\gamma$-covariant bimodule connection on $\Gamma(\cE)$, where $\sigma_\gamma$ is the morphism in $\tcat$  defined in \eqref{1stdec241jb}.
	\end{prop}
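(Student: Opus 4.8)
The plan is to unwind the definition $\nabla_{\Gamma(\cE)}=\varphi_{\Omega^1,\cE}^{-1}\circ\Gamma(\nabla_\cE)$ of \eqref{eq:3rddec241} and lean on two facts: that $\varphi_{\Omega^1,\cE}^{-1}$ is an isomorphism in $\tcat$, hence in particular $B_\gamma$-bilinear and $A_\gamma$-covariant; and that the covariance hypothesis on $\nabla_\cE$ says exactly that $\nabla_\cE\colon\cE\to\Omega^1\ot_B\cE$ is a morphism of left $A$-comodules, so that $\Gamma(\nabla_\cE)$ is a morphism of left $A_\gamma$-comodules via \eqref{16thNov241}. Granting this, $A_\gamma$-covariance of $\nabla_{\Gamma(\cE)}$ is immediate: it is the composite of an $A_\gamma$-comodule map with an $A_\gamma$-covariant isomorphism.

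Next I would check the left Leibniz rule. Fixing $b\in B_\gamma$ and $e\in\Gamma(\cE)$, I would pull the twisted left action through $\varphi_{\Omega^1,\cE}^{-1}$ (using its left $B_\gamma$-linearity) and use the covariance of $\nabla_\cE$ to rewrite $b\cdot_\cot\bigl(\Gamma(\nabla_\cE)(e)\bigr)=\co{\mone{b}}{\mone{e}}\,\zero{b}\cdot\nabla_\cE(\zero{e})$. Combining this with $\Gamma(\nabla_\cE)(b\cdot_\cot e)=\nabla_\cE\bigl(\co{\mone{b}}{\mone{e}}\,\zero{b}\cdot\zero{e}\bigr)$ and the Leibniz rule for $\nabla_\cE$, the difference $\nabla_{\Gamma(\cE)}(b\cdot_\cot e)-b\cdot_\cot\nabla_{\Gamma(\cE)}(e)$ collapses to $\varphi_{\Omega^1,\cE}^{-1}\bigl(\co{\mone{b}}{\mone{e}}\,d\zero{b}\ot_B\zero{e}\bigr)$, and what remains is to identify this with $d_\gamma b\ot_{B_\cot}e$. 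For that I would expand $\varphi_{\Omega^1,\cE}^{-1}$ by \eqref{eq:29thnov243}, compute the coaction on $d\zero{b}$ from the $A$-covariance of $d$, and then use coassociativity of the coaction together with the cocycle property $\gamma*\bar\gamma=\epsilon_{A\ot A}$ to cancel the two resulting cocycle factors against each other. This small cocycle computation is the only genuine calculation in the argument and the place where I expect a little care to be needed; once it is done, $\nabla_{\Gamma(\cE)}$ is a left connection on $\Gamma(\cE)$.

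Finally, for the bimodule statement I would first note that $\sigma_\gamma=\varphi_{\Omega^1,\cE}^{-1}\circ\Gamma(\sigma)\circ\varphi_{\cE,\Omega^1}$ of \eqref{1stdec241jb} is a morphism in $\tcat$ — being a composite of the $\tcat$-morphism $\Gamma(\sigma)$ with the natural isomorphisms $\varphi$, as already recorded just after \eqref{1stdec241jb} — and hence in particular $B_\gamma$-bilinear. Then I would repeat the previous manipulation with the right $B_\gamma$-action, now invoking the bimodule-connection identity $\nabla_\cE(\zero{e}\cdot\zero{b})=\nabla_\cE(\zero{e})\cdot\zero{b}+\sigma(\zero{e}\ot_B d\zero{b})$ (an instance of \eqref{19thoct237}): this reduces $\nabla_{\Gamma(\cE)}(e\cdot_\cot b)-\nabla_{\Gamma(\cE)}(e)\cdot_\cot b$ to $\varphi_{\Omega^1,\cE}^{-1}\bigl(\co{\mone{e}}{\mone{b}}\,\sigma(\zero{e}\ot_B d\zero{b})\bigr)$. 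Separately, unwinding $\sigma_\gamma(e\ot_{B_\cot}d_\gamma b)$ through \eqref{1stdec241jb}, the formula \eqref{nt} for $\varphi$, and the covariance of $d$ produces the very same expression — no cocycle identity is needed at this step — which verifies \eqref{19thoct237} for the pair $(\nabla_{\Gamma(\cE)},\sigma_\gamma)$ and finishes the proof.
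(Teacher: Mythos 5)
Your proposal is correct: the paper does not prove this proposition itself (it is quoted from Beggs--Majid, Proposition 9.28), and your direct verification — covariance from functoriality of $\Gamma$ together with $B_\gamma$-bilinearity of $\varphi^{-1}$, the Leibniz rule reducing to the single cancellation $\gamma\ast\bar{\gamma}=\epsilon\otimes\epsilon$ after using covariance of $\nabla_\cE$ and of $d$, and the $\sigma_\gamma$-identity checked by unwinding \eqref{1stdec241jb} and \eqref{nt} — is exactly the standard argument and each step goes through as you describe.
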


\section{Hermitian metrics and their cocycle deformations} \label{8thmarch252}

In this section, we  prove that covariant  Hermitian metrics on an object $\cE$ in $\cat$ can be deformed to Hermitian metrics on the object $\Gamma ( \cE ) $ in $\tcat.$ In the next section, we will apply this fact to obtain Chern connections on the twisted holomorphic bimodules $\Omega^{(1,0)}_\gamma$ and $\Omega^{(0, 1)}_\gamma.$ Hermitian metrics are defined for bimodules over a $\ast$-algebra. In order to deform these, we will require a compatibility between the $\ast$-structure and the cocycles and therefore, we will restrict ourselves to unitary cocycles (see Definition \ref{18thjan253}).

For the rest of this article, we will need the language of bar-categories. We refer to subsection \ref{2ndfeb261} for the definitions and other necessary details. In particular, for a vector space $M$, the symbol $\ol{M}$ will denote the vector space defined as 
$$ \overline{M}:= \{ \overline{m}: m \in M \}.$$
Throughout this section, $A$ will denote a Hopf $\ast$-algebra and $B$ a left $A$-comodule $\ast$-algebra. Then the monoidal category $\cat$ is a bar category (see Example \ref{5thdec241jb}). In particular,  if $M$ is an object of $\cat$, then $\ol{M}$ is a $B$-bimodule by \eqref{28thnov231} and a left $A$-comodule by \eqref{25thnov232}.

We start with the definition of an Hermitian metric, where,
$$ \ev: \cE \otimes_B \hm{B}{\cE}{B} \rightarrow B ~ \text{is defined as} ~ \ev ( e \otimes_B f ) = f ( e ) $$
for all $e \in \cE$ and for all $f \in \hm{B}{\cE}{B}.$

\begin{definition} (\cite[Definition 8.33]{BeggsMajid:Leabh}) \label{4thdec231}
	Let $\cE$ be an object of $\cat$ such that $\cE$ is finitely generated and projective as a left $B$-module. An $A$-covariant Hermitian metric on  $\cE$   is an isomorphism $ \kH: \overline{\cE} \to \hm{B}{\cE}{B}$  in the category $\cat$ such that 
	\begin{equation}\label{15thdec246}
		\langle y,\overline{x}\rangle^*=\langle x, \overline{y}\rangle \quad \text{for all } y, x\in \cE,
	\end{equation}
	where the map 
	\begin{equation} \label{metric:Hermitian}
		\langle~,~\rangle: \cE \otimes_B \overline{\cE} \to B \quad  \text{ is defined as }  \langle x, \overline{y}\rangle:= \ev(x \otimes_B \kH(\overline{y})).
	\end{equation} 
\end{definition}
We note that $\langle~, ~ \rangle$ is a morphism in the category $\cat$ as $\kH$ and $\ev$ are morphisms in $\cat$.

\subsection{Cocycle twists of Hermitian metric}

Let $\kH$ be an Hermitian metric on an object $\mathcal{E}$ in the category $\cat$ which is moreover
 finitely generated and projective as a left $B$-module. Thus, $\kH$ is an isomorphism from $\overline{\mathcal{E}}$ to $\hm{B}{\cE}{B}$ satisfying \eqref{15thdec246}. In the presence of a unitary $2$-cocycle $\gamma$ on $A,$ we want to construct an isomorphism 
$$ \kH_{\gamma}: \overline{\Gamma ( \mathcal{E} )} \rightarrow \hm{B_\gamma}{ \Gamma ( \mathcal{E} )} {B_\gamma }. $$
To this end, we will need to introduce two isomorphisms. 

If $\cE$ and $\cF$ are objects of $\cat$ with $\cE$ being finitely generated as a left $B$-module and $\gamma$ is a $2$-cocycle on $A,$ then by Proposition \ref{28thnov241jb}, $\hm{B_\cot}{\Gamma(\cE)}{\Gamma(\cF)}$ is an object of the category $\tcat$.
The authors of \cite{TwistPAschieriMain} defined the map 
$$\mathfrak{S}: \Gamma (\hm{B}{\mathcal{E}}{\mathcal{F}}) \to \hm{B_\gamma}{\Gamma(\mathcal{E})}{\Gamma(\mathcal{F})}  ~ {\rm as} $$
\begin{align}\label{fS}
\fS(f)(v)&=\co{\mtwo{v}}{S(\mone{v})\mone{[f(\zero{v})]}}
\zero{[f(\zero{v})]}\\
\label{eq:28thnov241} &= U(\mtwo{v}) \coin{S(\mone{v})}{\mone{[f(\zero{v})]}}\zero{[f(\zero{v})]}.
\end{align}
 The equality of \eqref{fS} and \eqref{eq:28thnov241} was observed in the proof of \cite[Proposition 3.17]{TwistPAschieriMain} and moreover, the same result shows that $\mathfrak{S}$ is a vector space isomorphism. In fact, we have that  $\fS$ is actually an isomorphism in $\tcat$.

\begin{lem} \label{26thsept20241}
Let $\cE$ be an object of $\cat$ such that $\cE$ is finitely generated and projective as a left $B$-module. The map $\mathfrak{S}: \Gamma (\hm{B}{\mathcal{E}}{\mathcal{F}}) \to \hm{B_\gamma}{\Gamma(\mathcal{E})}{\Gamma(\mathcal{F})}$ defined in \eqref{fS}
is an isomorphism in $\tcat$.
\end{lem}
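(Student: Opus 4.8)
The plan is to verify that $\fS$ is simultaneously a $B_\gamma$-bimodule map and an $A_\gamma$-comodule map; since the paper already records that $\fS$ is a vector-space isomorphism (from \cite[Proposition 3.17]{TwistPAschieriMain}), establishing these two covariance properties suffices to conclude that $\fS$ is an isomorphism in $\tcat$. I would organize the argument around the two defining structures of the category separately.

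\emph{Step 1: $A_\gamma$-colinearity.} Recall that $A_\gamma = A$ as a coalgebra, and the coaction $\del{\hm{B}{\cE}{\cF}}$ is given by \eqref{eq:27thnov241}, while on the Hom-space in $\tcat$ it is given by the analogous Ulbrich--Aschieri--Weber formula for the comodule algebra $B_\gamma$. I would take $f \in \hm{B}{\cE}{\cF}$ and compute $\del{}(\fS(f))$ on an arbitrary $v \in \Gamma(\cE)$ using the expression \eqref{fS}, then compare with $(\id \otimes \fS)\del{\Gamma(\hm{B}{\cE}{\cF})}(f)$. Since the coaction on $\Gamma$ of an object is unchanged ($\del{M_\gamma} = \del{M}$), this reduces to a direct Sweedler-notation check: both sides should produce the same element once one uses coassociativity and the fact that $S$, $U$ are fixed by the coalgebra structure. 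I expect this step to be essentially bookkeeping, with no cocycle identities needed beyond what is already packaged in the definition of $\fS$.

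\emph{Step 2: $B_\gamma$-bilinearity.} This is the substantive part. I need to show $\fS(b \cdot_\gamma f)(v) = (b \cdot_\gamma \fS(f))(v)$ and $\fS(f \cdot_\gamma b)(v) = (\fS(f) \cdot_\gamma b)(v)$ for all $b \in B_\gamma$, $v \in \Gamma(\cE)$, where the $B_\gamma$-bimodule structures on $\Gamma(\hm{B}{\cE}{\cF})$ come from the deformed actions $b \cdot_\gamma m = \co{\mone{b}}{\mone{m}}\zero{b}\cdot\zero{m}$ (with $\hm{B}{\cE}{\cF}$ carrying its $A$-coaction \eqref{eq:27thnov241}), while on $\hm{B_\gamma}{\Gamma(\cE)}{\Gamma(\cF)}$ they come from the undeformed-looking formulas $(b\cdot f)(e) = f(e\cdot_\gamma b)$ and $(f\cdot b)(e) = f(e)\cdot_\gamma b$ but now with $\cdot_\gamma$ everywhere. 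Unwinding both sides via \eqref{fS} or \eqref{eq:28thnov241} and the definitions of $\cdot_\gamma$ on $B$ and on $\cE$, the equality will come down to repeated application of the cocycle identities from Lemma \ref{lem:formula} (parts \eqref{ii}--\eqref{iv}) together with the formula for $U$ and the relation $\bar U = U^{-1}$ in convolution (Remark \ref{25thnov241}); one also needs the antipode axiom and $S$-equivariance of the original left-$B$-linearity of $f$.

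\emph{Main obstacle.} The hard part will be Step 2, specifically the right $B_\gamma$-linearity: tracking the three (or more) cocycle factors that appear when one expands $S(\mone{v})$ applied to a product $\zero{v}\cdot_\gamma b$, and recognizing which instance of \eqref{ii}, \eqref{iii} or \eqref{iv} collapses the resulting expression. A useful tactic to keep this manageable is to work with the second presentation \eqref{eq:28thnov241} of $\fS$, since the $U(\mtwo{v})$ prefactor separates cleanly from the $\bar\gamma$ factor, and to reduce right-linearity to left-linearity by first noting that $\fS$ is manifestly natural in $\cE$ and $\cF$ so that it suffices to treat one side and invoke the dual statement. Alternatively — and this may be the cleanest route — one can avoid recomputing bilinearity from scratch by observing that $\fS$ is the composite of $\Gamma$ applied to the canonical iso $\hm{B}{\cE}{\cF} \cong \cF \otimes_B \hm{B}{\cE}{B}$ (valid since $\cE$ is finitely generated projective, Proposition \ref{28thnov241jb}), followed by the monoidal structure map $\varphi_{\cF, \hm{B}{\cE}{B}}$ and the identification $\Gamma(\hm{B}{\cE}{B}) \cong \hm{B_\gamma}{\Gamma(\cE)}{B_\gamma}$; each of these is already known to be a morphism in $\tcat$, so the composite is too. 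I would present the direct Sweedler computation as the primary proof but remark on this conceptual factorization as a sanity check.
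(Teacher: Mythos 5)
Your primary route is exactly the paper's: quote the vector-space isomorphism from \cite[Proposition 3.17]{TwistPAschieriMain} and then verify $B_\gamma$-bilinearity and $A_\gamma$-colinearity of $\fS$ by direct Sweedler computations. However, you have the difficulty distributed backwards, and this matters for executing the plan. The $A_\gamma$-colinearity check is \emph{not} bookkeeping requiring ``no cocycle identities'': the Ulbrich--Aschieri--Weber coaction on the target $\hm{B_\gamma}{\Gamma(\cE)}{\Gamma(\cF)}$ is formed inside $A_\gamma$, i.e.\ $\mone{f}\otimes \zero{f}(e)= S_\gamma(\mone{e})\cdot_\gamma \mone{[f(\zero{e})]}\otimes \zero{[f(\zero{e})]}$, so the twisted antipode $S_\gamma$ (with its $U,\bar U$ factors, Proposition \ref{prop/defn:twisted-Hopf-algebra}) and the twisted product $\cdot_\gamma$ of $A_\gamma$ both enter; collapsing the resulting string of factors uses $\bar U \ast U=\epsilon$, $\bar\gamma\ast\gamma=\epsilon\otimes\epsilon$, the presentation \eqref{eq:28thnov241}, and crucially the identity \eqref{eq:28thnov243}, $U(\one{h})\coin{S(\two{h})}{k}=\co{\one{h}}{S(\two{h})k}$. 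Conversely, the bilinearity step you single out as the hard part needs only the cocycle condition \eqref{25thaug24} together with counit/coassociativity and \eqref{eq:27thnov241}; and right $B_\gamma$-linearity is obtained by a symmetric direct computation, not by your suggested ``naturality/duality'' reduction --- naturality of $\fS$ in $\cE,\cF$ does not interchange the left and right module structures, so that shortcut as stated does not work (harmlessly, since the direct check goes through).

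Your ``cleanest route'' fallback is circular as written: the identification $\Gamma(\hm{B}{\cE}{B})\cong\hm{B_\gamma}{\Gamma(\cE)}{B_\gamma}$ as an isomorphism in $\tcat$ is precisely the $\cF=B$ case of the lemma being proved, so it is not ``already known to be a morphism in $\tcat$''. One could instead argue abstractly that a strong monoidal functor carries right duals to right duals and that duals are unique up to canonical isomorphism, but one would then still have to identify the resulting abstract isomorphism with the concrete map \eqref{fS}, which returns you to the same Sweedler computation. So keep the direct verification as the actual proof; with the corrections above it coincides with the paper's argument.
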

\begin{proof}
See Lemma \ref{14thfeb252}.
\end{proof}

Now, if $A$ is a Hopf $\ast$-algebra, $B$ a left $A$ comodule $\ast$-algebra and $\gamma$ a unitary $2$-cocycle, we define

\begin{equation} \label{5thdec242jb}
\mathfrak{N}_\cE : \overline{\Gamma(\mathcal{E})} \to \Gamma(\overline{\mathcal{E}}),~ \fN_\cE(\overline{e})= \bar{V}(\mone{e^*}) \overline{\zero{e}}.
\end{equation}

Then we have the following result which has been proved in the appendix (see Lemma \ref{14thfeb251}).

\begin{lem} \label{8thoct2024}
 $\fN : \mathrm{bar}\circ \Gamma \to \Gamma \circ \mathrm{bar}$  
is a natural isomorphism.
\end{lem}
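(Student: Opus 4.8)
The plan is to verify directly that $\fN_\cE$ is an isomorphism in $\tcat$ for each object $\cE$ and that the family $\{\fN_\cE\}$ is natural in $\cE$, i.e.\ that for every morphism $f\colon\cE\to\cF$ in $\cat$ one has $\Gamma(\bar f)\circ\fN_\cE=\fN_\cF\circ\ol{\Gamma(f)}$. First I would record that as vector spaces $\ol{\Gamma(\cE)}=\ol{\cE}=\Gamma(\ol{\cE})$, so $\fN_\cE$ is the linear endomorphism $\ol e\mapsto\bar V(\mone{e^*})\,\ol{\zero e}$ of this common underlying space; its inverse is the natural candidate $\ol e\mapsto\overline{\ol V(\mone{e^*})}\,\ol{\zero e}=V(\mone e)\,\ol{\zero e}$ (using Lemma~\ref{lem:22ndnov241} to rewrite $\overline{\ol V(h^*)}=V(h)$), and checking that the two compositions are the identity reduces to the statement that $\ol V$ is the convolution inverse of $V$ (Remark~\ref{25thnov241}) together with the comodule coassociativity identity $\mtwo e\ot\mone e\ot\zero e$. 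So the only content in ``$\fN_\cE$ is an isomorphism'' is bijectivity, which is cheap; the substance is showing $\fN_\cE$ is a morphism in $\tcat$, i.e.\ $B_\gamma$-bilinear and $A_\gamma$-colinear.

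Colinearity I would dispatch first, since it is the easiest: the left $A_\gamma$-coaction on all of $\ol{\Gamma(\cE)}$, $\Gamma(\ol\cE)$ is, by \eqref{25thnov232} and the fact that $\Gamma$ leaves coactions unchanged, $\ol e\mapsto \mone{e}^*\ot\ol{\zero e}$ (the $\ast$ and antipode being those of $A$, which agree set-theoretically with those of $A_\gamma$ up to the identity coalgebra iso). Applying this before and after $\fN_\cE$ and comparing, colinearity becomes the identity $\bar V(\mtwo{e^*})\,\mone{e}^*\ot\ol{\zero e}=\mtwo{e}^*\,\bar V(\mone{e^*})\ot\ol{\zero e}$, i.e.\ commutativity of scalars, so it is automatic once one is careful that $\bar V$ is applied to the correct Sweedler leg; here one uses $\prescript{\ol\cE}{}\delta$ from \eqref{25thnov232} and the coassociativity of $\prescript\cE{}\delta$. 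Naturality is equally formal: both sides send $\ol x$ to a scalar times $\ol{f(\zero x)}$, and since $f$ is $A$-colinear we have $\mone{f(x)}\ot\zero{f(x)}=\mone x\ot f(\zero x)$, hence also $\mone{f(x)^*}=\mone{x^*}$, so the scalars $\bar V(\mone{x^*})$ match on the nose; this is the place to be slightly careful that $\ol f$ and $\Gamma(f)$ and $\Gamma(\bar f)$ are all ``the same underlying linear map'' $x\mapsto f(x)$.

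The genuine computation, and the step I expect to be the main obstacle, is $B_\gamma$-bilinearity of $\fN_\cE$: one must show $\fN_\cE(b\cdot_\gamma\ol e)=b\cdot_\gamma\fN_\cE(\ol e)$ and $\fN_\cE(\ol e\cdot_\gamma b)=\fN_\cE(\ol e)\cdot_\gamma b$ for $b\in B_\gamma$. Unwinding the bar-bimodule structure \eqref{28thnov231} on $\ol{\Gamma(\cE)}$ — where $b\cdot\ol e=\ol{e\cdot_\gamma b^{*_\gamma}}$ with $\cdot_\gamma$ the twisted right action on $\Gamma(\cE)$ and $b^{*_\gamma}$ given by \eqref{4thfeb251} — versus the bar-bimodule structure on $\Gamma(\ol\cE)$, which is $\Gamma$ applied to the bar-bimodule $\ol\cE$, one lands on an equality of scalars built from $\gamma$, $\bar\gamma$, $V$, $\bar V$, and several applications of the coproduct to $\mone b$ and $\mone e$. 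I expect this to collapse precisely to Lemma~\ref{lem:28thnov242}, equation \eqref{19thsept20243} (for one side) and to its Corollary \eqref{3rdfeb253} (for the other side), after also invoking the cocycle identities of Lemma~\ref{lem:formula} and the comodule-algebra property $\prescript B{}\delta$ on products $\mone{(eb)}=\mone e\mone b$. Concretely I would: (i) expand $b\cdot_\gamma\ol e$ using \eqref{28thnov231} and the twisted actions, (ii) apply $\fN_\cE$, pulling the $\bar V$-scalar out, (iii) expand the other side $b\cdot_\gamma\fN_\cE(\ol e)$ the same way, (iv) match the resulting two scalar expressions by substituting \eqref{19thsept20243}; the $*_\gamma$-structure on $B_\gamma$ enters exactly through the $\bar V(\mone b^*)$ factor in \eqref{4thfeb251}, and keeping track of which antipode and which $\ast$ (that of $A$ vs.\ $A_\gamma$) appears is where errors are most likely. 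Since the excerpt already states (Lemma~\ref{14thfeb251}) that this is carried out in the appendix, the proof body can legitimately be the one-line ``See Lemma~\ref{14thfeb251}.'', but the plan above is the argument that appendix lemma executes.
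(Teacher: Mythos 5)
Your overall architecture (bijectivity, $B_\gamma$-bilinearity via the cocycle identities \eqref{19thsept20243}/\eqref{3rdfeb253}, naturality) is the one the paper follows in Lemma \ref{14thfeb251}, and your sketches of bilinearity and naturality are in line with it. The genuine gap is in the $A_\gamma$-colinearity step, which you dismiss as automatic. The coaction on $\overline{\Gamma(\cE)}$ is formed inside the bar category $\tcat$, i.e.\ by \eqref{25thnov232} applied over $A_\gamma$: it sends $\overline{e}\mapsto \mone{e}^{*_\gamma}\otimes\overline{\zero{e}}$, where $*_\gamma$ is the deformed involution of Proposition \ref{10thmarch251}, $h^{*_\gamma}=\bar{V}(\one{h}^*)\two{h}^*V(\three{h}^*)$. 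Your assertion that the $*$ and antipode of $A$ ``agree set-theoretically'' with those of $A_\gamma$ is false in general: only the coalgebra structure is unchanged, while $*_\gamma\neq *$ and $S_\gamma\neq S$ except in special situations (cf.\ Remark \ref{9thsep251} for the cocommutative case). So the two coactions you are comparing are genuinely different, and the scalar $\bar{V}(\mone{e}^*)$ in \eqref{5thdec242jb} exists precisely to intertwine the $*_\gamma$-twisted coaction on $\overline{\Gamma(\cE)}$ with the untwisted one on $\Gamma(\overline{\cE})$; the paper's computation achieves this by inserting $V\ast\bar{V}=\epsilon$ (Remark \ref{25thnov241}) into $\bar{V}(\mtwo{e}^*)\,\mone{e}^*\otimes\overline{\zero{e}}$ so as to reassemble $\mtwo{e}^{*_\gamma}$ in the first leg. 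Note moreover that even under your (incorrect) identification of the coactions, the equality you invoke, $\bar{V}(\mtwo{e}^*)\,\mone{e}^*\otimes\overline{\zero{e}}=\mtwo{e}^*\,\bar{V}(\mone{e}^*)\otimes\overline{\zero{e}}$, is not ``commutativity of scalars'': the functional $\bar{V}$ is evaluated on different Sweedler legs on the two sides, and $\bar{V}(\one{h})\two{h}\neq\one{h}\bar{V}(\two{h})$ for a general functional. So the covariance check is exactly where the unitarity of $\gamma$ and the definition of $*_\gamma$ enter, and it cannot be waved away.

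A smaller inaccuracy: the inverse is $\fN_\cE^{-1}(\overline{e})=V(\mone{e}^*)\,\overline{\zero{e}}$ as in \eqref{11thdec24jb1}, not $V(\mone{e})\,\overline{\zero{e}}$. The verification uses the convolution identity $\bar{V}\ast V=\epsilon$ on the starred leg, $\bar{V}(\mtwo{e}^*)V(\mone{e}^*)=\epsilon(\mone{e}^*)$, whereas your formula pairs $\bar{V}$ and $V$ on legs carrying different stars, which is not a convolution product and does not simplify; Lemma \ref{lem:22ndnov241} lets you conjugate the coefficient of $\fN_\cE$, but the inverse of a linear map is not obtained by conjugating its coefficient.
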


Now we are in a position to prove the main theorem of this section:

\begin{theorem} \label{thm:twisted-hermitian}
	Suppose that $A$ is a Hopf $*$-algebra, $B$ an $A$-comodule $*$-algebra and $\cE$ an object in the category $\cat$ which is finitely generated and projective as a left $B$-module.  If $\kH: \overline{\cE} \to \hm{B}{\cE}{B}$ is an $A$-covariant Hermitian metric on $\cE,$  then for a unitary $2$-cocycle $\gamma$ on $A,$ the map  
    $$\kH_\gamma: \overline{\Gamma{(\cE)}} \to \hm{~B_\gamma}{\Gamma(\cE)}{B_\gamma} ~ {\rm defined} ~ {\rm by} ~ \kH_\gamma  = \fS \circ \Gamma( \kH )\circ \fN_\cE$$
    is an $A_\gamma$-covariant Hermitian metric on $\Gamma{(\cE)}.$
\end{theorem}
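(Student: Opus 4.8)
The plan is to verify the two requirements in Definition~\ref{4thdec231} for the candidate map $\kH_\gamma = \fS \circ \Gamma(\kH) \circ \fN_\cE$: namely, that $\kH_\gamma$ is an isomorphism in $\tcat$ from $\overline{\Gamma(\cE)}$ to $\hm{B_\gamma}{\Gamma(\cE)}{B_\gamma}$, and that the associated pairing $\langle~,~\rangle_\gamma$ satisfies the conjugate-symmetry condition $\langle y, \overline{x}\rangle_\gamma^{*_\gamma} = \langle x, \overline{y}\rangle_\gamma$ for all $x,y \in \Gamma(\cE)$. First I would dispose of the isomorphism claim: $\fN_\cE$ is an isomorphism in $\tcat$ by Lemma~\ref{8thoct2024}, $\Gamma(\kH)$ is an isomorphism because $\kH$ is one in $\cat$ and $\Gamma$ is a (monoidal) equivalence by \eqref{21stnov243}, and $\fS$ is an isomorphism in $\tcat$ by Lemma~\ref{26thsept20241}. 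Being a composite of isomorphisms in $\tcat$, $\kH_\gamma$ is an isomorphism in $\tcat$; in particular it is $A_\gamma$-covariant and $B_\gamma$-bilinear.

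The substantive part is the conjugate-symmetry identity \eqref{15thdec246} for $\langle~,~\rangle_\gamma$. Here I would first unwind the definition of $\langle x, \overline{y}\rangle_\gamma = \ev_\gamma(x \otimes_{B_\gamma} \kH_\gamma(\overline{y}))$, where $\ev_\gamma$ is the twisted evaluation, and express it explicitly in terms of the untwisted pairing $\langle~,~\rangle$, the cocycle $\gamma$, $\bar\gamma$, and the functionals $V, \bar V$. Concretely, expanding $\kH_\gamma(\overline{y}) = \fS(\Gamma(\kH)(\bar V(\mone{y^*})\,\overline{\zero{y}}))$ using formula \eqref{fS} for $\fS$ and the defining relation $\langle x,\overline{y}\rangle = \ev(x \otimes_B \kH(\overline{y}))$, one should obtain a formula of the shape
\begin{equation*}
\langle x, \overline{y}\rangle_\gamma = \text{(a product of cocycle/\,$V$ factors depending on } \mone{x}, \mone{y}\text{)} \cdot \langle \zero{x}, \overline{\zero{y}}\rangle,
\end{equation*}
where the coinvariance/covariance properties of $\langle~,~\rangle$ as a morphism in $\cat$ let me track the comodule legs. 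Then I would apply $*_\gamma$ to $\langle y, \overline{x}\rangle_\gamma$ using the formula \eqref{4thfeb251} for the deformed $*$-structure on $B_\gamma$, push the $*_\gamma$ through, invoke the untwisted symmetry $\langle y, \overline{x}\rangle^* = \langle x, \overline{y}\rangle$, and finally reconcile the resulting tangle of $\gamma, \bar\gamma, V, \bar V$ factors with the expression for $\langle x, \overline{y}\rangle_\gamma$ using the cocycle identities of Lemma~\ref{lem:formula}, the unitarity conditions \eqref{19thSept20241}--\eqref{19thSept20242}, Lemma~\ref{lem:22ndnov241}, Lemma~\ref{lem:28thnov242} and its Corollary~\eqref{3rdfeb253}, and the fact that $\bar V$ is the convolution inverse of $V$ (Remark~\ref{25thnov241}).

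The main obstacle I anticipate is precisely this last bookkeeping step: matching the two sides will require a carefully ordered sequence of cocycle manipulations, and the antilinearity of $*$ together with the appearance of $S$, $S^{-1}$, and the starred Sweedler legs $\mone{y}^*$ makes it easy to get the order of factors or the placement of antipodes wrong. A cleaner route, which I would pursue in parallel, is to argue more structurally: reformulate the Hermitian metric as a star structure / unitarity condition in the bar category $\cat$ (the condition \eqref{15thdec246} says exactly that $\kH$ is compatible with the bar structure in the sense of Definition~\ref{15thjuly241}), observe that $\Gamma$ is a bar functor (this is Theorem~\ref{theorem:21stnov242}, and $\fN$ is the associated natural isomorphism $\mathrm{bar}\circ\Gamma \to \Gamma\circ\mathrm{bar}$), and that $\fS$ likewise intertwines the relevant bar/duality data. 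Then $\kH_\gamma$, being built from $\Gamma(\kH)$ by conjugating with bar-functorial isomorphisms, automatically inherits the compatibility with the bar structure of $\tcat$, which is exactly \eqref{15thdec246} for $\langle~,~\rangle_\gamma$. I would present the structural argument as the main line and relegate the explicit cocycle verification (or the verification that $\fS$ is bar-compatible) to the appendix, since the paper already isolates such computations there.
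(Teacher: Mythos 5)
Your first route is, in outline, exactly the paper's proof: the isomorphism claim is handled the same way (via Lemma \ref{26thsept20241}, Lemma \ref{8thoct2024} and the equivalence $\Gamma$), and the paper then unwinds $\langle x,\overline{y}\rangle_\gamma$ through \eqref{fS} and \eqref{5thdec242jb} to obtain the intermediate identity $\langle x,\overline{y}\rangle_\gamma=\bar V(\mtwo{y}^*)\,\co{\mone{x}}{\mone{y}^*}\,\langle \zero{x},\overline{\zero{y}}\rangle$ (its \eqref{eq:relation}), applies $*_\gamma$ via \eqref{4thfeb251}, and closes the computation with \eqref{19thSept20241}, Lemma \ref{lem:22ndnov241}, the untwisted symmetry \eqref{15thdec246}, the identity \eqref{19thsept20243}, and the convolution-inverse fact of Remark \ref{25thnov241} — precisely the toolkit you list. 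So that part of your plan is sound, but it is only a plan: the bookkeeping you identify as the main obstacle is the actual content of the proof, and you have not carried it out.

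The route you declare as your main line — the structural bar-category argument — has a genuine gap. Theorem \ref{theorem:21stnov242} only says that $\Gamma$ is a bar functor with comparison $\fN$; it says nothing about $\fS$, about the bar behaviour of the internal hom $\hm{B}{\cE}{B}$, or about the evaluation pairing, and your assertion that "$\fS$ likewise intertwines the relevant bar/duality data" is exactly the nontrivial statement that would need proof. Condition \eqref{15thdec246} is not literally an instance of Definition \ref{15thjuly241}: it couples $\kH$ to $\ev$ and to the $*$-structure of $B$, so recasting it as compatibility of a morphism with the bar structure requires identifying $\overline{\hm{B}{\cE}{B}}$ with a suitable dual object and checking that $\fS$ (and $\ev_\gamma$) respect that identification — and verifying this compatibility involves the same cocycle/$V$/antipode manipulations (essentially \eqref{19thsept20243}, \eqref{3rdfeb253}, Lemma \ref{lem:22ndnov241}) that the direct computation uses. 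So the structural argument does not bypass the computation; as written it assumes the conclusion in the guise of an unproved compatibility of $\fS$. To make your proposal a proof you must either execute the explicit verification (as the paper does) or actually state and prove the bar-compatibility of $\fS$, which is of comparable difficulty.
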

\begin{proof}
	Since $\fS$ and $\fN_\cE$ are isomorphims in $\tcat,$ it follows that $\kH_\gamma$ is an isomorphism. Consider the map $\langle~,~\rangle_\gamma: \Gamma(\cE) \ot_{B_\cot} \overline{\Gamma(\cE)} \to B_\cot$, defined by $$  \langle x , \overline{y}\rangle_\gamma= \ev \big(x \ot_{B_\cot} \kH_\cot \left(\overline{y}\right)\big).$$ 
	Then for $x, y \in \cE,$ we get
	\begin{align*}
		\langle x,\overline{y} \rangle_\gamma &= \ev\left(x \ot_{B_\cot} \bar{V}(\mone{y}^*) \fS \circ \Gamma (\kH) (\overline{\zero{y}}) \right)\\
		&= \bar{V}(\mone{y}^*) ~ \fS \left( \kH(\ol{\zero{y}}) \right) (x) \\
		&= \bar{V}(\mone{y}^*) \co{\mtwo{x}}{S(\mone{x})\mone{(\kH(\overline{\zero{y}})(\zero{x}))}}\zero{(\kH(\overline{\zero{y}})(\zero{x}))} \text{ (by \eqref{fS})}\\
		&= \bar{V} ( \mone{y}^\ast ) \coin{\mtwo{x}}{S ( \mone{x} )\mone{\langle \zero{x}, \ol{\zero{y}}\rangle}} \zero{\langle \zero{x}, \ol{\zero{y}}\rangle}  \\
        &= \bar{V}(\mtwo{y}^*) \co{\mthree{x}}{S(\mtwo{x})\mone{x}\mone{y}^*}\langle\zero{x}, \ol{\zero{y}} \rangle \text{ (as $\langle ~, ~ \rangle$ is covariant)}\\
		&= \bar{V}(\mtwo{y}^*) \co{\mone{x}}{\mone{y}^*}\langle\zero{x}, \ol{\zero{y}} \rangle. 
	\end{align*}
	Hence we have that 
	\begin{equation} \label{eq:relation}
		\langle x,\overline{y} \rangle_\gamma = \bar{V}(\mtwo{y}^*) \co{\mone{x}}{\mone{y}^*}\langle\zero{x}, \ol{\zero{y}} \rangle.
	\end{equation}
	So, for $ x, y \in \cE$, we have,
	\begin{align*}
		&\langle x,\overline{y} \rangle_\gamma^{*_\cot}\\ 
		&= \big(\bar{V}(\mtwo{y}^*) \co{\mone{x}}{\mone{y}^*}\langle\zero{x}, \ol{\zero{y}} \rangle \big)^{*_\gamma} \text{ (by \eqref{eq:relation})}\\
		&= \overline{\bar{V}(\mthree{y}^*) \co{\mtwo{x}}{\mtwo{y}^*}} ~\bar{V}((\mone{x}\mone{y}^*)^*) \langle\zero{x}, \ol{\zero{y}} \rangle^* \text{(as $\langle~,~\rangle$ is covariant and by \eqref{4thfeb251})}\\
		&= {V}(\mthree{y}) \coin{S(\mtwo{x})^*}{S(\mtwo{y}^*)^*}\bar{V}(\mone{y}\mone{x}^*) \langle\zero{y}, \ol{\zero{x}} \rangle \text{(by Lemma \ref{lem:22ndnov241}, \eqref{19thSept20241} and \eqref{15thdec246})}\\
		& 
		={V}(\mthree{y}) \bar{V}(\mtwo{y}) \bar{V}(\mtwo{x}^*) \co{\mone{y}}{\mone{x}^*}\langle\zero{y}, \ol{\zero{x}} \rangle \text{ (by \eqref{19thsept20243})}\\
		&= \bar{V}(\mtwo{x}^*) \co{\mone{y}}{\mone{x}^*} \langle\zero{y},\ol{\zero{x}}\rangle \text{ (by Remark \ref{25thnov241})}\\
		&= \langle y, \ol{x}\rangle_\gamma \text{ (by \eqref{eq:relation})}.
	\end{align*}  
	Therefore, $\kH_\gamma$ satisfies \eqref{15thdec246} which completes the proof.
\end{proof}
\begin{remark} \label{03rdoct242} 
  \begin{enumerate}
  \item We will call $\kH_\gamma$ the $\gamma$-deformation of the Hermitian metric $\kH.$
  
	\item The equation  \eqref{eq:relation} implies the following relation:  
    \begin{equation} \label{18thfeb251}
    \langle~, ~\rangle_\cot = \Gamma(\langle~,~\rangle) \varphi_{\cE, \overline{\cE}}(\id \ot_{B_\gamma}\fN_\cE).
    \end{equation}
    \end{enumerate}
\end{remark}

\subsection{Hermitian metrics on the bimodule of one-forms}

In this subsection, we deal with the deformation of covariant Hermitian metrics on the bimodule of one-forms of a $\ast$-differential calculus by relating it with the deformation of covariant real metrics (see Definition \ref{23rddec24jb2}) on one-forms. n this subsection, we will tacitly use the fact (see Remark \ref{24thjuly241}) that if $\Omega^1$ admits a metric $\metric,$ then it is finitely generated and projective as a left module. 

The following result follows from \cite[Section 8.4]{BeggsMajid:Leabh} (also see \cite[Proposition 4.3]{LeviCivitaHK}):

\begin{prop}  \label{27thdec242jb}
	Let $\dc$ be an $A$-covariant $\ast$-differential calculus on a left $A$-comodule $\ast$-algebra $B.$  Given a real $A$-covariant metric $ ( g, ( ~, ~ )  ) $ on $\Omega^1,$ define
	$$ \kH_g: \overline{\Omega^1} \rightarrow \hm{B}{\Omega^1}{B} ~ {\rm as} ~ \kH_g ( \overline{\omega}  ) ( \eta ) = ( \eta, \omega^* ). $$
	Then $\kH_g$ is an $A$-covariant Hermitian metric on $\Omega^1.$ In fact,  $ ( g, ( ~ , ~ ) ) \mapsto \kH_g $ is a bijective correspondence between real $A$-covariant metrics and $A$-covariant Hermitian metrics on $\Omega^1.$
\end{prop}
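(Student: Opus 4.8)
The plan is to verify the three defining properties of an Hermitian metric directly from the formula $\kH_g(\overline{\omega})(\eta) = (\eta, \omega^*)$, and then to construct an explicit two-sided inverse of the assignment $(g,(~,~)) \mapsto \kH_g$.

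\emph{First}, I would check that $\kH_g$ is a well-defined left $B$-linear map $\overline{\Omega^1} \to \hm{B}{\Omega^1}{B}$ and that it is a morphism in $\cat$. Left $B$-linearity of $\kH_g(\overline{\omega})$ in the argument $\eta$ is immediate from the $B$-bilinearity of $(~,~)$. The compatibility with the $B$-bimodule structures on $\overline{\Omega^1}$ (given by \eqref{28thnov231}) and on $\hm{B}{\Omega^1}{B}$ needs the identities $(b \cdot \kH_g(\overline{\omega}))(\eta) = \kH_g(\overline{\omega})(\eta b) = (\eta b, \omega^*) = (\eta, b\omega^*)$, which should match $\kH_g(b \cdot \overline{\omega})(\eta) = \kH_g(\overline{\omega \cdot b^*})(\eta) = (\eta, (\omega b^*)^*) = (\eta, b\omega^*)$, using $(\omega b^*)^* = b \omega^*$; the right-module case is analogous, using centrality-type manipulations and $B$-bilinearity of $(~,~)$. $A$-covariance of $\kH_g$ follows from the covariance of $(~,~)$ together with the formula \eqref{25thnov232} for the coaction on $\overline{\Omega^1}$ and the fact that $\ast$ on $\Omega^1$ intertwines the coactions appropriately (i.e. ${}^{\Omega^1}\delta(\omega^*) = \mone{\omega}^* \otimes \zero{\omega}^*$).

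\emph{Second}, I would verify the reality condition \eqref{15thdec246}: for the induced pairing $\langle x, \overline{y}\rangle = \ev(x \otimes_B \kH_g(\overline{y})) = \kH_g(\overline{y})(x) = (x, y^*)$, we must show $\langle y, \overline{x}\rangle^* = \langle x, \overline{y}\rangle$, i.e. $(y, x^*)^* = (x, y^*)$. This is exactly where the realness hypothesis on $(g,(~,~))$ enters: writing $g = \sum_i \omega_i \otimes_B \eta_i = \sum_i \eta_i^* \otimes_B \omega_i^*$, the inner product of a real metric satisfies $(\alpha, \beta)^* = (\beta^*, \alpha^*)$ for all one-forms $\alpha, \beta$ — this is a standard consequence of realness (one expands using the defining property of $g$ and the two expressions for $g$). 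Applying this with $\alpha = y$, $\beta = x^*$ gives $(y, x^*)^* = ((x^*)^*, y^*) = (x, y^*)$, as required. So $\kH_g$ is an Hermitian metric.

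\emph{Third}, for the bijective correspondence, given an $A$-covariant Hermitian metric $\kH$ on $\Omega^1$ I would define $(~,~)_\kH: \Omega^1 \otimes_B \Omega^1 \to B$ by $(\eta, \omega)_\kH := \kH(\overline{\omega^*})(\eta)$ and $g_\kH := (\coev_{g_\kH})(1)$ where $\coev_{g_\kH}$ is obtained from the right-dual structure on $\Omega^1$ supplied by $\kH$ (using Remark~\ref{24thjuly241}, $\Omega^1$ being finitely generated projective). One then checks that $(g_\kH, (~,~)_\kH)$ is a covariant metric, that it is real (using \eqref{15thdec246} in the reverse direction, the same algebraic identity as above run backwards), and that the two assignments are mutually inverse — a matter of unwinding definitions. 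The main obstacle, and the step deserving the most care, is the precise bookkeeping in showing that \eqref{15thdec246} is \emph{equivalent} to realness of the associated metric (both directions), since this requires juggling the $\ast$-operation, the conjugate module structure \eqref{28thnov231}, and the self-duality axiom defining a metric simultaneously; the covariance verifications, while routine, also require attention to how $\ast$ interacts with the comodule structure. I expect the bulk of this to be citable from \cite[Section 8.4]{BeggsMajid:Leabh} and \cite[Proposition 4.3]{LeviCivitaHK}, so the write-up can be kept brief.
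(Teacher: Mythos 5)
Your proposal is correct and follows essentially the same route as the paper, which offers no proof of its own but delegates exactly to \cite[Section 8.4]{BeggsMajid:Leabh} and \cite[Proposition 4.3]{LeviCivitaHK}: the bimodule and covariance checks, the equivalence of realness of $\metric$ with the identity $(x,y)^*=(y^*,x^*)$ (your direct expansion works: writing $g=\sum_i a_i\otimes_B b_i=\sum_i b_i^*\otimes_B a_i^*$ one gets $(x,y^*)=\sum_i (x,a_i)(y,b_i^*)^*=(y,x^*)^*$), and the inverse correspondence built from the right-dual structure of Remark \ref{24thjuly241}. The only point you gloss over is that $\kH_g$ must actually be an \emph{isomorphism} onto $\hm{B}{\Omega^1}{B}$, as Definition \ref{4thdec231} demands; this is routine but should be said — by Remark \ref{24thjuly241} and Proposition \ref{28thnov241jb} both $(\Omega^1,(~,~),\coev_g)$ and $\hm{B}{\Omega^1}{B}$ are right duals of $\Omega^1$ in $\cat$, so uniqueness of right duals makes $\eta\mapsto(\,\cdot\,,\eta)$ an isomorphism, and composing with the bijection $\overline{\omega}\mapsto\omega^*$ shows $\kH_g$ is one too.
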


In fact, we have that 
\begin{equation} \label{29thjan251}
 \langle \omega,  \overline{\eta} \rangle = ( \omega, \eta^\ast ) ~  {\rm for} ~ {\rm all} ~ \omega, \eta \in \Omega^1.
\end{equation}

We will need to recall the cocycle deformation of covariant metrics from \cite{BeggsMajid:Leabh}.
Let $\dc$ be an $A$-covariant differential calculus on a left $A$-comodule algebra $B$ and  $\metric$ an $A$-covariant metric. In particular, by virtue of Remark \ref{24thjuly241}, $g= \coev_g(1)$ for a morphism $\coev_g: B \to \Omega^1 \ot_{B} \Omega^1$ in $\cat$ such that $(\Omega^1, (~,~), \coev_g)$ is a right dual of $\Omega^1$ in $\cat$. Now if $\gamma$ is a $2$-cocycle on $A,$ consider the maps
\begin{equation}
	\label{1stdec242jb}(~,~)_\gamma: \Omega^1_\gamma \ot_{B_\cot} \Omega^1_\gamma\to B_\gamma,~ (\coev_g)_\gamma: B_\gamma \to \Omega^1_\gamma\ot_{B_\cot}  \Omega_\gamma^1
\end{equation}
defined by \eqref{1stdec241jb} and $g_\gamma= (\coev_g)_\gamma(1).$ 

Since $g_\gamma$ is an $A_\gamma$-coinvariant central element of $\Omega^1_\gamma \otimes_{B_\gamma} \Omega^1_\gamma,$ it follows that there is a morphism 
$$\text{coev}_{g_\gamma}: B_\gamma \rightarrow \Omega^1_\gamma \otimes_{B_\gamma} \Omega^1_\gamma ~ \text{in} ~ \tcat ~ \text{defined by} ~ \text{coev}_{g_\gamma} ( b ) = b g_\gamma.  $$

\begin{prop} (\cite[Proposition 9.28]{BeggsMajid:Leabh}) \label{prop:15thdec241}
	If $(\Omega^\bullet, \wedge, d)$ is an $A$-covariant differential calculus on a left $A$-comodule algebra $B$
	and $\metric$  an $A$-covariant metric on  $\Omega^1(B),$  then $\metrictwisted$ is an $A_\gamma$-covariant metric on $\Omega^1(B_\gamma).$ 

    Moreover, the $\overline{\gamma}$-deformation of $\metrictwisted$ is $\metric$ and hence  $\metric \mapsto \metrictwisted $ defines a one to one correspondence between $A$-covariant metrics on $\Omega^1$ and $A_\gamma$-covariant metrics on $\Omega^1_\gamma. $
\end{prop}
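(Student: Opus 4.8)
The plan is to exploit that cocycle deformation $\Gamma$ is a monoidal equivalence together with the categorical characterization of covariant metrics as right duals recorded in Remark \ref{24thjuly241}. Recall that the definition of $(~,~)_\gamma$ and $(\coev_g)_\gamma$ in \eqref{1stdec242jb} uses exactly the recipe \eqref{1stdec241jb} for twisting morphisms in $\cat$: concretely $(~,~)_\gamma = \Gamma((~,~)) \circ \varphi_{\Omega^1,\Omega^1}$ and $(\coev_g)_\gamma = \varphi_{\Omega^1,\Omega^1}^{-1} \circ \Gamma(\coev_g)$ (the latter viewed as a map $B_\gamma \to \Omega^1_\gamma \ot_{B_\gamma} \Omega^1_\gamma$, using that $\Gamma(B) = B_\gamma$). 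The first step is to observe that, since $\varphi$ is the monoidal natural isomorphism implementing the equivalence $\Gamma$, the triple $(\Omega^1_\gamma, (~,~)_\gamma, (\coev_g)_\gamma)$ is the image under the monoidal functor $\Gamma$ of the right-dual data $(\Omega^1, (~,~), \coev_g)$ in $\cat$. A monoidal functor sends right duals to right duals — the two snake/zig-zag identities for $\Omega^1_\gamma$ follow from those for $\Omega^1$ by pre- and post-composing with the coherence isomorphisms $\varphi$ and using naturality \eqref{15thdec244}. Hence $(\Omega^1_\gamma, (~,~)_\gamma, (\coev_g)_\gamma)$ is a right dual of $\Omega^1_\gamma$ in $\tcat$, and by the converse direction of Remark \ref{24thjuly241} the pair $((\coev_g)_\gamma(1), (~,~)_\gamma) = (g_\gamma, (~,~)_\gamma)$ is a covariant metric on $\Omega^1_\gamma$. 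This proves the first assertion.

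For the second assertion I would invoke Remark \ref{27dec241jb}-style bookkeeping: $\bar\gamma$ is a $2$-cocycle on $A_\gamma$, the $\bar\gamma$-deformation of $\Gamma(M)$ is canonically $M$ (as vector spaces, with $\varphi^{\bar\gamma}$ inverse to $\varphi^\gamma$ in the appropriate sense), and the $\bar\gamma$-deformation of $\dctwisted$ is $\dc$ by Remark \ref{27thdec241jb}. Applying the morphism-twisting formula \eqref{1stdec241jb} twice — once with $\gamma$ and once with $\bar\gamma$ — one gets $((~,~)_\gamma)_{\bar\gamma} = (~,~)$ and $((\coev_g)_\gamma)_{\bar\gamma} = \coev_g$, because $\varphi^{\bar\gamma}_{\Omega^1_\gamma,\Omega^1_\gamma} \circ \Gamma^{\bar\gamma}(\varphi^\gamma_{\Omega^1,\Omega^1}) = \id$ and likewise for the inverse; this is precisely the statement that composing the two monoidal equivalences $\Gamma^\gamma$ and $\Gamma^{\bar\gamma}$ returns the identity functor on the nose (not merely up to isomorphism), which is the well-known feature of cocycle twisting. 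In particular $(g_\gamma)_{\bar\gamma} = g$. Consequently $\metric \mapsto \metrictwisted$ and $\metrictwisted \mapsto (\metrictwisted)_{\bar\gamma}$ are mutually inverse, giving the claimed bijection between $A$-covariant metrics on $\Omega^1$ and $A_\gamma$-covariant metrics on $\Omega^1_\gamma$.

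I expect the main obstacle to be purely notational rather than conceptual: one must be careful that $(~,~)_\gamma$ and $(\coev_g)_\gamma$ as \emph{a priori} only $\C$-linear maps are in fact $B_\gamma$-bilinear and $A_\gamma$-covariant, which is where one uses that $\varphi_{V,W}$ is a morphism in $\tcat$ (so that composing with it preserves bimodule-linearity and covariance) together with the fact that $g_\gamma = (\coev_g)_\gamma(1)$ is $A_\gamma$-coinvariant and central — the centrality being needed to promote $(\coev_g)_\gamma$ to the honest $B_\gamma$-bimodule map $\coev_{g_\gamma}(b) = b g_\gamma$ as in the paragraph preceding the proposition. The coinvariance of $g_\gamma$ follows from \eqref{18thfeb252} applied to $g$ together with $A_\gamma$-covariance of $\Gamma(\coev_g)$ and of $\varphi^{-1}$; centrality follows from the general fact (\cite[Lemma 1.16]{BeggsMajid:Leabh}, applied in $\tcat$) once one knows $(g_\gamma, (~,~)_\gamma)$ satisfies the metric axioms, so these points are not circular provided one orders the argument as above: first establish the two zig-zag identities, then read off centrality and coinvariance. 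An alternative, more hands-on route would bypass the categorical language and verify $((\omega,~)_\gamma \ot_{B_\gamma} \id)g_\gamma = \omega = (\id \ot_{B_\gamma} (~,\omega)_\gamma)g_\gamma$ directly by unwinding \eqref{nt}, \eqref{eq:29thnov243} and the cocycle identities of Lemma \ref{lem:formula}; this is the approach that would generalize to the $\ast$-setting, but for the present statement the monoidal-functor argument is cleaner.
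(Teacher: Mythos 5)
Your argument is correct, and it is worth noting that the paper itself offers no proof of this proposition: it is quoted verbatim from \cite[Proposition 9.28]{BeggsMajid:Leabh}, with the surrounding text only setting up the definitions of $(~,~)_\gamma$, $(\coev_g)_\gamma$ and $g_\gamma=(\coev_g)_\gamma(1)$. Your route — transport the right-dual datum $(\Omega^1,(~,~),\coev_g)$ of Remark \ref{24thjuly241} through the strong monoidal equivalence $\Gamma$, use that monoidal functors preserve right duals, and then apply the converse direction of Remark \ref{24thjuly241} in $\tcat$ — is exactly the argument the paper's setup is designed to support, and the inverse-twist bookkeeping for the second assertion (using that $\bar\gamma$ is a cocycle on $A_\gamma$ and that $\varphi^{\gamma}\circ\varphi^{\bar\gamma}=\id$ because $\bar\gamma\ast\gamma=\epsilon\otimes\epsilon$) is sound. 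Two small points you could make explicit: left $B_\gamma$-linearity of $(\coev_g)_\gamma$ gives $(\coev_g)_\gamma(b)=b\cdot_\gamma g_\gamma$, so the covariance of the map $b\mapsto b g_\gamma$ required by Definition \ref{4thmay242} is immediate rather than needing a separate coinvariance/centrality discussion; and the coinvariance of $g_\gamma$ you invoke follows in one line from \eqref{18thfeb252} since $\varphi^{-1}_{\Omega^1,\Omega^1}$ is a comodule map. Neither affects correctness.
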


In order to state our results, we make the following observation:

\begin{prop} \label{realmetric}
	If $ \metric $ is a real $A$-covariant metric on $\Omega^1 ( B ) $ and $\gamma$  a unitary  $2$-cocycle on $A,$ then the cocycle-deformed metric $ \metrictwisted $ of Proposition \ref{prop:15thdec241} for the differential calculus $\dctwisted$ is also real.  
\end{prop}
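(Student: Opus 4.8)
The statement to prove is that if $\metric$ is a real $A$-covariant metric on $\Omega^1(B)$ and $\gamma$ is a unitary $2$-cocycle, then the twisted metric $\metrictwisted$ on $\Omega^1_\gamma$ is again real in the sense of Definition \ref{23rddec24jb2}, i.e.\ if $g = \sum_i \omega_i \otimes_B \eta_i$ and $g_\gamma = \sum_j \alpha_j \otimes_{B_\gamma} \beta_j$, then $g_\gamma = \sum_j \beta_j^{*_\gamma} \otimes_{B_\gamma} \alpha_j^{*_\gamma}$. The natural strategy is to compute $g_\gamma$ explicitly from its definition $g_\gamma = (\coev_g)_\gamma(1) = \Gamma(\coev_g)\circ\varphi_{\mathbb{C},\mathbb{C}}(1\otimes 1)$, use the definition of $\varphi$ in \eqref{nt} together with coinvariance of $g$ (equation \eqref{18thfeb252}, which says ${}^{\Omega^1\otimes_B\Omega^1}\delta(g) = 1\otimes g$), and then express the $*_\gamma$-operation on $\Omega^\bullet_\gamma\otimes_{B_\gamma}\Omega^\bullet_\gamma$ via the conjugate-linear map on each factor using \eqref{4thfeb251} applied at the level of one-forms (the analogue of \eqref{4thfeb251} for $\Omega^1_\gamma$, which reads $\omega^{*_\gamma} = \bar V(\mone{\omega}^*)\,\zero{\omega}^*$).

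**Key steps.** First I would write $g_\gamma$ in Sweedler-free form. Since $g$ is $A$-coinvariant, writing $g = \omega_i\otimes_B\eta_i$ (summation implicit), coinvariance gives $\mone{\omega_i}\mone{\eta_i}\otimes\zero{\omega_i}\otimes_B\zero{\eta_i} = 1\otimes\omega_i\otimes_B\eta_i$. Using this, $\varphi_{\mathbb{C},\mathbb{C}}^{-1}$ collapses and one gets $g_\gamma = \coin{\mone{\omega_i}}{\mone{\eta_i}}\,\zero{\omega_i}\otimes_{B_\gamma}\zero{\eta_i}$, which, upon invoking coinvariance of $g$ more carefully (the individual factors $\omega_i,\eta_i$ need not be coinvariant, only the tensor is), becomes $g_\gamma = \coin{\mone{\omega_i}}{\mone{\eta_i}}\,\zero{\omega_i}\otimes_{B_\gamma}\zero{\eta_i}$ — this is the working expression. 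Second, I would compute $\beta^{*_\gamma}\otimes_{B_\gamma}\alpha^{*_\gamma}$ for a simple tensor and, summing over the representation of $g_\gamma$ above, apply \eqref{4thfeb251} to each factor. This produces terms involving $\bar V$ applied to coproducts of $\mone{\omega_i}^*$ and $\mone{\eta_i}^*$ together with $\zero{\eta_i}^*\otimes_B\zero{\omega_i}^*$. Third — and this is where the hypotheses genuinely enter — I would use that $g$ is real, so $g = \eta_i^*\otimes_B\omega_i^*$ (as elements of $\Omega^1\otimes_B\Omega^1$, after reindexing), to recognize $\zero{\eta_i}^*\otimes_B\zero{\omega_i}^*$ as producing (a twist of) $g$ itself. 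The scalar bookkeeping then has to match: one needs the identity relating $\bar V(\mone{\eta}^*_{(1)})\bar V(\mone{\omega}^*_{(1)})\co{\mone{\eta}^*_{(2)}}{\mone{\omega}^*_{(2)}}$ to $\coin{S(\mone{\omega}_{(1)})^*}{S(\mone{\eta}_{(1)})^*}\,\bar V(\mone{\eta}^*_{(2)}\mone{\omega}^*_{(2)})$, which is exactly Lemma \ref{lem:28thnov242} (equation \eqref{19thsept20243}), or its corollary \eqref{3rdfeb253}; these are the cocycle identities tailored for precisely this computation. Combining these, the scalars should telescope — using unitarity \eqref{19thSept20241}–\eqref{19thSept20242} and Remark \ref{25thnov241} (that $\bar V$ is the convolution inverse of $V$) — to reproduce $\coin{\mone{\omega_i}}{\mone{\eta_i}}\,\zero{\omega_i}\otimes_{B_\gamma}\zero{\eta_i} = g_\gamma$.

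**Alternative, more structural route.** Rather than a brute-force Sweedler computation, I would prefer to argue categorically: reality of $\metric$ can be characterized as a compatibility between $g$ (viewed via $\coev_g$) and the star structure on $\Omega^1$ from \eqref{8thjuly241}, packaged through the bar-category formalism of Appendix \ref{B}. Since $\Gamma$ is a bar functor (Theorem \ref{theorem:21stnov242}) and the natural isomorphisms $\Upsilon$, $\fN$ intertwine the bar structures with the twist, the reality condition, being an equation among morphisms in the bar category $\cat$, transports along $\Gamma$ to the corresponding equation in $\tcat$ — which is precisely reality of $\metrictwisted$. Concretely, one would show $g_\gamma$ satisfies the same diagram that encodes Definition \ref{23rddec24jb2}, using that $*_\gamma$ on $\Omega^1_\gamma$ corresponds under $\fN_{\Omega^1}$ to the image of $*$ on $\Omega^1$, and that $\varphi$, $\Upsilon$, $\fN$ satisfy the hexagon/compatibility identities established in the appendix.

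**Main obstacle.** The hard part is the scalar bookkeeping in the direct approach: tracking the cocycle factors $\gamma$, $\bar\gamma$, $V$, $\bar V$ through the reindexing forced by reality (swapping $\omega_i\leftrightarrow\eta_i$ and conjugating) without losing or misplacing a $S$, $S^{-1}$, or $*$, and recognizing at the end that the surviving scalar is exactly $\bar\gamma$ evaluated on the right legs. I expect the cleanest path is to reduce everything to a single application of \eqref{3rdfeb253} (or \eqref{19thsept20243}) after using coinvariance to simplify, and then to appeal to Remark \ref{25thnov241} to cancel the $V\bar V$ pairs; if instead one tries the structural/bar-category route, the obstacle shifts to verifying that the reality condition is genuinely expressible as a bar-category morphism equation and that all the coherence isomorphisms line up, which is conceptually cleaner but requires care with the results of Appendix \ref{B}.
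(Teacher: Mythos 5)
Your primary (direct) plan is essentially the paper's own proof: the paper likewise writes $g_\gamma=\varphi^{-1}_{\Omega^1,\Omega^1}(g)$, applies the flipped $\ast_\gamma\otimes_{B_\gamma}\ast_\gamma$ using \eqref{4thfeb251} and unitarity \eqref{19thSept20242}, reduces the scalar bookkeeping to a single use of \eqref{3rdfeb253}, and then combines coinvariance \eqref{18thfeb252} (applied after $\ast\otimes\dagger$, which makes the leftover $\bar V$-factor evaluate at $1$ rather than cancelling against $V$) with reality of $g$ to conclude $g_\gamma^{\dagger_\gamma}=g_\gamma$. The bar-categorical alternative you sketch is not the route taken in the paper, but your main argument coincides with it.
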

\begin{proof}
We will use the notation $\dagger_\gamma$ to denote the antilinear map ${\rm flip} ( \ast_\gamma \otimes_{B_\gamma} \ast_\gamma ): \Omega^1_\gamma \otimes_{B_\gamma} \Omega^1_\gamma \rightarrow \Omega^1_\gamma \otimes_{B_\gamma} \Omega^1_\gamma. $  Thus, we need to prove that $g^{\dagger_\gamma}_\gamma = g_\gamma.$ 

Let us use a shorthand notation to write 
$$g = g^{[1]} \otimes_B g^{[2]}, ~ \text{where summation is understood}.$$
Applying the antilinear map $\ast \otimes \dagger $ to the equation $ {}^{\Omega^1 \otimes_B \Omega^1} \delta ( g ) = 1 \otimes g $ (see \eqref{18thfeb252}) we get
\begin{equation} \label{3rdfeb252}
 g^{[2]\ast}_{(-1)} g^{[1]\ast}_{(-1)} \otimes (  g^{[2]\ast}_{(0)} \otimes_B g^{[1]\ast}_{(0)}  ) = 1 \otimes g^\dagger.
 \end{equation}
    
   We claim that the following equation holds for all $\omega, \eta \in \Omega^1:$
\begin{equation} \label{3rdfeb251}
\dagger_\gamma ( \varphi^{-1}_{\Omega^1, \Omega^1} ( \omega \otimes_B \eta ) ) = \varphi^{-1}_{\Omega^1, \Omega^1} ( \eta^*_{(0)} \otimes_B \omega^\ast_{(0)}  ) \bar{V} ( \eta^\ast_{(-1)} \omega^\ast_{(-1)} ).
\end{equation} 
 The proposition follows from \eqref{3rdfeb251} and \eqref{3rdfeb252}. Indeed, it is easy to see that $g_\gamma = \varphi^{-1}_{\Omega^1, \Omega^1} ( g ) $ and hence, 
 \begin{eqnarray*}
g^{\dagger_\gamma}_\gamma &=& \dagger_\gamma ( \varphi^{-1}_{\Omega^1, \Omega^1} ( g ) )\\
&=& \varphi^{-1}_{\Omega^1, \Omega^1} (   g^{[2]\ast}_{(0)} \otimes_B g^{[1]\ast}_{(0)} ~ \bar{V} ( (  g^{[1]}_{(-1)} g^{[2]}_{(-1)}  )^\ast  )  ) ~ {\rm (}  {\rm by} ~ \eqref{3rdfeb251} {\rm )}\\
&=& \varphi^{-1}_{\Omega^1, \Omega^1} ( g^\dagger ) ~ {\rm (} ~ {\rm by} ~ \eqref{3rdfeb252} {\rm )}  \\
&=& \varphi^{-1}_{\Omega^1, \Omega^1} ( g )\\
&=& g_\gamma.
 \end{eqnarray*}
 Thus, we are left to prove \eqref{3rdfeb251}. To this end, we compute:
 \begin{align*}
 \dagger_\gamma ( \varphi^{-1}_{\Omega^1, \Omega^1} ( \omega \otimes_B \eta ) ) &= \dagger_\gamma ( \bar{\gamma} ( \omega_{(-1)} \otimes \eta_{(-1)}  ) \omega_{(0)} \otimes_{B_\gamma} \eta_{(0)} ) ~ {\rm (}  {\rm by} ~ \eqref{eq:29thnov243}  {\rm )} \\
&= \gamma ( S ( \omega_{(-1)} )^\ast \otimes S ( \eta_{(-1)} )^\ast  ) \eta^{\ast_\gamma}_{(0)} \otimes_{B_\gamma} \omega^{\ast_\gamma}_{(0)} ~ {\rm (}  {\rm by} ~ \eqref{19thSept20242} {\rm)}\\
&= \gamma ( S ( \omega_{(-2)} )^\ast \otimes S ( \eta_{(-2)} )^\ast  ) \bar{V} ( \omega^\ast_{(-1)} ) \bar{V} ( \eta^\ast_{(-1)} ) \eta^{\ast}_{(0)} \otimes_{B_\gamma} \omega^{\ast}_{(0)} ~ {\rm (} {\rm by} ~ \eqref{4thfeb251}  {\rm )}\\
&= \bar{\gamma} ( \eta^\ast_{(-1)} \otimes \omega^\ast_{(-1)}  ) \bar{V} ( \eta^\ast_{(-2)} \omega^\ast_{(-2)} ) \eta^\ast_{(0)} \otimes_{B_\gamma} \omega^\ast_{(0)} ~ {\rm (}  {\rm by} ~ \eqref{3rdfeb253}  {\rm )} \\
&= \varphi^{-1}_{\Omega^1, \Omega^1} ( \eta^*_{(0)} \otimes_B \omega^\ast_{(0)}  ) \bar{V} ( \eta^\ast_{(-1)} \omega^\ast_{(-1)} ).
 \end{align*}
  This proves \eqref{3rdfeb251} and hence  the proposition.  
\end{proof}

We will continue to denote the  $\gamma$-deformation of a Hermitian metric $\kH$ introduced in Theorem \ref{thm:twisted-hermitian} by the symbol ${\kH}_\gamma.$  

By a combination of Proposition \ref{27thdec242jb} and Proposition \ref{realmetric}, we can construct an $A_\gamma$-covariant Hermitian metric on $\Omega^1 ( B_\gamma ) $ from a Hermitian metric on $\Omega^1 ( B ). $ Indeed,  Proposition \ref{27thdec242jb} implies that any covariant Hermitian metric  on $\Omega^1 ( B ) $ is of the form $ {\kH}_g $ for a real $A$-covariant metric $\metric$ on $\Omega^1 ( B ). $ By Proposition \ref{realmetric}, we know that $\metrictwisted$ is a real $A_\gamma$-covariant metric on $\Omega^1 ( B_\gamma ). $ Therefore, Proposition  \ref{27thdec242jb} applied to $\metrictwisted$ yields an $A_\gamma$-covariant Hermitian metric ${\kH}_{g_\gamma}$ on $\Omega^1 ( B_\gamma ). $ Our next result states that  ${\kH}_{g_\gamma}$ is precisely the $\gamma$-deformation of $ {\kH}_g $ for which we will use the fact that $\star_{\Omega^1}: \Omega^1 \rightarrow \overline{\Omega^1} $ defined in \eqref{8thjuly241} is an isomorphism in $\cat.$

\begin{theorem} \label{5thfeb253}
	If $\metric$ is a real $A$-covariant metric on $\Omega^1 ( B ) $ and $\gamma$ a unitary $2$-cocycle, then $ {\kH}_{g_\gamma} = ( {\kH}_g )_\gamma. $
\end{theorem}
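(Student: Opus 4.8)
The plan is to identify each of the two Hermitian metrics $\kH_{g_\gamma}$ and $(\kH_g)_\gamma$ by computing its associated sesquilinear form and to check that these forms coincide. The key preliminary observation is that a covariant Hermitian metric is completely determined by its form: if $\kH,\kH'$ are Hermitian metrics on an object $\cE$ of $\tcat$ with $\langle x,\overline y\rangle=\langle x,\overline y\rangle'$ for all $x,y$, then $\kH(\overline y)(x)=\ev(x\ot_{B_\gamma}\kH(\overline y))=\langle x,\overline y\rangle$ equals $\kH'(\overline y)(x)$ for every $x$, hence $\kH(\overline y)=\kH'(\overline y)$ for all $y$, i.e.\ $\kH=\kH'$. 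So it suffices to show that the form of $(\kH_g)_\gamma$ agrees with the form of $\kH_{g_\gamma}$ on all pairs $(x,\overline y)$ (recall that as vector spaces $\Gamma(\Omega^1)=\Omega^1$ and $\overline{\Gamma(\Omega^1)}=\overline{\Omega^1}$).

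For $(\kH_g)_\gamma$ I would invoke \eqref{eq:relation} from the proof of Theorem \ref{thm:twisted-hermitian} with $\kH=\kH_g$; since the form of $\kH_g$ is $\langle\omega,\overline\eta\rangle=(\omega,\eta^*)$ by \eqref{29thjan251}, this yields, for all $x,y\in\Omega^1$,
$$\langle x,\overline y\rangle_\gamma=\bar V(\mtwo{y}^*)\,\co{\mone{x}}{\mone{y}^*}\,(\zero{x},\zero{y}^*).$$
For $\kH_{g_\gamma}$ I would first note that, by Proposition \ref{prop:15thdec241} together with Proposition \ref{realmetric}, the twisted pair $\metrictwisted$ is a real $A_\gamma$-covariant metric on $\Omega^1_\gamma$, so Proposition \ref{27thdec242jb} applied to the calculus $\dctwisted$ produces $\kH_{g_\gamma}$ with $\kH_{g_\gamma}(\overline\omega)(\eta)=(\eta,\omega^{*_\gamma})_\gamma$; unfolding exactly as in the derivation of \eqref{29thjan251}, its associated form is $\langle x,\overline y\rangle_{g_\gamma}=(x,y^{*_\gamma})_\gamma$. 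Then I would expand the right-hand side: by \eqref{4thfeb251}, $y^{*_\gamma}=\bar V(\mone{y}^*)\zero{y}^*$, so coassociativity gives that the iterated coaction of $y^{*_\gamma}$ is $\bar V(\mtwo{y}^*)\,\mone{y}^*\ot\zero{y}^*$; and by \eqref{1stdec241jb} and \eqref{nt}, one has $(v,w)_\gamma=\co{\mone{v}}{\mone{w}}(\zero{v},\zero{w})$ for $v,w\in\Omega^1_\gamma$. Combining these two expansions gives $\langle x,\overline y\rangle_{g_\gamma}=\bar V(\mtwo{y}^*)\,\co{\mone{x}}{\mone{y}^*}\,(\zero{x},\zero{y}^*)$, which is precisely the expression obtained above for $\langle x,\overline y\rangle_\gamma$. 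Hence the two forms agree, and the theorem follows from the preliminary observation.

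The individual computations are short, so the only point demanding care is the Sweedler bookkeeping — in particular tracking how the scalar $\bar V$ shifts the iterated-coaction indices on $y^{*_\gamma}$, and checking that the coaction conventions used in \eqref{eq:relation} (on $\overline{\Omega^1}$) and in the expansion of $(x,y^{*_\gamma})_\gamma$ (on $\Omega^1$, where one uses that the coaction is a $\ast$-algebra map so that $(\zero{y})^\ast=\zero{(y^\ast)}$) are mutually consistent. I expect this to be the main, though routine, obstacle; no structural ingredient beyond Theorem \ref{thm:twisted-hermitian} and Propositions \ref{27thdec242jb}, \ref{prop:15thdec241} and \ref{realmetric} is needed.
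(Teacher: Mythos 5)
Your proof is correct. It rests on the same two pillars as the paper's argument: the relation established in the proof of Theorem \ref{thm:twisted-hermitian} (you use its Sweedler form \eqref{eq:relation}, the paper its categorical form \eqref{18thfeb251}), and the identification \eqref{29thjan251} of the form of $\kH_g$ (resp.\ of $\kH_{g_\gamma}$, via Propositions \ref{27thdec242jb}, \ref{prop:15thdec241} and \ref{realmetric}) with the metric pairing; the final reduction to equality of the associated sesquilinear forms is exactly the paper's closing step ``by \eqref{metric:Hermitian}''. The difference is in execution: the paper stays at the level of morphisms, using naturality \eqref{15thdec244}, the definition \eqref{1stdec242jb} of $(~,~)_\gamma$, and crucially the star-object compatibility $\fN_{\Omega^1}^{-1}\Gamma(\star_{\Omega^1})=\star_{\Omega^1_\gamma}$ of Lemma \ref{10thmarch252}, whereas you expand everything element-wise in Sweedler notation: $y^{*_\gamma}=\bar V(\mone{y}^*)\zero{y}^*$, coassociativity plus the fact that the coaction is a $\ast$-map to get $\delta(y^{*_\gamma})=\bar V(\mtwo{y}^*)\,\mone{y}^*\ot\zero{y}^*$, and $(v,w)_\gamma=\co{\mone{v}}{\mone{w}}(\zero{v},\zero{w})$, so that both forms become $\bar V(\mtwo{y}^*)\co{\mone{x}}{\mone{y}^*}(\zero{x},\zero{y}^*)$. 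In effect you re-derive the content of Lemma \ref{10thmarch252} inline for this particular pairing; what you gain is a self-contained, more elementary verification that does not invoke the bar-category machinery of Appendix \ref{B}, at the cost of the Sweedler bookkeeping you flag, which you have carried out correctly.
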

\begin{proof}
	    Let $ \langle ~ , ~ \rangle, \langle ~ , ~ \rangle^\prime $ and $\langle ~ , ~ \rangle_\gamma$ be the morphisms associated to the Hermitian metrics $\kH_g, \kH_{g_\gamma}$ and $ ( \kH_g )_\gamma $ respectively. Then by   \eqref{18thfeb251}, we have that,   
	\begin{align*}
		\langle~,~\rangle_\gamma &= \Gamma(\langle~,~\rangle)\varphi_{\Omega^1, \overline{\Omega^1}} (\id \ot_{B_\cot} \fN_{\Omega^{1}})\\
		&= \Gamma \big( (~, ~)\big) \Gamma\big((\id \ot_{B} \star_{\Omega^1}^{-1}) \big) \varphi_{\Omega^1, \overline{\Omega^1}} (\id \ot_{B_\cot} \fN_{\Omega^{1}}) ~ {\rm (}  {\rm by} ~ \eqref{29thjan251}  {\rm )}\\
		&= \Gamma \big( (~, ~)\big) \varphi_{\Omega^1, \Omega^1} \big(\Gamma (\id) \ot_{B_\gamma}\Gamma( \star_{\Omega^1}^{-1}) \big)  (\id \ot_{B_\cot} \fN_{\Omega^{1}})  \text{ (by \eqref{15thdec244})}\\
		&=  (~, ~)_\gamma  \big(\id \ot_{B_\gamma}\Gamma( \star_{\Omega^1}^{-1})  \fN_{\Omega^{1}}\big) \text{ (by the definition of ~} (~, ~)_\gamma \text{ in  \eqref{1stdec242jb})}\\
		&=  (~, ~)_\gamma  \big(\id \ot_{B_\gamma} ( \star_{\Omega_\gamma^1} )^{-1}) \big) \text{ (by \eqref{eq:21stnov245})}\\
        &= \langle ~ , ~ \rangle^\prime 
	\end{align*}
	by  \eqref{29thjan251}. Then by \eqref{metric:Hermitian}, it can be easily checked  that $ {\kH}_{g_\gamma} = ( {\kH}_g )_\gamma. $
    	\end{proof}

We have the following corollary:

\begin{cor} \label{28thmarch251}
	The following sets are in one to one correspondence:
	\begin{enumerate}
		\item $A$-covariant real metrics on $\Omega^1, $
		
		\item $A$-covariant Hermitian metrics on $\Omega^1, $
		
		\item $A_\gamma$-covariant real metrics on $\Omega^1_\gamma, $
		
		\item $A_\gamma$-covariant Hermitian metrics on $\Omega^1_\gamma. $
	\end{enumerate}
\end{cor}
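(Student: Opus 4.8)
The strategy is to assemble the four bijections from results already at hand, arranging them so that the compatibility conditions fit together. First I would recall Proposition~\ref{27thdec242jb}, which gives the bijection between (1) and (2), i.e., between $A$-covariant real metrics on $\Omega^1$ and $A$-covariant Hermitian metrics on $\Omega^1$, via $\metric \mapsto \kH_g$. Next, Proposition~\ref{prop:15thdec241} gives the bijection between (1) and (3): an $A$-covariant metric $\metric$ on $\Omega^1$ corresponds to the $\gamma$-deformed metric $\metrictwisted$ on $\Omega^1_\gamma$, with inverse given by $\overline{\gamma}$-deformation. Proposition~\ref{realmetric} ensures that this bijection restricts to the subsets of \emph{real} metrics (in both directions, since $\overline{\gamma}$ is again a unitary $2$-cocycle on $A_\gamma$ and $\overline{\overline{\gamma}}$-deformation returns $\metric$). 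Finally, applying Proposition~\ref{27thdec242jb} to the differential calculus $\dctwisted$ over $B_\gamma$ gives the bijection between (3) and (4), namely $\metrictwisted \mapsto \kH_{g_\gamma}$.

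\textbf{Key steps, in order.} (i) Set up the bijection (1)$\leftrightarrow$(2) from Proposition~\ref{27thdec242jb}. (ii) Set up the bijection (1)$\leftrightarrow$(3) from Proposition~\ref{prop:15thdec241}, and invoke Proposition~\ref{realmetric} to see it restricts to real metrics. (iii) Apply Proposition~\ref{27thdec242jb} over $B_\gamma$ to get (3)$\leftrightarrow$(4). (iv) For coherence, observe via Theorem~\ref{5thfeb253} that the composite bijection (2)$\to$(1)$\to$(3)$\to$(4) sends $\kH_g$ to $\kH_{g_\gamma} = (\kH_g)_\gamma$, so the square of bijections commutes; this also identifies the direct (2)$\leftrightarrow$(4) correspondence as the $\gamma$-deformation of Hermitian metrics from Theorem~\ref{thm:twisted-hermitian} (restricted to metrics of the form $\kH_g$). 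One should check that $\gamma$-deformation and $\overline{\gamma}$-deformation of Hermitian metrics are mutually inverse, which follows from the analogous statement for morphisms (the $\overline{\gamma}$-deformation of $\Gamma(\cE)$ is $\cE$, and $\fS$, $\fN_\cE$ are isomorphisms whose $\overline{\gamma}$-counterparts invert them).

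\textbf{Main obstacle.} The content is essentially bookkeeping: all four individual bijections are already established, so the only real work is verifying that the identifications are mutually compatible, i.e., that the diagram of four bijections commutes. The mild subtlety is making sure Proposition~\ref{realmetric} genuinely gives a bijection on \emph{real} metrics rather than merely a well-defined map in one direction — this needs the remark that $\overline{\gamma}$ is a unitary $2$-cocycle on $A_\gamma$ and that the $\overline{\gamma}$-deformation of $\metrictwisted$ recovers $\metric$ (Proposition~\ref{prop:15thdec241}) together with Remark~\ref{27dec241jb}-style facts, so that applying Proposition~\ref{realmetric} with $\overline{\gamma}$ shows the inverse map also preserves reality. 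Once that is in place, the corollary is immediate from chaining the three bijections, with Theorem~\ref{5thfeb253} supplying the commutativity that makes the fourth (diagonal) correspondence consistent.
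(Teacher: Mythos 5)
Your proposal is correct and follows essentially the same route as the paper: the bijections (1)$\leftrightarrow$(2) and (3)$\leftrightarrow$(4) come from Proposition \ref{27thdec242jb} (applied over $B$ and over $B_\gamma$), and (1)$\leftrightarrow$(3) comes from combining the second assertion of Proposition \ref{prop:15thdec241} with Proposition \ref{realmetric}. Your additional checks — that $\overline{\gamma}$ is a unitary $2$-cocycle on $A_\gamma$ so the inverse also preserves reality, and the compatibility of the square via Theorem \ref{5thfeb253} — are sound but go beyond what the statement requires, and the paper leaves them implicit.
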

\begin{proof}
The bijection between the sets in (1) and (2) (and similarly, between (3) and (4)) follow from Proposition \ref{27thdec242jb}. On the other hand,  a combination of the second statement of Proposition \ref{prop:15thdec241} with Proposition \ref{realmetric} yields a bijection between the sets in (1) and (3).
\end{proof}

We end this section by recalling the following result by Beggs and Majid:

\begin{theorem} (\cite[Proposition 9.28]{BeggsMajid:Leabh}) \label{thm:15thdec243}
Let $(\Omega^\bullet, \wedge, d)$ be an $A$-covariant differential calculus on a left $A$-comodule algebra $B.$  Moreover, assume that  $(\nabla_{\Omega^1}, \sigma)$ is the unique  $A$-covariant Levi-Civita connection for a covariant metric $\metric$.
   
    Then $\nabla_{\Omega_\cot^1}:= \varphi^{-1}_{\Omega^1, \Omega^1} \circ \Gamma ( \nabla_{\Omega^1} ) $ is a bimodule connection and is  the unique $A_\gamma$-covariant Levi-Civita connection for the metric $\metrictwisted.$
\end{theorem}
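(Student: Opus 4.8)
The plan is to verify the three defining properties of a Levi-Civita connection for $\metrictwisted$ one at a time — that $\nabla_{\Omega^1_\gamma}$ is a bimodule connection, that it is torsionless, and that it is compatible with $g_\gamma$ — and then to obtain uniqueness by running the same argument with the convolution-inverse cocycle $\overline{\gamma}$. The first property is immediate: Proposition \ref{prop:29thnov242} applied to $\cE = \Omega^1$ gives that $(\nabla_{\Omega^1_\gamma}, \sigma_\gamma)$ is an $A_\gamma$-covariant bimodule connection on $\Omega^1_\gamma$. For torsionlessness, recall that $\wedge_\gamma = \Gamma(\wedge)\circ\varphi_{\Omega^1,\Omega^1}$ by definition \eqref{1stdec241jb} (equivalently \eqref{29thnov24jb1}), that $d_\gamma = \Gamma(d)$, and that $\nabla_{\Omega^1_\gamma} = \varphi^{-1}_{\Omega^1,\Omega^1}\circ\Gamma(\nabla_{\Omega^1})$. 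Hence
\begin{align*}
\wedge_\gamma \circ \nabla_{\Omega^1_\gamma} - d_\gamma
&= \Gamma(\wedge) \circ \varphi_{\Omega^1,\Omega^1} \circ \varphi^{-1}_{\Omega^1,\Omega^1} \circ \Gamma(\nabla_{\Omega^1}) - \Gamma(d) \\
&= \Gamma(\wedge \circ \nabla_{\Omega^1}) - \Gamma(d) = \Gamma(\wedge \circ \nabla_{\Omega^1} - d) = 0,
\end{align*}
since $\Gamma$ is additive and $\wedge \circ \nabla_{\Omega^1} = d$ by hypothesis.

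The substantive step is metric compatibility. Write $\nabla^{\otimes}$ for the tensor product connection on $\Omega^1 \otimes_B \Omega^1$ built from $(\nabla_{\Omega^1}, \sigma)$ via the Beggs--Majid formula
\[ \nabla^{\otimes} = \nabla_{\Omega^1} \otimes_B \id + (\sigma \otimes_B \id)(\id \otimes_B \nabla_{\Omega^1}), \]
and $\nabla^{\otimes}_\gamma$ for the corresponding connection on $\Omega^1_\gamma \otimes_{B_\gamma} \Omega^1_\gamma$ built from $(\nabla_{\Omega^1_\gamma}, \sigma_\gamma)$; by Definition \ref{15thdec242}, compatibility of $\nabla_{\Omega^1_\gamma}$ with $\metrictwisted$ is the statement $\nabla^{\otimes}_\gamma(g_\gamma) = 0$. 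I would first establish that these two tensor product connections are intertwined by the canonical isomorphisms coming from the monoidal structure: letting $\psi\colon \Omega^1_\gamma \otimes_{B_\gamma} \Omega^1_\gamma \otimes_{B_\gamma} \Omega^1_\gamma \to \Gamma(\Omega^1 \otimes_B \Omega^1 \otimes_B \Omega^1)$ be the composite of $\varphi$'s (well-defined by the coherence of $\varphi$ with the associators), one has
\[ \nabla^{\otimes}_\gamma = \psi^{-1} \circ \Gamma(\nabla^{\otimes}) \circ \varphi_{\Omega^1, \Omega^1}. \]
This is checked term by term: the summand $\nabla_{\Omega^1} \otimes_B \id$ deforms to $\nabla_{\Omega^1_\gamma} \otimes_{B_\gamma} \id$ by unwinding the definition \eqref{eq:3rddec241} of the twisted connection together with naturality of $\varphi$, while the summand $(\sigma \otimes_B \id)(\id \otimes_B \nabla_{\Omega^1})$ deforms to $(\sigma_\gamma \otimes_{B_\gamma} \id)(\id \otimes_{B_\gamma} \nabla_{\Omega^1_\gamma})$ using \eqref{15thdec244} for the $B$-bilinear map $\sigma$ and again \eqref{eq:3rddec241} for $\nabla_{\Omega^1}$; all reassociations are absorbed into $\psi$. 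Granting this identity, and using $\varphi_{\Omega^1,\Omega^1}(g_\gamma) = g$ (i.e. $g_\gamma = \varphi^{-1}_{\Omega^1,\Omega^1}(g)$, as observed in the proof of Proposition \ref{realmetric}) together with $\nabla^{\otimes}(g) = 0$, we conclude
\begin{align*}
\nabla^{\otimes}_\gamma(g_\gamma) &= \psi^{-1}\big(\Gamma(\nabla^{\otimes})(\varphi_{\Omega^1,\Omega^1}(g_\gamma))\big) = \psi^{-1}\big(\Gamma(\nabla^{\otimes})(g)\big) \\
&= \psi^{-1}\big(\Gamma(\nabla^{\otimes}(g))\big) = \psi^{-1}(0) = 0,
\end{align*}
which is precisely compatibility with $\metrictwisted$.

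For uniqueness, let $(\nabla', \sigma')$ be any $A_\gamma$-covariant Levi-Civita connection for $\metrictwisted$. Since $\overline{\gamma}$ is a $2$-cocycle on $A_\gamma$ whose associated deformation returns $\dc$ and, by Proposition \ref{prop:15thdec241}, the metric $\metric$ (cf. Remark \ref{27thdec241jb}), the three steps above applied with $\overline{\gamma}$ in place of $\gamma$ show that the $\overline{\gamma}$-deformation of $\nabla'$ is an $A$-covariant Levi-Civita connection for $\metric$; by the assumed uniqueness on the untwisted calculus it equals $\nabla_{\Omega^1}$. Deforming back by $\gamma$ — which is inverse to deforming by $\overline{\gamma}$, as one checks directly from \eqref{eq:3rddec241} and $\varphi \circ \varphi^{-1} = \id$ — gives $\nabla' = (\nabla_{\Omega^1})_\gamma = \nabla_{\Omega^1_\gamma}$. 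I expect the main obstacle to be the intertwining identity $\nabla^{\otimes}_\gamma = \psi^{-1} \circ \Gamma(\nabla^{\otimes}) \circ \varphi_{\Omega^1, \Omega^1}$: although conceptually it merely says that the tensor product connection construction is transported by the monoidal equivalence $\Gamma$, making it precise requires careful bookkeeping of the natural isomorphism $\varphi$ — in particular its interaction with the associators and with the non-$B$-bilinear map $\nabla_{\Omega^1}$ — and, once unwound explicitly, the $2$-cocycle identities of Lemma \ref{lem:formula}.
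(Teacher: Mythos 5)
Your proposal is correct and follows essentially the route the paper takes: the existence and compatibility statements are exactly what the cited result of Beggs--Majid provides --- your intertwining identity for the tensor-product connection, though only sketched, is true and is the substance of that citation --- and your untwisting argument for uniqueness is precisely the paper's own combination of Remark \ref{27thdec241jb} with the second assertion of Proposition \ref{prop:15thdec241}. The only point to tidy is that the ``term-by-term'' deformation should be carried out on representatives, since $\nabla_{\Omega^1}\otimes_B \id$ alone is not well defined on $\Omega^1\otimes_B\Omega^1$; only the full sum defining the tensor-product connection is.
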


Here, the definition of the Levi-Civita connection is as in Definition \ref{15thdec242}. The uniqueness follows by combining Remark \ref{27thdec241jb} with the second assertion of Proposition \ref{prop:15thdec241}.

\section{Chern connections on  twisted holomorphic bimodules} \label{8thmarch253}

This section relates Chern connections on holomorphic bimodules  with cocycle deformations. In order to prove the main result of this section (Theorem \ref{thm:twisted_chern}), we have to go through two steps. Firstly, in Subsection \ref{18thfeb253}, we quickly observe that under unitary cocycle deformations, complex structures get deformed to complex structures. Then in Subsection \ref{18thfeb254}, we prove a similar result about holomorphic bimodules. Thus, if $\mathcal{E}$ is a covariant holomorphic $B$-bimodule equipped with an $A$-covariant Hermitian metric $\kH$ (in the sense of Definition \ref{4thdec231}) and $\gamma$ an unitary $2$-cocycle on $A,$ then by a result of Beggs and Majid (Theorem \ref{thm:Chern}), there exists a unique Chern connection for the pair $ (\Gamma ( \mathcal{E}), {\kH}_\gamma  ),$ where $\kH_\gamma$ is the twisted Hermitian metric of Theorem \ref{thm:twisted-hermitian}. We prove that this connection is nothing but the cocycle deformation of the Chern connection for the pair $( \mathcal{E}, \kH  ).$ 

We will recall the definitions of complex structures, holomorphic bimodules and Chern connections in the relevant subsections. As in the previous section, we will repeatedly need the language of bar categories which has been defined in Section \ref{8thmarch251}.

\subsection{Complex structures} \label{24thfeb251}

We begin by recalling the definition of a complex structure for a $\ast$-differential calculus. 
The symbol $\mathbb{N}_0$ will denote the set $\mathbb{N} \cup \{ 0 \}. $

\begin{definition} \cite{MMF2} \label{18thfeb255}
	A {\em  complex structure} $\Omega^{(\bullet,\bullet)}$ for a $\ast$-differential calculus~$(\Omega^{\bullet}, \wedge, d)$ on a $\ast$-algebra $B$
	is an $\bbN^2_0$-algebra grading $\bigoplus_{(p,q)\in \bN^2_0} \Omega^{(p,q)}$
	for $\Omega^{\bullet}$ such that for all $k \in \bbN_0$ and $(p,q) \in \bN^2_0$, we have 
	\[
	\Omega^k = \bigoplus_{p+q = k} \Omega^{(p,q)},\qquad
	\big(\Omega^{(p,q)}\big)^* = \Omega^{(q,p)},\qquad
	d( \Omega^{(p,q)}) \subseteq \Omega^{(p+1,q)} \oplus \Omega^{(p,q+1)}.
	\]
	Here, $\Omega^{(0,0)} = \Omega^0 = B$.
\end{definition}

The notion of complex structures in noncommutative geometry has been studied by many mathematicians for which we refer to \cite{PolSchwar2003}, \cite{KLvSPodles}, \cite{BS}, and references therein.
By a combination of \cite[Lemma 2.15 and Remark 2.16]{MMF2}, the above definition  is equivalent to those in \cite{BS} and  \cite{KLvSPodles}.

In the setup of Definition \ref{18thfeb255}, it is clear that  $\Omega^{(p,q)}$ is a $B$-bimodule.
Moreover, for $p, q \in \bN_0$,  $\pi^{p,q}: \Omega^{p + q} \rightarrow \Omega^{(p, q)} $ will denote the canonical projections associated to the decomposition $  \Omega^k = \bigoplus_{p+q = k} \Omega^{(p,q)},$ while $\partial, \overline{\partial}$ will denote the maps
\begin{equation}\label{2nddec241jb}
	\partial:= \pi^{(p + 1, q)} \circ d: \Omega^{(p, q)} \rightarrow \Omega^{(p + 1, q)} ~ \text{and} ~ \overline{\partial}:= \pi^{(p, q + 1)} \circ d: \Omega^{(p,q)} \rightarrow \Omega^{(p, q + 1)}.
\end{equation}

In the sequel, we will denote a complex structure on a differential calculus $(\Omega^\bullet, \wedge, d)$ by the quadruple $(\Omega^{(\bullet, \bullet)}, \wedge, \partial, \overline{\partial})$.

\begin{definition} \label{12thdec234}
	A complex structure $ (\Omega^{(\bullet, \bullet)}, \wedge, \partial, \overline{\partial}) $ on a left $A$-comodule $\ast$-algebra $B$ is said to be left $A$-covariant if  $ (\Omega^\bullet, \wedge, d) $ is a covariant differential $\ast$-calculus in the sense of Definition \ref{8thmarch256} and moreover, if $\Omega^{(p,q)}$ is a left $A$-comodule for all $ (p, q) \in \bN^2_0. $
\end{definition}

In this case, the covariance of  $(\Omega^\bullet, \wedge, d)$ implies that the projections $\pi^{p,q}$ and $d$ are $A$-covariant and hence the bimodules $\Omega^{(p,q)}$ are objects in $\cat$ and the maps $\partial$ and $\overline{\partial}$ are also $A$-covariant.

As a consequence of Remark \ref{24thjuly241}, we make the following observation:

\begin{remark} \label{5thmarch251}
 If $\complexdc$ is a complex structure on $B$ such that $\Omega^1$ admits a metric $\metric,$ then $\Omega^{(1,0)}$ and $\Omega^{(0,1)}$ are finitely generated and projective as left $B$-modules.   
\end{remark}

\subsection{Twisting a Covariant Complex Structure} \label{18thfeb253}
Let $\complexdc$ be an $A$-covariant complex structure on a left $A$-comodule $*$-algebra $B$ and $\gamma$ a unitary $2$-cocycle on $A$. Consider the cocycle twisted $A_\gamma$-covariant $\ast$-differential calculus $\dctwisted $ of Proposition \ref{prop:5thdec241}. Since $\Omega^{(p,q)}$ is an object of $\cat$ and $\partial, \delbar{}$ are morphisms of the category $\qMod{A}{}{}{}$, we can make the following definitions:
$$ \Omega^{(p,q)}_\gamma:= \Gamma(\Omega^{(p,q)}),~ \partial_\gamma := \Gamma(\partial),~ (\delbar{})_\gamma:= \Gamma(\delbar{}),$$ 
where $\partial$ and $\delbar{}$ are the maps defined in \eqref{2nddec241jb}. Then we have the following result:

\begin{prop} \label{26thfeb251}
	Let $\complexdc$ be an $A$-covariant complex structure on a left $A$-comodule $*$-algebra $B$ and $\gamma$ a unitary $2$-cocycle on $A$. Then $\complexdctwisted$ is an $A_\gamma$-covariant  complex structure for the $\ast$-differential calculus $\dctwisted.$
\end{prop}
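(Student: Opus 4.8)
The plan is to verify the three defining conditions of a complex structure (Definition \ref{18thfeb255}) for the quadruple $\complexdctwisted$, transporting each from the known complex structure $\complexdc$ via the monoidal equivalence $\Gamma$. First I would note that by Proposition \ref{prop:5thdec241} the twisted calculus $\dctwisted$ is already an $A_\gamma$-covariant $\ast$-differential calculus, so the ambient structure is in place; the bimodules $\Omega^{(p,q)}_\gamma = \Gamma(\Omega^{(p,q)})$ are objects of $\tcat$, and since $\Gamma$ is the identity on underlying vector spaces, the internal direct sum decomposition $\Omega^k_\gamma = \bigoplus_{p+q=k}\Omega^{(p,q)}_\gamma$ holds at the level of vector spaces. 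What needs checking is that this is a \emph{$B_\gamma$-bimodule} decomposition, i.e.\ each $\Omega^{(p,q)}_\gamma$ is a $B_\gamma$-subbimodule of $\Omega^k_\gamma$; this is immediate from the formula for the twisted actions $b\cdot_\gamma m = \gamma(\mone{b}\otimes\mone{m})\,\zero{b}\cdot\zero{m}$, since $\Omega^{(p,q)}$ being an $A$-covariant $B$-subbimodule of $\Omega^k$ means it is closed under the comodule coaction and under the original left and right $B$-actions, hence under $\cdot_\gamma$ as well.

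Next I would verify the $\N_0^2$-algebra grading property, namely $\wedge_\gamma$ sends $\Omega^{(p,q)}_\gamma \otimes_{B_\gamma}\Omega^{(r,s)}_\gamma$ into $\Omega^{(p+r,q+s)}_\gamma$. Using the defining formula $\omega\wedge_\gamma\eta = \gamma(\mone{\omega}\otimes\mone{\eta})\,\zero{\omega}\wedge\zero{\eta}$ from \eqref{29thnov24jb1}, and the fact that $\Omega^{(p,q)},\Omega^{(r,s)}$ are $A$-subcomodules, each Sweedler component $\zero{\omega}$ still lies in $\Omega^{(p,q)}$ and $\zero{\eta}$ in $\Omega^{(r,s)}$, so $\zero{\omega}\wedge\zero{\eta}\in\Omega^{(p+r,q+s)}$ by the grading property of the original complex structure, and scaling by $\gamma(\mone{\omega}\otimes\mone{\eta})\in\C$ stays in that homogeneous component. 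For the $\ast$-condition $\big(\Omega^{(p,q)}_\gamma\big)^{\ast_\gamma}=\Omega^{(q,p)}_\gamma$, I would invoke the formula \eqref{4thfeb251} for $\ast_\gamma$ on $B_\gamma$ and its extension to $\Omega^\bullet_\gamma$, $\omega^{\ast_\gamma} = \bar V(\mone{\omega}^\ast)\,\zero{\omega}^\ast$: since $\zero{\omega}\in\Omega^{(p,q)}$ implies $\zero{\omega}^\ast\in\Omega^{(q,p)}$ and the scalar $\bar V(\mone{\omega}^\ast)$ does not change the bidegree, we get $\omega^{\ast_\gamma}\in\Omega^{(q,p)}_\gamma$; the reverse containment follows symmetrically (or from $\ast_\gamma$ being an involution).

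Finally, for the derivative condition $d_\gamma(\Omega^{(p,q)}_\gamma)\subseteq\Omega^{(p+1,q)}_\gamma\oplus\Omega^{(p,q+1)}_\gamma$, I would use $d_\gamma = \Gamma(d)$, which as a map of underlying vector spaces is literally $d$; since $d$ is $A$-covariant and $d(\Omega^{(p,q)})\subseteq\Omega^{(p+1,q)}\oplus\Omega^{(p,q+1)}$ for the original complex structure, the same set-theoretic inclusion holds for $d_\gamma$, and the target is $\Omega^{(p+1,q)}_\gamma\oplus\Omega^{(p,q+1)}_\gamma$ as a vector space. Equivalently, one can phrase this as $(\delbar{})_\gamma = \Gamma(\delbar{}) = \pi^{(p,q+1)}_\gamma\circ d_\gamma$ and $\partial_\gamma = \Gamma(\partial) = \pi^{(p+1,q)}_\gamma\circ d_\gamma$, where $\pi^{(p,q)}_\gamma = \Gamma(\pi^{(p,q)})$ are the twisted projections, and then $d_\gamma = \partial_\gamma + (\delbar{})_\gamma$ on $\Omega^{(p,q)}_\gamma$ follows by applying $\Gamma$ to the identity $d|_{\Omega^{(p,q)}} = \partial + \delbar{}$. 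The only mild subtlety — and the closest thing to an obstacle — is bookkeeping: one must be careful that "$\Omega^{(p,q)}_\gamma$ is a $B_\gamma$-bimodule direct summand" and not merely a vector-space summand, but as noted this is forced because $A$-covariant $B$-subbimodules are automatically closed under the twisted actions. In short, every axiom is obtained by transporting the corresponding axiom for $\complexdc$ through the strict-on-objects, identity-on-vector-spaces functor $\Gamma$, using only that $\gamma$ and $\bar V$ take scalar values and that the components $\Omega^{(p,q)}$ are $A$-subcomodules; no genuinely new computation is required.
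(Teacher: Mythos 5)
Your proposal is correct and follows essentially the same route as the paper's proof: invoke Proposition \ref{prop:5thdec241} for the twisted $\ast$-calculus, transport the direct-sum decomposition and the $d$-condition through $\Gamma$ (which is the identity on underlying vector spaces), and verify $\big(\Omega^{(p,q)}_\gamma\big)^{\ast_\gamma}=\Omega^{(q,p)}_\gamma$ via the formula $\omega^{\ast_\gamma}=\bar V(\mone{\omega}^\ast)\,\zero{\omega}^\ast$, with the reverse containment from involutivity — exactly the paper's two-step argument using Remark \ref{25thnov241} and Lemma \ref{lem:22ndnov241}. You additionally spell out the compatibility of $\wedge_\gamma$ with the bigrading, which the paper dismisses as "easily checked," and your scalar-valued-cocycle argument for it is correct.
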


\begin{proof}
	Since $\gamma$ is unitary, we have that $\Omega_\cot^\bullet$ is an $A_\gamma$-covariant differential $*_\gamma$-algebra on $B_\gamma$. Moreover, since the functor $\Gamma: \cat \to \tcat$ is a categorical equivalence, we have $\Omega^k_\gamma=\bigoplus_{p+q=k} \Gamma(\Omega^{(p,q)})= \oplus_{p + q = k} \Omega^{(p,q)}_\gamma$. Then  $d_\gamma(\Omega_\gamma^{(p,q)})\subseteq \Omega_\cot^{(p+1,q)}\oplus \Omega_\cot^{(p,q+1)}.$	

    Next, we verify that $\left(\Omega^{(p,q)}_\gamma\right)^{*\gamma}= \Omega^{(q,p)}_\gamma.$
    If $\omega\in \Omega^{(p,q)}_\gamma,$ then by \eqref{4thfeb251}, 
    $$\omega^{*_\gamma} = \bar{V}(\mone{\omega^*})~\omega_{(0)}^* \in \Omega^{(q,p)}_\gamma.$$
    Conversely, for $\eta \in \Omega^{(q,p)}_\gamma,$ we have
    $(\bar{V}(\mone{\eta^*})~\eta_{(0)}^*)^{*_\gamma} = \eta$
    by virtue of Remark \ref{25thnov241} and Lemma \ref{lem:22ndnov241}. Since $ \bar{V}(\mone{\eta^*})~\eta_{(0)}^*\in \Omega^{(p,q)}_\gamma, $  we conclude that $ \Omega^{(q,p)}_\gamma \subseteq \left(\Omega^{(p,q)}_\gamma\right)^{*\gamma}.$
    The rest of the properties of a complex structure can be checked easily.
\end{proof}

\subsection{Deformation of holomorphic bimodules} \label{18thfeb254}

Now we are in a position to define and twist holomorphic bimodules.

\begin{definition} \label{11thdec231}
	Suppose that $ (\Omega^{(\bullet, \bullet)}, \wedge, \partial, \overline{\partial}) $ is an $A$-covariant complex structure on an $A$-comodule $\ast$-algebra $B$. A covariant holomorphic structure on an object  $\mathcal{E}$ of $\cat$ is the choice of a covariant left $\overline{\partial}$-connection $\overline{\partial}_{\mathcal{E}}$ on $\mathcal{E}$ whose holomorphic curvature vanishes. Concretely, this means that we have an $A$-covariant  $\mathbb{C}$-linear map
    \begin{equation} \label{21stfeb251}
    \overline{\partial}_{\mathcal{E}}: \mathcal{E}\to \Omega^{(0,1)}\otimes_B \mathcal{E} ~ \text{such that} ~ \overline{\partial}_{\mathcal{E}}(be)= b \overline{\partial}_{\mathcal{E}}(e)+ \overline{\partial}(b)\otimes_B e 
    \end{equation}
 for all $b \in B, e \in \cE,$   and moreover, the map 
     $$ R_{\mathcal{E}}^{\Hol}: \mathcal{E} \rightarrow \Omega^{(0, 2)} \otimes_B \mathcal{E}, ~ R_{\mathcal{E}}^{\Hol}(e)=(\ol{\partial}\otimes_B id- id\wedge \ol{\partial}_{\mathcal{E}})\ol{\partial}_{\mathcal{E}} (e)$$
     is identically zero.
    
    In this case, we say that the pair $(\mathcal{E}, \overline{\partial}_{\mathcal{E}})$ is a covariant holomorphic $B$-bimodule. 
	\end{definition}

With the setup and notations of Definition \ref{11thdec231}, we have the following result:

\begin{theorem} \label{24thfeb252}
	Suppose that $(\cE, \delbar{\cE})$ is an $A$-covariant holomorphic bimodule in $\cat$ and $\gamma$ a unitary $2$-cocycle on $A.$ We define
    \begin{equation}\label{16thdec241}
	\delbar{\Gamma(\cE)}:= \varphi_{\Omega^{(0,1)}, \cE}^{-1} \Gamma(\delbar{\cE}): \Gamma(\cE) \to \Omega^{(0,1)}_\gamma \ot_{B_\cot} \Gamma(\cE).
\end{equation} 
    Then the pair $(\Gamma(\cE), \delbar{\Gamma(\cE)})$ is an $A_\gamma$-covariant holomorphic bimodule in $\tcat$.
\end{theorem}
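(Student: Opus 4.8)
The plan is to verify, in order, that $\delbar{\Gamma(\cE)}$ as defined in \eqref{16thdec241} is (i) $A_\gamma$-covariant, (ii) a left $(\delbar{})_\gamma$-connection, and (iii) has vanishing holomorphic curvature. For (i), note that $\delbar{\cE}$ is a morphism in $\qMod{A}{}{}{}$, hence $\Gamma(\delbar{\cE})$ is $A_\gamma$-covariant by \eqref{16thNov241}, and $\varphi_{\Omega^{(0,1)}, \cE}^{-1}$ is a morphism in $\tcat$ by the naturality of $\varphi$; composing two $A_\gamma$-covariant maps gives an $A_\gamma$-covariant map. This is essentially identical to the covariance part of Proposition \ref{prop:29thnov242}, so it will be quick.

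For (ii), the key point is the Leibniz rule $\delbar{\Gamma(\cE)}(b \cdot_\gamma e) = b \cdot_\gamma \delbar{\Gamma(\cE)}(e) + (\delbar{})_\gamma(b) \ot_{B_\gamma} e$ for $b \in B_\gamma$, $e \in \Gamma(\cE)$. I would handle this exactly as in the proof of Proposition \ref{prop:29thnov242} (the bimodule-connection statement there is proved for $d$-connections, and the same argument works verbatim for $\delbar{}$-connections since $\delbar{} = \Gamma^{-1}$ applied appropriately and $(\delbar{})_\gamma = \Gamma(\delbar{})$ is the relevant derivation on $\Omega^\bullet_\gamma$). Concretely, one expands $b \cdot_\gamma e = \co{\mone{b}}{\mone{e}} \zero{b} \cdot \zero{e}$, applies $\Gamma(\delbar{\cE})$ using the ordinary Leibniz rule for $\delbar{\cE}$, then applies $\varphi^{-1}_{\Omega^{(0,1)},\cE}$, and repeatedly uses the cocycle identities of Lemma \ref{lem:formula} together with $A$-covariance of $\delbar{}$ and of $\delbar{\cE}$ to reassemble the two terms into the twisted form. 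Since the excerpt already asserts the analogous statement for general bimodule connections in Proposition \ref{prop:29thnov242}, I would simply invoke that proposition applied to the $\delbar{}$-calculus, or mimic its proof; in any case this step is routine.

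The substantive step is (iii): showing $R^{\Hol}_{\Gamma(\cE)} = 0$. Here I would argue categorically rather than by brute computation. The curvature operator $R^{\Hol}_{\cE} = (\delbar{} \ot_B \id - \id \wedge \delbar{\cE}) \circ \delbar{\cE}$ is built from $\delbar{\cE}$, the wedge map $\wedge: \Omega^{(0,1)} \ot_B \Omega^{(0,1)} \to \Omega^{(0,2)}$, and $\delbar{} = \delbar{\Omega^{(0,1)}}$, all of which are morphisms (or appropriately intertwine) in a way compatible with the monoidal functor $\Gamma$. The guiding principle is that $\Gamma$ is a monoidal equivalence carrying the $\ast$-calculus $\dc$ with its complex structure to $\dctwisted$ with the complex structure of Proposition \ref{26thfeb251}, and the construction $\delbar{\cE} \mapsto \delbar{\Gamma(\cE)}$ is precisely the image of $\delbar{\cE}$ under this transport of structure. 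Hence $R^{\Hol}_{\Gamma(\cE)}$ should equal, up to the coherence isomorphisms $\varphi$, the transported operator $\Gamma(R^{\Hol}_{\cE})$ conjugated by suitable $\varphi$'s; since $R^{\Hol}_{\cE} = 0$, so is $R^{\Hol}_{\Gamma(\cE)}$. To make this rigorous I would: unfold $R^{\Hol}_{\Gamma(\cE)}(e)$ using \eqref{16thdec241} twice; insert $\varphi \varphi^{-1} = \id$ at the appropriate tensor junction; use the hexagon/naturality identity \eqref{15thdec244} to move $\Gamma$ past the wedge map (noting $\wedge_\gamma$ is defined via \eqref{1stdec241jb}) and past $\delbar{} = \Gamma(\delbar{})$ composed with the relevant $\varphi$; and collect everything as $\varphi^{-1}_{\Omega^{(0,2)},\cE} \circ \Gamma\big((\delbar{} \ot_B \id - \id \wedge \delbar{\cE}) \delbar{\cE}\big) = \varphi^{-1}_{\Omega^{(0,2)},\cE} \circ \Gamma(R^{\Hol}_{\cE}) = \varphi^{-1}_{\Omega^{(0,2)},\cE} \circ \Gamma(0) = 0$.

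The main obstacle I anticipate is bookkeeping in step (iii): correctly tracking which $\varphi_{V,W}$ appears at which tensor slot when one applies $\delbar{\Gamma(\cE)}$, then $\delbar{\Omega^{(0,1)}_\gamma} \ot \id$ and $\id \wedge_\gamma \delbar{\Gamma(\cE)}$, and then uses associativity of $\ot_{B_\gamma}$ to combine them — the diagram to be checked is exactly the statement that $R^{\Hol}$ is natural with respect to the monoidal functor $\Gamma$, and the only real work is confirming that the two terms $\delbar{} \ot \id$ and $\id \wedge \delbar{\cE}$ are each transported correctly (the first via \eqref{15thdec244} applied with $f = \delbar{}$, $g = \id$; the second via \eqref{15thdec244} applied with $f = \id$, $g = \delbar{\cE}$, followed by the definition of $\wedge_\gamma$). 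Once those two pieces are matched, subtraction and the vanishing of $R^{\Hol}_{\cE}$ finish the proof. I would also remark at the end that applying the same construction with $\bar\gamma$ recovers $(\cE, \delbar{\cE})$, by Remark \ref{27thdec241jb}, so the correspondence $\cE \mapsto \Gamma(\cE)$ on holomorphic bimodules is a bijection — though this is not strictly required by the statement.
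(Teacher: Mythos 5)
Your steps (i) and (ii) are fine and coincide with the paper, which likewise disposes of the $\delbar{}$-connection property by a verbatim adaptation of the proof of Proposition \ref{prop:29thnov242}, and your target identity in step (iii) — that $\varphi_{\Omega^{(0,2)},\cE}\circ\big((\delbar{})_\gamma\ot_{B_\gamma}\id-\id\wedge_\gamma\delbar{\Gamma(\cE)}\big)\circ\varphi^{-1}_{\Omega^{(0,1)},\cE}=\Gamma\big(\delbar{}\ot_B\id-\id\wedge\delbar{\cE}\big)$ — is exactly the paper's equation \eqref{21stfeb252}. The gap is in how you propose to prove it: you cannot invoke \eqref{15thdec244} with $f=\delbar{}$, $g=\id$ or with $f=\id$, $g=\delbar{\cE}$. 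That naturality statement is for morphisms of $\cat$, i.e.\ $B$-bimodule maps, and neither $\delbar{}$ nor $\delbar{\cE}$ is $B$-linear; worse, the individual operators $\delbar{}\ot_B\id$ and $\id\wedge\delbar{\cE}$ are not even well defined on $\Omega^{(0,1)}\ot_B\cE$ (their Leibniz defects are what cancel in the difference), so the termwise transport you describe is not meaningful. The difference is a single well-defined map, but it is only a morphism of $\qMod{A}{}{}{}$, not of $\cat$, and it is not of the form $f\ot_B g$; moreover the twisted operator $(\delbar{})_\gamma\ot_{B_\gamma}\id-\id\wedge_\gamma\delbar{\Gamma(\cE)}$ is \emph{defined} by the same formula in $\tcat$, not as a $\varphi$-conjugate, so there is genuine content in identifying the two — it is not a formal consequence of monoidality of $\Gamma$.

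The paper closes this gap by a direct Sweedler computation: writing $\delbar{\cE}(\zero{e})$ in components, using the covariance identity \eqref{16thdec242}, the definition of $\wedge_\gamma$ and of $\varphi^{\pm1}$, and crucially the cocycle identity \eqref{iv} of Lemma \ref{lem:formula} together with $\bar{\gamma}\ast\gamma=\epsilon\otimes\epsilon$, to show that all cocycle factors cancel and \eqref{21stfeb252} holds; then $\varphi_{\Omega^{(0,2)},\cE}R^{\Hol}_{\Gamma(\cE)}=\Gamma(R^{\Hol}_{\cE})=0$ exactly as you intend. So your skeleton is correct, but the "bookkeeping" you flag as routine is in fact the substantive step, and it requires this cocycle computation (or an argument of comparable content) rather than an appeal to the naturality of $\varphi$.
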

\begin{proof}
	By a verbatim adaptation of the proof of Proposition \ref{prop:29thnov242}, it follows  that $\delbar{\Gamma(\cE)}$is a left $\delbar{}$-connection on $\Gamma(\cE),$ i.e, \eqref{21stfeb251} is satisfied. So, we are left to show that 
	$$R^{\Hol}_{\Gamma(\cE)}= \left( (\delbar{})_\gamma\ot_{B_\cot} \id - \id \wedge_\cot\delbar{\Gamma(\cE)}\right)\delbar{\Gamma(\cE)}=0.$$
    
    We begin by observing that the map  $\delbar{}\ot_{B} \id - \id \wedge\delbar{\cE}:\Omega^{(0,1)}\ot_{B}\cE \rightarrow \Omega^{(0,1)}\ot_{B}\cE $ is a morphism in the category $\qMod{A}{}{}{}$. Hence,  $\Gamma\left(\overline{\partial}\ot_{B}\id - \id \wedge \delbar{\cE}\right)$ is a morphism in the category $\qMod{A_\gamma}{}{}{}$. 
     
     We claim that  for all $e\in \cE$ and  $\omega \in \Omega^{(0,1)},$ the following equation is satisfied:  
    \begin{equation} \label{21stfeb252}
    \varphi_{\Omega^{(0,2)}, \cE} \left( (\delbar{})_\gamma\ot_{B_\cot} \id - \id \wedge_\cot\delbar{\Gamma(\cE)}\right)\varphi_{\Omega^{(0,1)}, \cE}^{-1}(\omega \ot_{B} e)= \Gamma\left( \overline{\partial}\ot_{B}\id - \id \wedge \delbar{\cE} \right) (\omega \ot_{B} e).
    \end{equation}

    The theorem follows easily from this equation. Indeed, for all $e\in \cE$, we have,
	\begin{align*}
		\varphi_{\Omega^{(0,2)}, \cE}R_{\Gamma(\cE)}^{\text{Hol}}(e)& = \varphi_{\Omega^{(0,2)}, \cE}\left( (\overline{\partial} )_\gamma \ot_{B_\cot} \id- (\id \wedge_\cot \delbar{\Gamma(\cE)})\right) \delbar{\Gamma(\cE)}(e)\\
		&= \Gamma \left(\overline{\partial}\ot_{B}\id - \id \wedge \delbar{\cE}\right) \varphi_{\Omega^{(0,1)}, \cE} ~ \delbar{\Gamma ( \cE )}(e)  ~ \text{ ( by \eqref{21stfeb252} ) } \\
		&=\Gamma \left(\overline{\partial}\ot_{B}\id - \id \wedge \delbar{\cE}\right) \Gamma ( \overline{\partial}_{\cE} ) (e) ~ \text{ ( by \eqref{16thdec241}  ) } \\
        &= \Gamma(R^{\text{Hol}}_\cE)(e) =0.
	\end{align*}

	Now we prove \eqref{21stfeb252}. For $e\in \cE,$ let us use the notation  
	\begin{equation}\label{17thdec241}
    \mone{e} \ot \delbar{\cE}(\zero{e}) = \mone{e} \ot \sum_i \omega_i \ot_{B} e_i.
	\end{equation} Since $\delbar{\cE}$ is $A$-covariant we have $\mone{e}\ot \del{\Omega^1\ot_{B} \cE}\big(\delbar{\cE}(\zero{e})\big)= \mone{e}\ot \big(\id \ot \delbar{\cE}\big)\del{\cE}(\zero{e})$, i.e. 
	\begin{equation}\label{16thdec242} 
		\mone{e} \ot  \sum_i  \mone{\omega_i} \mone{e_i} \ot \zero{\omega_i} \ot_{B} \zero{e_i} =  \mtwo{e}\ot \mone{e} \ot \delbar{\cE}(\zero{e}).
	\end{equation}

	Now, for $e\in \cE$ and $\omega \in \Omega^{(0,1)}$, we have
	\begin{align*}
		&\varphi_{\Omega^{(0,2)}, \cE} \left((\overline{\partial})_\gamma \ot_{B_\cot} \id- (\id \wedge_\cot \delbar{\Gamma(\cE)})\right)\varphi^{-1}_{\Omega^{(1,0)}, \cE}(\omega\ot_B e)\\
		&= \coin{\mone{\omega}}{\mone{e}}	\varphi_{\Omega^{(0,2)}, \cE} \left((\overline{\partial})_\gamma(\zero{\omega}) \ot_{B_\cot} \zero{e}- (\zero{\omega} \wedge_\cot \delbar{\Gamma(\cE)}(\zero{e}))\right) \text{ (by \eqref{eq:29thnov243})}\\ 
        &=  \coin{\mtwo{\omega}}{\mtwo{e}}	\co{\mone{\omega}}{\mone{e}} \overline{\partial}(\zero{\omega}) \ot_{B} \zero{e}- \sum_i\coin{\mone{\omega}}{\mone{e}}   \varphi_{\Omega^{(0,2)}, \cE} \\
        &\big( \zero{\omega} \wedge_\cot  \varphi^{-1}_{\Omega^{(0,1)}, \cE}({\omega_i} \ot_{B} {e_i})\big)~ (\text{ by \eqref{nt}, \eqref{16thdec241} and \eqref{17thdec241}} )\\ 
		&= \coin{\mtwo{\omega}}{\mtwo{e}}	\co{\mone{\omega}}{\mone{e}} \overline{\partial}(\zero{\omega}) \ot_{B} \zero{e}- \sum_i\coin{\mone{\omega}}{\mone{e}} \coin{\mone{\omega_i}}{\mone{e_i}}\\
		& \varphi_{\Omega^{(0,2)}, \cE}( \zero{\omega} \wedge_\cot \zero{\omega_i} \ot_{B_\gamma} \zero{e_i}) \text{ (by \eqref{eq:29thnov243})}\\
		&= \overline{\partial}({\omega}) \ot_{B} {e}- \sum_i\coin{\mtwo{\omega}}{\mone{e}} \coin{\mtwo{\omega_i}}{\mone{e_i}}\co{\mone{\omega}}{\mone{\omega_i}} \varphi_{\Omega^{(0,2)}, \cE}\\
		&( \zero{\omega} \wedge \zero{\omega_i} \ot_{B_\gamma} \zero{e_i}) \text{ (as $\bar{\cot} \ast \gamma = \epsilon \otimes \epsilon $   and \eqref{29thnov24jb1})}\\	
        &= \overline{\partial}({\omega}) \ot_{B} {e}- \sum_i\coin{\mthree{\omega}}{\mone{e}} 	\coin{\mthree{\omega_i}}{\mtwo{e_i}} \co{\mtwo{\omega}}{\mtwo{\omega_i}} \\ 
		& \co{\mone{\omega}\mone{\omega_i}}{\mone{e_i}}( \zero{\omega} \wedge \zero{\omega_i} \ot_{B} \zero{e_i}) \text{ (by \eqref{nt})}\\
		&= \overline{\partial}({\omega}) \ot_{B} {e}- \sum_i\coin{\mfour{\omega}}{\mone{e}} 	\co{\mthree{\omega}}{\mthree{\omega_i}\mthree{e_i}} \coin{\mtwo{\omega}\mtwo{\omega_i}}{\mtwo{e_i}} \\ 
		& \co{\mone{\omega}\mone{\omega_i}}{\mone{e_i}}( \zero{\omega} \wedge \zero{\omega_i} \ot_{B} \zero{e_i}) \text{ (by \eqref{iv} of Lemma \ref{lem:formula})}\\ 
		&= \overline{\partial}({\omega}) \ot_{B} {e}- \sum_i\coin{\mtwo{\omega}}{\mone{e}} 	\co{\mone{\omega}}{\mone{\omega_i}\mone{e_i}}( \zero{\omega} \wedge \zero{\omega_i} \ot_{B} \zero{e_i})\\
		& \text{(as $\bar{\cot} \ast \gamma = \epsilon \otimes \epsilon$)}\\
		&= \overline{\partial}({\omega}) \ot_{B} {e}- \sum_i\coin{\mtwo{\omega}}{\mtwo{e}} \co{\mone{\omega}}{\mone{e}} ( \zero{\omega} \wedge \delbar{\cE}(\zero{e})) \text{ (by \eqref{16thdec242})}\\	
		&= \overline{\partial}({\omega}) \ot_{B} {e}-  ( {\omega} \wedge \delbar{\cE}(e)) \text{ (as  $\bar{\cot} \ast \gamma = \epsilon \otimes \epsilon$)}\\
		& = \Gamma\left(\overline{\partial}\ot_{B}\id - \id \wedge \delbar{\cE}\right)(\omega \ot_{B}e).
	\end{align*}
	This proves \eqref{21stfeb252} and hence completes the proof.
\end{proof}

\subsection{Deformation of Chern connections}

From  \cite[Section 2.8]{BeggsMajid:Leabh}, we know that if $A$ is a Hopf $\ast$-algebra, then  $\qMod{A}{}{}{},$ i.e, the monoidal category of left $A$-comodules is a bar-category with structure maps as in Example \ref{5thdec241jb}. If $\mathcal{E}$ is an object of $\cat,$ then a covariant connection $\nabla: \mathcal{E} \rightarrow \Omega^1  \otimes_B \mathcal{E} $ is a morphism in $\qMod{A}{}{}{}.$ Thus, the map
\begin{equation*} 
\overline{\nabla}: \overline{\mathcal{E}} \rightarrow \overline{\Omega^1 \otimes_B \mathcal{E}} ~ {\rm defined} ~ {\rm by} ~ \overline{\nabla} ( \overline{e} ) = \overline{\nabla ( e )}
\end{equation*}
is a morphism in $\qMod{A}{}{}{}.$ We prove the following lemma in which we will use the notations of Example \ref{5thdec241jb}.

\begin{lem} \label{lem:25thnov242}
	Suppose that $\dc$ is a covariant $\ast$-differential calculus on a comodule $\ast$-algebra $B$ and  $\nabla$  a left covariant connection on  an object  $\cE$ of $\cat$. Then the map
   \begin{equation*} 
\widetilde{\nabla}= (\id \ot_{B}\star_{\Omega^1(B)}^{-1})\Upsilon_{\Omega^1, \cE}\overline{\nabla} 
   \end{equation*}
  is an $A$-covariant  right connection on $\ol{\cE}.$ In particular, if $\nabla(e)= \sum_i \omega_i \ot_{B} e_i,$ then
\begin{equation} \label{ref:25thnov243}
\mone{e}^* \ot \widetilde{\nabla}(\ol{\zero{e}}) = \sum_i \mone{e_i}^* \mone{\omega_i}^* \ot \overline{\mzero{e_i}}\ot_{B}\zero{\omega_i}^*.
\end{equation}
  \end{lem}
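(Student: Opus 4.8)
The statement claims that $\widetilde{\nabla} = (\id \ot_B \star_{\Omega^1}^{-1})\,\Upsilon_{\Omega^1,\cE}\,\overline{\nabla}$ is an $A$-covariant right connection on $\overline{\cE}$, together with the explicit Sweedler formula \eqref{ref:25thnov243}. A right connection on $\overline{\cE}$ is a $\mathbb{C}$-linear map $\widetilde{\nabla}\colon \overline{\cE} \to \overline{\cE}\ot_B \Omega^1$ satisfying the right Leibniz rule $\widetilde{\nabla}(\overline{e}\cdot b) = \widetilde{\nabla}(\overline{e})\cdot b + \overline{e}\ot_B db$. The plan is to verify this directly by composing the three maps, and separately to check $A$-covariance, which is essentially automatic because each of $\overline{\nabla}$, $\Upsilon_{\Omega^1,\cE}$, and $\id\ot_B\star_{\Omega^1}^{-1}$ is a morphism in $\qMod{A}{}{}{}$ (the first since $\nabla$ is covariant so $\overline{\nabla}$ is a morphism by the bar-functor structure of Example \ref{5thdec241jb}; the second by naturality of $\Upsilon$; the third since $\star_{\Omega^1}$ is a morphism in $\cat$, hence in $\qMod{A}{}{}{}$, and is invertible).

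\textbf{Leibniz rule.} First I would trace through where $\overline{e}\cdot b$ goes. In $\overline{\cE}$ we have $\overline{e}\cdot b = \overline{b^*\cdot e}$ by \eqref{28thnov231}. Applying $\overline{\nabla}$ gives $\overline{\nabla(b^*e)} = \overline{b^*\nabla(e) + db^*\ot_B e}$. Now $\overline{\nabla}$ lands in $\overline{\Omega^1\ot_B\cE}$; applying $\Upsilon_{\Omega^1,\cE}$, which sends $\overline{\omega\ot_B e'}\mapsto \overline{e'}\ot_B\overline{\omega}$, turns $\overline{b^*\omega_i\ot_B e_i}$ into $\overline{e_i}\ot_B\overline{b^*\omega_i}$ and $\overline{(db^*)\ot_B e}$ into $\overline{e}\ot_B\overline{db^*}$. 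Then I apply $\id\ot_B\star_{\Omega^1}^{-1}$, using $\star_{\Omega^1}(\omega) = \overline{\omega^*}$, so $\star_{\Omega^1}^{-1}(\overline{b^*\omega_i})$: writing $\overline{b^*\omega_i} = \overline{(\omega_i^* b)^*}$ we get $\star_{\Omega^1}^{-1}(\overline{b^*\omega_i}) = \omega_i^* b$, and $\star_{\Omega^1}^{-1}(\overline{db^*}) = (db^*)^* = db$. Collecting the $B$-bimodule bookkeeping in $\overline{\cE}\ot_B\Omega^1$ (moving the scalar $b$ across $\ot_B$), this yields exactly $\widetilde{\nabla}(\overline{e})\cdot b + \overline{e}\ot_B db$, which is the right Leibniz rule. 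The only subtlety is the careful use of the bimodule structures \eqref{28thnov231} on $\overline{\cE}$ and $\overline{\Omega^1}$ and keeping track of conjugate-linearity of $\ast$; none of this is deep but it must be done attentively.

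\textbf{The explicit formula.} For \eqref{ref:25thnov243}, write $\nabla(e) = \sum_i \omega_i\ot_B e_i$ and compute the composite while carrying the coaction. Since $\nabla$ is $A$-covariant, ${}^{\Omega^1\ot_B\cE}\delta(\nabla(e)) = (\id\ot_B\nabla)({}^\cE\delta(e))$, i.e. $\sum_i \mone{\omega_i}\mone{e_i}\ot \zero{\omega_i}\ot_B\zero{e_i} = \mone{e}\ot\nabla(\zero{e})$. Now apply the coaction formula \eqref{25thnov232} on $\overline{\Omega^1\ot_B\cE}$, namely ${}^{\overline{M}}\delta(\overline{m}) = m_{(-1)}^*\ot\overline{m_{(0)}}$, then push through $\Upsilon$ (which is $A$-covariant so commutes with the coaction) and $\id\ot_B\star_{\Omega^1}^{-1}$ (likewise covariant). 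Tracking Sweedler indices gives $\mone{e}^*\ot\widetilde{\nabla}(\overline{\zero{e}}) = \sum_i (\mone{\omega_i}\mone{e_i})^*\ot \overline{\zero{e_i}}\ot_B\zero{\omega_i}^* = \sum_i \mone{e_i}^*\mone{\omega_i}^*\ot\overline{\zero{e_i}}\ot_B\zero{\omega_i}^*$, using that $\ast$ is an antihomomorphism on $A$.

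\textbf{Main obstacle.} I expect no conceptual obstacle; the work is entirely in bookkeeping. The one place demanding genuine care is the interplay between the conjugate-linear involution $\ast$ and the tensor-flip inherent in $\Upsilon$ and in the definition \eqref{28thnov231} of the $B$-bimodule structure on bar objects — in particular verifying that the composite really lands in $\overline{\cE}\ot_B\Omega^1$ and is well-defined over $\ot_B$ (i.e. respects the relation $eb\ot\omega = e\ot b\omega$ after all the conjugations). I would double-check well-definedness over $\ot_B$ before anything else, since that is the step most prone to a sign- or side-error. Everything else then follows by the universal property and the fact, already recorded in the excerpt, that $\Omega^1$ is a star object of $\cat$ so $\star_{\Omega^1}$ is an isomorphism there.
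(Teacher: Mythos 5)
Your proposal is correct and follows essentially the same route as the paper: covariance is obtained exactly as you say, as a composition of the covariant maps $\overline{\nabla}$, $\Upsilon_{\Omega^1,\cE}$ and $\id\ot_B\star_{\Omega^1}^{-1}$, and the formula \eqref{ref:25thnov243} is extracted from the covariance identity together with the observation $\widetilde{\nabla}(\overline{e})=\sum_i \overline{e_i}\ot_B\omega_i^*$. The only difference is that the paper cites \cite[Lemma 3.82]{BeggsMajid:Leabh} for the right Leibniz rule, whereas you verify it by the direct computation that proves that cited lemma; your computation (including $\star_{\Omega^1}^{-1}(\overline{b^*\omega_i})=\omega_i^* b$ and $\star_{\Omega^1}^{-1}(\overline{db^*})=db$) is accurate.
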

\begin{proof}
The fact that $\widetilde{\nabla}$ is a right connection has been proved in \cite[Lemma 3.82]{BeggsMajid:Leabh}. Now, as noted above, the map $\overline{\nabla}$ is covariant and moreover, the same is true about $ \star_{\Omega^1(B)}$ and $ \Upsilon_{\Omega^1, \cE}$  (see Example \ref{5thdec241jb}). Hence, $\widetilde{\nabla}$ is a composition of covariant maps and hence covariant. The equation \eqref{ref:25thnov243} follows from the covariance of $\widetilde{\nabla}$ and the observation that $\widetilde{\nabla}(\ol{e})= \sum_i \ol{e_i}\ot_{B} \omega_i^*.$ 
	\end{proof}

    Now we are in a position to recall the following theorem which generalizes the existence of Chern connections on holomorphic vector bundles on complex manifolds. We will be using the map $\pi^{0,1}: \Omega^1 \rightarrow \Omega^{(0, 1)} $ introduced in Subsection \ref{24thfeb251}, while the notations  $\langle ~ , ~ \rangle$ and $\widetilde{\nabla}$ will be as in \eqref{metric:Hermitian} and  Lemma \ref{lem:25thnov242}.

\begin{theorem} (\cite[Theorem 8.53]{BeggsMajid:Leabh}) \label{thm:Chern}
	Suppose that $\complexdc$ is an $A$-covariant complex structure on an $A$-comodule $*$-algebra $B.$ If $\kH$ is an $A$-covariant Hermitian metric on an $A$-covariant holomorphic $B$-bimodule  $(\cE, \delbar{\cE})$ with $\cE$ finitely generated and projective as a left $B$-module,  then there exists a unique $A$-covariant left connection $\ch$ on $\cE$ satisfying the following two conditions:
    \begin{enumerate}
    \item $\ch$ is compatible with $\kH,$ i.e,
    \begin{align} \label{8thsep251}
		d\langle ~ , ~ \rangle = (\id \otimes_B \langle ~ , ~\rangle)(\nabla \otimes_B \id) + (\langle ~ , ~ \rangle \otimes_B \id) (\id\otimes_B \widetilde{\nabla})
	\end{align}
	as maps from $ \mathcal{E} \otimes_B \overline{\mathcal{E}} $ to $ \Omega^1,$
    
    \item  $(\pi^{0,1} \otimes_B \id) \circ \nabla = \delbar{\cE} $.
    \end{enumerate}
	\end{theorem}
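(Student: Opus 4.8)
The plan is to reconstruct the Beggs--Majid argument by imitating the classical construction of the Chern connection: fix the $(0,1)$-part and solve for the $(1,0)$-part from metric compatibility. First I would note that, since $\pi^{0,1}\circ d=\overline{\partial}$ on $B$, condition (2) forces any solution $\nabla$ to decompose as $\nabla=\nabla^{1,0}+\delbar{\cE}$ with $\nabla^{1,0}:=(\pi^{1,0}\otimes_B\id)\circ\nabla\colon\cE\to\Omega^{(1,0)}\otimes_B\cE$ a left $\partial$-connection, and conversely every left $\partial$-connection yields such a $\nabla$. Using \eqref{ref:25thnov243}, the induced right connection $\widetilde\nabla$ of Lemma \ref{lem:25thnov242} splits accordingly as $\widetilde\nabla=\widetilde{\nabla^{1,0}}+\widetilde{\delbar{\cE}}$, where $\widetilde{\nabla^{1,0}}$ is valued in $\overline{\cE}\otimes_B\Omega^{(0,1)}$ and $\widetilde{\delbar{\cE}}$ in $\overline{\cE}\otimes_B\Omega^{(1,0)}$, because $\ast$ exchanges $\Omega^{(1,0)}$ and $\Omega^{(0,1)}$. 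Splitting \eqref{8thsep251} into bidegrees, condition (1) becomes the pair of equations
\begin{align*}
\partial\langle~,~\rangle &= (\id\otimes_B\langle~,~\rangle)(\nabla^{1,0}\otimes_B\id)+(\langle~,~\rangle\otimes_B\id)(\id\otimes_B\widetilde{\delbar{\cE}}),\\
\overline{\partial}\langle~,~\rangle &= (\id\otimes_B\langle~,~\rangle)(\delbar{\cE}\otimes_B\id)+(\langle~,~\rangle\otimes_B\id)(\id\otimes_B\widetilde{\nabla^{1,0}}),
\end{align*}
as maps $\cE\otimes_B\overline{\cE}\to\Omega^{(1,0)}$ and $\cE\otimes_B\overline{\cE}\to\Omega^{(0,1)}$ respectively.

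Next I would show that the second equation is automatic once the first holds. Evaluating the first equation in the variables $(\eta,\overline{\omega})$, applying the calculus involution $\ast$, and using $(\partial\xi)^\ast=\overline{\partial}(\xi^\ast)$ (a consequence of $(d\xi)^\ast=d(\xi^\ast)$ together with the bigrading), the Hermitian symmetry $\langle y,\overline{x}\rangle^\ast=\langle x,\overline{y}\rangle$ of \eqref{15thdec246}, the $B$-actions and coaction on $\overline{\,\cdot\,}$ from \eqref{28thnov231}--\eqref{25thnov232}, and the explicit shape of $\widetilde{(\cdot)}$ in \eqref{ref:25thnov243}, a direct computation rewrites the first equation at $(\eta,\overline{\omega})$ as the second equation at $(\omega,\overline{\eta})$. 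Hence existence and uniqueness of $\ch$ reduce to existence and uniqueness of a left $\partial$-connection $\nabla^{1,0}$ solving the first equation.

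To solve it, I would set $\Phi(e,\overline{f}):=\partial\langle e,\overline{f}\rangle-(\langle~,~\rangle\otimes_B\id)(e\otimes_B\widetilde{\delbar{\cE}}(\overline{f}))\in\Omega^{(1,0)}$, so that the first equation reads $(\id\otimes_B\langle~,~\rangle)(\nabla^{1,0}(e)\otimes_B\overline{f})=\Phi(e,\overline{f})$. Since $\cE$ is finitely generated and projective as a left $B$-module and $\kH\colon\overline{\cE}\to\hm{B}{\cE}{B}$ is an isomorphism in $\cat$, the pairing $\langle~,~\rangle$ together with $\coev':=(\kH^{-1}\otimes_B\id)\circ\coev\colon B\to\overline{\cE}\otimes_B\cE$ exhibits $\overline{\cE}$ as a dual of $\cE$; writing $\coev'(1)=\sum_\alpha\overline{u_\alpha}\otimes_B v_\alpha$, the assignment $\omega\mapsto\big(\overline{f}\mapsto(\id\otimes_B\langle~,~\rangle)(\omega\otimes_B\overline{f})\big)$ is a bijection from $\Omega^{(1,0)}\otimes_B\cE$ onto the right $B$-linear maps $\overline{\cE}\to\Omega^{(1,0)}$, with inverse $\psi\mapsto\sum_\alpha\psi(\overline{u_\alpha})\otimes_B v_\alpha$. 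A short check shows $\Phi(e,-)$ is right $B$-linear in $\overline{f}$ (the two $\partial b$-terms produced by the Leibniz rules for $\partial$ on $B$ and for the right $\overline{\partial}$-connection $\widetilde{\delbar{\cE}}$ cancel), so there is a unique $\nabla^{1,0}$ with $\nabla^{1,0}(e)=\sum_\alpha\Phi(e,\overline{u_\alpha})\otimes_B v_\alpha$ solving the equation. From $\Phi(be,\overline{f})=\partial b\cdot\langle e,\overline{f}\rangle+b\cdot\Phi(e,\overline{f})$ and $\sum_\alpha\langle e,\overline{u_\alpha}\rangle v_\alpha=e$ one obtains the $\partial$-Leibniz rule $\nabla^{1,0}(be)=b\,\nabla^{1,0}(e)+\partial b\otimes_B e$, and $A$-covariance of $\nabla^{1,0}$ holds because $\partial$, $\langle~,~\rangle$, $\widetilde{\delbar{\cE}}$, $\kH^{-1}$ and $\coev$ are all comodule morphisms. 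Then $\ch:=\nabla^{1,0}+\delbar{\cE}$ is the desired connection, and it is the only one.

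The hard part will be the middle step: the involution bookkeeping that makes the $(0,1)$-component of compatibility redundant. It requires tracking carefully the interaction of $\ast$ with bidegrees, the sign rule $(\omega\wedge\nu)^\ast=(-1)^{kl}\nu^\ast\wedge\omega^\ast$, the twisted $B$-actions on $\overline{\cE}$, and the left/right placement of $B$-coefficients when transposing through $\ast$ and through the formula \eqref{ref:25thnov243} for $\widetilde\nabla$. Everything else --- the bidegree splitting of \eqref{8thsep251}, the duality argument over the finitely generated projective module $\cE$, and the Leibniz and covariance verifications --- is routine.
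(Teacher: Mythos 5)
Your reconstruction is correct, and I have checked the two delicate points: the conjugation argument does make the $(0,1)$-component of \eqref{8thsep251} redundant (applying $\ast$ to the $(1,0)$-equation at $(e,\overline{f})$ yields exactly the $(0,1)$-equation at $(f,\overline{e})$, using $(\partial b)^\ast=\overline{\partial}(b^\ast)$, \eqref{15thdec246} and \eqref{ref:25thnov243}), and the $\partial b$-terms do cancel in the right $B$-linearity check for $\Phi(e,\cdot)$, so the dual-basis formula $\nabla^{1,0}(e)=\sum_\alpha\Phi(e,\overline{u_\alpha})\otimes_B v_\alpha$ gives the unique $\partial$-connection solving the $(1,0)$-part. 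Note that the paper itself does not prove this statement but imports it from \cite[Theorem 8.53]{BeggsMajid:Leabh} (adding only the covariance remark via \cite{LeviCivitaHK}); your argument is essentially the proof given there, the only cosmetic slip being that $\widetilde{\delbar{\cE}}$ is a right $\partial$-connection (its Leibniz term is $\overline{f}\otimes_B\partial b$), which is exactly what your cancellation uses.
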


The covariance of $\ch$ follows from \cite[Theorem 7.11]{LeviCivitaHK}. The connection $\ch$ is called the Chern connection for the pair $ ( \cE, \delbar{\cE}). $

We continue with the assumptions of Theorem \ref{thm:Chern} and assume that $\gamma$ is a unitary $2$-cocycle on $A.$ Then by Theorem \ref{thm:twisted-hermitian}, $\kH_\gamma$ is an $A_\gamma$-covariant Hermitian metric on $\Gamma ( \cE ). $ On the other hand, Theorem \ref{24thfeb252} implies that  $(\Gamma(\cE), \delbar{\Gamma(\cE)})$ is an $A_\gamma$-covariant holomorphic bimodule in $\tcat$. Thus, by Theorem \ref{thm:Chern}, the pair $(\Gamma(\cE), \delbar{\Gamma(\cE)})$ admits a Chern connection for the Hermitian metric $\kH_\gamma.$ We will prove that this connection is the cocycle twist of the connection $\ch.$    

Let us recall (Proposition \ref{prop:29thnov242}) that given a left $A$-covariant connection $\nabla_{\cE}$   on an object  $\cE$ of $\cat,$  we have a left $A_\gamma$-covariant connection $\nabla_{\Gamma(\cE)}$ on $\Gamma(\cE).$ Then by Lemma \ref{lem:25thnov242}, there is a right $A_\gamma$-covariant connection $\widetilde{\nabla_{\Gamma(\cE)}}$ on $\ol{\Gamma(\cE)}.$
The following lemma establishes a relation  between $\widetilde{\nabla_{\Gamma(\cE)}}$ and $\widetilde{\nabla_{\cE}}$ which will be needed in the proof of Theorem \ref{thm:twisted_chern}.

\begin{lem} \label{lem:17thdec242}
	 Suppose that $\nabla_\cE: \cE \to \Omega^1 \ot_{B} \cE$ is a left $A$-covariant connection on an object $\cE$ of $\cat$ and $\widetilde{\nabla_\cE}: \overline{\cE} \to \overline{\cE} \ot_B \Omega^1$ is a right connection on $\cE$ defined as in Lemma \ref{lem:25thnov242}. Then the twisted connection $\nabla_{\Gamma(\cE)}$ of $\nabla_\cE$ as in \eqref{eq:3rddec241} satisfies $$\widetilde{\nabla_{\Gamma(\cE)}}= (\fN_\cE^{-1}\ot_{B_\gamma}\id)\varphi^{-1}_{\overline{\cE}, \Omega^1}\circ \Gamma(\widetilde{\nabla_\cE})\fN_\cE$$ where $\fN_\cE $ is as defined in \eqref{5thdec242jb}.
\end{lem}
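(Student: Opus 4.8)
The plan is to unwind the definition of $\widetilde{\nabla_{\Gamma(\cE)}}$ from Lemma \ref{lem:25thnov242}, namely
$$\widetilde{\nabla_{\Gamma(\cE)}} = (\id \ot_{B_\gamma} \star_{\Omega^1_\gamma}^{-1})\, \Upsilon_{\Omega^1_\gamma, \Gamma(\cE)}\, \overline{\nabla_{\Gamma(\cE)}},$$
and then push all three factors through the cocycle machinery until the expression matches the right-hand side. First I would substitute $\nabla_{\Gamma(\cE)} = \varphi_{\Omega^1, \cE}^{-1} \circ \Gamma(\nabla_\cE)$ from \eqref{eq:3rddec241}, so that $\overline{\nabla_{\Gamma(\cE)}} = \overline{\varphi_{\Omega^1, \cE}^{-1}} \circ \overline{\Gamma(\nabla_\cE)}$. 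The key structural input will be the compatibility of the bar functor with $\Gamma$ established in Appendix \ref{B}: the natural isomorphism $\fN$ of Lemma \ref{8thoct2024} together with the fact (proved there, and used already in the proof of Theorem \ref{thm:twisted-hermitian} and Theorem \ref{5thfeb253}) that $\Gamma$ is a bar functor, so that $\star_{\Omega^1_\gamma}$, $\Upsilon_{\Omega^1_\gamma, \Gamma(\cE)}$ and $\overline{\varphi}$ all factor through $\fN$ and $\varphi$ in the expected way — in particular \eqref{eq:21stnov245}, which relates $\Gamma(\star_{\Omega^1}^{-1})\fN_{\Omega^1}$ to $(\star_{\Omega^1_\gamma})^{-1}$, and the analogous identity expressing $\Upsilon_{\Omega^1_\gamma, \Gamma(\cE)}$ in terms of $\Gamma(\Upsilon_{\Omega^1, \cE})$ conjugated by $\fN$ and $\varphi$.

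The computation then proceeds as a diagram chase: I would replace $\Upsilon_{\Omega^1_\gamma, \Gamma(\cE)}$ by its expression through $\varphi^{-1}_{\overline{\cE}, \Omega^1}$, $\Gamma(\Upsilon_{\Omega^1,\cE})$, $\varphi_{\Omega^1,\cE}$ and the components of $\fN$, replace $\star_{\Omega^1_\gamma}^{-1}$ using \eqref{eq:21stnov245}, and use naturality of $\varphi$ (equation \eqref{15thdec244}) to commute $\Gamma(\nabla_\cE)$ past the various $\varphi$'s and $\fN$'s. After cancelling the pair $\varphi_{\Omega^1,\cE} \circ \varphi_{\Omega^1,\cE}^{-1}$ created by the substitutions of $\nabla_{\Gamma(\cE)}$ and of $\Upsilon_{\Omega^1_\gamma,\Gamma(\cE)}$, what remains should collapse to $(\fN_\cE^{-1} \ot_{B_\gamma} \id)\,\varphi^{-1}_{\overline{\cE},\Omega^1}\,\Gamma\big((\id\ot_B \star_{\Omega^1}^{-1})\Upsilon_{\Omega^1,\cE}\overline{\nabla_\cE}\big)\,\fN_\cE$, and the inner $\Gamma(\cdots)$ is exactly $\Gamma(\widetilde{\nabla_\cE})$ by definition of $\widetilde{\nabla_\cE}$ in Lemma \ref{lem:25thnov242}. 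Alternatively, and perhaps more cleanly, I would verify the claimed identity by evaluating both sides on a typical element $\overline{e}$ with $\nabla_\cE(e) = \sum_i \omega_i \ot_B e_i$, using the explicit Sweedler formula \eqref{ref:25thnov243} for $\widetilde{\nabla_\cE}$ on the left, the formulas \eqref{eq:29thnov243} for $\varphi^{-1}$ and \eqref{5thdec242jb} for $\fN_\cE$, and the fact that $\fN_\cE(\overline{e})$ and $\widetilde{\nabla_{\Gamma(\cE)}}(\overline{e})$ are determined by covariance; the cocycle identities from Lemma \ref{lem:formula} and the $\bar\gamma * \gamma = \epsilon\ot\epsilon$ relation would then be invoked to match the scalar prefactors, much as in the proof of Theorem \ref{24thfeb252}.

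The main obstacle I anticipate is bookkeeping rather than conceptual: keeping track of the several instances of $\fN$ (on $\cE$, on $\Omega^1$, and implicitly on $\Omega^1 \ot_B \cE$ via $\Upsilon$) and ensuring that the bar-functor coherence identities are applied in the correct order, so that the conjugating isomorphisms genuinely cancel. In particular, the subtle point is that $\widetilde{\nabla}$ mixes the bar functor with $\Upsilon$ and with $\star_{\Omega^1}^{-1}$, and under deformation each of these is twisted by a different piece of the cocycle data ($V$, $\bar V$, $\gamma$, $\bar\gamma$); verifying that these pieces assemble consistently — equivalently, that the relevant square in Appendix \ref{B} commutes — is where care is needed. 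Since the excerpt defers this kind of bar-category-versus-$\Gamma$ compatibility to the appendix, I would expect the actual proof to cite \eqref{eq:21stnov245} and the naturality of $\fN$ and $\Upsilon$, and then present the element-wise chase compactly, as the authors do for the closely parallel Lemma \ref{lem:17thdec242}'s siblings.
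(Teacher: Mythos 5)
Your proposal is correct, and your ``alternative'' route is in fact exactly what the paper does: the authors evaluate $\widetilde{\nabla_{\Gamma(\cE)}}$ on $\overline{e}$ with $\nabla_\cE(e)=\sum_i\omega_i\ot_B e_i$, unwind $\Upsilon_{\Omega^1_\gamma,\Gamma(\cE)}$, $\star_{\Omega^1_\gamma}^{-1}$, $\varphi^{-1}$ via \eqref{eq:29thnov243}, $\ast_\gamma$ via \eqref{4thfeb251}, and $\fN_\cE^{-1}$ via \eqref{11thdec24jb1}, then match scalar prefactors with the cocycle identity \eqref{3rdfeb253} and conclude with the covariance formula \eqref{ref:25thnov243} and \eqref{5thdec242jb}. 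Your primary route --- the abstract diagram chase using the bar-functor structure of $\Gamma$ --- is genuinely different from the paper's choice and does go through: substituting $\nabla_{\Gamma(\cE)}=\varphi^{-1}_{\Omega^1,\cE}\Gamma(\nabla_\cE)$, the square \eqref{11thdec24jb2} of Theorem \ref{theorem:21stnov242} rewrites $\Upsilon_\gamma\circ\overline{\varphi^{-1}_{\Omega^1,\cE}}$ as $(\fN_\cE^{-1}\ot_{B_\gamma}\fN_{\Omega^1}^{-1})\varphi^{-1}_{\overline{\cE},\overline{\Omega^1}}\Gamma(\Upsilon_{\Omega^1,\cE})\fN_{\Omega^1\ot_B\cE}$, then \eqref{eq:21stnov245} collapses $\star_{\Omega^1_\gamma}^{-1}\fN_{\Omega^1}^{-1}$ to $\Gamma(\star_{\Omega^1}^{-1})$, and naturality of $\varphi$ \eqref{15thdec244} and of $\fN$ reassemble $\Gamma(\widetilde{\nabla_\cE})$, giving the stated identity with no computation in Sweedler notation. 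What this buys is conceptual transparency (all cocycle bookkeeping is quarantined in Appendix \ref{B}); what it costs is that the paper explicitly does \emph{not} use Theorem \ref{theorem:21stnov242} in the body, so you would be importing that machinery, and you must note one small point: $\nabla_\cE$ and $\overline{\nabla_\cE}$ are only $A$-covariant, not morphisms of $\cat$, so the naturality of $\fN$ and of $\varphi$ has to be invoked for merely covariant maps --- which is legitimate, since both formulas \eqref{5thdec242jb} and \eqref{eq:29thnov243} depend only on the coaction, but it deserves an explicit remark.
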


\begin{proof}
	Suppose that  for $e \in \cE,~\nabla(e)= \sum_i \omega_i \ot_{B} e_i$. Then by the definition of $\widetilde{\nabla_{\Gamma(\cE)}},$  we get
	\begin{align*}
		&\widetilde{\nabla_{\Gamma(\cE)}}(\overline{e})\\
		&= (\id \ot_{B_\cot} \star_{\Omega^1_\gamma}^{-1})\Upsilon_{\Omega^1_\gamma, \Gamma(\cE)}\overline{\nabla_{\Gamma(\cE)}}(\overline{e})\\
		&=  (\id \ot_{B_\cot} \star_{\Omega^1_\gamma}^{-1})\Upsilon_{\Omega^1_\gamma, {\Gamma(\cE)}}  \overline{\varphi_{\Omega^1, \cE}^{-1} \big(\sum_i \omega_i \ot_{B} e_i \big) } \text{ (by \eqref{eq:3rddec241})}\\
		&=  (\id \ot_{B_\cot} \star_{\Omega^1_\gamma}^{-1}) \sum_i \overline{\coin{\mone{\omega_i}}{\mone{e_i}}} \Upsilon_{\Omega^1_\gamma, {\Gamma(\cE)}}  \overline{ \big( \zero{\omega_i} \ot_{B_\gamma} \zero{e_i} \big) } \text{ (by \eqref{eq:29thnov243})}\\
		&= \left(\id \ot_{B_\cot} \star_{\Omega^1_\gamma}^{-1} \right) \sum_i \overline{\coin{\mone{\omega_i}}{\mone{e_i}}}   \left( \overline{\zero{e_i}} \ot_{B_\cot} \overline{\zero{\omega_i}}\right) ~ \text{(by Example \ref{5thdec241jb})} \\
		&= \sum_i \overline{\coin{\mone{\omega_i}}{\mone{e_i}}}  \left( \overline{\zero{e_i}} \ot_{B_\cot} { \zero{\omega_i}^{\star_\gamma}}\right) \text{ (by \eqref{eq:18thdec241})}\\
		&= \sum_i {\co{S(\mtwo{\omega_i})^*}{S(\mone{e_i})^*}}    \left( \overline{\zero{e_i}} \ot_{B_\cot} { \bar{V}(\mone{\omega_i}^*) \zero{\omega_i}^*}\right) \text{ (by \eqref{19thSept20242} and \eqref{4thfeb251})}\\
		&= \sum_i {\co{S(\mtwo{\omega_i})^*}{S(\mtwo{e_i})^*}} \bar{V}(\mone{\omega_i}^*)  \bar{V}(\mone{e_i}^*) \left( \fN_\cE^{-1}\ot_{B_\cot} \id \right)   \left( \overline{\zero{e_i}} \ot_{B_\cot} {  \zero{\omega_i}^*}\right) \text{(by \eqref{11thdec24jb1})}\\
		&= \sum_i \left(\fN_\cE^{-1} \ot_{B_\cot} \id \right) \bar{V}(\mtwo{e_i}^* \mtwo{\omega_i}^*)\coin{\mone{e_i}^*}{\mone{\omega_i}^*}  \left( \overline{\zero{e_i}} \ot_{B_\cot} {  \zero{\omega_i}^*}\right) (\text{by \eqref{3rdfeb253}})\\
		&= \sum_i \left(\fN_\cE^{-1} \ot_{B_\cot} \id \right) \bar{V}(\mone{e_i}^* \mone{\omega_i}^*) \varphi_{ \overline{\cE}, \Omega^1}^{-1} \left(\overline{\zero{e_i}} \ot_{B} \zero{\omega_i^*}\right) \text{ (by \eqref{eq:29thnov243})}\\
		&= \left(\fN_\cE^{-1} \ot_{B_\cot} \id \right)  \varphi_{ \overline{\cE}, \Omega^1}^{-1} ( \bar{V}(\mone{e}^*) \Gamma(\widetilde{\nabla_\cE} ) (\overline{\zero{e}}) ) ~(\mathrm{by ~ \eqref{ref:25thnov243}})\\
		&= \left(\fN_\cE^{-1} \ot_{B_\cot} \id \right)  \varphi_{ \overline{\cE}, \Omega^1}^{-1}  \Gamma(\widetilde{\nabla_\cE} ) \fN_\cE\left(\overline{e}\right) 
	\end{align*}
	by \eqref{5thdec242jb}. This completes the proof of the lemma.
\end{proof}

Now we can prove the main theorem of this section:

\begin{theorem}\label{thm:twisted_chern}
Suppose that $\gamma$ is a unitary cocycle on a Hopf $*$-algebra $A$ and  $B$ is an $A$-comodule $*$-algebra. If $\kH$ is a covariant Hermitian metric on a holomorphic bimodule $(\mathcal{E}, \overline{\partial}_{\cE})$ in $\cat$ such that $\cE  $ is finitely generated and projective as a left $B$-module, then the connection 
   $$\nabla^\prime:= \varphi_{\Omega^1, \cE}^{-1} \Gamma ( \ch )$$
  is the Chern connection for the Hermitian metric $\kH_\cot$ (as  in Theorem \ref{thm:twisted-hermitian}) on the holomorphic bimodule $(\Gamma (\cE ), \delbar{\Gamma(\cE)})$ in $\tcat$. Thus, $\tch = \nabla^\prime. $
  \end{theorem}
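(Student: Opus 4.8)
The strategy is to verify that $\nabla'$ satisfies the two defining conditions of the Chern connection of Theorem~\ref{thm:Chern}, now applied inside the twisted category $\tcat$, and then to appeal to uniqueness. Since $\Gamma$ is a monoidal equivalence it takes finitely generated projective left $B$-modules to finitely generated projective left $B_\gamma$-modules, so $\Gamma(\cE)$ is finitely generated and projective over $B_\gamma$; by Proposition~\ref{26thfeb251} the quadruple $\complexdctwisted$ is an $A_\gamma$-covariant complex structure for $\dctwisted$; by Theorem~\ref{24thfeb252} the pair $(\Gamma(\cE),\delbar{\Gamma(\cE)})$ is an $A_\gamma$-covariant holomorphic bimodule; and by Theorem~\ref{thm:twisted-hermitian} the map $\kH_\gamma$ is an $A_\gamma$-covariant Hermitian metric on $\Gamma(\cE)$. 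Hence Theorem~\ref{thm:Chern} applies and yields a \emph{unique} $A_\gamma$-covariant left connection $\tch$ on $\Gamma(\cE)$ that is compatible with $\langle~,~\rangle_\gamma$ and satisfies $(\pi^{0,1}_\gamma\ot_{B_\gamma}\id)\circ\tch=\delbar{\Gamma(\cE)}$. On the other hand $\nabla'=\varphi^{-1}_{\Omega^1,\cE}\,\Gamma(\ch)$ is precisely the twisted connection $\nabla_{\Gamma(\cE)}$ of \eqref{eq:3rddec241}, so by Proposition~\ref{prop:29thnov242} it is an $A_\gamma$-covariant left connection on $\Gamma(\cE)$; write $\widetilde{\nabla'}$ for the associated right connection on $\overline{\Gamma(\cE)}$ built as in Lemma~\ref{lem:25thnov242}. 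It therefore suffices to check the two Chern conditions for $\nabla'$.

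The holomorphicity condition is immediate. By Proposition~\ref{26thfeb251} the decomposition $\Omega^1_\gamma=\bigoplus_{p+q=1}\Omega^{(p,q)}_\gamma$ is the image under $\Gamma$ of the untwisted one, so the canonical projection $\pi^{0,1}_\gamma\colon\Omega^1_\gamma\to\Omega^{(0,1)}_\gamma$ equals $\Gamma(\pi^{0,1})$. Applying the naturality identity \eqref{15thdec244} with $f=\pi^{0,1}$ and $g=\id_\cE$ gives
\begin{align*}
(\pi^{0,1}_\gamma\ot_{B_\gamma}\id)\circ\nabla'
&=\big(\Gamma(\pi^{0,1})\ot_{B_\gamma}\Gamma(\id)\big)\circ\varphi^{-1}_{\Omega^1,\cE}\circ\Gamma(\ch)
=\varphi^{-1}_{\Omega^{(0,1)},\cE}\circ\Gamma\big((\pi^{0,1}\ot_B\id)\circ\ch\big)\\
&=\varphi^{-1}_{\Omega^{(0,1)},\cE}\circ\Gamma(\delbar{\cE})=\delbar{\Gamma(\cE)},
\end{align*}
where the last two equalities use the second Chern condition for $\ch$ and the definition \eqref{16thdec241} of $\delbar{\Gamma(\cE)}$.

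The metric-compatibility condition is the crux. One must prove that, as maps $\Gamma(\cE)\ot_{B_\gamma}\overline{\Gamma(\cE)}\to\Omega^1_\gamma$,
\[
d_\gamma\langle~,~\rangle_\gamma=(\id\ot_{B_\gamma}\langle~,~\rangle_\gamma)(\nabla'\ot_{B_\gamma}\id)+(\langle~,~\rangle_\gamma\ot_{B_\gamma}\id)(\id\ot_{B_\gamma}\widetilde{\nabla'}).
\]
The plan is to rewrite every ingredient as $\Gamma$ applied to untwisted data, conjugated by the monoidal comparison isomorphisms $\varphi$ and by the bar comparison $\fN_\cE$ of \eqref{5thdec242jb}: one has $d_\gamma=\Gamma(d)$; by \eqref{18thfeb251}, $\langle~,~\rangle_\gamma=\Gamma(\langle~,~\rangle)\,\varphi_{\cE,\overline{\cE}}\,(\id\ot_{B_\gamma}\fN_\cE)$; by Lemma~\ref{lem:17thdec242}, $\widetilde{\nabla'}=(\fN_\cE^{-1}\ot_{B_\gamma}\id)\,\varphi^{-1}_{\overline{\cE},\Omega^1}\,\Gamma(\widetilde{\ch})\,\fN_\cE$; and $\nabla'=\varphi^{-1}_{\Omega^1,\cE}\,\Gamma(\ch)$. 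Substituting these into the right-hand side and repeatedly invoking the naturality of $\varphi$ (equation \eqref{15thdec244}), together with the associativity coherence of the strong monoidal functor $\Gamma$ for the triple products $\Omega^1\ot_B\cE\ot_B\overline{\cE}$ and $\cE\ot_B\overline{\cE}\ot_B\Omega^1$ and the bar-functoriality of $\Gamma$ (Theorem~\ref{theorem:21stnov242}) to transport the $\fN_\cE$'s across $\ot_{B_\gamma}$, the two summands collapse: the $\fN_\cE$ hidden inside $\langle~,~\rangle_\gamma$ cancels the $\fN_\cE^{-1}$ hidden inside $\widetilde{\nabla'}$, and what remains is $\Gamma$ applied to $(\id\ot_B\langle~,~\rangle)(\ch\ot_B\id)+(\langle~,~\rangle\ot_B\id)(\id\ot_B\widetilde{\ch})$, post- and pre-composed with the same family of $\varphi$'s and $\fN_\cE$'s that express $\langle~,~\rangle_\gamma$. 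By the untwisted Chern compatibility \eqref{8thsep251} for $\ch$ this equals $\Gamma(d\circ\langle~,~\rangle)$ conjugated in the same way, which is exactly $d_\gamma\langle~,~\rangle_\gamma$. Thus $\nabla'$ is compatible with $\kH_\gamma$, and the uniqueness clause of Theorem~\ref{thm:Chern} forces $\tch=\nabla'$.

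The main obstacle is precisely this collapsing step: one has to keep faithful track of all associators and of the interplay between $\varphi$ and $\fN_\cE$ so that the bar comparison morphisms introduced through $\langle~,~\rangle_\gamma$ and through $\widetilde{\nabla'}$ cancel exactly, reducing the twisted compatibility identity cleanly to $\Gamma$ of the untwisted one. Everything else is formal, given Theorem~\ref{thm:twisted-hermitian}, Theorem~\ref{24thfeb252}, Proposition~\ref{26thfeb251}, Lemma~\ref{lem:17thdec242}, and the naturality identities \eqref{15thdec244} and \eqref{18thfeb251}.
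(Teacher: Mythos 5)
Your proposal is correct and follows essentially the same route as the paper: identify $\nabla'$ as the twisted connection, verify the $(0,1)$-part condition via the naturality identity \eqref{15thdec244}, verify $\kH_\gamma$-compatibility by rewriting $\langle~,~\rangle_\gamma$ and $\widetilde{\nabla'}$ through \eqref{18thfeb251} and Lemma \ref{lem:17thdec242} and collapsing with monoidality of $\Gamma$ back to the untwisted compatibility \eqref{8thsep251}, then invoke uniqueness in Theorem \ref{thm:Chern}. The "collapsing" step you only sketch is exactly the long chain of equalities carried out explicitly in the paper, with the same cancellations of $\varphi$'s and $\fN_\cE$'s.
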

\begin{proof}
 Firstly, as $\Gamma$ is a categorical equivalence, $\Gamma ( \cE ) $ is finitely generated and projective as a left $B_\gamma$-module and moreover, $\nabla^\prime$ is a connection by virtue of Proposition \ref{prop:29thnov242}. Thus, by the uniqueness of the Chern connection in Theorem \ref{thm:Chern}, it is enough to prove that the connection $\nabla^\prime$ satisfies the conditions (1) and (2) of that theorem.	Firstly,
    \begin{eqnarray*}
(\pi^{0,1}_\gamma\ot_{B_\cot} \id)\nabla^\prime &=& (\pi^{0,1}_\gamma\ot_{B_\cot} \id) \varphi^{-1}_{\Omega^1, \cE} \Gamma ( \ch )\\
&=& \varphi^{-1}_{\Omega^{(0,1)}, \cE} \Gamma ( \pi^{0,1} \otimes_B \text{id}  ) \Gamma ( \ch )  ~  \text{(by \eqref{15thdec244})}\\
&=& \varphi^{-1}_{\Omega^{(0,1)}, \cE} \Gamma ( ( \pi^{0,1} \otimes_B \text{id}  )  \ch )\\
&=& \varphi^{-1}_{\Omega^{(0,1)}, \cE} \Gamma ( \delbar{\cE} )\\
&=&\delbar{\Gamma({\cE})} \text{~(by \eqref{16thdec241}).}
  \end{eqnarray*}

    So, we are left to show that $\nabla^\prime$ is compatible with $\kH_\cot$. For the rest of the proof, we will use the notation $\cdot_\gamma$ to denote both the left and the right $B_\gamma$-bimodule actions on  $\Omega^1_{\gamma}.$ For example, for the right $B_\gamma$ action, this will mean
    $$ \cdot_\gamma = \Gamma ( \cdot ) \varphi_{\Omega^1, B}. $$
    We compute
	\begin{align*}
		&\cdot_\gamma \big((\id_{\Omega^1_\gamma} \ot_{B_\gamma} \langle~,~\rangle_\cot )(\nabla^\prime \ot_{B_\gamma} \id_{\overline{\Gamma ( \cE ) }})\big) + \cdot_\gamma\big( (\langle~,~\rangle_\gamma\ot_{B_\gamma} \id_{\Omega^1_\gamma} ) (\id_{\Gamma ( \cE ) } \ot_{B_\gamma} \widetilde{\nabla^\prime})  \big) \\
        &= 	\cdot_\gamma \left((\id_{\Omega^1_\gamma} \ot_{B_\gamma}  \Gamma(\langle~,~\rangle) \varphi_{\cE, \overline{\cE}}(\id_{\Gamma ( \cE )} \ot_{B_\gamma}\fN_\cE) )(\nabla^\prime \ot_{B_\gamma} \id_{\overline{\Gamma ( \cE ) }}) \right)+\\
		&  \cdot_\gamma \left( ( \Gamma(\langle~,~\rangle) \varphi_{\cE, \overline{\cE}}(\id_{\Gamma ( \cE ) } \ot_{B_\gamma} \fN_\cE)\ot_{B_\gamma} \id_{\Omega^1_\gamma} ) (\id_{\Gamma ( \cE ) } \ot_{B_\gamma} (\fN_\cE^{-1}\ot_{B_\gamma}\id_{\Omega^1_\gamma} )\varphi^{-1}_{\overline{\cE}, \Omega^1}\circ \Gamma(\widetilde{\ch})\fN_\cE)  \right)\\
        &\text{ (by \eqref{18thfeb251} and Lemma \ref{lem:17thdec242})} \\
		&= 	\cdot_\gamma \left((\id_{\Omega^1_\gamma} \ot_{B_\gamma}  \Gamma(\langle~,~\rangle)) (\id_{\Omega^1_\gamma} \ot_{B_\gamma}\varphi_{\cE, \overline{\cE}}) (\nabla^\prime \ot_{B_\gamma} \id_{\Gamma ( \overline{\cE} ) } )(\id_{\Gamma ( \cE ) } \ot_{B_\gamma} \fN_\cE) \right)+\\
		& \cdot_\gamma \left( ( \Gamma(\langle~,~\rangle) \varphi_{\cE, \overline{\cE}}\ot_{B_\gamma} \id_{\Omega^1_\gamma} ) (\id_{\Gamma ( \cE ) } \ot_{B_\gamma} \varphi^{-1}_{\overline{\cE}, \Omega^1})(\id_{\Gamma ( \cE )} \ot_{B_\gamma} \Gamma(\widetilde{\ch}))(\id_{\Gamma ( \cE )} \ot_{B_\gamma} \fN_\cE)  \right) \\	
		&= 	\cdot_\gamma \Big((\id_{\Omega^1_\gamma} \ot_{B_\gamma}  \Gamma(\langle~,~\rangle)) (\id_{\Omega^1_\gamma} \ot_{B_\gamma}\varphi_{\cE, \overline{\cE}}) (\varphi^{-1}_{\Omega^{1}, \cE}\ot_{B_\gamma}\id_{\Gamma ( \overline{\cE} ) }) \varphi^{-1}_{\Omega^1 \ot_{B}\cE, \overline{\cE}} \Gamma(\ch \ot_B \id_{\overline{\cE}}) \varphi_{\cE, \overline{\cE}}\\
		&(\id_{\Gamma ( \cE ) } \ot_{B_\gamma}\fN_\cE) \Big) +  \cdot_\gamma \Big( ( \Gamma(\langle~,~\rangle) \ot_{B_\cot} \id_{\Omega^1_\gamma})(\varphi_{\cE, \overline{\cE}}\ot_{B_\gamma} \id_{\Omega^1_\gamma}) (\id_{\Gamma ( \cE ) } \otimes_{B_\gamma} \varphi^{-1}_{\overline{\cE}, \Omega^1})  \varphi^{-1}_{\cE, \overline{\cE}\ot_{B}\Omega^1} \\
        &\Gamma(\id_{\cE} \ot_B \widetilde{\ch})
        \varphi_{\cE, \overline{\cE}}    (\id_{\Gamma ( \cE ) } \otimes_{B_\gamma} \fN_\cE)  \Big) \text{ (by definition of $\nabla^\prime$, \eqref{15thdec244})}\\
		&= \cdot_\gamma \Big((\id_{\Omega^1_\gamma} \ot_{B_\gamma}  \Gamma(\langle~,~\rangle)) (\id_{\Omega^1_\gamma} \ot_{B_\gamma}\varphi_{\cE, \overline{\cE}}) (\id_{\Omega^1_\gamma} \ot_{B_\gamma}\varphi_{\cE, \overline{\cE}}^{-1}) \varphi^{-1}_{\Omega^1,\cE \otimes_B \overline{\cE}} \Gamma(\ch \ot_B \id_{\overline{\cE}})\\
        &\varphi_{\cE, \overline{\cE}}(\id_{\Gamma (\cE ) } \ot_{B_\gamma}\fN_\cE) \Big)+
		  \cdot_\gamma \Big( ( \Gamma(\langle~,~\rangle) \ot_{B_\cot} \id_{\Omega^1_\gamma} )(\varphi_{\cE, \overline{\cE}}\ot_{B_\gamma} \id_{\Omega^1_\gamma})( \varphi^{-1}_{\cE,\overline{\cE}} \ot_{B_\cot} \id_{\Omega^1_\gamma} )  \varphi^{-1}_{\cE \ot_B \overline{\cE}, \Omega^1}   \\
        &\Gamma(\id_{\cE} \ot_B \widetilde{\ch})  \varphi_{\cE, \overline{\cE}}    (\id_{\Gamma ( \cE ) } \ot_{B_\gamma} \fN_\cE)  \Big)  \text{ (as  $\Gamma$ is a monoidal equivalence)}\\
		&=\Gamma(\cdot)\varphi_{\Omega^1 ,B}  \Big((\id_{\Omega^1_\gamma} \ot_{B_\gamma}  \Gamma(\langle~,~\rangle)) \varphi^{-1}_{\Omega^1 ,\cE \otimes_B \overline{\cE}} \Gamma(\ch \otimes_B \id_{\overline{\cE}}) \varphi_{\cE, \overline{\cE}}(\id_{\Gamma ( \cE ) } \ot_{B_\gamma}\fN_\cE) \Big)+\\
		&  \Gamma(\cdot) \varphi_{B, \Omega^1} \Big( ( \Gamma(\langle~,~\rangle) \ot_{B_\cot} \id_{\Omega^1_\gamma} )  \varphi^{-1}_{\cE \ot_B \overline{\cE}, \Omega^1}   \Gamma(\id_{\cE} \ot_B \widetilde{\ch})  \varphi_{\cE, \overline{\cE}}    (\id_{\Gamma ( \cE ) } \ot_{B_\gamma}\fN_\cE)  \Big) \text{ (by \eqref{1stdec241jb})}\\
        &=\Gamma(\cdot ) \Gamma ( (\id_{\Omega^1} \ot_{B}  \langle~,~\rangle) ) \Gamma (\ch \otimes_B \id_{\overline{\cE}})) \varphi_{\cE, \overline{\cE}}(\id_{\Gamma ( \cE ) } \ot_{B_\gamma}\fN_\cE) +\\
		&  \Gamma\Big( \cdot  (\langle~,~\rangle \ot_B \id_{\Omega^1}) (\id_{\cE} \ot_B \widetilde{\ch})\Big)  \varphi_{\cE, \overline{\cE}}    (\id_{\Gamma ( \cE ) } \ot_{B_\gamma}\fN_\cE)   \text{ (by \eqref{15thdec244})}\\
		&=\Gamma\Big(\cdot (\id_{\Omega^1} \ot_{B}  \langle~,~\rangle) (\ch \otimes_B \id_{\overline{\cE}}) + \cdot  (\langle~,~\rangle \ot_B \id_{\Omega^1} ) (\id_{\cE} \ot_B  \widetilde{\ch})\Big)  \varphi_{\cE, \overline{\cE}}    (\id_{\Gamma ( \cE ) } \ot_{B_\gamma}\fN_\cE)   \\
		&= \Gamma(d\langle~,~ \rangle)\varphi_{\cE, \overline{\cE}}    (\id_{\Gamma ( \cE ) } \ot_{B_\gamma}\fN_\cE)\\
        &= d_\gamma (\langle~,~ \rangle_\gamma) 
	\end{align*}
	\noindent
  by \eqref{18thfeb251}.  Hence, $\nabla^\prime$ is the Chern connection on $\Gamma(\cE).$
	\end{proof}

We end this section by spelling out a sufficient condition for $\tch$ to be a bimodule connection. Let us recall that a left $\overline{\partial}$-connection $\overline{\partial}_{\mathcal{E}}$ on $\mathcal{E}$ is said to be a left $\sigma$-bimodule $\overline{\partial}$-connection if there exists a $B$-bimodule map 
$\sigma: \mathcal{E} \otimes_B \Omega^{(0,1)} \rightarrow \Omega^{(0, 1)} \otimes_B \mathcal{E}$
such that
$$\overline{\partial}_{\mathcal{E}} (e b) = \overline{\partial}_{\mathcal{E}} (e) b + \sigma (e \otimes_B \overline{\partial} b)$$
for all $e \in \mathcal{E}$ and for all $b \in B$. 

\begin{cor}
With the notations and assumptions of Theorem \ref{thm:Chern}, let us assume that  $\overline{\partial}_{\mathcal{E}}$ is a left bimodule $\overline{\partial}$-connection. Then for any unitary $2$-cocycle $\gamma$ on $A,$ the Chern connection $\tch$ is again a bimodule connection.
\end{cor}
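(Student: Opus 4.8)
The plan is to deduce the statement from the untwisted case by means of Theorem~\ref{thm:twisted_chern}. That theorem identifies $\tch$ with $\varphi_{\Omega^1,\cE}^{-1}\circ\Gamma(\ch)$, which is exactly the cocycle twist $\nabla_{\Gamma(\cE)}$ of the untwisted Chern connection $\ch$ in the sense of \eqref{eq:3rddec241}. Now Proposition~\ref{prop:29thnov242} says that if $(\ch,\sigma_{\ch})$ is a left $A$-covariant \emph{bimodule} connection on $\cE$, with $\sigma_{\ch}$ a morphism in $\cat$, then $(\tch,(\sigma_{\ch})_\gamma)$ is a left $A_\gamma$-covariant bimodule connection on $\Gamma(\cE)$, where $(\sigma_{\ch})_\gamma$ is the morphism in $\tcat$ built from $\sigma_{\ch}$ via \eqref{1stdec241jb}. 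So the corollary reduces to the purely untwisted assertion: \emph{if $\delbar{\cE}$ is a left bimodule $\delbar{}$-connection, then $\ch$ is a bimodule connection on $\cE$ whose braiding is covariant.}

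For this I would first recall from \cite[Chapter~8]{BeggsMajid:Leabh} that the Chern connection attached to a holomorphic Hermitian bimodule whose holomorphic structure $\delbar{\cE}$ is a bimodule $\delbar{}$-connection is again a bimodule connection. Concretely, write $\ch=\nabla^{(1,0)}+\delbar{\cE}$ relative to the decomposition $\Omega^1=\Omega^{(1,0)}\oplus\Omega^{(0,1)}$; the $(0,1)$-component satisfies a right Leibniz rule with a $B$-bimodule braiding $\sigma_{\delbar{}}\colon\cE\ot_B\Omega^{(0,1)}\to\Omega^{(0,1)}\ot_B\cE$ by hypothesis, and feeding $e\mapsto eb$ into the metric-compatibility identity \eqref{8thsep251} forces the $(1,0)$-component $\nabla^{(1,0)}$ to obey one as well; the two braidings assemble into a single $B$-bimodule map $\sigma_{\ch}\colon\cE\ot_B\Omega^1\to\Omega^1\ot_B\cE$ realising $\ch$ as a bimodule connection in the sense of \eqref{19thoct237}. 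Since $\ch$, the $B$-actions and $d$ are all $A$-covariant and the braiding accompanying a given bimodule connection is unique, $\sigma_{\ch}$ is automatically $A$-covariant, hence a morphism in $\cat$.

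Granting this, Proposition~\ref{prop:29thnov242} applies and shows that $(\tch,(\sigma_{\ch})_\gamma)$ is a bimodule connection on $\Gamma(\cE)$, which is exactly the claim. (Alternatively, one may bypass Proposition~\ref{prop:29thnov242} by first noting, through the verbatim adaptation of its proof already invoked in Theorem~\ref{24thfeb252}, that $\delbar{\Gamma(\cE)}$ is a bimodule $\delbar{}$-connection on $\Gamma(\cE)$ in $\tcat$, and then applying the same Beggs--Majid criterion directly inside the category of relative Hopf modules over $A_\gamma$, since $\tch$ is its Chern connection by Theorem~\ref{thm:twisted_chern}.) The only real work is the middle step: extracting the right-Leibniz behaviour of $\nabla^{(1,0)}$ from \eqref{8thsep251} and checking that the resulting map on $\cE\ot_B\Omega^1$ is genuinely $B$-bilinear rather than merely additive; everything downstream is then a formal consequence of the monoidal-equivalence machinery of Sections~\ref{31stjan251}--\ref{8thmarch253}.
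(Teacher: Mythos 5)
Your proposal is correct and follows essentially the same route as the paper: identify $\tch$ with the cocycle twist of $\ch$ via Theorem \ref{thm:twisted_chern} and then invoke the second assertion of Proposition \ref{prop:29thnov242}. The only difference is that the paper simply cites \cite[Proposition 8.54]{BeggsMajid:Leabh} for the untwisted step (that a bimodule $\overline{\partial}$-connection forces $\ch$ to be a bimodule connection), which you sketch instead; your remark on the covariance of the braiding is a harmless (and in fact helpful) addition.
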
 
\begin{proof}
Since $\overline{\partial}_{\mathcal{E}}$ is a bimodule $\overline{\partial}$-connection, \cite[Proposition 8.54]{BeggsMajid:Leabh} implies that  $\ch$ is also a bimodule connection.
Now by Theorem \ref{thm:twisted_chern},  $\tch$ is a $2$-cocycle twist of $\ch$ and therefore, an application of  the second assertion of Proposition \ref{prop:29thnov242} yields that $\tch$ is a bimodule connection.
\end{proof}

\section{Levi-Civita connection and K\"ahler structures} \label{8thmarch254}

The goal of this section is to state and prove the main result of this article for which we recall the definition of noncommutative Hermitian and K\"ahler structures from \cite{MMF3}. 

Let $( \Omega^\bullet, \wedge, d )$ be a differential calculus on $B.$ As before, we will tacitly use the fact (see Remark \ref{24thjuly241}) that if $\Omega^1$ admits a metric $\metric,$ then it is finitely generated and projective as a left $B$-module.  If there exists a natural number $n$ such that $\Omega^n \neq 0$ and $\Omega^m = 0$ for all $m > n,$ then we say that $( \Omega^\bullet, \wedge, d )$ has total dimension $n.$ A central form in $( \Omega^\bullet, \wedge, d )$ is an element of $\Omega^\bullet$ which belongs to the center of the algebra $( \Omega^\bullet, \wedge ).$ If $( \Omega^\bullet, \wedge, d )$ is a $\ast$-differential calculus, then a form $\omega$ is called real if $\omega^\ast = \omega.$ Finally, for a central real form $\kappa$ on a $\ast$-differential calculus, the Lefschetz operator 
$$L_\kappa: \Omega^\bullet \rightarrow \Omega^\bullet ~ {\rm is} ~ {\rm defined} ~ {\rm as} ~ L_\kappa ( \omega ) = \kappa \wedge \omega. $$
Since $\kappa$ is a central real-form, $L$ is a $B$-bimodule map satisfying $L ( \omega^\ast ) = ( L ( \omega ) )^\ast. $

\begin{definition}   \cite[Definition 7.1]{MMF3} 
	A covariant Hermitian form for a $2n$-dimensional covariant  complex structure on a left $A$-comodule  $\ast$-algebra $B$ is an  $A$-coinvariant central, real, $(1, 1 )$-form $\kappa$ such that the Lefschetz operator induces $B$-bimodule isomorphisms 
	$$L^{n - k}_\kappa: \Omega^k \rightarrow \Omega^{2n - k}, ~ \omega \mapsto \kappa^{n - k} \wedge \omega  $$
	for all $0 \leq k < n.$ 
	
A covariant Hermitian form $\kappa$ is called a K\"ahler form if $d \kappa = 0.$
	\end{definition}

The tuple $ ( \Omega^{\bullet, \bullet}, \wedge, \partial, \overline{\partial}, \kappa ) $ is called an Hermitian (respectively,  K\"ahler  structure) on the differential calculus. For classical complex manifolds, the above definition coincides with the usual definition of Hermitian and K\"ahler  structures (see  Definition 3.1.1 and Definition 3.1.6  of \cite{DanHuybrechts}).

Before proving the main theorem, let us make a couple of observations. The following is an easy consequence of the fact that $\kappa$ is an $A$-coinvariant element in $\Omega_\cot^{(1,1)}.$ 
\begin{prop}
 If $ ( \Omega^{(\bullet, \bullet)}, \wedge, \partial, \overline{\partial}, \kappa ) $ is an $A$-covariant Hermitian (respectively, K\"ahler) structure and $\gamma$ is a unitary $2$-cocycle on $A,$ then $ ( \Omega^{(\bullet, \bullet)}_\gamma, \wedge_\gamma, \partial_\gamma,  (\overline{\partial})_\gamma, \kappa ) $ is also a covariant Hermitian (respectively, K\"ahler) structure.
\end{prop}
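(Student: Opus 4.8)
The plan is to push everything through the monoidal equivalence $\Gamma\colon \cat \to \tcat$ of \eqref{21stnov243}, using repeatedly that, because $\kappa$ is $A$-coinvariant, wedging with $\kappa$ is unaffected by the cocycle twist. By Proposition \ref{26thfeb251}, $\complexdctwisted$ is already an $A_\gamma$-covariant complex structure for the $\ast$-calculus $\dctwisted$, and its total dimension is again $2n$ since $\Omega^k_\gamma = \Omega^k$ as vector spaces. So the only thing to verify is that $\kappa$, now viewed as an element of $\Omega^{(1,1)}_\gamma = \Gamma(\Omega^{(1,1)})$, is an $A_\gamma$-coinvariant, central, real $(1,1)$-form whose associated Lefschetz powers are $B_\gamma$-bimodule isomorphisms, together with the condition $d_\gamma\kappa = 0$ in the K\"ahler case.

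First I would record the key identity: for $\omega \in \Omega^\bullet_\gamma$, using \eqref{29thnov24jb1}, the coinvariance $\del{\Omega^\bullet}(\kappa) = 1 \ot \kappa$, unitality of $\gamma$ and the counit axiom,
\[ \kappa \wedge_\gamma \omega = \co{\mone{\kappa}}{\mone{\omega}}\,\zero{\kappa}\wedge\zero{\omega} = \co{1}{\mone{\omega}}\,\kappa\wedge\zero{\omega} = \kappa \wedge \omega, \]
and symmetrically $\omega \wedge_\gamma \kappa = \omega \wedge \kappa$. Consequently the twisted Lefschetz operator $L^\gamma_\kappa\colon \Omega^\bullet_\gamma \to \Omega^\bullet_\gamma$, $\omega \mapsto \kappa \wedge_\gamma \omega$, coincides as a $\C$-linear map with $L_\kappa$; moreover $\kappa \wedge_\gamma \kappa = \kappa \wedge \kappa$ is again coinvariant, so by induction the $m$-fold $\wedge_\gamma$-power of $\kappa$ equals $\kappa^{\wedge m}$ and $(L^\gamma_\kappa)^m = L_\kappa^m$ as maps $\Omega^k_\gamma \to \Omega^{k+2m}_\gamma$.

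With this in hand the remaining verifications are routine. Coinvariance of $\kappa$ in $\Omega^{(1,1)}_\gamma$ holds because $\Gamma$ leaves the comodule structure untouched; centrality in $(\Omega^\bullet_\gamma, \wedge_\gamma)$ is exactly $\kappa \wedge_\gamma \omega = \kappa \wedge \omega = \omega \wedge \kappa = \omega \wedge_\gamma \kappa$; reality follows from \eqref{4thfeb251}, since $\mone{\kappa}^* = 1$ and $\bar V$ is unital (being the convolution inverse of the unital $V$, see Remark \ref{25thnov241}), so $\kappa^{*_\gamma} = \bar V(\mone{\kappa}^*)\zero{\kappa}^* = \bar V(1)\kappa^* = \kappa^* = \kappa$. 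For the Lefschetz condition I would observe that $L_\kappa\colon \Omega^k \to \Omega^{k+2}$ is a morphism in $\cat$ — it is $B$-bilinear by centrality of $\kappa$, and $\del{\Omega^\bullet}(\kappa \wedge \omega) = \mone{\omega}\ot\kappa\wedge\zero{\omega}$ exhibits its $A$-covariance — hence so is each power $L_\kappa^{\,n-k}\colon \Omega^k \to \Omega^{2n-k}$, and the hypothesis that $L_\kappa^{\,n-k}$ is a $B$-bimodule isomorphism makes it an isomorphism in $\cat$. Applying the equivalence $\Gamma$ and using $\Gamma(L_\kappa^{\,n-k}) = (L^\gamma_\kappa)^{n-k}$ from the previous paragraph, I conclude that $(L^\gamma_\kappa)^{n-k}\colon \Omega^k_\gamma \to \Omega^{2n-k}_\gamma$ is an isomorphism in $\tcat$, in particular a $B_\gamma$-bimodule isomorphism, for $0 \le k < n$. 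In the K\"ahler case the extra condition $d_\gamma\kappa = \Gamma(d)(\kappa) = d\kappa = 0$ is immediate.

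I do not expect any genuine obstacle here; the only step that needs a little care is the bookkeeping identification $(L^\gamma_\kappa)^{n-k} = \Gamma(L_\kappa^{\,n-k})$ together with the fact that an equivalence of categories preserves isomorphisms — this is precisely what upgrades ``bijective $B$-bimodule map'' in the untwisted calculus to ``bijective $B_\gamma$-bimodule map'' after twisting. Everything else reduces to the single observation that wedging on either side with an $A$-coinvariant form commutes with the passage from $\wedge$ to $\wedge_\gamma$.
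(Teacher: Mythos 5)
Your proposal is correct and follows essentially the same route as the paper: Proposition \ref{26thfeb251} handles the complex structure, and the single observation that $A$-coinvariance of $\kappa$ together with unitality of $\gamma$ gives $\kappa\wedge_\gamma\omega=\kappa\wedge\omega$ and $\omega\wedge_\gamma\kappa=\omega\wedge\kappa$ then yields centrality, reality ($\kappa^{*_\gamma}=\bar V(1)\kappa^*=\kappa$), the Lefschetz isomorphisms, and $d_\gamma\kappa=0$. Your extra bookkeeping via $\Gamma(L_\kappa^{\,n-k})=(L^\gamma_\kappa)^{n-k}$ and the fact that the equivalence preserves isomorphisms is a slightly more explicit version of the paper's one-line conclusion, not a different argument.
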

\begin{proof}
Since $\gamma$ is a unitary $2$-cocycle, Proposition \ref{26thfeb251} implies that $\complexdctwisted$   is an $A_\gamma$-covariant complex structure.	Now since $\kappa$ is an $A_\gamma$-coinvariant element in $\Omega^{(1,1)}_\gamma$, \eqref{29thnov24jb1} implies that for all $\omega \in \Omega^\bullet_\gamma,$ 
    $$\omega\wedge_\gamma \kappa = \co{\mone{\omega}}{1}\zero{\omega} \wedge \kappa = \omega \wedge \kappa ~ \text{and similarly,} ~ \kappa \wedge_\gamma \omega =\kappa \wedge \omega. $$ 
  Therefore,   as $\kappa$ is central,
  $$\omega\wedge_\gamma \kappa = \omega \wedge \kappa = \kappa \wedge \omega = \kappa \wedge_\gamma \omega.$$
  The form $\kappa$ in $\Omega_\gamma^{(1,1)}$ is real as $\kappa^{*_\gamma}= \bar{V}(1) \kappa^*= \kappa.$ Moreover, since $ \kappa \wedge_\gamma \omega = \kappa \wedge \omega $ as noted above, it follows that $ L^{n - k}_\kappa: \Omega^k_\gamma \rightarrow \Omega^{2n - k}_\gamma $ is an isomorphism.
  
  Finally, if $ d \kappa = 0, $ then $ d_\gamma \kappa = 0. $
\end{proof}

Our next observation is a  generalization of \cite[Theorem 4.4]{LeviCivitaHK}.

\begin{prop} \label{lem:27thdec241}
	Let  $\complexdc$ be a covariant complex structure on an $A$-comodule $\ast$-algebra $B$ and $\metric$ be a covariant real metric on $\Omega^1$ such that
	\begin{equation}\label{diamond}
		(\eta_1, \eta_2)=0\text{ for  all } \eta_1, \eta_2 \text{ in }\Omega^{(1,0)}\text{ or }\Omega^{(0,1)}.
	\end{equation} 
    Then the covariant Hermitian metric $\kH_g$ on $\Omega^{1}$ obtained from Proposition \ref{27thdec242jb} is of the form  $\kH_g= \kH_1 \oplus \kH_2$, where $\kH_1$ and $\kH_2$ are the restrictions of $\kH$ on $\ol{\Omega^{(1,0)}}$ and $\ol{\Omega^{(0,1)}}$ respectively. In fact, $\kH_1$ and $\kH_2$ are covariant Hermitian metrics on $\Omega^{(1,0)}$ and $\Omega^{(0,1)}$ respectively.
    \end{prop}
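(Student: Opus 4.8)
The plan is to use the explicit formula for $\kH_g$ provided by Proposition \ref{27thdec242jb}, namely $\kH_g(\overline{\omega})(\eta) = (\eta,\omega^*)$, together with the vanishing hypothesis \eqref{diamond}, to show that $\kH_g$ is ``block diagonal'' with respect to the decomposition $\Omega^1 = \Omega^{(1,0)} \oplus \Omega^{(0,1)}$; the two diagonal blocks will then be the desired $\kH_1$ and $\kH_2$.

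First I would set up the relevant decompositions inside $\cat$. By Definition \ref{18thfeb255} the $\ast$-involution of the calculus interchanges $\Omega^{(1,0)}$ and $\Omega^{(0,1)}$, so in view of \eqref{28thnov231} and \eqref{25thnov232} the subspaces $\overline{\Omega^{(1,0)}}$ and $\overline{\Omega^{(0,1)}}$ are subobjects of $\overline{\Omega^1}$ in $\cat$ with $\overline{\Omega^1} = \overline{\Omega^{(1,0)}} \oplus \overline{\Omega^{(0,1)}}$. Since $\Omega^{(1,0)}$ and $\Omega^{(0,1)}$ are finitely generated and projective as left $B$-modules (Remark \ref{5thmarch251}), Proposition \ref{28thnov241jb} makes $\hm{B}{\Omega^{(1,0)}}{B}$ and $\hm{B}{\Omega^{(0,1)}}{B}$ objects of $\cat$, and identifying each with the subobject of $\hm{B}{\Omega^1}{B}$ consisting of the functionals vanishing on the complementary summand, one obtains $\hm{B}{\Omega^1}{B} = \hm{B}{\Omega^{(1,0)}}{B} \oplus \hm{B}{\Omega^{(0,1)}}{B}$ in $\cat$ (equivalently, right duals distribute over the finite direct sum $\Omega^1 = \Omega^{(1,0)} \oplus \Omega^{(0,1)}$; this can also be checked directly, since a left $B$-linear map $\Omega^1 \to B$ decomposes uniquely according to the grading and the bimodule and comodule structures respect this decomposition).

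The key step is then a one-line computation: if $\omega \in \Omega^{(1,0)}$ then $\omega^* \in \Omega^{(0,1)}$, hence for every $\eta \in \Omega^{(0,1)}$ we have $\kH_g(\overline{\omega})(\eta) = (\eta,\omega^*) = 0$ by \eqref{diamond}; thus $\kH_g(\overline{\omega}) \in \hm{B}{\Omega^{(1,0)}}{B}$. Symmetrically $\kH_g$ carries $\overline{\Omega^{(0,1)}}$ into $\hm{B}{\Omega^{(0,1)}}{B}$. Consequently $\kH_g$ respects both decompositions above, so $\kH_g = \kH_1 \oplus \kH_2$, where $\kH_1$ and $\kH_2$ are the corestrictions of the blocks $\kH_g|_{\overline{\Omega^{(1,0)}}}$ and $\kH_g|_{\overline{\Omega^{(0,1)}}}$.

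It remains to check that $\kH_1$ and $\kH_2$ satisfy Definition \ref{4thdec231}: each is a morphism of $\cat$ (restriction of a morphism of $\cat$ to a subobject, corestricted to a subobject), each is an isomorphism (a diagonal block of the invertible diagonal operator $\kH_g$), and the reality condition \eqref{15thdec246} is inherited from that of $\kH_g$, because for $x, y$ both in $\Omega^{(1,0)}$ (resp.\ both in $\Omega^{(0,1)}$) the pairing \eqref{metric:Hermitian} built from $\kH_i$ coincides with $\langle x, \overline{y}\rangle = \kH_g(\overline{y})(x) = (x, y^\ast)$. I expect the only mildly delicate point to be verifying that the intrinsic $\cat$-structure on $\hm{B}{\Omega^{(p,q)}}{B}$ coming from \eqref{eq:27thnov241} agrees with the one induced from $\hm{B}{\Omega^1}{B}$ via the subobject identification; this is routine once one notes that $\Omega^{(p,q)}$ is an $A$-subcomodule and a sub-bimodule of $\Omega^1$, on which the relevant coaction and evaluation maps restrict block-wise.
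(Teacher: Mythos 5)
Your proposal is correct and follows essentially the same route as the paper's proof: you use $\kH_g(\overline{\omega})(\eta)=(\eta,\omega^*)$ together with \eqref{diamond} to show that $\kH_g$ is block-diagonal for $\Omega^1=\Omega^{(1,0)}\oplus\Omega^{(0,1)}$, and then deduce that each block is an isomorphism inheriting \eqref{15thdec246}. The extra care you take with identifying $\hm{B}{\Omega^{(p,q)}}{B}$ inside $\hm{B}{\Omega^1}{B}$ is left implicit in the paper but is the same argument.
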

\begin{proof}
We note that by definition,  $\kH_1$ and $\kH_2$ are morphisms in $\cat$ satisfying \eqref{15thdec246} and moreover,   $\kH_g = \kH_1 \oplus \kH_2.$ 
    
    As   $\kH_g(\ol{\eta_1})(\eta_2)= (\eta_2, \eta_1^*)$ for all
    $\eta_1, \eta_2 \in \Omega^1,$ \eqref{diamond} implies that
     $$\kH_1(\ol{\eta_1}) (\eta_2)= (\eta_2, \eta_1^*) = 0 = \kH_2 ( \overline{\eta_2} ) ( \eta_1 ) $$ 
 for all $\eta_1\in \Omega^{(1,0)}, \eta_2 \in \Omega^{(0,1)}.$    Hence we have that  $\kH_g ( \overline{\Omega^{(1,0)}} ) \subseteq \hm{B}{\Omega^{(1,0)}}{B}  $ and 
$\kH_g ( \overline{\Omega^{(0,1)}} ) \subseteq \hm{B}{\Omega^{(0,1)}}{B}.$

Since $\kH_g: \overline{\Omega^1} \rightarrow \hm{B}{\Omega^{1}}{B}  $ is an isomorphism, it follows that the maps  $\kH_1:  \overline{\Omega^{(1,0)}} \rightarrow \hm{B}{\Omega^{(1,0)}}{B}  $ and 
$\kH_2:  \overline{\Omega^{(0,1)}}  \rightarrow \hm{B}{\Omega^{(0,1)}}{B}$ are also isomorphisms.
\end{proof}

Now we present one last ingredient for proving Theorem \ref{29thjan253}. 

\begin{definition} \cite[Definition 7.21]{BeggsMajid:Leabh}   \label{13thdec231}
 A complex structure $ (\Omega^{(\bullet, \bullet)}, \wedge, \partial, \overline{\partial}) $ over a $\ast$-algebra $B$ is called factorizable if  
the restriction of the wedge maps
$$ \wedge_{(0, q), (p,  0)}: \Omega^{(0, q)} \otimes_{B} \Omega^{(p, 0)} \rightarrow \Omega^{(p,q)} ~ \text{and} ~  \wedge_{(p, 0), (0,  q)}: \Omega^{(p, 0)} \otimes_{B} \Omega^{(0, q)} \rightarrow \Omega^{(p,q)}      $$
are invertible for all $ (p, q) \in \mathbb{N} \times \mathbb{N}. $
\end{definition}
In the sequel, we will denote the inverse of the map $ \wedge_{(0, 1),( 1, 0)} $ by the symbol $\theta^{(1, 1)}_l.$

Factorizable complex structures give rise to holomorphic bimodules in the following way:

\begin{theorem} (\cite[Proposition 6.1]{BS}) \label{28thfeb251}
If $\complexdc$ is an $A$-covariant factorizable complex structure as above and we define the $A$-covariant map
$$\overline{\partial}_{\Omega^{(1,0)}}:= \theta^{(1, 1)}_l \circ \overline{\partial}: \Omega^{(1, 0)} \rightarrow \Omega^{(0, 1)} \otimes_B \Omega^{(1, 0)},$$
then the pair $ (\Omega^{(1,0)}, \overline{\partial}_{\Omega^{(1,0)}}) $ is an $A$-covariant holomorphic $B$-bimodule.
\end{theorem}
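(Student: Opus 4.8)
The plan is to verify directly the three requirements of Definition~\ref{11thdec231} for the pair $(\Omega^{(1,0)},\overline{\partial}_{\Omega^{(1,0)}})$ with $\overline{\partial}_{\Omega^{(1,0)}} = \theta^{(1,1)}_l\circ\overline{\partial}$: that $\overline{\partial}_{\Omega^{(1,0)}}$ is a left $\overline{\partial}$-connection, that it is $A$-covariant, and that its holomorphic curvature vanishes.

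First I would dispose of the two easy points. Factorizability makes the wedge map $\wedge_{(0,1),(1,0)}:\Omega^{(0,1)}\otimes_B\Omega^{(1,0)}\to\Omega^{(1,1)}$ an isomorphism in $\cat$, so its inverse $\theta^{(1,1)}_l$ is again a morphism in $\cat$ — in particular left $B$-linear and $A$-covariant — and since $\overline{\partial}$ is $A$-covariant, so is the composite $\overline{\partial}_{\Omega^{(1,0)}}$. For the connection identity \eqref{21stfeb251}, I would take $b\in B$, $\omega\in\Omega^{(1,0)}$ and read off the $(1,1)$-component of $d(b\omega)=db\wedge\omega + b\,d\omega$: among the pieces $\partial b\wedge\omega\in\Omega^{(2,0)}$, $\overline{\partial}b\wedge\omega\in\Omega^{(1,1)}$, $b\,\partial\omega\in\Omega^{(2,0)}$, $b\,\overline{\partial}\omega\in\Omega^{(1,1)}$, only the second and fourth survive, so $\overline{\partial}(b\omega)=\overline{\partial}b\wedge\omega + b\,\overline{\partial}\omega$. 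Applying the left $B$-linear map $\theta^{(1,1)}_l$ and using $\theta^{(1,1)}_l(\overline{\partial}b\wedge\omega)=\overline{\partial}b\otimes_B\omega$ gives $\overline{\partial}_{\Omega^{(1,0)}}(b\omega)=\overline{\partial}b\otimes_B\omega + b\,\overline{\partial}_{\Omega^{(1,0)}}(\omega)$.

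The substance is the vanishing of $R^{\Hol}_{\Omega^{(1,0)}}$. Fix $\omega\in\Omega^{(1,0)}$ and write $\overline{\partial}_{\Omega^{(1,0)}}(\omega)=\sum_i\alpha_i\otimes_B\omega_i$ with $\alpha_i\in\Omega^{(0,1)}$, $\omega_i\in\Omega^{(1,0)}$, so that $\sum_i\alpha_i\wedge\omega_i=\overline{\partial}\omega$ by definition of $\theta^{(1,1)}_l$; likewise write $\overline{\partial}_{\Omega^{(1,0)}}(\omega_i)=\sum_j\alpha_{ij}\otimes_B\omega_{ij}$, so $\sum_j\alpha_{ij}\wedge\omega_{ij}=\overline{\partial}\omega_i$. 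Unwinding the definition of $R^{\Hol}$ gives $R^{\Hol}_{\Omega^{(1,0)}}(\omega)=\sum_i\overline{\partial}\alpha_i\otimes_B\omega_i-\sum_{i,j}(\alpha_i\wedge\alpha_{ij})\otimes_B\omega_{ij}$ in $\Omega^{(0,2)}\otimes_B\Omega^{(1,0)}$. Since factorizability also makes $\wedge_{(0,2),(1,0)}:\Omega^{(0,2)}\otimes_B\Omega^{(1,0)}\to\Omega^{(1,2)}$ injective, it suffices to show the image of $R^{\Hol}_{\Omega^{(1,0)}}(\omega)$ under this map is zero, i.e.\ (using associativity of $\wedge$ on the second sum) that $\sum_i\overline{\partial}\alpha_i\wedge\omega_i=\sum_i\alpha_i\wedge\overline{\partial}\omega_i$ in $\Omega^{(1,2)}$. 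I would obtain this by applying $d$ to $\sum_i\alpha_i\wedge\omega_i=\overline{\partial}\omega$: the graded Leibniz rule gives $d(\alpha_i\wedge\omega_i)=d\alpha_i\wedge\omega_i-\alpha_i\wedge d\omega_i$, and extracting the $\Omega^{(1,2)}$-component — only $\overline{\partial}\alpha_i\wedge\omega_i$ and $-\alpha_i\wedge\overline{\partial}\omega_i$ contribute on the left, while $d\overline{\partial}\omega=\partial\overline{\partial}\omega+\overline{\partial}^2\omega$ has no $(1,2)$-part since $\partial\overline{\partial}\omega\in\Omega^{(2,1)}$ and $\overline{\partial}^2=0$ (the latter from $d^2=0$ by comparing bidegrees) — yields exactly the required identity. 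Hence $\wedge_{(0,2),(1,0)}(R^{\Hol}_{\Omega^{(1,0)}}(\omega))=0$, and injectivity finishes the proof.

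I expect the only real obstacle to be the bidegree bookkeeping in the curvature step: carefully isolating which summands of $d\alpha_i\wedge\omega_i$ and $\alpha_i\wedge d\omega_i$ lie in $\Omega^{(1,2)}$, handling the implicit identification hidden in the notation $\id\wedge\overline{\partial}_{\Omega^{(1,0)}}$, and invoking injectivity of $\wedge_{(0,2),(1,0)}$ to descend the resulting equation from $\Omega^{(1,2)}$ back to $\Omega^{(0,2)}\otimes_B\Omega^{(1,0)}$. Everything else — $A$-covariance, the Leibniz computation, and $\overline{\partial}^2=0$ — is routine.
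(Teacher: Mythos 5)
Your proof is correct: the Leibniz rule follows from extracting the $(1,1)$-component of $d(b\omega)$, covariance is inherited from $\overline{\partial}$ and the inverse wedge isomorphism, and the curvature vanishes because injectivity of $\wedge_{(0,2),(1,0)}$ reduces $R^{\Hol}_{\Omega^{(1,0)}}=0$ to the identity $\sum_i\overline{\partial}\alpha_i\wedge\omega_i=\sum_i\alpha_i\wedge\overline{\partial}\omega_i$ in $\Omega^{(1,2)}$, which is exactly the $(1,2)$-part of $d(\overline{\partial}\omega)$ together with $\overline{\partial}^2=0$. The paper itself gives no proof of this statement (it is quoted from Proposition 6.1 of the cited reference), and your argument is essentially the standard one used there, so there is nothing to add.
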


We recall that the opposite complex structure on $ (\Omega^{(\bullet, \bullet)} \wedge, \partial, \overline{\partial}) $   is given by the $\bN^2_0$-graded algebra
 $\bigoplus_{(a,b)\in \bN^2_0} \Omega^{(a,b), \text{op}}$, where
$$ \Omega^{(a,b), \text{op}}:= \Omega^{(b, a)}.  $$
The $\partial$ and $\overline{\partial}$-operators for the opposite complex structure are given by:
$$ \partial_{\text{op}}:= \overline{\partial}: \Omega^{(a, b), \text{op}} \rightarrow \Omega^{(a + 1, b), \text{op}}, ~  \overline{\partial}_{\text{op}}:= \partial: \Omega^{(a, b), \text{op}} \rightarrow \Omega^{(a, b + 1), \text{op}}. $$

Thus, for a covariant factorizable complex structure, Theorem \ref{28thfeb251} shows that $\Omega^{(0, 1)}$ is again a holomorphic covariant $B$-bimodule for the opposite complex structure. 

Now let us assume that $\complexdc$ is an $A$-covariant factorizable complex structure equipped with a real covariant metric $\metric$ satisfying \eqref{diamond} and let us collect some observations that follow by combining various results that we have obtained so far.  

Firstly, Proposition \ref{lem:27thdec241} shows that the Hermitian metric $\kH_g$ on $\Omega^1$ is of the form $ \kH_g = \kH_1 \oplus \kH_2 $ for some covariant Hermitian metrics $\kH_1$ and $\kH_2$ on $\Omega^{(1,0)}$ and $\Omega^{(0, 1)}$ respectively. Remark \ref{5thmarch251} implies that $\Omega^{(1,0)}$ and $\Omega^{(0,1)}$ are finitely generated and projective as  left $B$-modules. Therefore, by Theorem \ref{thm:Chern} and the discussion above, we have covariant Chern connections $\nabla_{\text{Ch}, \Omega^{(1, 0)}}$ and $\nabla_{\text{Ch}, \Omega^{(0, 1)}}$ for the pairs  $ ( \Omega^{(1, 0)}, \kH_1 ) $ and $( \Omega^{(0,1)}, \kH_2  )$ respectively.

On the other hand, if $\gamma$ is a unitary $2$-cocycle on $A,$ then by Proposition \ref{realmetric}, $\metrictwisted$ is a real covariant metric on $\Omega^1_\gamma$ and it can be easily verified that $\metrictwisted$ satisfies \eqref{diamond} for the covariant complex structure $\complexdctwisted.$ Once again, by Proposition \ref{lem:27thdec241}, we have a covariant Hermitian metric $\kH_{g_\gamma}$ on $\Omega^1_\gamma$ with $ \kH_{g_\gamma} = \kH^\prime_1 \oplus \kH^\prime_2 $ for some covariant Hermitian metrics $\kH^\prime_1, \kH^\prime_2$ as in that Proposition. Moreover, Theorem \ref{24thfeb252} proves that we have covariant holomorphic bimodule structures on $\Omega^{(1,0)}_\gamma$ and $\Omega^{(0, 1)}_\gamma.$ We will denote the Chern connections for the pairs $ (\Omega^{(1,0)}_\gamma, \kH^\prime_1  ) $ and  $ (\Omega^{(0,1)}_\gamma, \kH^\prime_2  ) $ by the symbols $\nabla_{\text{Ch}, \Omega^{(1, 0)}_\gamma}$ and $\nabla_{\text{Ch}, \Omega^{(0, 1)}_\gamma}$ respectively. 

With these notations, we can now state the following theorem:

\begin{theorem} \label{29thjan253}
Let $\complexdc$ be an $A$-covariant factorizable complex structure on a left $A$ comodule $\ast$-algebra $B$ and $\metric$ be a real covariant metric on $\Omega^1$ satisfying \eqref{diamond}. Assume that $\nabla_{\Omega^1}$ is the unique covariant Levi-Civita connection for $\metric$ such that
\begin{equation} \label{1stmarch251}
 \nabla = \nabla_{\text{Ch}, \Omega^{(1, 0)}} \oplus \nabla_{\text{Ch}, \Omega^{(0, 1)}}
 \end{equation}
 with respect to the decomposition $\Omega^1 = \Omega^{(1,0)} \oplus  \Omega^{(0,1)}. $
If $\gamma$ is a unitary $2$-cocycle on $A,$ then there exists a unique covariant Levi-Civita connection $\nabla_{\Omega^1_\gamma}: = \varphi^{-1}_{\Omega^1, \Omega^1} \circ \Gamma ( \nabla_{\Omega^1} )$ for $\metrictwisted$ and moreover  $ \nabla_{\Omega^1_\gamma} =\nabla_{\text{Ch}, \Omega^{(1, 0)}_\gamma} \oplus \nabla_{\text{Ch}, \Omega^{(0, 1)}_\gamma}. $
\end{theorem}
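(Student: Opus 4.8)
The plan is to combine the three transfer results established earlier—Theorem~\ref{thm:15thdec243} (the twist of the Levi-Civita connection is the Levi-Civita connection), Theorem~\ref{thm:twisted_chern} (the twist of the Chern connection is the Chern connection), and Theorem~\ref{5thfeb253} (the twist of $\kH_g$ is $\kH_{g_\gamma}$)—with the hypothesis \eqref{1stmarch251} and the functoriality of $\Gamma$. The key point is that both sides of the desired identity are, by construction, obtained by applying $\varphi^{-1}_{\Omega^1,\Omega^1}\circ\Gamma(-)$ to objects that are already known to agree on $\Omega^1$.

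First I would record the existence and uniqueness of $\nabla_{\Omega^1_\gamma}$: by Proposition~\ref{realmetric}, $\metrictwisted$ is a real covariant metric on $\Omega^1_\gamma$ satisfying \eqref{diamond} for the twisted complex structure $\complexdctwisted$ (Proposition~\ref{26thfeb251}), and by Theorem~\ref{thm:15thdec243}, $\nabla_{\Omega^1_\gamma}:=\varphi^{-1}_{\Omega^1,\Omega^1}\circ\Gamma(\nabla_{\Omega^1})$ is the unique covariant Levi-Civita connection for $\metrictwisted$. Next, I would observe that the twisted Hermitian metric $\kH_{g_\gamma}=(\kH_g)_\gamma$ (Theorem~\ref{5thfeb253}) decomposes as $\kH_{g_\gamma}=\kH_1'\oplus\kH_2'$ by Proposition~\ref{lem:27thdec241} applied to $\metrictwisted$, and that under the identification $\overline{\Gamma(\Omega^{(1,0)})}\oplus\overline{\Gamma(\Omega^{(0,1)})}\cong\overline{\Gamma(\Omega^1)}$ one has $\kH_i'=(\kH_i)_\gamma$ for $i=1,2$; this is just the statement that applying the functorial construction $\fS\circ\Gamma(-)\circ\fN$ of Theorem~\ref{thm:twisted-hermitian} to a direct sum is the direct sum of the constructions, which follows because $\Gamma$, $\fS$, and $\fN$ all respect direct sums. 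Consequently, by Theorem~\ref{thm:twisted_chern} applied to each of the two holomorphic bimodules $(\Omega^{(1,0)},\overline\partial_{\Omega^{(1,0)}})$ and $(\Omega^{(0,1)},\overline\partial_{\Omega^{(0,1)}})$ (which are holomorphic by Theorem~\ref{28thfeb251} and its opposite-complex-structure version), we get
$$\nabla_{\text{Ch},\Omega^{(1,0)}_\gamma}=\varphi^{-1}_{\Omega^{(1,0)},\Omega^{(1,0)}}\circ\Gamma(\nabla_{\text{Ch},\Omega^{(1,0)}}),\qquad \nabla_{\text{Ch},\Omega^{(0,1)}_\gamma}=\varphi^{-1}_{\Omega^{(0,1)},\Omega^{(0,1)}}\circ\Gamma(\nabla_{\text{Ch},\Omega^{(0,1)}}).$$

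Finally I would assemble the pieces. Applying $\Gamma$ to \eqref{1stmarch251} and then $\varphi^{-1}_{\Omega^1,\Omega^1}$, and using that $\Gamma$ is additive and that $\varphi^{-1}_{\Omega^1,\Omega^1}$ restricted to the summand $\Gamma(\Omega^{(p,q)})\otimes_{B_\gamma}\Gamma(\Omega^{(r,s)})$ agrees with $\varphi^{-1}_{\Omega^{(p,q)},\Omega^{(r,s)}}$—because $\varphi$ is a natural transformation and the inclusions $\Omega^{(p,q)}\hookrightarrow\Omega^1$ are morphisms in $\cat$, so \eqref{15thdec244} applies—one obtains
$$\nabla_{\Omega^1_\gamma}=\varphi^{-1}_{\Omega^1,\Omega^1}\circ\Gamma\big(\nabla_{\text{Ch},\Omega^{(1,0)}}\oplus\nabla_{\text{Ch},\Omega^{(0,1)}}\big)=\nabla_{\text{Ch},\Omega^{(1,0)}_\gamma}\oplus\nabla_{\text{Ch},\Omega^{(0,1)}_\gamma}.$$
The main obstacle I anticipate is bookkeeping rather than conceptual: one must check carefully that the twisted Chern connections appearing on the right-hand side—defined intrinsically via $(\Omega^{(1,0)}_\gamma,\kH_1')$ and $(\Omega^{(0,1)}_\gamma,\kH_2')$ using the twisted complex structure and the twisted factorizable structure—coincide with the twists $\varphi^{-1}\circ\Gamma(-)$ of the untwisted Chern connections. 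This requires verifying that the twisted holomorphic structures $\overline\partial_{\Omega^{(1,0)}_\gamma}$ built from $(\theta^{(1,1)}_l)_\gamma$ and $(\overline\partial)_\gamma$ agree with $\varphi^{-1}\circ\Gamma(\overline\partial_{\Omega^{(1,0)}})$, which in turn follows from the definition \eqref{16thdec241}, the naturality \eqref{15thdec244} of $\varphi$, and the fact that $\theta^{(1,1)}_l$ is a morphism in $\cat$ so that its inverse twists compatibly; and that $\kH_i'=(\kH_i)_\gamma$ as noted above. Once these identifications are in place, the uniqueness clause in Theorem~\ref{thm:Chern} forces each twisted Chern connection to equal the corresponding twist, and the decomposition follows.
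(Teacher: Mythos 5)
Your proposal is correct and follows essentially the same route as the paper: existence/uniqueness via Theorem \ref{thm:15thdec243}, the identification $\kH'_i=(\kH_i)_\gamma$ from Theorem \ref{5thfeb253} together with the compatibility of $\fS$, $\fN$ and $\Gamma$ with the direct sum decomposition, then Theorem \ref{thm:twisted_chern} to identify the twisted Chern connections, and finally additivity of $\Gamma$ plus naturality of $\varphi$ (equation \eqref{15thdec244}) to split $\varphi^{-1}_{\Omega^1,\Omega^1}\circ\Gamma(\nabla)$. The extra caution you raise about the twisted holomorphic structures is not needed in the paper's formulation, since $\overline{\partial}_{\Omega^{(1,0)}_\gamma}$ is by definition the twist $\varphi^{-1}_{\Omega^{(0,1)},\Omega^{(1,0)}}\circ\Gamma(\overline{\partial}_{\Omega^{(1,0)}})$ from Theorem \ref{24thfeb252}.
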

\begin{proof}
The existence and uniqueness follows from Theorem \ref{thm:15thdec243} and we know that
 $\nabla_{\Omega^1_\gamma} = \varphi^{-1}_{\Omega^1, \Omega^1} \circ \Gamma ( \nabla_{\Omega^1} ). $

For proving the second assertion, we will be using the notations introduced before the statement of this theorem. Moreover, $ \mathfrak{S}_{\cE}: \Gamma (  \hm{B}{\cE}{B}  ) \rightarrow \hm{B_\gamma}{\Gamma ( \cE ) }{B_\gamma}  $ and  $ \mathfrak{N}_{\cE} : \overline{\Gamma ( \cE ) } \rightarrow \Gamma ( \overline{\cE} ) $ will denote the isomorphisms introduced in \eqref{fS} and \eqref{5thdec242jb} respectively. It is easy to observe that 
$$ \mathfrak{N}_{\Omega^1} =   \mathfrak{N}_{\Omega^{(1,0)}} \oplus  \mathfrak{N}_{\Omega^{(1, 0)}} $$
and for all $f \in \hm{B}{\Omega^1}{B}, $
$$ \mathfrak{S}_{\Omega^1} (  f|_{\Omega^{(1,0)}} ) = \mathfrak{S} _{\Omega^{(1,0)}} (  f|_{\Omega^{(1,0)}} ) $$
and a similar equation holds for $\Omega^{(0,1)}.$

Then by Theorem \ref{thm:twisted-hermitian},
\begin{eqnarray*}
(  \kH_g  )_\gamma &=& \mathfrak{S}_{\Omega^1} \circ \Gamma (  \kH_g ) \circ \mathfrak{N}_{\Omega^1}\\
&=&  ( \mathfrak{S}_{\Omega^{(1, 0)}} \oplus \mathfrak{S}_{\Omega^{(0, 1)}}   ) (  \Gamma (  \kH_1 ) \oplus \Gamma ( \kH_2 )  ) (  \mathfrak{N}_{\Omega^{(1, 0)}} \oplus  \mathfrak{N}_{\Omega^{(0, 1)}}   )\\
&=& \mathfrak{S}_{\Omega^{(1, 0)}} \circ \Gamma (  \kH_1 ) \circ \mathfrak{N}_{\Omega^{(1, 0)}} \oplus \mathfrak{S}_{\Omega^{(0, 1)}} \circ \Gamma (  \kH_2 ) \circ \mathfrak{N}_{\Omega^{(0, 1)}}\\
&=& ( \kH_1  )_\gamma \oplus (  \kH_2  )_\gamma.
\end{eqnarray*}

Since $ \kH_{g_\gamma} = ( \kH_g )_\gamma $ by Theorem \ref{5thfeb253} and $\kH_{g_\gamma} = \kH^\prime_1 \oplus \kH^\prime_2, $ it follows that 
$$ \kH^\prime_1 = ( \kH_1 )_\gamma ~ \text{and} ~ \kH^\prime_2 = ( \kH_2 )_\gamma. $$
Hence, by Theorem \ref{thm:twisted_chern}, we get
$$ \nabla_{\rm{Ch}, \Omega^{(1,0)}_\gamma} = \varphi_{  \Omega^{1}, \Omega^{(1,0)}}^{-1} \Gamma \big(\nabla_{\rm{Ch}, \Omega^{(1,0)}}\big) ~ \text{and} ~  \nabla_{\rm{Ch}, \Omega^{(0,1)}_\cot} = \varphi_{  \Omega^1, \Omega^{(0,1)}}^{-1} \Gamma \big( \nabla_{\rm{Ch},\Omega^{(0,1)}}\big). $$

Finally, by virtue of \eqref{1stmarch251}, we can write
	\begin{align*}
	\nabla_{\Omega^1_\gamma}  & =	\varphi_{\Omega^1, \Omega^1}^{-1} \Gamma ( \nabla_{\Omega^1} )\\
        &= \varphi_{  \Omega^1, \Omega^1}^{-1} \Gamma \big(\nabla_{\rm{Ch}, \Omega^{(1,0)}} \oplus \nabla_{\rm{Ch},\Omega^{(0,1)}}\big)\\
		&= \varphi_{  \Omega^1, \Omega^1}^{-1} \Gamma \big(\nabla_{\rm{Ch}, \Omega^{(1,0)}}\big) \oplus  \varphi_{  \Omega^1, \Omega^1}^{-1} \Gamma \big( \nabla_{\rm{Ch},\Omega^{(0,1)}}\big)\\
		&=  \varphi_{  \Omega^{1}, \Omega^{(1,0)}}^{-1} \Gamma \big(\nabla_{\rm{Ch}, \Omega^{(1,0)}}\big) \oplus  \varphi_{  \Omega^1, \Omega^{(0,1)}}^{-1} \Gamma \big( \nabla_{\rm{Ch},\Omega^{(0,1)}}\big)\\
		&= \nabla_{\rm{Ch}, \Omega^{(1,0)}_\gamma} \oplus \nabla_{\rm{Ch}, \Omega^{(0,1)}_\cot}. 
	\end{align*}
 This proves the theorem.	
\end{proof}

\subsection{On examples of unitary cocycles} \label{4thmarch254}

In this article, our main examples of cocycles come from cocycles associated with compact quantum group algebras. Let us start by recalling the definition of a compact quantum group in which $\otimes_{{\rm min}}$ will denote the minimal tensor product of two $C^*$-algebras. 

A compact quantum group is a pair $ ( \mathbb{G}, \Delta ) $ where $ C ( \mathbb{G} ) $ is a unital $C^*$-algebra and $\Delta: C ( \mathbb{G} ) \rightarrow  C ( \mathbb{G} ) \otimes_{{\rm min}} C ( \mathbb{G} ) $ is a coassociative unital $C^*$-algebra homomorphism  such that 
$\Delta ( C ( \mathbb{G} ) ) ( 1 \otimes C ( \mathbb{G} ) ) $ and $\Delta (  C ( \mathbb{G} )  ) (   C ( \mathbb{G} ) \otimes 1 ) $ are both dense in $C ( \mathbb{G} ) \otimes_{{\rm min}} C ( \mathbb{G} ).$

We will assume familiarity with the theory of compact quantum groups for which we refer to \cite{woroleshouches}. However, we will be mostly using the notations of the monograph \cite{NeshveyevTuset} in this subsection. 

Woronowicz (\cite{woroleshouches}) proved an analogue of Peter-Weyl theorem for compact quantum groups in terms of irreducible unitary corepresentations. We refer to   \cite[Chapter 1]{NeshveyevTuset} for the relevant definitions. For a compact quantum group $ ( \mathbb{G}, \Delta ), $  $\mathbb{C}[ \mathbb{G}]$ will denote the dense $\ast$-subalgebra of $C ( \mathbb{G} ) $ spanned by the matrix coefficients of irreducible unitary corepresentations of $\mathbb{G}.$  Then $\mathbb{C}[ \mathbb{G} ]$ is a Hopf $\ast$-algebra. 

\begin{definition} \label{18thjan251}
A Hopf $\ast$-algebra $A$ is called a compact quantum group algebra if $A$ is isomorphic to $\mathbb{C}[ \mathbb{G} ]$ as Hopf $\ast$-algebras for some compact quantum group $\mathbb{G}.$
\end{definition}

Let $ ( \mathbb{G}, \Delta ) $ be a compact quantum group.  Following  \cite[Section 1.6]{NeshveyevTuset}, let $(\mathbb{C}[\mathbb{G}])^* $ denote the vector space dual of $ \mathbb{C}[\mathbb{G}]. $ Then $(\mathbb{C}[\mathbb{G}])^* $ is a $\ast$-algebra with multiplication of elements $f$ and $g$ given by the convolution product $ f \ast g $ 
and involution defined as
$$ f^\ast ( a ) = \overline{f ( S ( a )^\ast ) } $$
for all $ f \in (\mathbb{C}[\mathbb{G}])^* $ and $ a \in \mathbb{C}[ \mathbb{G} ].$ Moreover,  $(\mathbb{C}[\mathbb{G}] \otimes  \mathbb{C}[\mathbb{G}])^* $ is again a $\ast$-algebra with multiplicative identity given by $\epsilon \otimes \epsilon$ and $(\mathbb{C}[\mathbb{G}])^* \otimes  (\mathbb{C}[\mathbb{G}])^* $ is a proper subalgebra of   $(\mathbb{C}[\mathbb{G}] \otimes  \mathbb{C}[\mathbb{G}])^* $ in general. We have a $\ast$-homomorphism
\begin{equation} \label{31stjan261}
 \widehat{\Delta}: (\mathbb{C}[\mathbb{G}])^* \rightarrow (\mathbb{C}[\mathbb{G}] \otimes  \mathbb{C}[\mathbb{G}])^* ~ {\rm defined} ~ {\rm by} ~ \widehat{\Delta} ( f ) ( a \otimes b ) = f ( a b ).
 \end{equation}

Now we are in a position to make the following definition:

\begin{definition} (\cite[Section 3.1]{NeshveyevTuset})
If $ ( \mathbb{G}, \Delta )$ is a compact quantum group, a dual $2$-cocycle on $\mathbb{G}$ is an invertible element $\mathcal{F} \in (\mathbb{C}[\mathbb{G}] \otimes  \mathbb{C}[\mathbb{G}])^* $ such that 
$$ ( \mathcal{F} \otimes 1 ) ( \widehat{\Delta} \otimes {\rm id} ) ( \mathcal{F}  ) = ( 1 \otimes \mathcal{F} ) ( {\rm id} \otimes \widehat{\Delta}  ) ( \mathcal{F} ). $$
$\mathcal{F}$ is called unitary if $\mathcal{F}^\ast \mathcal{F} = \epsilon \otimes \epsilon.$
\end{definition}

Let us note the following well-known fact (see, for example \cite[Theorem 4.10]{SadeDeformSpecTrip}) :

\begin{remark} \label{18thdec24jb1}
If $A = \mathbb{C}[\mathbb{G}] $ is a compact quantum group algebra in the sense of  Definition \ref{18thjan251}, then a (unitary) $2$-cocycle on $A$ (in the sense of Definition \ref{18thjan252} and Definition \ref{18thjan253}) is nothing but a (unitary) dual $2$-cocycle on $\mathbb{G}.$    
\end{remark}

Now we collect some examples of cocycles which are well-known to the experts. 

\begin{example} \label{19thdec24jb1}
Consider the compact group $G = \mathbb{T}^n.$ Then all irreducible unitary representations of $G$ are one dimensional and indexed by the dual group $\mathbb{Z}^n.$ In what follows, we will denote an element $ ( m_1, \cdots m_n   ) \in \mathbb{Z}^n $ by the symbol $\underline{m}$ and the associated (one-dimensional) subspace of matrix coefficients by $\mathbb{C}[\mathbb{T}^n]_{\underline{m}}.$

We fix an $n \times n$ skew-symmetric matrix $\theta$ and define 
$$\mathcal{F} \in (\mathbb{C}[\mathbb{T}^n] \otimes  \mathbb{C}[\mathbb{T}^n])^* ~ {\rm  by} ~ \mathcal{F} ( u_{\underline{m}} \otimes u_{\underline{n}} ) = e^{2\pi \sqrt{-1} \langle \langle \theta \underline{m}, \underline{n} \rangle \rangle } $$
for all $u_{\underline{m}} \in \mathbb{C}[\mathbb{T}^n]_{\underline{m}} $ and $u_{\underline{n}} \in \mathbb{C}[\mathbb{T}^n]_{\underline{n}} $ and where $\langle \langle ~ , ~ \rangle \rangle$ denotes the usual Euclidean inner product.

Then $\mathcal{F}$ is a unitary dual $2$-cocycle on $\mathbb{T}^n$
and hence a unitary cocycle on the Hopf $\ast$-algebra $\mathbb{C}[\mathbb{T}^n]$ by Remark \ref{18thdec24jb1}. Since $\mathbb{C}[\mathbb{T}^n]$ is cocommutative, it is well-known (see Remark \ref{9thsep251})  that the cocycle deformation of the Hopf $\ast$-algebra $\mathbb{C}[\mathbb{T}^n]$  is again isomorphic to $\mathbb{C}[\mathbb{T}^n].$
\end{example}

\begin{example} \label{4thmarch253}
Suppose that $\mathbb{G}, \mathbb{H}$ be compact quantum groups such that there exists a surjective homomorphism of Hopf $\ast$-algebras. Then by  \cite[Example 3.1.7]{NeshveyevTuset}, a dual (unitary) $2$-cocycle on $H$ induces a dual (unitary) $2$-cocycle on $G.$

In particular, if $H = \mathbb{T}^n, $ then the cocycle $\mathcal{F}$ of Example \ref{19thdec24jb1} induces a unitary $2$-cocycle on $\mathbb{G}.$
\end{example}

\subsection{Cocycle deformations of a class of  K\"ahler  manifolds} \label{11thsep252}

In this subsection, we apply Theorem \ref{29thjan253} to the classical situation. The framework of this article is that of cocycles on Hopf algebras coacting on unital algebras. Therefore,  the natural classical setting to which our results apply is when there is an action of varieties $G \times X \rightarrow X, $ where $G$ is a smooth real affine algebraic group and $X$ a smooth real affine algebraic variety. With the only exception of Theorem \ref{thm:classical-deformed}, we do not claim any originality for the exposition in this subsection, since all the results are well-known to the experts. 

We will denote the set of all prime ideals of a ring $R$  by $\text{Spec} ( R ), $ while  the ideal generated by elements $x_1, \cdots x_r$ in a ring $R$ will be denoted by $(x_1, \cdots, x_r).$

Suppose that  $X  $ is  a real smooth affine variety. Thus, $ X = \text{Spec} ( O_{\mathbb{R}} ( X )  ) $ where $ O_{\mathbb{R}} ( X ) := \frac{\mathbb{R}[x_1, \cdots, x_n]}{(p_1, \cdots, p_r)} $ for polynomials $p_1, \cdots, p_r$ in  $\mathbb{R}[x_1, \cdots, x_n]$ and we assume that $O_{\mathbb{R}} ( X ) $ is an integral domain which is smooth, i.e, the localisation of $O_{\mathbb{R}} ( X ) $ at every maximal ideal is smooth (see \cite[Proposition 3.6]{uli}). 
For a real smooth affine variety $X$ as above, we let 
$$ X ( \mathbb{R} ):= \{ I \in X: \frac{O_{\mathbb{R}} ( X )}{I} \cong \mathbb{R} \} $$
denote the set of all real points of the variety $X.$ In what follows, we will always assume that $ X ( \mathbb{R} )  $ is non-empty. In this case, it is well-known (see \cite[page 105]{shafarevich}) that $ X ( \mathbb{R} ) $ is a smooth manifold. 

Thus, elements of $ O_{\mathbb{R}} ( X ) $ are in particular smooth functions on $ X ( \mathbb{R} ) $ and thus it makes sense to apply the de Rham differential $d$ to an element of $ O_{\mathbb{R}} ( X ). $ Therefore, we can define the $O_{\mathbb{R}} ( X )$-bimodule 
	$$ \Omega^1_{\mathbb{R}} ( X ( \mathbb{R} ) ):= \text{Span} \{ f dg: f, g \in O_{\mathbb{R}} ( X )  \}. $$
	It is well-known that $ \Omega^1_{\mathbb{R}} ( X ( \mathbb{R} ) ) $ is finitely generated and projective as an $O_{\mathbb{R}} ( X )$-module. If $\wedge$ denotes the classical wedge map and we define
$$ \Omega^k_{\mathbb{R}} ( X ( \mathbb{R} ) ) = {\rm Span} \{ b_0 db_1 \wedge db_2 \wedge \cdots \wedge db_k: b_i \in O_{\mathbb{R}} ( X )    \}, $$
then $ ( \Omega^\bullet_{\mathbb{R}} ( X ( \mathbb{R} ) ), \wedge, d ) $ is a differential calculus
on the real algebra $O_{\mathbb{R}} ( X ).$

 Now let $G = \text{Spec} ( O_{\mathbb{R}} ( G ) ) $ denote a real smooth affine algebraic group with the set of real points $G ( \mathbb{R} ). $ Once again, we assume that $G (\mathbb{R}) $ is non-empty. It is well-known (see \cite[page 9]{waterhouse}) that $ O_{\mathbb{R}} ( G ) $ is a Hopf algebra over the field of real numbers.  Now if we have an action of the variety $G$ on the variety $X$ (see \cite[Subsection 3.1]{waterhouse}), then we have a group action of $ G ( \mathbb{R} ) $ on $ X ( \mathbb{R} ) .$ Consequently, $ O_{\mathbb{R}} ( X ) $ is an $ O_{\mathbb{R}} ( G ) $ comodule algebra by \cite[Subsection 3.2]{waterhouse}. Moreover, it follows that  $ ( \Omega^\bullet_{\mathbb{R}} ( X ( \mathbb{R} ) ), \wedge, d )$ is an $ O_{\mathbb{R}} ( G ) $-covariant differential calculus.

\begin{prop} \label{14thaugust251}
If  we have  an action of varieties $G \times X \rightarrow X$ where $G$ and $X$ are as above and  $\widetilde{g_{\mathbb{R}}}: \Omega^1_{\mathbb{R}} ( X ( \mathbb{R} ) ) \otimes_{O_{\mathbb{R}} ( X )} \Omega^1_{\mathbb{R}} ( X ( \mathbb{R} ) ) \rightarrow O_{\mathbb{R}} ( X ) $ is a $G ( \mathbb{R} ) $-invariant Riemannian metric on $\Omega^1_{\mathbb{R}} ( X ( \mathbb{R} ) ),$ then there exists a unique  $O_{\mathbb{R}} ( G )$-covariant connection $\nabla_{\mathbb{R}}$ on $\Omega^1_{\mathbb{R}} ( X ( \mathbb{R} ) )$ which is torsionless and compatible with $\widetilde{g_{\mathbb{R}}}.$
 \end{prop}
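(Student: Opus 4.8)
The strategy is to obtain $\nabla_{\R}$ by restricting the ordinary Levi-Civita connection of the Riemannian manifold $X(\R)$ to the algebraic level and then checking compatibility with the coaction. Throughout write $O$ for $O_\R(X)$ and $\Omega^1_\R$ for $\Omega^1_\R(X(\R))$. First, regard $\widetilde{g_\R}$ as a smooth Riemannian metric on $X(\R)$: elements of $O$ are smooth functions on $X(\R)$, the $O$-module $\Omega^1_\R$ is finitely generated projective and becomes the module of smooth one-forms after base change to $C^\infty(X(\R))$, and $\widetilde{g_\R}$ is a nondegenerate symmetric $O$-bilinear form. Since $\Omega^1_\R$ is finitely generated projective there is a unique $g_\R \in \Omega^1_\R \otimes_O \Omega^1_\R$ for which $(g_\R, \widetilde{g_\R})$ is a metric in the sense of Definition \ref{4thmay242}, and $G(\R)$-invariance of $\widetilde{g_\R}$ makes this metric $O_\R(G)$-covariant. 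By the fundamental theorem of Riemannian geometry (\cite[Theorem 5.4]{johnmlee}) there is a unique smooth torsion-free connection $\nabla^{\mathrm{LC}}$ on $X(\R)$ compatible with $\widetilde{g_\R}$.

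Next I would check that $\nabla^{\mathrm{LC}}$ is algebraic, i.e. restricts to an $\R$-linear map $\Omega^1_\R \to \Omega^1_\R \otimes_O \Omega^1_\R$. This is read off the Koszul formula: for algebraic vector fields $Y, Z, W \in \mathrm{Der}_\R(O)$, the quantity $2\,\widetilde{g_\R}\big(\nabla^{\mathrm{LC}}_Y Z, W\big)$ equals a fixed universal expression with integer coefficients in the values $\widetilde{g_\R}(-,-)$ on pairs from $\{Y,Z,W\}$, the derivatives of such values along $Y$, $Z$, $W$, and the brackets $[-,-]$ of $\mathrm{Der}_\R(O)$; all these lie in $O$. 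Because $\Omega^1_\R$ is finitely generated projective and $\widetilde{g_\R}$ is a nondegenerate self-dual pairing over $O$ (so that an algebraic vector field is determined by its metric pairings with algebraic vector fields), this forces $\nabla^{\mathrm{LC}}_Y Z \in \mathrm{Der}_\R(O)$; dualizing, $\nabla^{\mathrm{LC}}$ restricts to a connection $\nabla_\R$ on $\Omega^1_\R$. As $O$ is commutative, $\nabla_\R$ is a left bimodule connection whose bimodule map $\sigma$ is the flip $\sigma(\omega \otimes_O \eta) = \eta \otimes_O \omega$, and being a restriction of $\nabla^{\mathrm{LC}}$ it is torsionless and compatible with $(g_\R, \widetilde{g_\R})$ in the sense of Definition \ref{15thdec242}.

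For covariance, the action $G \times X \to X$ induces an action of the group $G(\R)$ on $X(\R)$ by diffeomorphisms, and $G(\R)$-invariance of $\widetilde{g_\R}$ means each translation $\rho_h$, $h \in G(\R)$, is an isometry; by naturality of the Levi-Civita construction, $\rho_h^\ast \nabla^{\mathrm{LC}} = \nabla^{\mathrm{LC}}$ for all $h$. Translating this pointwise $G(\R)$-equivariance of the algebraic tensor $\nabla_\R$ into the language of the $O_\R(G)$-coaction on $\Omega^1_\R$ — which is the standard dictionary between actions of the affine algebraic group $G$ and $O_\R(G)$-comodule structures, see \cite[Subsection 3.2]{waterhouse} — shows that $\nabla_\R$ satisfies the covariance condition of Subsection \ref{4thaugust241}.

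Finally, uniqueness reduces to the smooth statement. If $\nabla'$ is any $O_\R(G)$-covariant left bimodule connection on $\Omega^1_\R$ that is torsionless and compatible with $(g_\R, \widetilde{g_\R})$, then extending scalars to $C^\infty(X(\R))$ via the Leibniz rule yields a smooth torsion-free connection on $X(\R)$ compatible with $\widetilde{g_\R}$, which must coincide with $\nabla^{\mathrm{LC}}$; since the natural map $\Omega^1_\R \to C^\infty(X(\R)) \otimes_O \Omega^1_\R$ is injective (flatness of the projective $O$-module $\Omega^1_\R$), the restriction map on connections is injective and hence $\nabla' = \nabla_\R$. The step requiring the most care is the algebraicity claim of the second paragraph: one must ensure that solving the Koszul identity for $\nabla^{\mathrm{LC}}_Y Z$ introduces no denominators outside $O$, and this is exactly where nondegeneracy of $\widetilde{g_\R}$ together with projectivity of $\Omega^1_\R$ over $O$ enter.
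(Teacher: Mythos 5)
Your proposal is correct and follows essentially the same route as the paper: invoke the classical fundamental theorem of Riemannian geometry, check that the Levi-Civita connection restricts to the algebraic module $\Omega^1_{\mathbb{R}}(X(\mathbb{R}))$, and deduce covariance from the fact that $G(\mathbb{R})$ acts by isometries. The only difference is one of detail — where the paper cites \cite[Theorem 6.1]{KoszulJDG} for the restriction step and \cite[Proposition 5.6]{johnmlee} for equivariance, you spell these out via the Koszul formula (using nondegeneracy and projectivity over $O_{\mathbb{R}}(X)$) and an explicit scalar-extension argument for uniqueness.
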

 \begin{proof}
This is merely the fundamental theorem of Riemannian geometry. We only need to observe that the Levi-Civita connection restricts to a connection on the module $\Omega^1_{\mathbb{R}} ( X ( \mathbb{R} ) ). $ But this is again well-known and follows from our assumption that $\widetilde{g_{\mathbb{R}}}$ is $O_{\mathbb{R}} (X)$-valued. This can be verified for example by observing that the differential calculus $(\Omega^1_{\mathbb{R}} ( X ( \mathbb{R} ) ), \wedge, d ) $ satisfies the hypotheses of \cite[Theorem 6.1]{KoszulJDG}.
 
Secondly, since $\widetilde{g_{\mathbb{R}}}$ is $G ( \mathbb{R} ) $-invariant, the group $G ( \mathbb{R} ) $ acts by isometries on the Riemannian manifold $X ( \mathbb{R} ). $ Then it is well-known (see the first assertion of \cite[Proposition 5.6]{johnmlee}) that $\nabla_{\mathbb{R}}$ is $G ( \mathbb{R} ) $-equivariant, i.e, $\nabla_{\mathbb{R}}$ is $O_{\mathbb{R}} (G) $-covariant.
\end{proof}

Now we complexify this framework. Thus, we define the complex algebra 
$$ O_{\mathbb{C}} ( X ):= O_{\mathbb{R}} ( X ) \otimes_{\mathbb{R}} \mathbb{C}$$
and endow it  with the  complex antilinear involution $\ast$ defined as
   $$( f \otimes_{\mathbb{R}} \lambda )^\ast = f \otimes_{\mathbb{R}} \overline{\lambda} $$
 for all $f \in  O_{\mathbb{R}} ( X )$ and $ \lambda \in \mathbb{C}.$  
The involution  on $O_{\mathbb{C}} ( X )$ induces an involution (to be denoted by $\ast$ by an abuse of notation) on 
$$ \Omega^k ( X ( \mathbb{R} ) ):= \text{Span} \{ b_0 db_1 \wedge \cdots db_k: b_i \in  O_{\mathbb{C}} ( X ) \}    $$
so that $(\Omega^k ( X ( \mathbb{R} ) ), \wedge, d) $ becomes a $\ast$-differential calculus. 
	Similarly,  
    $$ O_{\mathbb{C}} ( G ) := O_{\mathbb{R}} ( G ) \otimes_{\mathbb{R}} \mathbb{C} $$
    is a  Hopf $\ast$-algebra over the field of complex numbers and  $ O_{\mathbb{C}} ( X ) $ is an $ O_{\mathbb{C}} ( G ) $ comodule $\ast$-algebra. 
The Riemannian metric $\widetilde{g_{\mathbb{R}}}$  of Proposition \ref{14thaugust251} canonically complexifies to an $O_{\mathbb{C}} (X) $-linear  map 
  $$\widetilde{g}: \Omega^1 ( X ( \mathbb{R} ) ) \otimes_{O_{\mathbb{C}} ( X )}  \Omega^1 ( X ( \mathbb{R} ) ) \rightarrow  O_{\mathbb{C}} ( X )$$
 defined as $\widetilde{g} ( ( \eta_1 \otimes_{\mathbb{R}} \lambda ) \otimes_{O_{\mathbb{C}} ( X )} (\eta_2 \otimes_{\mathbb{R}} \mu)  ) = \widetilde{g}_{\mathbb{R}} (\eta_1 \otimes_{O_{\mathbb{R}} ( X )} \eta_2) \lambda \mu $ for all $\eta_1, \eta_2 \in \Omega^1_{\mathbb{R}} (X(\mathbb{R})) $ and $\lambda, \mu \in \mathbb{C}.$
  It follows that
  \begin{equation} \label{13thaugust251}
\widetilde{g} ( \eta_1 \otimes_{O_{\mathbb{C}} ( X )} \eta_2 )^\ast = \widetilde{g} ( \eta_2^\ast \otimes_{O_{\mathbb{C}} ( X )} \eta_1^\ast )
\end{equation}
for all $\eta_1, \eta_2 \in \Omega^1 ( X ( \mathbb{R} ) ).$  

 Let us observe that  $\widetilde{g}$ canonically corresponds to a metric $(g, ( ~, ~ ) )$ in the sense of \cite{BeggsMajid:Leabh}. Indeed, consider the map
$$ V: \Omega^1 ( X ( \mathbb{R} ) ) \rightarrow \hm{O_{\mathbb{C}} ( X )}{\Omega^1 ( X ( \mathbb{R} ) )}{O_{\mathbb{C}} ( X )}  ~ {\rm defined} ~ {\rm by} ~ V ( \eta_1 ) ( \eta_2 ) = \widetilde{g} ( \eta_1 \otimes_{O_{\mathbb{C}} ( X )} \eta_2 ). $$
Then $V$ is an $O_{\mathbb{R}} ( X )$-linear $O_{\mathbb{R}} ( G ) $-covariant isomorphism. 
We define
$$ ( ~, ~ ): \Omega^1 ( X ( \mathbb{R} ) ) \otimes_{O_{\mathbb{C}} ( X )} \Omega^1 ( X ( \mathbb{R} ) ) \rightarrow O_{\mathbb{C}} ( X ) ~ {\rm as} ~ ( \eta_1, \eta_2 )  = V ( \eta_2 ) ( \eta_1 ). $$
Next, as $\Omega^1 ( X ( \mathbb{R} ) ) $ is finitely generated and projective as an $O_{\mathbb{C}} ( X )$-module, there exists a natural number $n$ and elements   $e^i\in \Omega^1 ( X ( \mathbb{R} ) ) , e_i \in \prescript{}{O_{\mathbb{C}} ( X )}{\Hom(\Omega^1 ( X ( \mathbb{R} ) ),O_{\mathbb{C}} ( X ))}$ for all $1\le i\le n$,  such that:
$$\eta =\sum_i e_i(\eta) e^i ~ \text{and} ~  f =\sum_i e_i f(e^i)$$
for all $ \eta \in \Omega^1 ( X ( \mathbb{R} ) ) $ and for all $ f\in \prescript{}{O_{\mathbb{C}} ( X )}{\Hom(\Omega^1 ( X ( \mathbb{R} ) ),O_{\mathbb{C}} ( X ))}$.
Now we can define 
$$ g = \sum^n_{i = 1} V^{-1} ( e_i ) \otimes_{O_{\mathbb{C}} ( X )} e^i. $$
It is easy to verify that $ ( g, ( ~, ~ ) ) $ is an $O_{\mathbb{C}} ( G ) $-covariant metric on $ \Omega^1 ( X ( \mathbb{R} ) ). $ Moreover, since $\widetilde{g}$ satisfies the equation \eqref{13thaugust251},
 it follows that the metric $ ( g, ( ~, ~ ) ) $ is actually real. 
Thus, we can state the following:

\begin{theorem} \label{18thaugust251}
If  we have  an action of varieties $G \times X \rightarrow X$ where $G$ and $X$ are as above and  $\widetilde{g_{\mathbb{R}}}: \Omega^1_{\mathbb{R}} ( X ( \mathbb{R} ) ) \otimes_{O_{\mathbb{R}} ( X )} \Omega^1_{\mathbb{R}} ( X ( \mathbb{R} ) ) \rightarrow O_{\mathbb{R}} ( X ) $ is a $G ( \mathbb{R} ) $-invariant Riemannian metric on $\Omega^1_{\mathbb{R}} ( X ( \mathbb{R} ) ),$ then the   connection $\nabla_{\mathbb{R}}$ of Proposition \ref{14thaugust251} complexifies to a connection $\nabla$ which is the  unique  $O_{\mathbb{C}} (G)$-covariant torsionless connection  on $\Omega^1 ( X ( \mathbb{R} ) )$  compatible with the real metric $(g, ( ~, ~ ) ).$ 
\end{theorem}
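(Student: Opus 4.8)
The plan is to transport the classical Levi-Civita connection $\nabla_{\mathbb{R}}$ from $\Omega^1_{\mathbb{R}}(X(\mathbb{R}))$ through the complexification functor and then verify directly that its complexification $\nabla$ is a bimodule connection which is torsionless and compatible with $(g,(~,~))$, finally appealing to uniqueness in the complexified setting. Since $\nabla_{\mathbb{R}}$ is $O_{\mathbb{R}}(G)$-covariant by Proposition \ref{14thaugust251}, its complexification $\nabla := \nabla_{\mathbb{R}} \otimes_{\mathbb{R}} \id_{\mathbb{C}}$, viewed as a map $\Omega^1(X(\mathbb{R})) \to \Omega^1(X(\mathbb{R})) \otimes_{O_{\mathbb{C}}(X)} \Omega^1(X(\mathbb{R}))$, is $\mathbb{C}$-linear and $O_{\mathbb{C}}(G)$-covariant by functoriality of $- \otimes_{\mathbb{R}} \mathbb{C}$.

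\emph{First} I would check that $\nabla$ is a left connection: the Leibniz rule $\nabla(be) = b\nabla(e) + db \otimes_{O_{\mathbb{C}}(X)} e$ follows by $\mathbb{C}$-bilinear extension from the corresponding identity for $\nabla_{\mathbb{R}}$, together with the fact that $d$ on $\Omega^\bullet(X(\mathbb{R}))$ is the complexification of the real de Rham differential. \emph{Next}, because the classical Levi-Civita connection on a manifold is automatically a bimodule connection with braiding $\sigma$ the flip (the classical $\Omega^1$ being a symmetric bimodule, as all functions commute), the complexified $\nabla$ is a left $\sigma$-bimodule connection with $\sigma: \Omega^1(X(\mathbb{R})) \otimes_{O_{\mathbb{C}}(X)} \Omega^1(X(\mathbb{R})) \to \Omega^1(X(\mathbb{R})) \otimes_{O_{\mathbb{C}}(X)} \Omega^1(X(\mathbb{R}))$ the flip map; this is $O_{\mathbb{C}}(X)$-bilinear and $O_{\mathbb{C}}(G)$-covariant. \emph{Then} torsionlessness, $\wedge \circ \nabla - d = 0 : \Omega^1 \to \Omega^2$, and metric compatibility, $\nabla_{\Omega^1 \otimes_B \Omega^1} g = 0$, are each $O_{\mathbb{C}}(X)$-linear (resp.\ $\mathbb{C}$-linear) identities between maps that are complexifications of the corresponding real identities; since $\nabla_{\mathbb{R}}$ is torsionless and compatible with $\widetilde{g_{\mathbb{R}}}$, and since $g = \sum_i V^{-1}(e_i) \otimes_{O_{\mathbb{C}}(X)} e^i$ is manifestly the complexification of the corresponding real element (using that $V$ and the dual basis $\{e^i, e_i\}$ can be chosen over $O_{\mathbb{R}}(X)$ because $\Omega^1_{\mathbb{R}}(X(\mathbb{R}))$ is finitely generated projective over $O_{\mathbb{R}}(X)$), both identities extend by scalars. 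I have already recorded above that $(g,(~,~))$ is real, via \eqref{13thaugust251}.

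\emph{Finally}, for uniqueness: suppose $(\nabla', \sigma')$ is another $O_{\mathbb{C}}(G)$-covariant torsionless connection on $\Omega^1(X(\mathbb{R}))$ compatible with $(g,(~,~))$. The difference $\nabla' - \nabla$ is $O_{\mathbb{C}}(X)$-linear, hence descends to an $O_{\mathbb{R}}(X)$-linear map after noting it is the complexification of a real map — more precisely, one argues that the Koszul-type formula of \cite{KoszulJDG} (whose hypotheses are satisfied here, as noted in the proof of Proposition \ref{14thaugust251}) pins down the connection uniquely already at the level of $B$-module maps, so $\nabla' = \nabla$. \emph{The main obstacle} I anticipate is the bookkeeping needed to make precise the phrase ``the complexification of the real identity'': one must be careful that the tensor products $\Omega^1_{\mathbb{R}} \otimes_{O_{\mathbb{R}}(X)} \Omega^1_{\mathbb{R}}$ and $\Omega^1 \otimes_{O_{\mathbb{C}}(X)} \Omega^1$ are related by the canonical isomorphism $(\Omega^1_{\mathbb{R}} \otimes_{O_{\mathbb{R}}(X)} \Omega^1_{\mathbb{R}}) \otimes_{\mathbb{R}} \mathbb{C} \cong \Omega^1 \otimes_{O_{\mathbb{C}}(X)} \Omega^1$, and that $\wedge$, $d$, $(~,~)$, $\coev_g$ and $\sigma$ all intertwine this isomorphism with their real counterparts — a routine but slightly tedious verification that I would state as a lemma and prove by tracking generators $f\,dg$.
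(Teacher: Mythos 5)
Your argument is correct and is essentially the route the paper has in mind: the paper in fact states Theorem \ref{18thaugust251} without proof (the subsection explicitly disclaims originality), and the intended justification is exactly your extension-of-scalars argument — Proposition \ref{14thaugust251} for the real statement, the observation that $(g,(\ ,\ ))$, $d$, $\wedge$ and the flip $\sigma$ are complexifications of their real counterparts, and uniqueness via \cite[Theorem 6.1]{KoszulJDG}, which is the same reference the paper invokes for the real case. One small caution: in your uniqueness step the claim that $\nabla'-\nabla$ ``descends to an $O_{\mathbb{R}}(X)$-linear map'' because it is ``the complexification of a real map'' is not justified (nothing forces an arbitrary competitor $\nabla'$ to be a complexification a priori), but this is harmless, since your subsequent appeal to the Koszul-type formula of \cite{KoszulJDG} applied to the complexified calculus $(\Omega^\bullet(X(\mathbb{R})),\wedge,d)$ with the nondegenerate metric $(g,(\ ,\ ))$ already yields uniqueness directly, without any descent to the real form.
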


At this stage, we introduce almost complex structures. We will continue with the notations introduced before and  say that $X(\mathbb{R})$ has an almost complex structure if there exists an $O_{\mathbb{R}}(X)$-linear map $$I: \Omega^1(X(\mathbb{R})) \to \Omega^1(X(\mathbb{R})) \quad  \text{such that } I^2=-\id. $$ 
The map $I$ extends to an $O_\mathbb{C}(X)$-linear map (to be denoted by $I$ again) from $\Omega^1(X(\mathbb{R}))$to itself.  We define $\Omega^{(1,0)}(X(\mathbb{R}))$ and $\Omega^{(0,1)}(X(\mathbb{R }))$ to be the eigenspaces of $I$ for the eigenvalues $\sqrt{-1}$ and $-\sqrt{-1}$ respectively. Hence, 
$$\Omega^1(X(\mathbb{R}))= \Omega^{(1,0)} (X(R))\oplus\Omega^{(0,1)}(X(\mathbb{R})).$$
 We will denote the  projection maps from  $\Omega^1(X(\mathbb{R}))$ onto $\Omega^{(1,0)}(X(\mathbb{R}))$ and $\Omega^{(0,1)}(X(\mathbb{R}))$ by the symbols $\pi^{(1,0)}$ and $\pi^{(0,1)}$ respectively. Thus we have the maps 
$$\partial= \pi^{(1,0)}\circ d: O_{\mathbb{C}} (X) \rightarrow \Omega^{(1,0)} (X(\mathbb{R})) ~ \text{and} ~ \overline{\partial}={\pi^{(0,1)}}\circ d: O_{\mathbb{C}} (X) \rightarrow \Omega^{(0,1)} (X(\mathbb{R})). $$
Let us define the higher $(p,q)$ forms as
$$\Omega^{(p,q)}(X(\mathbb{R}))= \text{span}_\bbC\{ b_0 \partial b_1 \wedge\cdots \wedge \partial b_p\wedge \overline{\partial}b_{p+1} \wedge \cdots \wedge \overline{\partial}b_{p+q}: b_i \in O_{\bbC}(X) \}$$
which leads to  the decomposition (\cite[Corollary 2.6.8]{DanHuybrechts}) 
$$\Omega^k(X(\mathbb{R}))= \oplus_{p+q=k} \Omega^{(p,q)}(X(\mathbb{R})).$$
  The maps $\partial$ and $\delbar{}$ extend to the higher forms  by defining  
  $\partial= \pi^{(p+1,q)}\circ d: \Omega^{p+q} (X(\mathbb{R})) \rightarrow \Omega^{(p + 1, q)} (X(\mathbb{R})) $ and $   \delbar{}= \pi^{(p,q+1)}\circ d: \Omega^{p+q} (X(\mathbb{R})) \rightarrow \Omega^{(p, q + 1)} (X(\mathbb{R})).$

 If $I$ is an almost complex structure on $X(\mathbb{R})$ and $\widetilde{g_{\mathbb{R}}}: \Omega^1_{\mathbb{R}} (X(\mathbb{R})) \otimes_{O_{\mathbb{R}} (X)} \Omega^1_{\mathbb{R}} (X(\mathbb{R})) \rightarrow O_{\mathbb{R}} (X) $ is  a Riemannian metric on $X (\mathbb{R}),$ then $(\widetilde{g_{\mathbb{R}}}, I)$ is said to be an Hermitian structure  (\cite[Definition 3.1.1]{DanHuybrechts}) on $X(\mathbb{R}) $  if 
$$\widetilde{g_\mathbb{R}} \big(I(\eta_1) \ot_{O_{\mathbb{R}}(X)} I(\eta_2)\big) = \widetilde{g_\mathbb{R}}\left(\eta_1\ot_{O_{\mathbb{R}}(X)} \eta_2 \right). $$
If $\{f_i, f^i:i=1,2, \cdots ,n\}$ is a dual basis of the finitely generated and projective $O_{\mathbb{R}} (X)$-module $\Omega^1_\mathbb{R}(X(\mathbb{R})),$ then the fundamental form $\omega$ for the pair $(\widetilde{g_\mathbb{R}},I)$ is the element 
\begin{equation} \label{eq:omega}
    \omega= \sum_i I^{-1}V^{-1}(f_i)\wedge f^i \in \Omega^2_{\mathbb{R}} (X(\mathbb{R})) \subseteq \Omega^2 (X(\mathbb{R})). 
\end{equation}
It is well known that $\omega \in\Omega^{(1,1)}(X(\mathbb{R})) $ (\cite[Section 1.3]{DanHuybrechts}). We have seen that $\widetilde{g}_{\mathbb{R}}$ complexifies to an $O_{\mathbb{C}} (X) $-linear map $\widetilde{g}: \Omega^1 (X(\mathbb{R})) \otimes_{O_{\mathbb{C}} (X)} \Omega^1 (X(\mathbb{R})) \rightarrow O_{\mathbb{C}} (X) $ and then we get a real metric $\metric$ on $\Omega^1 (X(\mathbb{R})).$ Since $(\widetilde{g_{\mathbb{R}}}, I)$ is an Hermitian structure, it is well-known and easy to check that 
\begin{equation} \label{6thsep251}
(\eta_1, \eta_2)=0 ~ \text{if} ~ \eta_1,\eta_2 \in \Omega^{(1,0)}(X(\mathbb{R})) ~ \text{or if} ~ \eta_1,\eta_2 \in \Omega^{(0,1)}(X(\mathbb{R})).
\end{equation}
 Indeed, if $\eta_1,\eta_2 \in \Omega^{(1,0)}(X(\mathbb{R})),$ then
$$(\eta_1, \eta_2)= \widetilde{g}(\eta_1,\eta_2)= \widetilde{g}\big(I(\eta_1),I(\eta_2)\big)= \widetilde{g}\big(\sqrt{-1}\eta_1,\sqrt{-1}\eta_2\big)= - \widetilde{g}\big(\eta_1,\eta_2\big)= -(\eta_1, \eta_2)$$ 
as $I(\eta) =\sqrt{-1}\eta$ for $\eta \in  \Omega^{(1,0)}(X(\mathbb{R}))$. Similarly, if $\eta_1,\eta_2 \in \Omega^{(0,1)}(X(\mathbb{R}))$, then $(\eta_1, \eta_2)=0.$

\begin{definition} \label{def:kahler-manifold} 
Suppose that $ \widetilde{g_{\mathbb{R}}}:  \Omega^1_{\mathbb{R}} (X(\mathbb{R})) \otimes_{ O_{\mathbb{R}} (X)}  \Omega^1_{\mathbb{R}} (X(\mathbb{R})) \rightarrow  O_{\mathbb{R}} (X)  $ is a $G(\mathbb{R})$-invariant Riemannian metric on the space of one-forms  $\Omega^1_\R(X(\R)) $ of the $O_{\mathbb{C}} (G) $-covariant differential calculus $(\Omega^\bullet (X(\mathbb{R})), \wedge, d ).$ If $( \widetilde{g_{\mathbb{R}}}, I)$ is an Hermitian structure as above and $\omega$ is the fundamental form as in \eqref{eq:omega}, then $\big(X(\mathbb{R}), I, \widetilde{g_\mathbb{R}}\big)$ is said to be an  $O_{\mathbb{C}} (G) $-covariant 
K\"{a}hler structure if the following properties are satisfied:
\begin{enumerate}
\item $d = \partial + \overline{\partial}$ as maps on $\Omega^\bullet (X(\mathbb{R})) $ (i.e, $X(\mathbb{R}) $ is a complex manifold),
\item $\Omega^{(p,q)} (X(\mathbb{R})) $ is an $O_{\mathbb{C}} (G) $ comodule for all $(p,q),$
\item $d \omega = 0$ and $\omega$ is  $O_{\mathbb{C}} (G) $ coinvariant.
\end{enumerate}
\end{definition}



\begin{theorem} \label{thm:classical-deformed} 
Let $\big(X(\mathbb{R}), I, \widetilde{g_\mathbb{R}}\big)$ be an $O_{\mathbb{C}} (G) $-covariant   K\"{a}hler manifold as in Definition \ref{def:kahler-manifold}. If $\nabla$ denotes the $O_{\mathbb{C}} (G) $-covariant Levi-Civita connection for the real metric $\metric$ as in Theorem \ref{18thaugust251} and if $\gamma$ is a unitary $2$-cocycle on $O_\bbC(G)$, then the unique $\big(O_\bbC(G)\big)_\gamma$-covariant Levi-Civita connection for the metric $(g_\gamma, (~,~)_\gamma)$ on $\dctwisted$ coincides with  $ \nabla_{\text{Ch}, \Omega^{(1, 0)}_\gamma} \oplus \nabla_{\text{Ch}, \Omega^{(0, 1)}_\gamma}. $
\end{theorem}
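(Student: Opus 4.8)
The plan is to deduce the statement from Theorem~\ref{29thjan253}, applied with $A = O_{\mathbb{C}}(G)$ and $B = O_{\mathbb{C}}(X)$; thus the work is to verify that the classical K\"ahler data of Definition~\ref{def:kahler-manifold} meet all the hypotheses of that theorem. First, conditions (1)--(3) of Definition~\ref{def:kahler-manifold} say precisely that the bigrading of $\Omega^\bullet(X(\mathbb{R}))$ is an $O_{\mathbb{C}}(G)$-covariant complex structure $\complexdc$ in the sense of Definitions~\ref{18thfeb255} and~\ref{12thdec234}: condition (1) is the inclusion $d(\Omega^{(p,q)}) \subseteq \Omega^{(p+1,q)}\oplus\Omega^{(p,q+1)}$, $(\Omega^{(p,q)})^* = \Omega^{(q,p)}$ holds because complex conjugation interchanges the $\pm\sqrt{-1}$-eigenspaces of $I$, and condition (2) provides the covariance. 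Moreover the complexified real metric $\metric$ of Theorem~\ref{18thaugust251} is a real $O_{\mathbb{C}}(G)$-covariant metric on $\Omega^1(X(\mathbb{R}))$ satisfying \eqref{diamond} --- this is exactly \eqref{6thsep251}, a consequence of the Hermitian condition on $(\widetilde{g_{\mathbb{R}}}, I)$.

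Next I would check that this complex structure is factorizable in the sense of Definition~\ref{13thdec231}, i.e.\ that $\wedge_{(0,q),(p,0)}: \Omega^{(0,q)}\otimes_B\Omega^{(p,0)} \to \Omega^{(p,q)}$ and $\wedge_{(p,0),(0,q)}: \Omega^{(p,0)}\otimes_B\Omega^{(0,q)} \to \Omega^{(p,q)}$ are invertible. Pointwise this is the linear-algebra fact that for a finite-dimensional complex vector space $W$ the wedge map identifies $\Lambda^q(\overline{W})\otimes\Lambda^p(W)$ with the $(p,q)$-part of $\Lambda^\bullet(W\oplus\overline{W})$; since $\Omega^{(p,0)}$ and $\Omega^{(0,q)}$ are finitely generated projective over $O_{\mathbb{C}}(X)$, the pointwise isomorphisms assemble into module isomorphisms. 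Hence Theorem~\ref{28thfeb251} applies and equips $\Omega^{(1,0)}$ with the holomorphic structure $\overline{\partial}_{\Omega^{(1,0)}} = \theta^{(1,1)}_l\circ\overline{\partial}$, and likewise $\Omega^{(0,1)}$ with the corresponding structure for the opposite complex structure; one checks directly that these recover the usual Dolbeault operators on the $(1,0)$- and $(0,1)$-cotangent bundles of $X(\mathbb{R})$.

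It remains to identify the two Chern connections with their classical counterparts. By Proposition~\ref{27thdec242jb} and Proposition~\ref{lem:27thdec241} the Hermitian metric $\kH_g$ attached to $\metric$ decomposes as $\kH_g = \kH_1\oplus\kH_2$, and $\kH_1(\overline{\eta_1})(\eta_2) = (\eta_2,\eta_1^*)$ is precisely the Hermitian metric classically induced by $\widetilde{g_{\mathbb{R}}}$ on $\Omega^{(1,0)}$ (and similarly $\kH_2$ on $\Omega^{(0,1)}$). Theorem~\ref{thm:Chern} characterises $\ch$ by compatibility \eqref{8thsep251} with the Hermitian metric together with $(\pi^{0,1}\otimes_B\id)\circ\nabla = \overline{\partial}_{\cE}$; after translating $\langle\,\cdot\,,\overline{\,\cdot\,}\rangle$ into the classical Hermitian pairing these become exactly the two defining properties of the Chern connection of a holomorphic Hermitian bundle, so $\nabla_{\text{Ch},\Omega^{(1,0)}}$ and $\nabla_{\text{Ch},\Omega^{(0,1)}}$ coincide with the classical Chern connections. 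The final input is the fundamental theorem of K\"ahler geometry recalled in the introduction: the complex-linear extension of the Levi-Civita connection to $\Omega^1 = \Omega^{(1,0)}\oplus\Omega^{(0,1)}$ preserves the splitting and restricts on each summand to the corresponding Chern connection. Applied to the connection $\nabla$ of Theorem~\ref{18thaugust251}, which is the unique $O_{\mathbb{C}}(G)$-covariant Levi-Civita connection for $\metric$ and which in the commutative setting is automatically a bimodule connection with $\sigma$ the flip, this yields \eqref{1stmarch251}.

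With all hypotheses of Theorem~\ref{29thjan253} verified, that theorem applies and gives exactly the assertion: $\varphi^{-1}_{\Omega^1,\Omega^1}\circ\Gamma(\nabla)$ is the unique $\big(O_{\mathbb{C}}(G)\big)_\gamma$-covariant Levi-Civita connection for $(g_\gamma,(~,~)_\gamma)$ on $\dctwisted$, and it equals $\nabla_{\text{Ch},\Omega^{(1,0)}_\gamma}\oplus\nabla_{\text{Ch},\Omega^{(0,1)}_\gamma}$. The only point that really calls for care is the dictionary between Beggs--Majid's categorical language --- factorizability versus the $(p,q)$-grading, Hermitian metrics as isomorphisms $\overline{\cE}\to\hm{B}{\cE}{B}$, and the abstract Chern connection of Theorem~\ref{thm:Chern} --- and the classical objects; everything else is a bookkeeping exercise in invoking the machinery already established.
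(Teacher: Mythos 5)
Your proposal is correct and follows essentially the same route as the paper: reduce to Theorem \ref{29thjan253} by checking \eqref{diamond} via \eqref{6thsep251}, factorizability of the classical complex structure, the flip bimodule structure of $\nabla$, and the splitting \eqref{1stmarch251}. The only difference is one of detail: where you invoke the classical K\"ahler decomposition of the Levi-Civita connection wholesale (together with a dictionary identifying the abstract Chern connections of Theorem \ref{thm:Chern} with the classical ones), the paper—while acknowledging this is well-known—writes out the verification, using the fact that $\nabla$ commutes with $I$ on a K\"ahler manifold and a short computation showing the $(0,1)$-part of the restricted connection on $\Omega^{(1,0)}$ is $\overline{\partial}_{\Omega^{(1,0)}}$, so that uniqueness in Theorem \ref{thm:Chern} identifies the summands as the Chern connections.
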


\begin{proof}
We need to prove that the hypotheses of Theorem \ref{29thjan253} are satisfied for the real metric $\metric$ and the  $O_\bbC(G)$-covariant complex structure $(\Omega^{(\bullet, \bullet)}, \wedge, \partial, \overline{\partial})$ on the $O_\bbC(G)$ comodule $\ast$-algebra $O_{\mathbb{C}} (X). $

To begin with, \eqref{6thsep251} implies that \eqref{diamond} is satisfied and    since $X(\mathbb{R})$ is a complex manifold, it is well-known that the complex structure is factorizable.  Moreover,  the Levi-Civita connection on $\Omega^1 (X(\mathbb{R})) $ is a bimodule connection for the $O_\bbC(X)$-bilinear $O_\bbC(G)$-covariant map 
$$ \text{flip}: \Omega^1 ( X(\mathbb{R}) )  \otimes_{O_{\mathbb{C}} (X)} \Omega^1 ( X(\mathbb{R}) ) \rightarrow \Omega^1 ( X(\mathbb{R}) ) \otimes_{O_{\mathbb{C}} (X)} \Omega^1 ( X(\mathbb{R}) ), $$
where $ \text{flip} ( \eta_1 \otimes_{O_{\mathbb{C}} (X)} \eta_2 ) = \eta_2 \otimes_{O_{\mathbb{C}} (X)} \eta_1. $ Thus, we are left to verify \eqref{1stmarch251}.

This is again well-known (see \cite[Subsection 4.A]{DanHuybrechts}) but we provide the details for the sake of completeness. Let $\kH_g: \overline{\Omega^1(X(\mathbb{R}))} \rightarrow \prescript{}{O_{\mathbb{C}}(X)}{\text{Hom}} (\Omega^1(X(\mathbb{R})),O_{\mathbb{C}}(X)) $ be the Hermitian metric obtained from the real metric $\metric$ as in Proposition \ref{27thdec242jb}. Since $\nabla$ is compatible with $\metric,$ it can be easily checked that $\nabla$ is compatible with $\kH_g$ in the sense of \eqref{8thsep251}. Therefore, since $X(\mathbb{R})$ is K\"{a}hler, \cite[Theorem 4.3]{KobayashiVol2} implies that  
$$\nabla\circ I(\eta)= (\id \ot_{O_\bbC(X)}I)\nabla(\eta)$$
for all $\eta \in \Omega^1(X(\mathbb{R}))$.

Thus, $\nabla$ preserves the eigenspaces $\Omega^{(1,0)} (X(\mathbb{R})) $ and $\Omega^{(0,1)} (X(\mathbb{R}))$ inducing   two connections $\nabla_1$ and $\nabla_2$ on $\Omega^{(1,0)}(X(\mathbb{R}))$ and $\Omega^{(0,1)}(X(\mathbb{R}))$ respectively. Since \eqref{diamond} is satisfied for $\metric,$  Proposition \ref{lem:27thdec241} implies that 
$\kH_g= \kH_1 \oplus \kH_2$ for Hermitian metrics $\kH_1$ and $\kH_2$  on $\Omega^{(1,0)}(X(\mathbb{R}))$ and $\Omega^{(0,1)}(X(\mathbb{R})).$ 
 Moreover, the connections $\nabla_1$ and $\nabla_2$ are compatible with $\kH_1$ and $\kH_2$ respectively.  Thus, using the notations and formulas from Theorem \ref{28thfeb251}, for $\eta\in \Omega^{(1,0)}(X(\mathbb{R}))$, we obtain 
\begin{align*}
    \delbar{\Omega^{(1,0)}(X(\mathbb{R}))}(\eta) &=\theta^{(1,1)}_l \circ \delbar{}(\eta)\\
    &= \theta^{(1,1)}_l \pi^{(1,1)}(\partial+ \delbar{}) (\eta) \\
    &= \theta^{(1,1)}_l \pi^{(1,1)} d(\eta) \\
    &= \theta^{(1,1)}_l \pi^{(1,1)} \wedge\circ \nabla(\eta) \quad\big(\text{since } \nabla \text{ is torsionless}\big)\\
    &= \theta^{(1,1)}_l  \wedge (\pi^{(0,1)} \ot_{O_\bbC(X)}\id)\circ \nabla_1(\eta) \quad \big( \text{as } \eta \in  \Omega^{(1,0)}(X(\mathbb{R}))\big)\\
    &= (\pi^{(0,1)} \ot_{O_\bbC(X)}\id)\circ \nabla_1(\eta).
\end{align*}
 
Thus, the $(0,1)$-part of the connection $\nabla_1$ on $\Omega^{(1,0)}(X(\mathbb{R}))$ is $\delbar{\Omega^{(1,0)}(X(\mathbb{R}))}$ and $\nabla_1$ is compatible with $\kH_1.$ Therefore, $\nabla_1$ coincides with the Chern connection for the Hermitian metric $\kH_1$. Similarly, $\nabla_2$ coincides with the Chern connection (for the opposite complex structure $-I$ ) for the Hermitian metric $\kH_2.$

Since $\nabla = \nabla_1 \oplus \nabla_2$,    \eqref{1stmarch251} is satisfied  and the result follows from Theorem \ref{29thjan253}.
\end{proof}


As a sub-case, we can apply our results for the toric deformations of K\"ahler manifolds. Such deformations were  considered recently in \cite{meslandrennie1}. We will be using the notations, definitions and results of Subsection \ref{4thmarch254} regarding dual $2$-cocycles on compact quantum group algebra. In particular, we will be using the identification made in  Remark  \ref{18thdec24jb1}.

\begin{example}[Toric deformation] \label{8thmarch257}
Let us assume that $G(\mathbb{R}) = \mathbb{T}^n, $ the $n$-torus in the setup of  Theorem \ref{thm:classical-deformed}. 
If $n \geq 2$ and $\theta$ an $n \times n$ skew-symmetric matrix, then we have nontrivial cocycles on $\mathbb{T}^n$ of the form $\mathcal{F}$ as defined in Example \ref{19thdec24jb1}. Hence,  the $\ast$-algebra $O_{\mathbb{C}} (X) $ can be cocycle deformed to an algebra ${O_{\mathbb{C}} (X)}_{\mathcal{F}}.$

The algebra ${O_{\mathbb{C}} (X)}_{\mathcal{F}}$ is called a toric deformation of $O_{\mathbb{C}} (X).$ In the $C^*$-algebraic framework, such deformations were considered in \cite{rieffel93},  \cite{conneslandi01} and \cite{connesdubois02}.

Let us remind the reader about two simplifications that occur in this case. Firstly, since $O_{\mathbb{C}} (\mathbb{T}^n) $ is cocommutative, we already know (see Remark \ref{9thsep251}) that $(O_{\mathbb{C}} (\mathbb{T}^n))_{\mathcal{F}} $ is isomorphic to $O_{\mathbb{C}} (\mathbb{T}^n) $ as Hopf $\ast$-algebras. Secondly, the involution on $O_{\mathbb{C}}(X)$ coincides with the twisted involution $\ast_{\mathcal{F}}$ on $(O_{\mathbb{C}} (X))_{\mathcal{F}} $ defined in \eqref{4thfeb251}.  Indeed, let $\underline{m} = (m_1, \cdots, m_n) \in \mathbb{Z}^n $ and $b \in O_{\mathbb{C}} (X)$ belong to the $\underline{m}$-th spectral subspace, i.e, the coaction $\delta$ of $O_{\mathbb{C}} (\mathbb{T}^n)$ on $O_{\mathbb{C}} (X)$ satisfies
$$ \delta (b) = u_{\underline{m}} \otimes b $$ 
for  $ u_{\underline{m}} \in (O_{\mathbb{C}} (\mathbb{T}^n))_{\underline{m}}. $ Then in the notations of Example \ref{19thdec24jb1},
$$b^{\ast_{\mathcal{F}}} =  \overline{V} ( u^{\ast}_{\underline{m}}) b^\ast = e^{- 2 \pi \sqrt{-1} \langle\langle \theta ( - \underline{m}  ), \underline{m} \rangle\rangle} b^\ast = b^\ast,$$  
where we have used the fact that $\langle\langle \theta ( - \underline{m}  ), \underline{m} \rangle\rangle = 0$ by the skew-symmetry of $\theta.$
 
\end{example}

\subsection{Cocycle deformations of the Heckenberger-Kolb calculi}

We end this section by showing that Theorem \ref{29thjan253} can be applied to cocycle deformations of the Heckenberger-Kolb calculus on quantized irreducible flag manifolds. 

Let $G$ be a compact connected simply connected simple Lie group with complexified Lie algebra $\mathfrak{g}.$ Suppose that $ \pi = \{ \alpha_1, \cdots ,\alpha_n  \} $ is the set of simple roots of $\mathfrak{g}$ and $ S = \pi \setminus \{ \alpha_i \}, $ where $\alpha_i$ appears with coefficient at most $1$ in the highest root of $\mathfrak{g}.$ We will use the symbol $\mathfrak{l}_S$ to denote the Levi subalgebra for the standard parabolic subalgebra associated to the set $S.$  If $\widetilde{G}$ and $L_S \subseteq \widetilde{G} $ denote the connected (complex) Lie groups corresponding to $\mathfrak{g}$ and $\mathfrak{l}_S,$ then the homogeneous space $G/{(L_S \cap G)}$ is called the irreducible flag manifold for $G$ and the particular choice of $S.$ 

For $0 < q < 1,$ the symbol $U_q ( \mathfrak{g} ) $ will denote the Drinfeld-Jimbo quantized universal enveloping algebra of  \cite{DrinfeldICM,Jimbo1986} while $ \mathcal{O}_q ( G ) $ will denote the coordinate algebra of the dual compact quantum group $G_q.$ Moreover, we have a  quantum homogeneous space $ \mathcal{O}_q ( G/L_S ), $ called the algebra of functions on the corresponding quantized irreducible flag manifold. For more details, we refer to \cite{HKdR} and references therein. Moreover, Heckenberger and Kolb constructed (\cite{HKdR}) a unique $\mathcal{O}_q ( G ) $-covariant differential calculus $\dc$ of classical dimension on the algebra $ \mathcal{O}_q ( G/L_S ). $ This calculus is  known as the Heckenberger-Kolb calculus.

In \cite[Proposition 5.8]{MATASSA2019103477}, Matassa showed that for all but finitely many $q \in ( 0, 1 ),$ the Heckenberger-Kolb calculus admits a K\"ahler   structure.  However,  the following theorem holds for all $q \in ( 0, 1 ).$

\begin{theorem} \label{11thmarch251}
Let $\dc$ be the $A:= \mathcal{O}_q ( G ) $-covariant Heckenberger-Kolb calculus on $B:= \mathcal{O}_q ( G/L_S ). $ If $\gamma$ is a unitary $2$-cocycle on $A$ and $\metric$ an $A$-covariant real metric on $\Omega^1,$ then the conclusion of Theorem \ref{29thjan253} holds for the Levi-Civita connection on $\Omega^1_\gamma$ for the metric $\metrictwisted.$  
\end{theorem}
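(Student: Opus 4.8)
The plan is to reduce the statement to Theorem \ref{29thjan253} by checking that its hypotheses are met by the Heckenberger--Kolb calculus. First, since $A = \mathcal{O}_q ( G )$ is a compact quantum group algebra in the sense of Definition \ref{18thjan251}, Remark \ref{18thdec24jb1} shows that a unitary $2$-cocycle on $A$ is precisely a unitary dual $2$-cocycle on $G_q$, so the whole apparatus of the preceding sections applies without change. By \cite{HKdR}, $\dc$ is an $A$-covariant $\ast$-differential calculus on $B = \mathcal{O}_q ( G/L_S )$ carrying a canonical $A$-covariant complex structure $\complexdc$, and this complex structure is factorizable in the sense of Definition \ref{13thdec231} (see \cite{HKdR}, and also \cite{MMF2}, \cite{MATASSA2019103477}). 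Consequently $\Omega^{(1,0)}$ and $\Omega^{(0,1)}$ carry covariant holomorphic bimodule structures by Theorem \ref{28thfeb251}, the latter with respect to the opposite complex structure.

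Next, fix an $A$-covariant real metric $\metric$ on $\Omega^1$. I would argue that it automatically satisfies \eqref{diamond}: the covariance of $( ~, ~ )$ makes its restriction to $\Omega^{(1,0)} \otimes_B \Omega^{(1,0)}$ a morphism in $\cat$ with values in $B = \Omega^{(0,0)}$, and the representation-theoretic analysis of the Heckenberger--Kolb calculus in \cite{LeviCivitaHK} shows there is no nonzero such morphism (equivalently, this is a consequence of the classification of covariant real metrics on $\Omega^1$ carried out there), and similarly for $\Omega^{(0,1)} \otimes_B \Omega^{(0,1)}$. Hence \eqref{diamond} holds, so by Proposition \ref{lem:27thdec241} the associated Hermitian metric splits as $\kH_g = \kH_1 \oplus \kH_2$, and we obtain the Chern connections $\nabla_{\text{Ch}, \Omega^{(1,0)}}$ and $\nabla_{\text{Ch}, \Omega^{(0,1)}}$.

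Then, by the main theorem of \cite{LeviCivitaHK}, for the metric $\metric$ the Heckenberger--Kolb calculus admits a unique $A$-covariant Levi-Civita connection $\nabla_{\Omega^1}$, and it decomposes as $\nabla_{\Omega^1} = \nabla_{\text{Ch}, \Omega^{(1,0)}} \oplus \nabla_{\text{Ch}, \Omega^{(0,1)}}$ with respect to $\Omega^1 = \Omega^{(1,0)} \oplus \Omega^{(0,1)}$; that is, \eqref{1stmarch251} is satisfied. All the hypotheses of Theorem \ref{29thjan253} are therefore in place, and applying that theorem to the unitary $2$-cocycle $\gamma$ yields that $\nabla_{\Omega^1_\gamma} := \varphi^{-1}_{\Omega^1, \Omega^1} \circ \Gamma ( \nabla_{\Omega^1} )$ is the unique $A_\gamma$-covariant Levi-Civita connection for $\metrictwisted$ and that $\nabla_{\Omega^1_\gamma} = \nabla_{\text{Ch}, \Omega^{(1,0)}_\gamma} \oplus \nabla_{\text{Ch}, \Omega^{(0,1)}_\gamma}$, which is exactly the conclusion of Theorem \ref{29thjan253}.

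The only genuinely non-formal ingredient is the pair of facts imported from \cite{LeviCivitaHK}: that every $A$-covariant real metric on the Heckenberger--Kolb calculus satisfies \eqref{diamond}, and that its Levi-Civita connection splits as in \eqref{1stmarch251}. Both rest on the fine structure of the calculus --- the description of $\Omega^{(1,0)}$ as a relative Hopf module induced from $\mathcal{O}_q ( G/L_S )$ and Takeuchi's monoidal equivalence --- and once they are taken as given, the result is an immediate consequence of Theorem \ref{29thjan253}. I expect the only mild technical point to be bookkeeping: matching the conventions of \cite{LeviCivitaHK} (left versus right comodules, the normalization of the complex structure and of the holomorphic $\overline{\partial}$-connections) with those used here, so that the Chern connections $\nabla_{\text{Ch}, \Omega^{(1,0)}}$, $\nabla_{\text{Ch}, \Omega^{(0,1)}}$ in \eqref{1stmarch251} are literally the ones appearing in Theorem \ref{thm:Chern} and Theorem \ref{thm:twisted_chern}.
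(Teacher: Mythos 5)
Your proposal is correct and follows essentially the same route as the paper: both reduce the statement to Theorem \ref{29thjan253} by importing from \cite{LeviCivitaHK} the facts that every covariant real metric on the Heckenberger--Kolb calculus satisfies \eqref{diamond} and that its Levi-Civita connection decomposes as in \eqref{1stmarch251}, together with factorizability of the complex structure from \cite{HKdR}. The paper simply cites these ingredients more precisely (existence/uniqueness from \cite[Theorem 6.14]{LeviCivitaHK}, the condition \eqref{diamond} from \cite[Proposition 3.7]{LeviCivitaHK}, the decomposition from equation (41) there, and factorizability from \cite[Proposition 3.11]{HKdR}), which is the only difference from your write-up.
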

\begin{proof}
To begin with, the existence of the Levi-Civita connection follows from    \cite[Theorem 6.14]{LeviCivitaHK}.

The differential calculus $\dc$ has an $A$-covariant complex structure as shown in \cite {MATASSA2019103477} while from \cite[Proposition 3.11]{HKdR}, we know that this complex structure is factorizable. Moreover, \cite[Proposition 3.7]{LeviCivitaHK} proves that the metric $\metric$ satisfies \eqref{diamond} and the equation (41) of \cite{LeviCivitaHK} verifies \eqref{1stmarch251} for the Levi-Civita connection $\nabla$ on $\dc.$
Thus, all the conditions of Theorem \ref{29thjan253} are satisfied which proves the theorem.
\end{proof}  

We end this section by giving two classes  of examples of nontrivial unitary $2$-cocycles on $\mathcal{O}_q ( G ).$ 

\begin{example} \label{4thmarch252}
We have a surjective Hopf $\ast$-algebra morphism $\pi_q: \mathcal{O}_q ( G ) \rightarrow \mathbb{C} [\mathbb{T}^n],$ where $\mathbb{T}^n$ denotes the maximal torus of the compact Lie group $G.$ Whenever $n \geq 2,$ we have unitary dual $2$-cocycles $\mathcal{F}$ on $\mathbb{C}[\mathbb{T}^n]$ as in Example \ref{19thdec24jb1}.  Therefore, by Example \ref{4thmarch253}, $\mathcal{F}$ induces a unitary dual $2$-cocycle $\gamma$ on $\mathcal{O}_q ( G ).$ More concretely, for all $x, y \in \mathcal{O}_q ( G ),$  
$$\gamma(x \otimes y) := \mathcal{F}\big( \pi_q(x) \otimes \pi_q(y)\big).$$

We point out that in this case, the $\gamma$-deformed algebra structure and the $\ast$-structures on $\mathcal{O}_q (G/L_S)$ coincide with those obtained from $\mathcal{F}.$

Observe that the morphism $\pi_q$ induces a $\mathbb{C}[\mathbb{T}^n]$-coaction on $\mathcal{O}_q (G/L_S)$ given by 
$${}^{\mathcal{O}_q(G/L_S)}\delta:= (\pi_q\otimes \id) \circ \Delta, $$ 
where $\Delta: \mathcal{O}_q (G/L_S) \to \mathcal{O}_q(G) \otimes \mathcal{O}_q(G/L_S)$ is the canonical coaction. Therefore, it makes sense to deform $\mathcal{O}_q (G/L_S)$ by the cocycle $\mathcal{F}.$

If we use Sweedler's notation to write $\Delta(b)= \mone{b}\otimes \zero{b}$ for $b \in \mathcal{O}_q (G/L_S),$  then ${}^{\mathcal{O}_q(G/L_S)}\delta(b)= \pi_q(\mone{b}) \otimes\zero{b}.$

Thus,  for  $b, c\in \mathcal{O}_q(G/L_S),$ we write
\begin{align*}
    b\cdot_{{\gamma}} c =  {\gamma}\big( \mone{b}\otimes \mone{c}\big) \zero{b} \zero{c} = 
    {\mathcal{F}}\big(\pi_q( \mone{b})\otimes \pi_q(\mone{c})\big) \zero{b}\zero{c}= b\cdot_\mathcal{F} c
\end{align*}
and
\begin{align*}
    b^{*_{{\gamma}}}= \bar{{\gamma}}\big(\mone{b}^* \otimes S^{-1} (\mtwo{b})\big) \zero{b}^* = \bar{\mathcal{F}} \big(\pi_q (\mone{b}^*) \otimes \pi_q(S^{-1} (\mtwo{b}))\big) \zero{b}^* = b^{*_\mathcal{F}}.
\end{align*}
This shows that the $\gamma$-deformation of $\mathcal{O}_q(G/L_S)$ coincides with the $\mathcal{F}$-deformation. 
\end{example}

\begin{example} \cite[Chapter 3]{NeshveyevTuset}
 Consider the connected  simple Lie group $G =  \text{SO}(4n) $ so that its complexified Lie algebra $\mathfrak{g} = {\text so} (4n, \mathbb{C}). $ In this case, the proof of \cite[Corollary 3.4.2]{NeshveyevTuset} shows that we have a nontrivial dual cocycle $c$ on the center $Z(G)$ of $G.$ The proof utilizes the fact that the dual group of $Z(G)  $ can be identified with the quotient of the weight lattice by the root lattice.

 Then from \cite[Example 3.1.11]{NeshveyevTuset} shows that $c$ induces a dual unitary $2$-cocycle (to be denoted by $\mathcal{F}_c$ ) on $\mathcal{O}_q (G). $ Moreover, 
if $\lambda, \mu$ are positive integral weights of $\mathfrak{g}$ with weight spaces $V_\lambda, V_\mu$ respectively,  then
$$  \mathcal{F}_c (v_\lambda \otimes v_\mu) = c(\lambda, \mu)  v_\lambda \otimes v_\mu   $$
for all   $v_\lambda \in V_\lambda, v_\mu \in V_\mu.$ 

 Note that since $  \mathcal{F}_c \in  (\mathcal{O}_q (G) \otimes  \mathcal{O}_q (G))^*,  $ it is determined by its action on the subspaces $V_\lambda \otimes V_\mu.$  

The cocycle $ \mathcal{F}_c $ is invariant, in the sense that 
 $  \mathcal{F}_c \ast \widehat{\Delta} (f)   = \widehat{\Delta} (f) \ast  \mathcal{F}_c  $
for all $f \in  (\mathbb{C}[\mathbb{G}])^*, $ where $\widehat{\Delta}$ is as defined in \eqref{31stjan261}. In fact, it is proven that any unitary dual invariant $2$-cocycle  on $ \mathcal{O}_q (G) $ (up to coboundary) is obtained by this construction. Unfortunately, if $\mathfrak{g} \neq \text{so} (4n, \mathbb{C}), $ then there are no nontrivial dual $2$-cocycles on $Z(G).$

\end{example}

\appendix
\section{Some cocycle identities} \label{8thmarch255}

In this section, we start by deriving some identities regarding cocycles which have been crucially used in several computations throughout the article.  Throughout this section, $A$ will stand for a Hopf algebra.

Let us begin by recalling  the following  identity proved in \cite[Lemma 3.2]{TwistPAschieriMain}.
\begin{lem}
	Suppose that $\gamma$ is a $2$-cocycle on a Hopf algebra $A$. Then for all $h \in A, k \in A$,  we have the following equation:
	\begin{align} \label{eq:28thnov243}
		U(\one{h}) \coin{S(\two{h})}{k} = \co{\one{h}}{S(\two{h})k}.
	\end{align} 
\end{lem}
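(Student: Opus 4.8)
The plan is to deduce \eqref{eq:28thnov243} from the cocycle identity alone, using no reference to the $\ast$-structure, so that only the hypotheses of the lemma are needed.

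First I would unpack the left-hand side: using coassociativity together with $U(k)=\co{\one{k}}{S(\two{k})}$, it equals
\[
\co{\one{h}}{S(\two{h})}\,\coin{S(\three{h})}{k}.
\]
The key observation is that this is precisely the right-hand side $\co{g}{\two{h'}}\,\coin{\one{h'}}{k'}$ of identity \eqref{iv} of Lemma \ref{lem:formula}, specialised to $g=\one{h}$, $h'=S(\two{h})$ and $k'=k$; here one uses the anti-coalgebra property $\Delta\circ S=(S\otimes S)\circ\mathrm{flip}\circ\Delta$, which gives $\Delta\big(S(\two{h})\big)=S(\three{h})\otimes S(\two{h})$ and makes the Sweedler indices match.

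Next, by identity \eqref{iv} of Lemma \ref{lem:formula} the expression above equals the left-hand side of that identity under the same substitution. Expanding with coassociativity — now using a fourfold coproduct $\one{h}\otimes\two{h}\otimes\three{h}\otimes\four{h}$ on $h$ and once more the flip rule for $\Delta\circ S$ — this left-hand side becomes
\[
\co{\one{h}}{S(\four{h})\,\one{k}}\;\coin{\two{h}\,S(\three{h})}{\two{k}}.
\]
Finally I would apply the antipode axiom in the form $\one{h}\otimes\two{h}\,S(\three{h})\otimes\four{h}=\one{h}\otimes 1\otimes\two{h}$ to collapse the two middle legs, and use unitality of $\bar\gamma$ (which follows from unitality of $\gamma$) to get $\coin{1}{\two{k}}=\epsilon(\two{k})$. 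What survives is $\co{\one{h}}{S(\two{h})\,\one{k}\,\epsilon(\two{k})}=\co{\one{h}}{S(\two{h})\,k}$, which is the right-hand side of \eqref{eq:28thnov243}.

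The computation itself is elementary; the one point requiring care is the Sweedler bookkeeping, in particular tracking how $S$ reverses the coproduct so that $\two{h}$ and $\three{h}$ end up \emph{adjacent} inside the first argument of $\bar\gamma$ — this adjacency is exactly what licenses the antipode collapse in the last step. Accordingly, I would state the precise instances of Lemma \ref{lem:formula}\eqref{iv} and of the antipode axiom that are invoked, so as to keep the indices transparent.
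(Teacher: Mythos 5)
Your argument is correct. Note that the paper does not prove \eqref{eq:28thnov243} at all — it only quotes it from the cited reference — so your computation supplies a self-contained proof, and it is the natural one. The Sweedler bookkeeping checks out: unpacking $U(\one{h})$ gives $\co{\one{h}}{S(\two{h})}\,\coin{S(\three{h})}{k}$, which is precisely the right-hand side of \eqref{iv} of Lemma \ref{lem:formula} evaluated at the element $\one{h}\otimes S(\two{h})\otimes k$ (both sides of that identity are linear functionals on $A^{\otimes 3}$, so applying it to this element and summing is legitimate, with $\Delta\circ S=(S\otimes S)\circ\mathrm{flip}\circ\Delta$ giving $\one{h'}\otimes\two{h'}=S(\four{h})\otimes S(\three{h})$); the resulting left-hand side then collapses via $\one{h}\otimes\two{h}S(\three{h})\otimes\four{h}=\one{h}\otimes 1\otimes\two{h}$ and $\coin{1}{\two{k}}=\epsilon(\two{k})$ exactly as you describe. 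The only input beyond the cocycle identity is unitality of $\gamma$ (through $\coin{1}{\cdot}=\epsilon$, which indeed follows from unitality of $\gamma$ by convolving with $\gamma$); this is consistent with the paper's standing convention that all cocycles are unital (the remark after Definition \ref{18thjan252}), and it is genuinely needed, since \eqref{eq:28thnov243} already fails at $h=1$ for a non-unital cocycle.
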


The  identity proved in the following lemma plays a key role in the article. It has been used  in the proof of many of the results, including  Proposition \ref{prop:5thdec241}, Theorem \ref{thm:twisted-hermitian}, Lemma \ref{14thfeb251}.

\begin{lem} 
	If $A$ is a Hopf $*$-algebra and $\gamma$ is a two cocycle on $A$ with convolution inverse $\bar{\gamma},$ then the following equation holds:
	\begin{equation} \label{2ndfeb262}
		\bar{V}(\one{k}^*)\bar{V}(\one{h}^*) \co{\two{k}^*}{\two{h}^*} =\coin{S(\one{h})^*}{S(\one{k})^*}~ \bar{V}(\two{k}^*\two{h}^*). 
        \end{equation}
		\end{lem}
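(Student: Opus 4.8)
The plan is to reduce \eqref{2ndfeb262} to a purely combinatorial identity in two bijective changes of variables, and then to obtain that identity from the equivalent forms of the cocycle condition listed in Lemma~\ref{lem:formula}. First I would strip the $\ast$'s. Since $\Delta$ is a $\ast$-homomorphism one has $\one{(x^\ast)}\ot\two{(x^\ast)}=\one{x}^\ast\ot\two{x}^\ast$, and from \eqref{19thsept20245} one gets $S(x)^\ast=S^{-1}(x^\ast)$. As $\ast$ is a bijection of $A$, proving \eqref{2ndfeb262} is equivalent to proving it after the substitution $a:=k^\ast$, $b:=h^\ast$; performing this substitution turns \eqref{2ndfeb262} into the $\ast$-free statement
\[
\bar{V}(\one{a})\,\bar{V}(\one{b})\,\co{\two{a}}{\two{b}} \;=\; \coin{S^{-1}(\one{b})}{S^{-1}(\one{a})}\;\bar{V}(\two{a}\two{b}),
\]
to hold for all $a,b\in A$.

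Next I would eliminate $\bar{V}$. Recall $\bar{V}(x)=\bar{U}(S^{-1}(x))=\coin{\two{x}}{S^{-1}(\one{x})}$. Substituting this on both sides, and on the right using that $\Delta$ is multiplicative and $S^{-1}$ is an algebra anti-homomorphism so that $\bar{V}(\two{a}\two{b})=\coin{\three{a}\three{b}}{S^{-1}(\two{b})\,S^{-1}(\two{a})}$, the displayed identity becomes
\[
\coin{\two{a}}{S^{-1}(\one{a})}\coin{\two{b}}{S^{-1}(\one{b})}\co{\three{a}}{\three{b}} \;=\; \coin{S^{-1}(\one{b})}{S^{-1}(\one{a})}\coin{\three{a}\three{b}}{S^{-1}(\two{b})\,S^{-1}(\two{a})}.
\]
Performing the further (bijective) change of variables $p:=S^{-1}(a)$, $q:=S^{-1}(b)$, which replaces $\one{a},\two{a},\three{a}$ by $S(\three{p}),S(\two{p}),S(\one{p})$ (and similarly for $b,q$) and thereby removes the bare $S^{-1}$'s, this reduces to
\[
\coin{S(\two{p})}{\three{p}}\,\coin{S(\two{q})}{\three{q}}\,\co{S(\one{p})}{S(\one{q})} \;=\; \coin{\three{q}}{\three{p}}\,\coin{S(\one{p})S(\one{q})}{\two{q}\,\two{p}},
\]
an identity in which $\gamma$ and $\bar{\gamma}$ are applied only to Sweedler components of $p,q$ and their $S$-images. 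This last identity I would establish from Lemma~\ref{lem:formula}: applying condition (ii) to re-bracket the $\bar{\gamma}$-factor on the right-hand side, then condition (iv) (or (iii)) to split the remaining products, and finally collapsing the excess terms by the antipode axioms $\one{x}S(\two{x})=\epsilon(x)1=S(\one{x})\two{x}$ together with $\gamma\ast\bar{\gamma}=\epsilon_{A\ot A}=\bar{\gamma}\ast\gamma$. (Equivalently one can skip the $p,q$ substitution and apply (iv) followed by (ii) directly to the previous display.)

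The hard part is entirely organizational: none of the ingredients is deep, but one must choose the triples of elements playing the roles of $g,h,k$ in Lemma~\ref{lem:formula}(ii)/(iv) so that, after re-expanding coproducts by coassociativity, the antipode cancellations $\one{x}S(\two{x})$ actually line up, rather than producing an equation that refuses to close. A clean way to isolate the single genuine cancellation is to first prove, as an auxiliary step, the ``mirror'' of \eqref{eq:28thnov243} --- an identity expressing $\bar{U}(\two{h})$ times one $\gamma$-factor as a single $\bar{\gamma}$-factor --- and then feed it into the computation above; after that, all remaining manipulations are formal uses of coassociativity, the counit axiom, and $\gamma\ast\bar{\gamma}=\epsilon_{A\ot A}$.
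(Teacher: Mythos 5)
Your proposal is correct: the de-starring substitution $a=k^{*}$, $b=h^{*}$ (using that $\Delta$ is a $*$-map and $S(x)^{*}=S^{-1}(x^{*})$), the elimination of $\bar{V}$ via $\bar{V}(x)=\coin{\two{x}}{S^{-1}(\one{x})}$, and the change of variables $p=S^{-1}(a)$, $q=S^{-1}(b)$ are all valid, and the resulting star-free identity does close along the route you sketch --- your ``mirror'' of \eqref{eq:28thnov243} is $\co{g}{S(\one{h})}\,\bar{U}(\two{h})=\coin{g\,S(\one{h})}{\two{h}}$, which follows from condition (iv) of Lemma \ref{lem:formula} plus the antipode/counit collapse and unitality, and then a single application of condition (ii) with one more collapse finishes the computation. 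This is essentially the paper's own proof, which performs exactly the same moves --- (iv), a counit collapse and unitality, then (ii), then anti-multiplicativity of $S^{-1}$ --- directly on the starred elements $h^{*},k^{*}$ without your preliminary substitutions, so the only difference is organizational.
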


\begin{proof}
 By repeated use of   \eqref{19thsept20245}, we have 
	\begin{align*}
		&\bar{V}(\one{k}^*)\bar{V}(\one{h}^*) \co{\two{k}^*}{\two{h}^*}\\
		& =\bar{V}(\one{k}^*)\coin{\two{h}^*}{{S^{-1}(\one{h}^*)}} \co{\two{k}^*}{\three{h}^*}\\
		&=\bar{V}(\one{k}^*)\coin{\two{h}^*}{{S(\one{h})^*}} \co{\two{k}^*}{\three{h}^*} \\
		&=\bar{V}(\one{k}^*)~ \co{\two{k}^*}{\three{h}^*S(\two{h})^*} \coin{\three{k}^*\four{h}^*}{S(\one{h})^*}~\text{(using \eqref{iv} of Lemma \ref{lem:formula})}\\
		&= \bar{V}(\one{k}^*)~ \co{\two{k}^*}{1} \coin{\three{k}^*\two{h}^*}{S(\one{h})^*}\\
		&= \bar{V}(\one{k}^*)  \coin{\two{k}^*\two{h}^*}{S(\one{h})^*} ~\left(\text{as $\gamma$ is unital.}\right)\\
		&= \coin{\two{k}^*}{S(\one{k})^*} ~ \coin{\three{k}^*\two{h}^*}{S(\one{h})^*}\\
		&=  \coin{\two{k}^*\three{h}^* S(\two{h})^*}{S(\one{k})^*} ~ \coin{\three{k}^*\four{h}^*}{S(\one{h})^*}\\
		&=\coin{S(\one{h})^*}{S(\one{k})^*}  \coin{\three{k}^*\three{h}^*}{S(\two{h})^*S(\two{k})^*}~~\text{(using \eqref{ii} of Lemma \ref{lem:formula})}\\
		&= \coin{S(\one{h})^*}{S(\one{k})^*}  \coin{\three{k}^*\three{h}^*}{S^{-1}(\two{h}^*)S^{-1}(\two{k}^*)}\\
		& =\coin{S(\one{h})^*}{S(\one{k})^*}  \coin{\three{k}^*\three{h}^*}{S^{-1}(\two{k}^*\two{h}^*)}\\
		&= \coin{S(\one{h})^*}{S(\one{k})^*} \bar{V}(\two{k}^*\two{h}^*).
	\end{align*}
	\end{proof}

As an application of \eqref{2ndfeb262}, we prove the following result which has been used in the proof of Proposition \ref{realmetric}, Lemma \ref{lem:17thdec242} and Theorem \ref{theorem:21stnov242}.
\begin{cor}
If $\gamma$ is a  $2$-cocycle on $A,$ then for all $h, k \in A,$ the following equation holds:
\begin{equation} \label{2ndfeb263}
\co{S(\one{h})^*}{S(\one{k})^*}
 \bar{V} ( k^\ast_{(2)} ) \bar{V} ( h^\ast_{(2)} ) = \bar{V} ( k^\ast_{(1)} h^\ast_{(1)} ) 
 \coin{ k^\ast_{(2)}}{h^\ast_{(2)}}
\end{equation}
\end{cor}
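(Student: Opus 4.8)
The plan is to derive \eqref{2ndfeb263} from the previously established identity \eqref{2ndfeb262} by a simple substitution and manipulation. Equation \eqref{2ndfeb262} reads
\[
\bar{V}(\one{k}^*)\bar{V}(\one{h}^*) \co{\two{k}^*}{\two{h}^*} =\coin{S(\one{h})^*}{S(\one{k})^*}~ \bar{V}(\two{k}^*\two{h}^*),
\]
and the goal \eqref{2ndfeb263} reads
\[
\co{S(\one{h})^*}{S(\one{k})^*} \bar{V} ( k^\ast_{(2)} ) \bar{V} ( h^\ast_{(2)} ) = \bar{V} ( k^\ast_{(1)} h^\ast_{(1)} ) \coin{ k^\ast_{(2)}}{h^\ast_{(2)}}.
\]
Structurally these are obtained from one another by replacing $\gamma \leftrightarrow \bar\gamma$ (equivalently, by passing to the inverse cocycle) and swapping the roles of the factors, so the natural route is to run the same chain of cocycle manipulations as in the proof of \eqref{2ndfeb262}, but starting from the left-hand side of \eqref{2ndfeb263}.

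Concretely, I would start from $\co{S(\one{h})^*}{S(\one{k})^*}\,\bar V(k^\ast_{(2)})\,\bar V(h^\ast_{(2)})$ and proceed as follows. First, expand $\bar V(h^\ast_{(2)}) = \coin{h^\ast_{(3)}}{S^{-1}(h^\ast_{(2)})}$ using the definition of $\bar V$ and \eqref{19thsept20245}, rewriting $S^{-1}(h^\ast_{(2)}) = S(\two{h})^*$. Then apply part \eqref{iii} of Lemma \ref{lem:formula} (the cocycle identity for $\gamma$) to combine $\co{S(\one{h})^*}{S(\two{h})^*\cdots}$-type factors, exactly mirroring the step in the proof of \eqref{2ndfeb262} where \eqref{iv} was invoked but now with $\gamma$ in place of $\bar\gamma$. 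Using unitality of $\gamma$ to kill the factors $\co{\cdot}{1}$, and then re-introducing $\bar V(k^\ast_{(1)}) = \coin{k^\ast_{(2)}}{S^{-1}(k^\ast_{(1)})}$, one rearranges the remaining $\gamma$ and $\bar\gamma$ factors via part \eqref{ii} of Lemma \ref{lem:formula}. A final application of \eqref{19thSept20241}/\eqref{19thSept20242} to pull $S$ and $\ast$ through, together with the antipode identities $S(xy)=S(y)S(x)$ and $S^{-1}(xy) = S^{-1}(y)S^{-1}(x)$, collapses the expression to $\bar V(k^\ast_{(1)}h^\ast_{(1)})\,\coin{k^\ast_{(2)}}{h^\ast_{(2)}}$.

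Alternatively — and this may be cleaner — one can observe that $\bar\gamma$ is itself a $2$-cocycle on $A_\gamma$, and $\gamma$ is its convolution inverse, so that applying the already-proved identity \eqref{2ndfeb262} with the cocycle $\bar\gamma$ (whose associated $\bar V$-function is $V$, by Remark \ref{25thnov241} and Lemma \ref{lem:22ndnov241}) directly yields \eqref{2ndfeb263} after matching up notation and relabelling $h \leftrightarrow k$. I would check carefully that the unitarity hypothesis is not actually needed here: equation \eqref{2ndfeb263} is stated for an arbitrary $2$-cocycle, so the argument must only use \eqref{2ndfeb262} insofar as that identity holds formally from the cocycle relations, the definition of $\bar V$, and \eqref{19thsept20245}; indeed a reading of the proof of \eqref{2ndfeb262} shows every step is an instance of Lemma \ref{lem:formula}, unitality, or the $S$-identity, none of which requires unitarity.

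The main obstacle I anticipate is purely bookkeeping: keeping the Sweedler indices aligned through the half-dozen applications of the four equivalent cocycle identities, and making sure that each insertion of $S(\two{h})^*\!\cdot\! S(\one{h})^* = S(\one{h}\two{h})^* \cdot(\text{unit})$-type resolution-of-identity is legitimate. There is no conceptual difficulty — the identity is ``the same'' as \eqref{2ndfeb262} under inversion of the cocycle — but writing it out so that the indices in the final line literally match $\bar{V}(k^\ast_{(1)} h^\ast_{(1)})\coin{k^\ast_{(2)}}{h^\ast_{(2)}}$ requires care with the order of $k$ and $h$ inside the products, since $\gamma$ and $\bar\gamma$ are not symmetric.
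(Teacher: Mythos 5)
Your guiding idea, that \eqref{2ndfeb263} should be a formal consequence of \eqref{2ndfeb262}, is the right one, and your first route (mirroring the computation that proved \eqref{2ndfeb262}) could in principle be pushed through; but as written the proposal has two concrete problems. First, the relation between the two identities is not the substitution $\gamma\leftrightarrow\bar\gamma$: swapping $\gamma$ and $\bar\gamma$ in \eqref{2ndfeb262} produces an identity for the functional $a\mapsto\co{\two{a}}{S^{-1}(\one{a})}$ rather than for $\bar V$, so it is not \eqref{2ndfeb263}; the correct relation is convolution-inversion, i.e.\ moving the two convolution-invertible factors across the equality. For the same reason your ``cleaner'' alternative does not go through: $\bar\gamma$ is a $2$-cocycle on $A_\gamma$, not on $A$, so applying \eqref{2ndfeb262} to the pair $(A_\gamma,\bar\gamma)$ yields a statement phrased with $S_\gamma$, with $\ast_\gamma$ (which is only constructed for unitary $\gamma$, cf.\ Proposition \ref{10thmarch251}) and with the $\bar V$-functional built from $\bar\gamma$ and $S_\gamma^{-1}$ --- none of which are the untwisted $S$, $\ast$, $\bar V$ occurring in \eqref{2ndfeb263}; and neither Remark \ref{25thnov241} (which only says $\bar V$ is the convolution inverse of $V$) nor Lemma \ref{lem:22ndnov241} (which needs unitarity and involves complex conjugation) justifies the claim that this functional equals $V$. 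Second, in your first route the final appeal to \eqref{19thSept20241}/\eqref{19thSept20242} is out of place: those are the unitarity conditions and involve complex conjugates, which cannot occur in \eqref{2ndfeb263}; the identity needed to move $S$ and $\ast$ around is \eqref{19thsept20245}, i.e.\ $S(a)^\ast=S^{-1}(a^\ast)$. This citation also conflicts with your own (correct) observation that unitarity should not be needed.

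For comparison, the paper's proof is exactly the convolution-inversion argument and is very short: starting from the left-hand side of \eqref{2ndfeb263}, insert the factor $\co{k^\ast_{(3)}}{h^\ast_{(3)}}\,\coin{k^\ast_{(4)}}{h^\ast_{(4)}}$ (which is $\gamma\ast\bar\gamma=\epsilon\otimes\epsilon$ on those legs), apply \eqref{2ndfeb262} once to the block $\bar V(k^\ast_{(2)})\bar V(h^\ast_{(2)})\co{k^\ast_{(3)}}{h^\ast_{(3)}}$, observe that the resulting factor $\coin{S(\two{h})^\ast}{S(\two{k})^\ast}$ combines with the initial $\co{S(\one{h})^\ast}{S(\one{k})^\ast}$ to counits, and finally use the expansion $\bar V(k^\ast h^\ast)=\coin{k^\ast_{(2)}h^\ast_{(2)}}{S(\one{h})^\ast S(\one{k})^\ast}$ (equation \eqref{4thfeb252}, a direct consequence of the definition of $\bar V$ and \eqref{19thsept20245}) to reassemble the right-hand side; no unitarity enters. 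I recommend either adopting this insert-and-cancel argument, or, if you keep your first route, writing out the mirrored computation in full with the Sweedler indices, since the sketch as it stands does not yet constitute a proof.
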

\begin{proof}
As $\gamma \ast \bar{\gamma} = \epsilon \otimes \epsilon,$ we can write 
\begin{eqnarray*}
&&\co{S ( h_{(1)} )^\ast  }{ S ( k_{(1)} )^\ast} \bar{V} ( k^\ast_{(2)} ) \bar{V} ( h^\ast_{(2)} )\\
&=& \gamma \big( S ( h_{(1)} )^\ast \otimes S ( k_{(1)} )^\ast \big) \bar{V} ( k^\ast_{(2)} ) \bar{V} ( h^\ast_{(2)} ) \gamma \big( k^\ast_{(3)} \otimes h^\ast_{(3)}  \big) \bar{\gamma} \big( k^\ast_{(4)} \otimes h^\ast_{(4)}   \big)\\
&=& \gamma \big( S ( h_{(1)} )^\ast \otimes S ( k_{(1)} )^\ast \big) \bar{\gamma} \big( S ( h_{(2)} )^\ast \otimes S ( k_{(2)} )^\ast \big)  \bar{V} \big(  k^\ast_{(3)} h^\ast_{(3)}  \big) \bar{\gamma} \big(  k^\ast_{(4)} \otimes h^\ast_{(4)}  \big)  ~ {\rm (}  {\rm by} ~ \eqref{2ndfeb262}   {\rm )}\\
&=& \epsilon ( S ( h_{(1)} )^\ast  )  \epsilon ( S ( k_{(1)} )^\ast  ) \bar{V} \big( k^\ast_{(2)}  h^\ast_{(2)} \big) \bar{\gamma} \big(  k^\ast_{(3)} \otimes h^\ast_{(3)}  \big).
\end{eqnarray*}
Now, by the definition of $\bar{V}$ and \eqref{19thsept20245}, we have
\begin{equation} \label{4thfeb252}
\bar{V} \big( k^\ast  h^\ast \big) = \bar{\gamma} \big( k^\ast_{(2)}  h^\ast_{(2)} \otimes S ( h_{(1)} )^\ast S ( k_{(1)} )^\ast  \big)
\end{equation}
and hence
\begin{eqnarray*}
&& \epsilon ( S ( h_{(1)} )^\ast  )  \epsilon ( S ( k_{(1)} )^\ast  ) \bar{V} \big( k^\ast_{(2)}  h^\ast_{(2)} \big) \bar{\gamma} \big(  k^\ast_{(3)} \otimes h^\ast_{(3)}  \big)\\
&=& \epsilon ( S ( h_{(1)} )^\ast  )  \epsilon ( S ( k_{(1)} )^\ast  ) \bar{\gamma} \big( k^\ast_{(3)}  h^\ast_{(3)} \otimes S ( h_{(2)} )^\ast S ( k_{(2)} )^\ast  \big) \bar{\gamma} \big(  k^\ast_{(4)} \otimes h^\ast_{(4)}  \big)\\
&=& \bar{\gamma} \big( k^\ast_{(2)}  h^\ast_{(2)} \otimes S ( h_{(1)} )^\ast S ( k_{(1)} )^\ast  \big) \bar{\gamma} \big(  k^\ast_{(3)} \otimes h^\ast_{(3)}  \big)\\
&=& \bar{V} \big( k^\ast_{(1)} h^\ast_{(1)} \big) \bar{\gamma} \big( k^\ast_{(2)} \otimes h^\ast_{(2)} \big)
\end{eqnarray*}
by \eqref{4thfeb252}. This completes the proof.
\end{proof}

We end this subsection by proving Lemma \ref{26thsept20241} which has been used in the proof of Theorem \ref{thm:twisted-hermitian}.

\begin{lem} \label{14thfeb252}
	Suppose that $\gamma$ is a $2$-cocycle on a Hopf algebra $A$  and $B$  a left $A$-comodule algebra. If $\cE$ and $\cF$ are objects of $\cat$ such that $\cE$ is finitely generated and projective as a left $B$-module, then  $\mathfrak{S}: \Gamma (\hm{B}{\mathcal{E}}{\mathcal{F}}) \to \hm{B_\gamma}{\Gamma(\mathcal{E})}{\Gamma(\mathcal{F})}$ defined in \eqref{fS} 
	 is an isomorphism in $\tcat$.
\end{lem}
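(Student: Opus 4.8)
The plan is to show that $\mathfrak{S}$, which is already known to be a vector space isomorphism by the proof of \cite[Proposition 3.17]{TwistPAschieriMain}, is in addition a morphism in $\tcat$, i.e.\ that it is left $B_\gamma$-linear and $A_\gamma$-covariant; once this is established, its inverse is automatically a morphism as well, so $\mathfrak{S}$ is an isomorphism in $\tcat$. Since $\cE$ is finitely generated and projective as a left $B$-module, Proposition \ref{28thnov241jb} guarantees that $\hm{B}{\cE}{\cF}$ is an object of $\cat$, hence $\Gamma(\hm{B}{\cE}{\cF})$ is an object of $\tcat$, and similarly $\hm{B_\gamma}{\Gamma(\cE)}{\Gamma(\cF)}$ is an object of $\tcat$; so both sides of $\mathfrak{S}$ live in the right category and it only remains to check compatibility with the structure maps.

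First I would recall the three relevant structures explicitly: the left $A$-coaction on $\hm{B}{\cE}{\cF}$ from \eqref{eq:27thnov241}, the deformed left $B_\gamma$-module structure on $\Gamma(-)$ given by $b\cdot_\gamma(-) = \co{\mone{b}}{\mone{-}}\zero{b}\cdot\zero{-}$, and the left $B_\gamma$-module structure on $\hm{B_\gamma}{\Gamma(\cE)}{\Gamma(\cF)}$ given by $(b\cdot f)(v) = f(v\cdot_\gamma b)$. Then I would verify $A_\gamma$-covariance: using that the coaction on $\Gamma(M)$ coincides with that on $M$, one computes $\del{}(\mathfrak{S}(f))$ from formula \eqref{fS} and matches it with $(\id\otimes\mathfrak{S})\del{}(f)$; this is a Sweedler-notation bookkeeping computation that uses coassociativity and the counit axioms together with the cocycle identity \eqref{25thaug24} (or one of its equivalent forms in Lemma \ref{lem:formula}) to move the $U$ and $\overline{\gamma}$ factors past the coaction legs. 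For left $B_\gamma$-linearity I would take $f\in\hm{B}{\cE}{\cF}$, $b\in B$, $v\in\Gamma(\cE)$, expand $\mathfrak{S}(b\cdot_\gamma f)(v)$ using the $A$-coaction on $b\cdot f$ in $\cat$ (which is just $\del{B}(b)\del{}(f)$ componentwise) and $\mathfrak{S}((b\cdot f))$, and compare with $(b\cdot\mathfrak{S}(f))(v) = \mathfrak{S}(f)(v\cdot_\gamma b)$, again reducing the discrepancy to a single application of a cocycle identity from Lemma \ref{lem:formula} together with \eqref{eq:28thnov243}.

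The main obstacle I expect is the $A_\gamma$-covariance computation: tracking the interplay between the antipode $S$ appearing in \eqref{eq:27thnov241}, the functional $U$ appearing in \eqref{eq:28thnov241}, the convolution inverse $\overline{\gamma}$, and the deformed coproduct requires careful use of Sweedler indices and the correct variant of the cocycle condition, and it is easy to misplace a leg. A useful shortcut is to work with the alternative expression \eqref{eq:28thnov241} for $\mathfrak{S}$ and to exploit that $\overline{U}$ is the convolution inverse of $U$ (Remark \ref{25thnov241}) and that $U(\one{h})\coin{S(\two{h})}{k} = \co{\one{h}}{S(\two{h})k}$ from \eqref{eq:28thnov243}, which collapses several of the intermediate terms. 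Since the cited source already records that these formulas define a well-defined bijection, the content of the lemma is entirely this categorical compatibility check, and I would present it as a sequence of two short displayed computations, one for covariance and one for $B_\gamma$-linearity, with the cocycle identities invoked by equation number. Finally, I would remark that right $B_\gamma$-linearity of $\mathfrak{S}$ — if one wants it stated — is not needed for objecthood in $\tcat$ but follows by the same method, and that the inverse map is a morphism automatically since it is the set-theoretic inverse of a bijective morphism in an abelian (indeed, just a linear) category.
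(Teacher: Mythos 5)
Your proposal follows essentially the same route as the paper: take the vector-space bijectivity from \cite[Proposition 3.17]{TwistPAschieriMain} and then verify $B_\gamma$-linearity and $A_\gamma$-covariance by Sweedler computations using \eqref{eq:27thnov241}, \eqref{eq:28thnov241}, the cocycle identities of Lemma \ref{lem:formula}, \eqref{eq:28thnov243}, and the convolution-inverse relations for $U$, which is exactly how the paper's proof proceeds. One correction: your closing remark that right $B_\gamma$-linearity is ``not needed'' is off the mark, since $\tcat$ is a category of $B_\gamma$-\emph{bimodules}, so a morphism (and hence an isomorphism) there must be $B_\gamma$-bilinear as well as $A_\gamma$-covariant; the paper accordingly proves left linearity in detail and notes that right linearity follows by the analogous computation, which you do acknowledge, so this is a misstatement of what a morphism in $\tcat$ is rather than a gap in the argument itself.
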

\begin{proof}
	    Since \cite[Proposition 3.17]{TwistPAschieriMain} proves that $\fS$ is a vector space isomorphism, we  are left to show the  $B_\gamma$-bilinearity and $A_\gamma$-covariance of $\fS$.
	
	Let $b \in B_\gamma$ and $ f\in \hm{B}{\mathcal{E}}{\mathcal{F}}$. Then for all $v\in \Gamma(\cE)$, we compute 
	\begin{align*}
		&\mathfrak{S}(b\cdot_\gamma f)(v)\\
		&= \gamma(\mone{b}\otimes \mone{f}) \mathfrak{S}(\zero{b}\zero{f})(v)\\
		&= \gamma(\mone{b}\otimes \mone{f})~ \gamma \left(\mtwo{v}\otimes S(\mone{v})\mone{\left[(\zero{b}\zero{f})(\zero{v})\right]}\right)  \zero{\left[(\zero{b}\zero{f})(\zero{v})\right]}\\
		&= \gamma(\mone{b}\otimes \mone{f})~ \gamma \left(\mtwo{v}\otimes S(\mone{v})\mone{\left[\zero{f}(\zero{v}\zero{b})\right]}\right)  \zero{\left[\zero{f}(\zero{v}\zero{b})\right]}\\
		&=  \gamma\left(\mtwo{b}\otimes S\left(\mone{v}\mone{b}\right)  \mtwo{\left[f\left(\zero{v}\zero{b}\right)\right]}\right) \gamma \left(\mthree{v}\otimes S(\mtwo{v})\mone{\left[{f}(\zero{v}\zero{b})\right]}\right)  \zero{\left[{f}(\zero{v}\zero{b})\right]}\\
		& \mathrm{~(by ~\eqref{eq:27thnov241})}\\
		&=\gamma\left(\mthree{b} \otimes S(\mone{b})S\left(\mone{v}\right) \mtwo{\left[f\left(\zero{v}\zero{b}\right)\right]}\right)~
		\gamma \left(\mthree{v}\otimes \epsilon({\mtwo{b}}) S(\mtwo{v})\mone{\left[{f}(\zero{v}\zero{b})\right]}\right)\\
		& \zero{\left[{f}(\zero{v}\zero{b})\right]}(\text{by $\epsilon(\mtwo{b})\mone{b}=\mone{b}$})\\
		&=\gamma\left(\mfour{b} \otimes S(\mone{b})S\left(\mone{v}\right) \mtwo{\left[f\left(\zero{v}\zero{b}\right)\right]}\right)~\\
		& \gamma \left(\mthree{v}\otimes {\mthree{b}}S(\mtwo{b}) S(\mtwo{v})\mone{\left[{f}(\zero{v}\zero{b})\right]}\right)  \zero{\left[{f}(\zero{v}\zero{b})\right]}\\
		&= \co{\mthree{v}}{\mthree{b}} \co{\mtwo{v}\mtwo{b}}{S(\mone{v}\mone{b})\mone{\left[f(\zero{v}\zero{b})\right]}} \zero{\left[f(\zero{v} \zero{b})\right]} \left(\text{by \eqref{25thaug24}}\right)\\
		&= \mathfrak{S}(f)(\zero{v}\zero{b}) \co{\mone{v}}{\mone{b}}\\
		&= \mathfrak{S}(f)(v \cdot_\gamma b)= (b\cdot_\gamma \mathfrak{S}(f))(v).
	\end{align*}
	So, $\mathfrak{S}$ is a left $B_\gamma$-linear. Similarly, the right $B_\gamma$-linearity of $\mathfrak{S}$ can be proved.

     Finally, to prove the $A_\cot$-covariance of $\fS$, we will denote the the $A_\gamma$-coaction on the comodule $\hm{B_\cot}{\Gamma(\cE)}{\Gamma(\cF)}$ by $\del{\cot}$ and the $A$-coaction on $\Gamma(\hm{B}{\cE}{\cF})$ by $\delta$.
	So, for $e\in \Gamma(\cE),$ we have
	\begin{align*}
		&\del{\cot}(\fS(f))(e)\\
		&= \mone{\fS(f)} \ot \zero{\fS(f)}(e)\\
		&= S_\cot(\mone{e}) \cdot_\cot \mone{[\fS(f)(\zero{e})]} \ot \zero{[\fS(f)(\zero{e})]}~ \text{ ( by \eqref{eq:27thnov241} ) }\\
		&= U(e_{(-5)}) S(\mfour{e}) \bar{U}(\mthree{e}) \cdot_\cot \mone{[f(\zero{e})]} U(\mtwo{e}) \coin{S(\mone{e})}{\mtwo{[f(\zero{e})]}}\ot \zero{[f(\zero{e})]}\\
		&  (\text{by the definition of $S_\gamma $ in Proposition \ref{prop/defn:twisted-Hopf-algebra} and \eqref{eq:28thnov241}})\\
		&= U(e_{(-3)}) S(\mtwo{e})  \cdot_\cot \mone{[f(\zero{e})]}  \coin{S(\mone{e})}{\mtwo{[f(\zero{e})]}}\ot \zero{[f(\zero{e})]} ~ \text{(as $ \bar{U} \ast U = \epsilon $)} \\
		&=  U(e_{(-5)}) \co{S(\mtwo{e})}{\mthree{[f(\zero{e})]}} S(\mthree{e}) \mtwo{[f(\zero{e})]}\\
		& \quad  \quad \coin{S(\mfour{e})}{\mone{[f(\zero{e})]}} \coin{S(\mone{e})}{\mfour{[f(\zero{e})]}}\ot  \zero{[f(\zero{e})]}\\
		&= U(\mthree{e}) \coin{S(\mtwo{e})}{\mone{[f(\zero{e})]}} S(\mone{e}) \mtwo{[f(\zero{e})]} \ot \zero{[f(\zero{e})]} ~ \text{ (as $\bar{\gamma} \ast \gamma = \epsilon \otimes \epsilon $) } \\
		&= \co{\mthree{e}}{S(\mtwo{e})\mone{[f(\zero{e})]}} S(\mone{e}) \mtwo{[f(\zero{e})]}\ot  \zero{[f(\zero{e})]} (\text{by \eqref{eq:28thnov243}})\\
        &=  S(\mone{e})\mtwo{[f(\zero{e})]} \co{\mthree{e}}{S(\mtwo{e})\mone{[f(\zero{e})]}}\ot \zero{[f(\zero{e})]}\\
        &= \mone{f} \co{\mtwo{e}}{S(\mone{e})\mone{[\zero{f}(\zero{e})]}} \ot \zero{[\zero{f}(\zero{e})]} ~ \text{ (by \eqref{eq:27thnov241}) }\\
        &= \left(\mone{f}\otimes \fS(\zero{f})\right) (e)\\
        &= (\id \ot \fS )\delta(f)(e).
	\end{align*}
	Hence,  we have that $\fS$ is $A_\gamma$-covariant.
\end{proof}

\section{Bar Categories and cocycles} \label{B}

In this section, we derive some results relating to the interaction of bar categories with unitary cocycles. We begin by proving the following lemma  which was used in the proof of Theorem \ref{thm:twisted-hermitian}.

\begin{lem} \label{14thfeb251}
	Let $\gamma$ be a unitary 2-cocycle on  a Hopf $*$-algebra $A$ and $B$  a $A$-comodule $*$-algebra.  Then $\fN : \mathrm{bar}\circ \Gamma \to \Gamma \circ \mathrm{bar}$  defined in \eqref{5thdec242jb}
	is a natural isomorphism.
	\end{lem}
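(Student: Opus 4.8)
The plan is to check the three ingredients of ``natural isomorphism $\mathrm{bar}\circ\Gamma\to\Gamma\circ\mathrm{bar}$'': that each $\fN_\cE$ is a well-defined morphism of $\tcat$, that it is invertible, and that the family is natural. First I would pin down the underlying picture. Since neither $\Gamma$ nor the bar functors alter underlying vector spaces, both $\overline{\Gamma(\cE)}$ and $\Gamma(\overline{\cE})$ have underlying vector space $\overline{\cE}$, and the assignment $\overline{e}\mapsto\bar{V}(\mone{e}^{*})\,\overline{\zero{e}}$ of \eqref{5thdec242jb} is $\bbC$-linear, being the composite of the two conjugate-linear maps $e\mapsto\bar{V}(\mone{e}^{*})\zero{e}$ and $x\mapsto\overline{x}$. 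What differs on the two sides are the structures: on $\overline{\Gamma(\cE)}$ the $B_\gamma$-actions and $A_\gamma$-coaction come from the $\gamma$-twisted actions on $\cE$ and the twisted involution $*_\gamma$ of Proposition \ref{10thmarch251} via \eqref{28thnov231}, \eqref{25thnov232}, whereas on $\Gamma(\overline{\cE})$ one first forms $\overline{\cE}$ in $\cat$ (using the original $*$) and then applies the $\gamma$-twist.

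Naturality is the quickest step. For a morphism $f\colon\cE\to\cF$ of $\cat$, both $\overline{\Gamma(f)}$ and $\Gamma(\overline{f})$ act as $\overline{e}\mapsto\overline{f(e)}$, and $A$-covariance of $f$ gives $\del{\cF}(f(e))=\mone{e}\ot f(\zero{e})$; hence $\fN_\cF(\overline{f(e)})=\bar{V}(\mone{e}^{*})\,\overline{f(\zero{e})}=\Gamma(\overline{f})\bigl(\fN_\cE(\overline{e})\bigr)$, so the naturality square commutes.

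Next I would verify that $\fN_\cE$ is a morphism of $\tcat$. For $A_\gamma$-covariance I would expand $\del{\Gamma(\overline{\cE})}\circ\fN_\cE$ and $(\id\ot\fN_\cE)\circ\del{\overline{\Gamma(\cE)}}$ on a generator $\overline{e}$; after repeated use of coassociativity both sides reduce to the single identity in $A$
\[
\textstyle\sum_{(h)}(\one{h})^{*_\gamma}\,\bar{V}\bigl((\two{h})^{*}\bigr)=\sum_{(h)}\bar{V}\bigl((\one{h})^{*}\bigr)\,(\two{h})^{*},\qquad h=\mone{e},
\]
which I would deduce from the formula $h^{*_\gamma}=\bar{V}(\one{h}^{*})\two{h}^{*}V(\three{h}^{*})$ of Proposition \ref{10thmarch251}, the fact that $\bar{V}$ is the convolution inverse of $V$ (Remark \ref{25thnov241}), the fact that $\epsilon$ and $\Delta$ are $*$-homomorphisms, and the counit axiom: in a four-fold split the scalar pair $V((\three{h})^{*})\bar{V}((\four{h})^{*})$ collapses, via $V\ast\bar V=\epsilon$, to $\overline{\epsilon(\three{h})}=\epsilon\bigl((\three{h})^{*}\bigr)$, after which the $*$-homomorphism property of the iterated coproduct together with the counit folds the three remaining legs back into two. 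For $B_\gamma$-bilinearity I would likewise expand $\fN_\cE(b\cdot\overline{e})$ against $b\cdot\fN_\cE(\overline{e})$ and $\fN_\cE(\overline{e}\cdot b)$ against $\fN_\cE(\overline{e})\cdot b$, using \eqref{28thnov231} for the bar-module structures, the twisted actions $\cdot_\gamma$, and the formula \eqref{4thfeb251} for $b^{*_\gamma}$; the Sweedler identities in $B_\gamma$ that then have to be matched are precisely where the cocycle relations collected in Appendix \ref{8thmarch255} enter, most notably \eqref{2ndfeb262} and \eqref{2ndfeb263}, together with Lemma \ref{lem:22ndnov241}.

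Finally, for invertibility I would write down the candidate inverse
\[
\fN_\cE^{-1}\colon\Gamma(\overline{\cE})\to\overline{\Gamma(\cE)},\qquad \fN_\cE^{-1}(\overline{m})=V(\mone{m}^{*})\,\overline{\zero{m}},
\]
and check the two composites: using coassociativity, $\fN_\cE^{-1}\circ\fN_\cE$ and $\fN_\cE\circ\fN_\cE^{-1}$ become $(\bar{V}\ast V)\bigl((\mone{m})^{*}\bigr)\,\overline{\zero{m}}$ and $(V\ast\bar{V})\bigl((\mone{m})^{*}\bigr)\,\overline{\zero{m}}$ respectively, each of which equals $\overline{m}$ by Remark \ref{25thnov241} (the convolution being $\epsilon$), the fact that $\Delta$ is a $*$-homomorphism, and the counit axiom. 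Since $\fN_\cE^{-1}$ is automatically a $\tcat$-morphism once $\fN_\cE$ is, this yields the natural isomorphism $\mathrm{bar}\circ\Gamma\to\Gamma\circ\mathrm{bar}$. The hard part will be the $B_\gamma$-bilinearity computation: it carries several layers of Sweedler legs coming from the iterated coactions of both $b$ and $e$, and selecting the right succession of cocycle identities from Appendix \ref{8thmarch255} is delicate, whereas the covariance and invertibility arguments are comparatively mechanical once the convolution-inverse and $*$-homomorphism properties are in hand.
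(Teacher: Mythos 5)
Your proposal is correct and follows essentially the same route as the paper's own proof: direct verification of $A_\gamma$-covariance via the identity $\one{h}^{*_\gamma}\bar V(\two{h}^*)=\bar V(\one{h}^*)\two{h}^*$ (which the paper realizes by inserting $V\ast\bar V=\epsilon$), $B_\gamma$-bilinearity via \eqref{2ndfeb262}, Lemma \ref{lem:22ndnov241} and the unitarity/convolution-inverse facts, the explicit inverse $\fN_\cE^{-1}(\overline{m})=V(\mone{m}^*)\,\overline{\zero{m}}$ as in \eqref{11thdec24jb1}, and the same easy naturality check. The only difference is that you leave the bilinearity computation as an outline rather than carrying it out, but the ingredients you name are exactly those the paper uses, so nothing would fail.
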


\begin{proof}
		We start by verifying that $\fN_\cE$ is left $A_\gamma$-covariant. Suppose that $e\in \Gamma (\cE)$. Then 
	\begin{align*}
		\del{\Gamma(\overline{\cE})}(\fN_\cE(\overline{\Gamma(e)})) &= \bar{V}(\mone{e}^*)~~ \del{\Gamma(\overline{\cE})}(\Gamma(\overline{\zero{e}}))\\
		&= \bar{V}(\mtwo{e}^*) (\mone{e}^* \otimes \Gamma(\overline{\zero{e}})) \\
		&= \bar{V}(\mfour{e}^*)~ \mthree{e}^* ~{V}(\mtwo{e}^*) \bar{V}(\mone{e}^*) \otimes  \Gamma(\overline{\zero{e}}) ~\text{(by Remark \ref{25thnov241})}\\
		&= \bar{V}(\mfour{e}^*)~ \mthree{e}^* ~{V}(\mtwo{e}^*)  \otimes \bar{V}(\mone{e}^*) \Gamma(\overline{\zero{e}})\\
		&= {\mtwo{e}^{*_\gamma}} \otimes \bar{V}\left( \mone{e}^*\right) \Gamma\left(\ol{\zero{e}}\right)\\
		&= (\id \otimes \fN_\cE) \left(\del{\overline{\Gamma(\cE)}}(\overline{\Gamma(e)})\right).
	\end{align*} 
	Therefore, $\fN_\cE$ is $A_\gamma$-covariant. Now for $b_\gamma \in B_\gamma, e  \in \Gamma( \cE)$, 
	\begin{align*}
		&\fN_\cE\left(\Gamma(b) \cdot_\gamma \overline{\Gamma(e)}\right)\\
		&= \fN_\cE \left( \overline{ \Gamma(e) \cdot_\cot \Gamma(b)^{*_\cot} }\right)\\
		&= \fN_\cE \left( \overline{ \bar{V}(\mone{b}^*)~ \Gamma(e) \cdot_\cot \Gamma(\zero{b}^{*}) }\right)\\
		&=  \overline{ \bar{V}(\mtwo{b}^*)}~ \fN_\cE \left(\overline{ \co{\mone{e}}{\mone{b}^*}\Gamma(\zero{e}\zero{b}^{*})}\right)\\
		&=   \overline{ \bar{V}(\mthree{b}^*)}~\coin{S(\mtwo{e})^*}{S(\mtwo{b}^*)^*}~\bar{V}(\mone{b}\mone{e}^*)  ~\Gamma(\overline{\zero{e}\zero{b}^{*}}) ~ (\text{by \eqref{19thSept20241} and \eqref{5thdec242jb}})\\
		&=  {V}(\mthree{b})~ \bar{V}(\mtwo{b})  \bar{V}(\mtwo{e}^*) ~\co{\mone{b}}{\mone{e}^*} \Gamma(\zero{b}\overline{\zero{e}})  ~ \text{(by using \eqref{2ndfeb262} and Lemma \ref{lem:22ndnov241})}\\
		&=\bar{V}(\mtwo{e}^*) ~\co{\mone{b}}{\mone{e}^*} ~ \Gamma(\zero{b}\overline{\zero{e}}) ~~~ \text{(by using Remark \ref{25thnov241})}\\
		& = \Gamma(b)\cdot_\gamma ~ \bar{V}(\mone{e}^*) \Gamma(\overline{\zero{e}})= \Gamma(b) \cdot_\gamma \fN_\cE(\overline{\Gamma(e)})
	\end{align*}
	and thus, $\fN_\cE$ is left $B_\gamma$-linear. Similarly, we can show that $\fN_\cE$ is right $B_\gamma $-linear. 	
	
	We also note that 
	$\fN^{-1}_\cE: \Gamma(\overline{\mathcal{E}}) \to \overline{\Gamma(\mathcal{E})}$ is given by  
	\begin{equation}
		\label{11thdec24jb1} \fN^{-1}_\cE(\Gamma(\overline{e}))= V(\mone{e}^*)~ \overline{\Gamma(\zero{e})}.
	\end{equation}
	Indeed, this can be easily checked by using that $\bar{V}$ is the convolution inverse of $V$. The fact that $\fN$ is natural can be easily verified. 
\end{proof}

We end this subsection with the following lemma:

\begin{lem} \label{10thmarch252}
	Suppose that $A$ is Hopf $*$-algebra and $\gamma$ a unitary $2$-cocycle on $A$. Let $\dc$ be an $A$-covariant $*$-differential calculus on an $A$-covariant $*$-algebra $B$. Then the space of twisted one-forms $\Omega^1_\gamma$ on $B_\gamma$ is a star object in $\tcat$ via the map 
	\begin{equation} \label{eq:18thdec241}
		\star_{\Omega^1_\gamma}: \Omega^1_\gamma \to \ol{\Omega^1_\gamma}, \text{ defined by } \star_{\Omega^1_\gamma}(\Gamma(\omega)):= \ol{\Gamma(\omega)^{*_\gamma}}.
	\end{equation} Moreover, this map satisfies the following: 
	\begin{equation} \label{eq:21stnov245}
		\fN_{\Omega^{1}}^{-1}\Gamma(\star_{\Omega^1})= \star_{\Omega^1_\gamma}. 
	\end{equation}	
\end{lem}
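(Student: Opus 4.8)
The goal is to prove Lemma \ref{10thmarch252}: that $\star_{\Omega^1_\gamma}$ as defined in \eqref{eq:18thdec241} makes $\Omega^1_\gamma$ a star object in $\tcat$, and that $\fN_{\Omega^1}^{-1} \Gamma(\star_{\Omega^1}) = \star_{\Omega^1_\gamma}$.

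\begin{proof}
First I would check that $\star_{\Omega^1_\gamma}$ is a morphism in $\tcat$: it is built from the $\ast_\gamma$-structure, which by Proposition \ref{prop:5thdec241} is $A_\gamma$-covariant, and from the bimodule structure on $\overline{\Omega^1_\gamma}$ given by \eqref{28thnov231}, so covariance and $B_\gamma$-bilinearity are routine. To see it is a star structure, I must verify $\overline{\star_{\Omega^1_\gamma}} \circ \star_{\Omega^1_\gamma} = \mathrm{bb}_{\Omega^1_\gamma}$, i.e. that $\overline{\overline{(\omega^{\ast_\gamma})^{\ast_\gamma}}} = \overline{\overline{\omega}}$ for all $\omega$; this reduces to $(\omega^{\ast_\gamma})^{\ast_\gamma} = \omega$, which is precisely the statement that $\ast_\gamma$ is an involution on $\Omega^\bullet_\gamma$ (Proposition \ref{prop:5thdec241}, combined with Proposition \ref{10thmarch251} / the computation in Proposition \ref{26thfeb251} showing $(\bar V(\omega^\ast_{(-1)})\omega^\ast_{(0)})^{\ast_\gamma} = \omega$ via Remark \ref{25thnov241} and Lemma \ref{lem:22ndnov241}). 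So the first assertion follows essentially from facts already established.

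The substantive part is \eqref{eq:21stnov245}. I would prove it by evaluating both sides on an arbitrary element $\Gamma(\omega) \in \Omega^1_\gamma$. On the left, $\Gamma(\star_{\Omega^1})(\Gamma(\omega)) = \Gamma(\overline{\omega^\ast})$ by the definition of $\star_{\Omega^1}$ in \eqref{8thjuly241} and the fact that $\Gamma$ acts as the identity on underlying vector spaces; then applying $\fN_{\Omega^1}^{-1}$ via the explicit formula \eqref{11thdec24jb1}, namely $\fN_\cE^{-1}(\Gamma(\overline e)) = V(\mone{e}^\ast)\,\overline{\Gamma(\zero{e})}$, I get
$$ \fN_{\Omega^1}^{-1}\big(\Gamma(\overline{\omega^\ast})\big) = V\big((\omega^\ast)_{(-1)}^\ast\big)\, \overline{\Gamma\big((\omega^\ast)_{(0)}\big)}. $$
Now $(\omega^\ast)_{(-1)} \otimes (\omega^\ast)_{(0)} = \mone{\omega}^\ast \otimes \zero{\omega}^\ast$ by \eqref{25thnov232} (applied to the $\ast$-calculus, using that $\ast$ reverses the comodule structure in this way), so $(\omega^\ast)_{(-1)}^\ast = S(S(\mone{\omega}^\ast)^\ast)^\ast$-type manipulation — more simply, $(\omega^\ast)_{(-1)}^\ast = (\mone{\omega}^{\ast})^{\ast} = \mone{\omega}$ once one is careful; this gives $V(\mone{\omega})\, \overline{\Gamma(\zero{\omega}^\ast)}$. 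On the right, $\star_{\Omega^1_\gamma}(\Gamma(\omega)) = \overline{\Gamma(\omega)^{\ast_\gamma}} = \overline{\bar V(\mone{\omega}^\ast)\, \Gamma(\zero{\omega}^\ast)}$ by \eqref{4thfeb251}, and by the conjugate-linearity/scalar rule for $\overline{(-)}$ this equals $\overline{\bar V(\mone{\omega}^\ast)}\;\overline{\Gamma(\zero{\omega}^\ast)}$. So the two sides agree provided $V(\mone{\omega}) = \overline{\bar V(\mone{\omega}^\ast)}$, i.e. provided $V(h) = \overline{\bar V(h^\ast)}$ for all $h \in A$.

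Hence the whole lemma comes down to the scalar identity $\overline{\bar V(h^\ast)} = V(h)$. I would prove this as a short computation exactly parallel to Lemma \ref{lem:22ndnov241} (which gives $\overline{\bar V(h^\ast)} = V(h)$ — indeed this is literally Lemma \ref{lem:22ndnov241}, so I can simply cite it). The main obstacle, and where I expect to have to be careful, is getting the Sweedler-index bookkeeping for the $\ast$ and comodule structures right in the middle step — in particular verifying $(\omega^\ast)_{(-1)} \otimes (\omega^\ast)_{(0)} = \mone{\omega}^\ast \otimes \zero{\omega}^\ast$ and then correctly simplifying $(\omega^\ast)_{(-1)}^\ast$ using \eqref{19thsept20245}; once that is pinned down, everything else is an application of \eqref{11thdec24jb1}, \eqref{4thfeb251}, \eqref{8thjuly241} and Lemma \ref{lem:22ndnov241}.
\end{proof}
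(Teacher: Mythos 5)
Your proposal is correct and follows essentially the same route as the paper: the star-object claim is delegated to Proposition \ref{prop:5thdec241} together with Example \ref{5thdec241jb}, and \eqref{eq:21stnov245} is verified by evaluating both sides on $\Gamma(\omega)$ using \eqref{8thjuly241}, \eqref{11thdec24jb1}, \eqref{4thfeb251} and Lemma \ref{lem:22ndnov241}. The only nitpick is that the identity $(\omega^\ast)_{(-1)}\otimes(\omega^\ast)_{(0)}=\mone{\omega}^\ast\otimes\zero{\omega}^\ast$ follows from $\Omega^\bullet$ being a comodule $\ast$-algebra (Definition \ref{8thmarch256}), not from \eqref{25thnov232}, which instead defines the coaction on $\overline{M}$.
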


\begin{proof}
	By virtue of Proposition \ref{prop:5thdec241}, $\dctwisted$ is an $A_\gamma$-covariant  $*$-differential calculus on $B_\gamma$. Using Example \ref{5thdec241jb}, we conclude that  $\Omega^1_\gamma$ is a star object in the category $\tcat$. For the second claim, if $\omega\in \Omega^1$, then by \eqref{11thdec24jb1}, we have 
	\begin{align*}
		\fN^{-1}_{\Omega^1} \Gamma(\star_{\Omega^1})(\Gamma(\omega))= \fN^{-1} _{\Omega^1} (\Gamma(\ol{\omega^*}))= V(\mone{\omega}) \ol{\Gamma(\zero{\omega}^*)} = \ol{\bar{V}(\mone{\omega}^*) \Gamma(\zero{\omega}^*)}= \ol{\Gamma(\omega)^{*_\gamma}}.
	\end{align*}
	This finishes the proof.
\end{proof}

\subsection{Cocycle deformation as a bar functor}
Suppose that $B$ is a left $A$-comodule $*$-algebra and $\gamma$ is a $2$-cocycle on $A$. If $\gamma$ is a unitary $2$-cocycle,  $B_\gamma$ is a left $A_\gamma$-comodule $*$-algebra by Proposition \ref{10thmarch251} and so the monoidal category $\tcat$ is a bar category by Example \ref{5thdec241jb}. The goal of this subsection is to prove that the monoidal equivalence $\Gamma: \cat \to \tcat $ is a bar functor in the sense of the following definition. Although we have not used this fact anywhere, we feel that it is worthwhile to point this out.

\begin{definition}\cite[Definition 2.5]{BarCategory} \label{3rdmarch251}
	Let  $({\cC}, \ot_\cC, 1_\cC)$ and $({\cD}, \ot_\cD, 1_\cD)$ be  two bar categories. A bar functor $(F,\varphi, f^1)$ from
	${\cC}$ to ${\cD}$ is a monoidal functor $F:{\cC}\to{\cD}$ together with:
	
	\noindent(1)\quad  a natural equivalence $\mathrm{fb}$ between the functors $\mathrm{bar}\circ F$ and $F\circ\mathrm{bar}$. In particular,  
	$\mathrm{fb}_Y:\overline{F(Y)}\to F(\overline{Y})$ is an isomorphism for any object $Y$ in $\cC$.
	
	\noindent(2)\quad 
	$\mathrm{fb}_{1_{\cC}} \circ \ol{f^1} \circ \star_{\cD} = F(\star_{\cC} )\circ f^1$, where
	$f^1: 1_\cD\simeq{F(1_\cC)}$ is an isomorphism,
	
	\noindent(3)\quad $F(\mathrm{bb}_Y)=
	\mathrm{fb}_{\bar Y}\circ \overline{\mathrm{fb}_Y}\circ \mathrm{bb}_{F(Y)}$ and

	\noindent(4)\quad 
	if $\varphi_{X,Y}:F(X)\ot_\mathcal{D} F(Y)\to F(X\ot_\cC Y)$ 
	is the natural equivalence associated to $F$, then the following diagram commutes:
	\begin{equation} \label{11thdec24jb2}
		\begin{tikzcd}
			\overline{F(X\ot_\cC Y)} & \overline{F(X)\ot_\cD F(Y)} & \overline{F(Y)}\ot_\cD \overline{F(X)} \\
			F(\overline{X \ot_\cC Y}) & F(\overline{Y}\ot_\cC\overline{ X}) & F(\overline{Y})\ot_\cD F(\overline{ X})
			\arrow["\overline{\varphi_{X,Y}^{-1}}", from=1-1, to=1-2]
			\arrow["\mathrm{fb}_{X\ot_\cC Y}"', from=1-1, to=2-1]
			\arrow["\Upsilon_\cD", from=1-2, to=1-3]
			\arrow["\mathrm{fb}_Y \ot_\cD \mathrm{fb}_X ", from=1-3, to=2-3]
			\arrow["F(\Upsilon_\cC)", from=2-1, to=2-2]
			\arrow["\varphi^{-1}_{\overline{Y}, \overline{X}}", from=2-2, to=2-3].
		\end{tikzcd}
	\end{equation}	
\end{definition}

\begin{theorem} \label{theorem:21stnov242}
	If $\gamma$ is a unitary $2$-cocycle on $A,$ then the monoidal functor  $\Gamma: \cat \to \tcat $ is a bar functor.
\end{theorem}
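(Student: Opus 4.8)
The plan is to verify the four axioms in Definition~\ref{3rdmarch251} for the functor $\Gamma: \cat \to \tcat$, using the natural isomorphism $\mathfrak{N}: \mathrm{bar}\circ\Gamma \to \Gamma\circ\mathrm{bar}$ from Lemma~\ref{14thfeb251} as the structure map $\mathrm{fb}$, and the natural isomorphism $\varphi$ from \eqref{nt} as the monoidal structure $\varphi_{X,Y}$. Since $\Gamma$ acts as the identity on underlying vector spaces, the isomorphism $f^1: \bbC \to \Gamma(\bbC)$ is just the identity map, and $\star_\cC = \star_\cD = \star$ is the canonical map $1 \mapsto \overline{1}$ coming from the $\ast$-structure (unchanged as a vector space map, since $1^{\ast_\gamma}=1$ by \eqref{4thfeb251}). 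The first axiom, that $\mathrm{fb}=\mathfrak{N}$ is a natural equivalence between $\mathrm{bar}\circ\Gamma$ and $\Gamma\circ\mathrm{bar}$, is precisely the content of Lemma~\ref{14thfeb251}. Axioms (2) and (3) are short computations: for (2) one checks $\mathfrak{N}_{\bbC}\circ\overline{f^1}\circ\star = \Gamma(\star)\circ f^1$ on $\overline{1}$, which reduces to $\overline{V}(1^\ast)=1$, i.e.\ $\overline{V}(1)=1$, true by unitality; for (3) one checks $\Gamma(\mathrm{bb}_Y) = \mathfrak{N}_{\overline{Y}}\circ\overline{\mathfrak{N}_Y}\circ\mathrm{bb}_{\Gamma(Y)}$, which by Example~\ref{5thdec241jb} amounts to verifying that the scalar factors $V$ and $\overline{V}$ that appear cancel; this uses Lemma~\ref{lem:22ndnov241} ($\overline{\overline{V}(h^\ast)}=V(h)$) together with the fact that $\overline{V}$ is the convolution inverse of $V$ (Remark~\ref{25thnov241}).

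The main work is axiom (4), the commutativity of the hexagon \eqref{11thdec24jb2}. Here one must trace an element $\overline{\Gamma(v\ot_B w)}$, for $v \in V,\ w\in W$, along both paths. Going along the top: apply $\overline{\varphi^{-1}_{V,W}}$ to get $\overline{\bar\gamma(\mone{v}\ot\mone{w})\,\zero{v}\ot_{B_\gamma}\zero{w}}$, then $\Upsilon_\cD$ which flips the tensor factors (Example~\ref{5thdec241jb}), then $\mathfrak{N}_W\ot_{B_\gamma}\mathfrak{N}_V$, producing scalar factors $\overline{\bar\gamma(\mone{v}\ot\mone{w})}$, $\overline{V}(\mone{w}^\ast)$ and $\overline{V}(\mone{v}^\ast)$. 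Going along the bottom: apply $\mathfrak{N}_{V\ot_B W}$ to get $\overline{V}((\mone{v}\mone{w})^\ast)\,\Gamma(\overline{\zero{v}\ot_B\zero{w}})$, then $\Gamma(\Upsilon_\cC)$ to flip inside $\Gamma$, then $\varphi^{-1}_{\overline{W},\overline{V}}$ which contributes a factor $\bar\gamma(\mone{\overline{w}}\ot\mone{\overline{v}})$; by \eqref{25thnov232} this is $\bar\gamma(\mone{w}^\ast\ot\mone{v}^\ast)$. So the identity to be proved is
\begin{equation*}
\overline{\bar\gamma(\mone{v}\ot\mone{w})}\;\overline{V}(\mtwo{w}^\ast)\;\overline{V}(\mtwo{v}^\ast)
= \overline{V}(\mone{v}^\ast\mone{w}^\ast)\;\bar\gamma(\mtwo{w}^\ast\ot\mtwo{v}^\ast),
\end{equation*}
after relabelling Sweedler indices. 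Using the unitarity relation \eqref{19thSept20242}, $\overline{\bar\gamma(a\ot b)} = \gamma(S(a)^\ast\ot S(b)^\ast)$, the left side becomes $\gamma(S(\mone{v})^\ast\ot S(\mone{w})^\ast)\,\overline{V}(\mtwo{w}^\ast)\,\overline{V}(\mtwo{v}^\ast)$. This is exactly the left-hand side of the corollary identity \eqref{3rdfeb253} (equivalently \eqref{2ndfeb263}) with $h=v,\ k=w$, whose right-hand side is $\overline{V}(k^\ast_{(1)}h^\ast_{(1)})\,\bar\gamma(k^\ast_{(2)}\ot h^\ast_{(2)}) = \overline{V}(\mone{w}^\ast\mone{v}^\ast)\,\bar\gamma(\mtwo{w}^\ast\ot\mtwo{v}^\ast)$. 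A small additional point is needed here: one must also confirm the ordering $\overline{V}(\mone{w}^\ast\mone{v}^\ast)$ versus $\overline{V}(\mone{v}^\ast\mone{w}^\ast)$ matches; this is handled because in the bottom path the product $\mone{v}\mone{w}$ lands inside $\mathfrak{N}$ before the flip by $\Upsilon_\cC$, so after the flip the two comodule labels appear in the order dictated by \eqref{2ndfeb263}, and Lemma~\ref{lem:28thnov242} (equation \eqref{2ndfeb262}) can be invoked to reconcile any residual discrepancy.

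The hard part will be bookkeeping: keeping the Sweedler indices, the order of the scalar factors, and the bar-functor conventions from Example~\ref{5thdec241jb} all consistent while matching the computation to the precise form of \eqref{3rdfeb253}. The underlying algebra is entirely encapsulated in the cocycle identities already proved in Appendix~\ref{8thmarch255} --- specifically \eqref{2ndfeb262} and \eqref{2ndfeb263} --- so no new cocycle manipulation is required; the task is to organise the hexagon chase so that each arrow's effect (flip, scalar factor, relabelling) is recorded correctly and the two sides visibly reduce to the two sides of that corollary. Once \eqref{11thdec24jb2} is verified, together with axioms (1)--(3) the theorem follows immediately from Definition~\ref{3rdmarch251}.
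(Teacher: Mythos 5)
Your proposal is correct and follows essentially the same route as the paper: take $\mathrm{fb}=\fN$ from Lemma \ref{14thfeb251} and $f^1=\id$, reduce axiom (2) to the compatibility of the star maps with $\fN$, axiom (3) to the cancellation of $V$ and $\bar{V}$ via Lemma \ref{lem:22ndnov241} and Remark \ref{25thnov241}, and axiom (4), after the hexagon chase, to the cocycle identity \eqref{2ndfeb263}, which is exactly how the paper argues. Two minor corrections: the unit object of $\cat$ is $B$ (not $\mathbb{C}$), so axiom (2) is the statement $\star_{B_\gamma}=\fN_B^{-1}\Gamma(\star_B)$, verified by the same computation as Lemma \ref{10thmarch252} (or by your check at $1$ together with left $B_\gamma$-linearity of the three maps); and in axiom (4) the bottom path produces $\bar{V}(\mone{w}^\ast\mone{v}^\ast)$ automatically since $\ast$ reverses products, so \eqref{2ndfeb263} applies on the nose and no appeal to \eqref{2ndfeb262} for a residual ordering discrepancy is needed.
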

\begin{proof}
	We have already seen  (in \eqref{21stnov243}) that $\Gamma: \cat \to \tcat $ is a monoidal equivalence. Moreover, by Lemma \ref{14thfeb251}, $\fN: \mathrm{bar}\circ \Gamma \to \Gamma \circ \mathrm{bar} $ is a  natural isomorphism. Thus, the candidate for $\mathrm{fb}_{\cE}$ is $\fN_{\cE}.$
    
    In our case, $\Gamma(B)= B_\gamma$ and so $f^1=\id$. Now by a verbatim adaptation of the proof of \eqref{eq:21stnov245}, we have $\star_\gamma = \fN_B^{-1} \Gamma(\star).$ This proves the second condition. 
	
	    Next, for $e\in \cE$,
	\begin{align*}
		\left(\fN_{\overline{\cE}}\circ  \overline{\fN_\cE} \circ \mathrm{bb_{\Gamma(\cE)}}\right)\Gamma(e)&= \fN_{\overline{\cE}} \left( \overline{\fN_\cE\left(\overline{\Gamma(e)}\right)}\right) = \fN_{\overline{\cE}}\left(\overline{\bar{V}(\mone{e}^*) \Gamma(\overline{\zero{e}})}\right)\\
        &= \fN_{\overline{\cE}}\left(\overline{ \Gamma(\overline{\zero{e}})}\right) \overline{\bar{V}(\mone{e}^*)}
		= V(\mtwo{e}) \bar{V}(\mone{e})\Gamma(\overline{\overline{\zero{e}}}) = \Gamma(\overline{\overline{e}})\\
		&= \Gamma(\mathrm{bb_\cE}) \Gamma(e),
	\end{align*}	
    where we have used Lemma \ref{lem:22ndnov241} and Remark \ref{25thnov241}.
	Finally, we need to check the commutativity of the diagram \eqref{11thdec24jb2}. This will follow from a computation in which the natural equivalence $\Upsilon$ for the category  $\tcat$ will be denoted by $\Upsilon_\gamma$. Suppose that $\cE$ and $\cF$ are objects in $\cat$. Then for  $e\in \cE$ and $f \in \cF$, we have 
	\begin{align*}
		&(\fN_\cF \ot_{B_\cot} \fN_\cE)\Upsilon_\gamma   (\overline{\varphi_{\cE, \cF}^{-1}})\overline{\Gamma(e\ot_{B}f)}\\
		&= (\fN_\cF \ot_{B_\cot} \fN_\cE)\Upsilon_\gamma \overline{\coin{\mone{e}}{\mone{f}} \Gamma(\zero{e}) \ot_{B_\cot} \Gamma(\zero{f})}  \\ 
		&= \overline{\coin{\mone{e}}{\mone{f}}} (\fN_\cF \ot_{B_\cot} \fN_\cE)  \overline{\Gamma(\zero{f})} \ot_{B_\cot}\overline{ \Gamma(\zero{e})}\\
		&= \co{S(\mtwo{e})^*}{S(\mtwo{f})^*} \bar{V}(\mone{e}^*) \bar{V}({\mone{f}^*})  \Gamma(\overline{\zero{f}}) \ot_{B_\gamma} \Gamma(\overline{\zero{e}}) \\
				&= \bar{V}(\mtwo{f}^*\mtwo{e}^*) \coin{\mone{f}^*}{\mone{e}^*}  \Gamma(\overline{\zero{f}}) \ot_{B_\gamma} \Gamma(\overline{\zero{e}}) ~\text{\big(by \eqref{2ndfeb263}\big)}\\
		&= \bar{V}(\mone{f}^*\mone{e}^*) \varphi^{-1}_{\overline{\cF},\overline{\cE}}  \Gamma(\overline{\zero{f}} \ot_{B} \overline{\zero{e}}) \text{ (by \eqref{eq:29thnov243})} \\
		&=  \bar{V}((\mone{e} \mone{f})^*) \varphi^{-1}_{\overline{\cF},\overline{\cE}} \Gamma(\Upsilon) \Gamma(\overline{\zero{e}\ot_{B} \zero{f}} )\\ 
		&=   \varphi^{-1}_{\overline{\cF},\overline{\cE}} \Gamma(\Upsilon) \fN_{\cE \ot_{B} \cF} \Gamma(\overline{{e}\ot_{B} {f}} ).
	\end{align*}
	This completes the proof that $\Gamma$ is a bar functor.
\end{proof}

\vspace{.5cm}

\textbf{Data Availability:} This article has no associated data.

\textbf{Conflict of interest:}  The authors declare that there is no conflict of interest.


\bibliographystyle{alpha} 
\bibliography{references}

\end{document}